\numberwithin{equation}{section}
\newtheorem{proposition}[equation]{Proposition}
\newtheorem{lemma}[equation]{Lemma}
\newtheorem{theorem}[equation]{Theorem}
\newtheorem{corollary}[equation]{Corollary}
\theoremstyle{definition}  
\newtheorem{definition}[equation]{Definition}
\newtheorem{remark}[equation]{Remark}
\newtheorem{example}[equation]{Example}
\newtheorem*{thmA}{Theorem A}
\newtheorem*{thmB}{Theorem B}
\newtheorem*{thmC}{Theorem C}
\newcommand\Comment[2][\relax]{\space\par\medskip\noindent%
   \fbox{\begin{minipage}{\textwidth}\textbf{Comment\ifx\relax#1\else---#1\fi}\newline%
        #2\end{minipage}}\medskip
}
\def\bc{\text{\boldmath$c$}}
\def\b1{\text{\boldmath$1$}}
\newcommand{\im}{\operatorname{im}}
\newcommand{\res}{\operatorname{res}}
\newcommand{\height}{\operatorname{ht}}
\newcommand{\infl}{\operatorname{dil}}
\newcommand{\Z}{\mathbb{Z}}
\newcommand{\N}{\mathcal{N}}
\newcommand{\SSSS}{\mathcal{S}}
\newcommand{\SSSSS}{\boldsymbol{\mathcal{S}}}
\def\phi{{\varphi}}
\newcommand{\ttt}{{\tt t}}
\newcommand{\NEarrow}{\mathbin{\rotatebox[origin=c]{45}{$\Rightarrow$}}}
\newcommand{\SEarrow}{\mathbin{\rotatebox[origin=c]{-45}{$\Rightarrow$}}}
\newcommand{\Ind}{{\mathrm {Ind}}}
\newcommand{\diag}{{\mathrm {diag}}}
\newcommand{\dist}{{\mathrm {dist}}}
\newcommand{\Res}{{\mathrm {Res}}}
\newcommand{\QQ}{{\mathbb Q}}
\newcommand{\ZZ}{{\mathbb Z}}
\newcommand{\NN}{{\mathbb N}}
\newcommand{\D}{{\mathcal D}}
\newcommand{\so}{{\sf S}}
\newcommand{\no}{{\sf N}}
\newcommand{\ea}{{\sf E}}
\newcommand{\we}{{\sf W}}
\newcommand{\T}{{\sf T}}
\newcommand{\nnee}{{\sf NE}}
\newcommand{\nnww}{{\sf NW}}
\newcommand{\ssee}{{\sf SE}}
\newcommand{\ssww}{{\sf SW}}
\newcommand{\blam}{\boldsymbol{\lambda}}
\newcommand{\bkap}{\boldsymbol{\kappa}}
\newcommand{\btau}{\boldsymbol{\tau}}
\newcommand{\bbep}{\boldsymbol{\varepsilon}}
\newcommand{\bmu}{\boldsymbol{\mu}}
\newcommand{\bnu}{\boldsymbol{\nu}}
\newcommand{\bbeta}{\boldsymbol{\beta}}
\def\rev{{\operatorname{rev}}}
\def\cont{{\operatorname{cont}}}
\def\init{{\operatorname{init}}}
\def\height{{\operatorname{ht}}}
\def\b{\mathfrak{b}}
\def\k{\Bbbk}
\def\height{{\operatorname{ht}}}
\def\re{{\mathrm{re}}}
\def\im{{\mathrm{im}\,}}
  \gdef\set#1{\mathinner{\lbrace\,{\mathcode`\|"8000%
  \let|\midvert #1}\,\rbrace}}
\def\midvert{\egroup\mid\bgroup}
\colorlet{darkgreen}{green!50!black}
\tikzset{dots/.style={very thick,loosely dotted},
         greendot/.style={fill,circle,color=darkgreen,inner sep=1.5pt,outer sep=0}
}
\def\greendot(#1,#2){\node[greendot] at(#1,#2){}}
\newenvironment{braid}{
  \begin{tikzpicture}[baseline=6mm,blue,line width=1pt, scale=0.4,
                      draw/.append style={rounded corners},
                      every node/.append style={font=\fontsize{5}{5}\selectfont}]%
  }{\end{tikzpicture}
}
\def\Grid(#1,#2){
  \draw[very thin,gray,step=2mm] (0,0)grid(#1,#2);
  \draw[very thin,darkgreen,step=10mm] (0,0)grid(#1,#2);
}
\newcommand\Tableau[2][\relax]{
  \begin{tikzpicture}[scale=0.5,draw/.append style={thick,black}]
    \ifx\relax#1\relax%
    \else 
      \foreach\box in {#1} { \filldraw[blue!30]\box+(-.5,-.5)rectangle++(.5,.5); }
    \fi
    \newcount\row\newcount\col
    \row=0
    \foreach \Row in {#2} {
       \col=1
       \foreach\k in \Row {
          \draw(\the\col,\the\row)+(-.5,-.5)rectangle++(.5,.5);
          \draw(\the\col,\the\row)node{\k};
          \global\advance\col by 1
       }
       \global\advance\row by -1
    }
  \end{tikzpicture}
}
\newcommand{\hackcenter}[1]{
 \xy (0,0)*{#1}; \endxy}
\newcommand\YoungDiagram[2][\relax]{
  \begin{tikzpicture}[scale=0.5,draw/.append style={thick,black}]
    \ifx\relax#1\relax%
    \else 
    \foreach\box in {#1} {
      \filldraw[blue!30]\box rectangle ++(1,1);
    }
    \fi
    \newcount\row
    \row=0
    \foreach \col in {#2} {
       \draw(1,\the\row)grid ++(\col,1);
       \global\advance\row by -1
    }
  \end{tikzpicture}
}
\colorlet{darkgreen}{green!50!black}
\tikzset{dots/.style={very thick,loosely dotted},
         greendot/.style={fill,circle,color=darkgreen,inner sep=1.5pt,outer sep=0},
         blackdot/.style={fill,circle,color=black,inner sep=2pt,outer sep=0},
         graydot/.style={fill,circle,color=gray,inner sep=1.1pt,outer sep=0}
}
\def\greendot(#1,#2){\node[greendot] at(#1,#2){}}
\def\blackdot(#1,#2){\node[blackdot] at(#1,#2){}}
\def\graydot(#1,#2){\node[graydot] at(#1,#2){}}
\def\Grid(#1,#2){
  \draw[very thin,gray,step=2mm] (0,0)grid(#1,#2);
  \draw[very thin,darkgreen,step=10mm] (0,0)grid(#1,#2);
}
\begin{document}


\title[Cuspidal ribbon tableaux in affine type A]{{\bf Cuspidal ribbon tableaux in affine type A}}

\author{\sc Dina Abbasian}
\address{Washington \& Jefferson College\\ Washington\\ PA~15301, USA}
\email{abbasiand@washjeff.edu}

\author{\sc Lena Difulvio}
\address{Washington \& Jefferson College\\ Washington\\ PA~15301, USA}
\email{difulviole@washjeff.edu}

\author{\sc Robert Muth}
\address{Dept. of Mathematics\\ Washington \& Jefferson College\\ Washington\\ PA~15301, USA}
\email{rmuth@washjeff.edu}

\author{\sc Gabrielle Pasternak}
\address{Washington \& Jefferson College\\ Washington\\ PA~15301, USA}
\email{pasternakgm@washjeff.edu}

\author{\sc Isabella Sholtes}
\address{Washington \& Jefferson College\\ Washington\\ PA~15301, USA}
\email{sholtesim@washjeff.edu}

\author{\sc Frances Sinclair}
\address{Washington \& Jefferson College\\ Washington\\ PA~15301, USA}
\email{fiee9808@gmail.com}

\renewcommand{\shortauthors}{D. Abbasian, L. Difulvio, R. Muth, G. Pasternak, I. Sholtes, and F. Sinclair}
\renewcommand{\shorttitle}{Cuspidal ribbon tableaux in affine type A}



\begin{abstract}
For any convex preorder on the set of positive roots of affine type A, we classify and construct all associated cuspidal and semicuspidal skew shapes. These combinatorial objects correspond to cuspidal and semicuspidal skew Specht modules for the Khovanov-Lauda-Rouquier algebra of affine type A. Cuspidal skew shapes are ribbons, and we show that every skew shape has a unique ordered tiling by cuspidal ribbons. This tiling data provides an upper bound, in the bilexicographic order on Kostant partitions, for labels of simple factors of Specht modules.
\keywords{Young diagrams \and Ribbon tableaux \and Specht modules \and Khovanov-Lauda-Rouquier algebras}
\end{abstract}

\maketitle

\section{Introduction}

We begin by briefly describing the combinatorial data studied herein, referring the reader to the body of the paper for detailed exposition and to Example~\ref{BigEx} for demonstrative visuals.

Fix \(e > 1\), and let \(\Phi_+ = \Phi_+^\re \sqcup \Phi_+^\im\) be the positive root system of type \({\tt A}^{(1)}_{e-1}\); \(I = \{\alpha_0, \ldots, \alpha_{e-1}\}\) be the set of simple roots; \(Q_+ = \ZZ_{\geq 0}I\) be the root lattice;  \(\Phi_+^\re\) be the set of real roots; \(\Phi_+^\im = \{m \delta \mid m \in \NN\}\) be the set of imaginary roots, and \(\delta = \alpha_0 + \cdots + \alpha_{e-1}\) be the null root. We fix a convex preorder \(\succeq\) on \(\Phi_+\), see \S\ref{posrootsec}. This root system data is fundamental in the representation theory of the Kac-Moody Lie algebra \(\widehat{\mathfrak{sl}}_e\) \cite{Kac}, and convex preorders determine PBW bases for the associated quantum group \cite{Beck}.

A skew shape \(\tau\) is a set difference of Young diagrams. Nodes in \(\tau\) have residue in \(\ZZ_e\), and the skew shape \(\tau\) has content \(\cont(\tau) \in Q_+\), see \S\ref{skewshapesec}, \S\ref{contentsec}.
We say a set \(\Lambda\) of non-overlapping skew shapes is a tiling of a skew shape \(\tau\) provided that \(\tau = \bigsqcup_{\lambda \in \Lambda}\lambda\), and we refer to elements of \(\Lambda\) as tiles. A \(\Lambda\)-tableau \(\ttt = (\lambda_1, \ldots, \lambda_{|\Lambda|})\)  is an ordering of the tiles in \(\Lambda\) such that no node in \(\lambda_i\) is (weakly) southeast of any node in \(\lambda_j\) when \(1 \leq i < j \leq |\Lambda|\), see \S\ref{tilingsec}. This is a generalization of the notion of {\em Young} tableaux, in which each \(\lambda_i\) consists of a single node. Young tableaux and their associated residue sequences correspond to bases and associated weight spaces for Specht modules over cyclotomic Hecke algebras, and the symmetric group in particular, see \cite{Mathasbook, Kbook}.

Of particular interest in this paper are tilings whose constituent tiles are ribbons---connected skew shapes that contain no \(2 \times 2\) boxes.
Ribbon tilings and tableaux are well-studied combinatorial objects connected to many areas of algebra and geometry, such as the theory of symmetric functions, the representation theory of finite groups, lie algebras and quantum groups, and the geometry of flag varieties, see \cite{FS, LLT, Shef} for a few examples. The majority of research done on this topic concerns \(r\)-ribbon tableaux, in which all tiles are ribbons of a given cardinality \(r\). We do not include that restriction here, as we are interested in {\em cuspidal} ribbons, which have varying and unbounded cardinality.

\subsection{Cuspidality}\label{cuspint}
Motivated by the representation theory of Khovanov-Lauda-Rouquier (KLR) algebras studied in \cite{KCusp, KM, McN}, we introduce the notion of {\em cuspidal} and {\em semicuspidal} skew shapes. Let \(\tau\) be a skew shape such that \(\cont(\tau) = m \beta\) for some \(m \in \NN\) and \(\beta \in \Phi_+\). We say that \(\tau\) is semicuspidal provided that whenever \((\lambda_1, \lambda_2)\) is a tableau for \(\tau\), we may write \(\cont(\lambda_1)\) as a sum of positive roots \(\preceq \beta\), and \(\cont(\lambda_2)\) as a sum of positive roots \(\succeq \beta\). We say that \(\tau\) is cuspidal provided that \(m=1\) and the comparisons above may be made strict, see \S\ref{cuspdef}. 
We give a complete classification of cuspidal and semicuspidal skew shapes in Theorems~\ref{allcusp} and~\ref{allsemicusp}, summarized (in a slightly weaker form) as Theorems~A and~B below.

\begin{thmA}\label{THMA}
Every cuspidal skew shape is a ribbon.
There exists a unique cuspidal ribbon \(\zeta^\beta\) of content \(\beta\) for all \(\beta \in \Phi_+^\re\), and \(e\) distinct cuspidal ribbons \(\zeta^{\overline{0}}, \ldots, \zeta^{\overline{e-1}}\) of content \(\delta\). 
\end{thmA}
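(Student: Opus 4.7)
The plan is to prove the three assertions of the theorem in sequence.

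First, I want to show every cuspidal skew shape $\tau$ of content $\beta$ is a ribbon, meaning connected and free of $2\times 2$ sub-blocks. Connectedness follows from a convexity argument: if $\tau = \tau_1 \sqcup \tau_2$ decomposes with neither piece weakly southeast of the other, then both orderings $(\tau_1,\tau_2)$ and $(\tau_2,\tau_1)$ are valid tableaux, and cuspidality forces each $\cont(\tau_i)$ to be a sum of positive roots strictly $\prec \beta$. Summing would express $\beta$ itself as such a sum, contradicting the basic convexity property (any decomposition of a positive root into positive roots contains a summand $\succeq$ the original). For the $2\times 2$ block obstruction, suppose $\tau$ contains a $2\times 2$ block with residues $i, i+1, i-1, i$ in the NW, NE, SW, SE positions respectively. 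Two distinct tableau decompositions split this block ``top row versus bottom row'' and ``left column versus right column''; applying cuspidality to each forces the real positive roots $\alpha_i+\alpha_{i+1}$ and $\alpha_i+\alpha_{i-1}$ to lie strictly on opposite sides of $\beta$ in \emph{both} orderings, i.e., $\alpha_i + \alpha_{i+1} \prec \beta \prec \alpha_i + \alpha_{i-1}$ and simultaneously $\alpha_i+\alpha_{i-1} \prec \beta \prec \alpha_i+\alpha_{i+1}$, an immediate contradiction.

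Second, for each $\beta \in \Phi_+^\re$ I would construct the unique cuspidal ribbon $\zeta^\beta$ inductively, adding cells from the NW end. After placing a partial ribbon of content $\gamma$, there are at most two choices for the next cell (rightward or downward), adding $\alpha_s$ or $\alpha_{s'}$ to the content. The convex preorder forces exactly one choice to keep the new content expressible as a sum of positive roots strictly $\prec \beta$; this gives uniqueness automatically and existence after an induction on the height of $\beta$, with the height-lowering step handled by peeling off the final cell. Full cuspidality of the resulting ribbon follows from the construction: every 2-tableau cut corresponds to an initial segment whose content is $\prec \beta$ by design, and a symmetric SE-to-NW argument gives the complementary $\succ \beta$ condition.

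Third, for content $\delta$, the residue sequence of a ribbon must traverse $\ZZ/e$ monotonically by $\pm 1$ steps and visit each residue exactly once, forcing the ribbon to be a monotone strip. These candidates are parametrized by starting residue and orientation. Any 2-tableau cut of such a strip splits $\delta$ as a sum of two real positive roots $\gamma + \gamma'$; convexity forces $\gamma$ and $\gamma'$ to lie strictly on opposite sides of $\delta$ in the preorder, since $\delta$ cannot itself appear as a summand of either side. A case analysis then shows that for each residue class $\overline{r} \in \ZZ/e$, exactly one candidate strip satisfies all cuspidality constraints simultaneously, yielding the $e$ distinct cuspidal ribbons $\zeta^{\overline{0}}, \ldots, \zeta^{\overline{e-1}}$.

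The main obstacle will be step two: rigorously organizing the inductive construction of $\zeta^\beta$ requires a careful understanding of how the convex preorder stratifies the real positive roots of affine type A, especially for $\beta$ of the form $m\delta + \alpha$ near the imaginary locus, where the interaction between real roots on either side of $\delta$ becomes delicate. A secondary subtlety is ensuring the construction terminates at exactly content $\beta$ rather than stalling prematurely, which amounts to a compatibility argument combining the preorder data with the height function on $Q_+$.
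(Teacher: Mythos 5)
Your outline has the right overall shape---deduce connectedness and thinness separately, then construct the cuspidal ribbons by a step-by-step rule forced by convexity---but the thinness step as written has a genuine gap, and the construction step glosses over a point the paper handles with care.

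The connectedness argument is correct and is arguably more direct than the paper's. You observe that if $\tau = \tau_1\sqcup\tau_2$ with neither component weakly southeast of the other, then \emph{both} $(\tau_1,\tau_2)$ and $(\tau_2,\tau_1)$ are valid two-tableaux; cuspidality applied to each gives $\cont(\tau_1)$ and $\cont(\tau_2)$ each as a sum of roots $\prec\beta$, so $\beta$ itself is such a sum, contradicting Lemma~\ref{GenConvexLem}. That is clean. (The paper instead proves the stronger Lemma~\ref{cuspisribbon} in one shot, using the machinery of minimal $\ssee$-removable ribbons from Lemma~\ref{remsinorder} through Lemma~\ref{minremtabisKos}.)

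The $2\times 2$-block step does not go through. A two-tableau of $\tau$ is a decomposition of \emph{all} of $\tau$ into two skew-shape tiles; you cannot cut off just the top row or just the left column of an interior $2\times 2$ block and leave the rest of $\tau$ aside. The tiles in any two-tableau of $\tau$ will contain the $2\times2$ block \emph{plus} whatever else of $\tau$ falls on each side of the cut, so their contents are not $\alpha_i+\alpha_{i+1}$ and $\alpha_i+\alpha_{i-1}$, and the claimed ``opposite sides of $\beta$'' clash never materializes. (Even in the degenerate case where $\tau$ literally equals the $2\times2$ block, its content $2\alpha_i+\alpha_{i+1}+\alpha_{i-1}$ fails to lie in $\Phi_+$ for $e\ge 4$, so the cuspidality hypothesis does not apply.) The paper avoids this by showing (Lemma~\ref{cuspisribbon}) that a cuspidal shape which were not a ribbon would admit a minimal $\ssee$-removable ribbon tableau whose last tile has content both $\preceq\cont(\tau)$ (by generalized convexity applied to the decreasing Kostant sequence) and $\succ\cont(\tau)$ (by Lemma~\ref{shapehooktest}), a contradiction. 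You would need an analogue of this ``squeeze'' to make the thinness argument work; the raw $2\times 2$ observation is not enough.

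On the construction of $\zeta^\beta$: the inductive rule you describe (at each step a binary choice, forced by which side of $\beta$ the partial content lies on) is essentially Definition~\ref{defxib}, which grows the ribbon from its \emph{southwest} endpoint via north/east moves rather than from the NW end as you state. The verification of cuspidality needs more care than ``every two-tableau cut is an initial segment'': two-tableaux of a ribbon can have disconnected tiles (remove a middle stretch), and the correct reduction is Lemma~\ref{CuspForSkewHook}, which says it suffices to check $\ssee$-removable ribbons, all of which are of the form $\xi^\zeta_{z^i,z^j}$ for a contiguous interval $[i,j]$ (Lemma~\ref{xiuv}). Finally, for the imaginary case: the candidate ribbons of content $\delta$ are not parametrized by ``starting residue and orientation''---for each starting residue $t$ there are $2^{e-1}$ lattice paths of length $e$---so the heart of the matter, which your sketch elides, is precisely that the convexity-forced rule of Definition~\ref{defxib} picks out a single path among these, and Lemma~\ref{onlycuspribbon} shows nothing else is cuspidal.
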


\begin{thmB}\label{THMB}
Let \(m \in \NN\).
There exists a unique semicuspidal skew shape \(\zeta^{m \beta}\) of content \(m \beta\), for all \(\beta \in \Phi_+^\re\). The set of connected semicuspidal skew shapes of content \(m \delta\) is in bijection with \(\ZZ_e \times \SSSS_{\tt c}(m)\), where \(\SSSS_{\tt c}(m)\) is the set of connected skew shapes of cardinality \(m\). 
\end{thmB}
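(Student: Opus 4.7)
The plan is to leverage Theorem~A together with the existence and uniqueness of an ordered cuspidal ribbon tiling of an arbitrary skew shape (the central combinatorial result alluded to in the abstract). Given a semicuspidal skew shape \(\tau\) with \(\cont(\tau) = m\beta\), consider its ordered cuspidal tiling \(\ttt = (\lambda_1, \ldots, \lambda_k)\). For each index \(i\), applying semicuspidality to the two-tile tableau \((\lambda_1 \sqcup \cdots \sqcup \lambda_i,\ \lambda_{i+1} \sqcup \cdots \sqcup \lambda_k)\) gives constraints on the partial sum \(\sum_{j \leq i} \cont(\lambda_j)\) with respect to~\(\succeq\). Combining these bounds with the convexity of the preorder and the fact that each \(\cont(\lambda_j) \in \Phi_+\) by Theorem~A, I would deduce that every tile content must equal \(\beta\) when \(\beta\) is real, and must equal \(\delta\) when \(\beta = \delta\). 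Thus a semicuspidal shape of content \(m\beta\) is built entirely from copies of \(\zeta^\beta\) in the real case, or from the \(e\) cuspidal \(\delta\)-ribbons \(\zeta^{\bar 0}, \ldots, \zeta^{\overline{e-1}}\) in the imaginary case.

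For \(\beta \in \Phi_+^\re\), I would prove by induction on \(m\) that \(m\) copies of \(\zeta^\beta\) admit exactly one valid assembly into a skew shape admitting a compatible tableau order: the northwesternmost copy is determined by the content, and the rigidity of ribbon adjacencies (coupled with the tableau-order condition) pins down each subsequent copy along a unique shared edge. Existence of \(\zeta^{m\beta}\) then follows by explicit construction of this stacked shape, and uniqueness by the inductive rigidity argument.

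For \(\beta = \delta\), the \(\ZZ_e\)-factor of the bijection comes from showing that in a connected semicuspidal shape, all imaginary cuspidal tiles share a common label \(\bar r \in \ZZ_e\): two tiles \(\zeta^{\bar r}\) and \(\zeta^{\bar s}\) with \(\bar r \neq \bar s\) cannot share a nontrivial boundary without violating semicuspidality, by a local residue-sequence analysis along shared edges. The \(\SSSS_{\tt c}(m)\)-factor is then produced by collapsing each \(\zeta^{\bar r}\) tile to a single node while preserving relative adjacency, yielding a connected skew shape of cardinality \(m\). The inverse inflation map replaces each node of such a skew shape by a copy of \(\zeta^{\bar r}\) in the unique adjacency-compatible fashion, and one verifies semicuspidality of the result directly.

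The hardest step, and the one requiring the most careful case analysis, will be the imaginary bijection: specifically, ruling out mixed-label configurations in connected semicuspidal \(m\delta\)-shapes, and confirming that arbitrary inflations yield semicuspidal shapes whose subsequent collapse recovers the original skew shape. Both points hinge on an explicit understanding of how the boundary cells of the \(e\) distinct cuspidal \(\delta\)-ribbons can abut one another, information that should be extractable from the construction underlying Theorem~A.
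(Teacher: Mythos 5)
Your first-paragraph reduction --- applying two-tile semicuspidality to prefixes of the cuspidal tiling, then using convexity to force every tile content to equal $\beta$ (real case) or $\delta$ (imaginary case) --- is sound; it is essentially the paper's Lemma~\ref{sciffcusptile}. Your imaginary-case plan also matches the paper: the collapse/inflation correspondence is exactly the dilation map $\infl_t$ of \S\ref{infldef}, the claim that all $\delta$-tiles in a connected shape share one label $t\in\ZZ_e$ is Proposition~\ref{conissemicusp}, and you correctly identify ruling out mixed-label abutments as the hard case analysis (the paper handles it via Lemma~\ref{remhookswap}).

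The real case, however, has a genuine gap. You propose an induction in which ``$m$ copies of $\zeta^\beta$ admit exactly one valid assembly,'' with each subsequent copy ``pinned down \ldots along a unique shared edge.'' This is backwards: for $\beta\in\Phi_+^\re$, the unique semicuspidal skew shape $\zeta^{m\beta}$ is \emph{disconnected} --- it is $m$ pairwise non-adjacent copies of $\zeta^\beta$ satisfying the $\NEarrow$ condition of Proposition~\ref{skewshapecondecomp}, and uniqueness holds only up to $e$-similarity precisely because there is no rigid adjacency at all. Any assembly in which two copies of $\zeta^\beta$ actually abut fails to be semicuspidal; this is the content of Lemma~\ref{conscreal}, whose proof requires the minimal-removable-ribbon machinery (Theorem~\ref{mainthmcusp} and the case analysis of Lemma~\ref{remhookswap}) to show that a connected shape of content $m\beta$ with $m>1$ and $\beta$ real cannot be tiled by cuspidal $\beta$-ribbons. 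Your proposal never confronts this: it silently assumes a connected stacking is possible and unique, when in fact you need to prove the opposite --- that no connected configuration exists. Without that argument, nothing forces $m=1$ per connected component, and the real half of the theorem is unproved.
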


The uniqueness statements in Theorems~A and~B refer to uniqueness up to certain residue-preserving translations, see \S\ref{similaritysec}. In \S\ref{cuspcons} and \S\ref{semicuspcons} we provide explicit constructions of all cuspidal and semicuspidal skew shapes. Imaginary semicuspidal skew shapes are constructed via an \(e\)-dilation process which is in some sense an inversion of the \(e\)-quotients defined in \cite{Lit}.

\subsection{Kostant tilings}\label{Kosint}
Let \(\Lambda\) be a tiling for \(\tau\). We say a \(\Lambda\)-tableau \(\ttt = (\lambda_1, \ldots, \lambda_{|\Lambda|})\) is {\em Kostant} if there exist \(m_1, \ldots, m_k \in \NN\), \(\beta_1\succeq \ldots \succeq \beta_k \in \Phi_+\) such that \(\cont(\lambda_i) = m_i \beta_i\) for all \(1 \leq i \leq |\Lambda|\). We say it is {\em strict} Kostant if the comparisons above are strict. We say \(\Lambda\) is a {\em (strict) Kostant} tiling if a (strict) Kostant \(\Lambda\)-tableau exists.

If \(\cont(\tau) = \theta\), then a Kostant tiling \(\Lambda\) of \(\tau\) may naturally be associated with a Kostant partition \(\bkap^\Lambda\) of \(\theta\), see \S\ref{Kostsec}. The convex preorder \(\succeq\) naturally induces a bilexicographic partial order \(\trianglerighteq\) on the set \(\Xi(\theta)\) of Kostant partitions of \(\theta\). Our main theorem on Kostant tilings is the following, which appears as Theorems~\ref{mainthmcusp} and~\ref{mainthmscKost} in the text:

\begin{thmC}\label{THMC}
Let \(\tau\) be a nonempty skew shape.
\begin{enumerate}
\item There exists a unique cuspidal Kostant tiling \(\Gamma_\tau\) for \(\tau\). 
\item There exists a unique semicuspidal strict Kostant tiling \(\Gamma^{sc}_\tau\) for \(\tau\).
\item If \(\Lambda\) is any Kostant tiling for \(\tau\), then we have \( \bkap^{\Lambda} \trianglelefteq \bkap^{\Gamma_\tau} = \bkap^{\Gamma^{sc}_\tau} \).
\end{enumerate}
\end{thmC}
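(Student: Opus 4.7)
The plan is to prove parts (i) and (ii) by induction on $|\tau|$ via a canonical northwest-peeling procedure, then to derive (iii) by a refinement argument paired with a lexicographic comparison at the first disagreement.

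For (i), let $n$ be the northwesternmost node of $\tau$. The first tile $\lambda_1$ of any cuspidal Kostant tableau must contain $n$ (by the tableau ordering condition) and must carry $\succeq$-maximal content $\beta_1$ among all cuspidal ribbons---the $\zeta^\beta$ for $\beta \in \Phi_+^\re$ and the imaginary flavors $\zeta^{\overline{0}},\ldots,\zeta^{\overline{e-1}}$ classified in Theorem~A, together with their residue-preserving translates from $\S\ref{similaritysec}$---that contain $n$ and fit inside $\tau$. The classification ensures both that this maximum exists and that the maximizer $\lambda_1$ is unique. Setting $\tau' := \tau \setminus \lambda_1$, the rigid shape of cuspidal ribbons forces $\tau'$ to again be a skew shape; by induction $\tau'$ has a unique cuspidal Kostant tiling, and prepending $\lambda_1$ yields $\Gamma_\tau$. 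Uniqueness of $\Gamma_\tau$ follows because any other choice of first tile would either miss $n$, violate maximality of $\beta_1$, or duplicate $\lambda_1$.

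For (ii), I repeat the construction with semicuspidal tiles and the stricter requirement that each root appears at most once among the $\beta_i$. At each step I peel the semicuspidal tile $\lambda_1$ of content $m_1\beta_1$ at the NW corner with $\beta_1$ chosen $\succeq$-maximal and $m_1$ then forced to be the total peelable multiplicity of $\beta_1$. Theorem~B's classification (including the parametrization of imaginary semicuspidals by $\ZZ_e \times \SSSS_{\tt c}(m)$) pins $\lambda_1$ down uniquely. For (iii), the equality $\bkap^{\Gamma_\tau} = \bkap^{\Gamma^{sc}_\tau}$ follows by exhibiting $\Gamma_\tau$ as a refinement of $\Gamma^{sc}_\tau$: each real semicuspidal tile $\zeta^{m\beta}$ splits into $m$ copies of $\zeta^\beta$ via the construction of $\S\ref{semicuspcons}$, and the imaginary tile of content $m\delta$ splits into $m$ cuspidal $\delta$-ribbons of appropriate residues. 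Since refinement preserves the multiset of content roots, the associated Kostant partitions coincide. For $\bkap^\Lambda \trianglelefteq \bkap^{\Gamma_\tau}$ with $\Lambda$ an arbitrary Kostant tiling, I compare the ordered sequences of content roots of $\Lambda$ and $\Gamma_\tau$ tile-by-tile: at the first index of disagreement, the peeling construction of $\Gamma_\tau$ has chosen the $\succeq$-maximal available root at the NW, so $\Gamma_\tau$'s root strictly exceeds $\Lambda$'s, and the bilexicographic order $\trianglerighteq$ on Kostant partitions (governed precisely by this lex comparison) yields the claim.

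The main obstacle will be verifying the uniqueness of $\lambda_1$ at each peeling step, especially when $\beta_1 = \delta$: with $e$ distinct cuspidal $\delta$-ribbons available, one must argue that the residue of $n$ together with the shape constraint imposed by $\tau$ selects a unique flavor $\zeta^{\overline{r}}$. A related technical point is that $\tau \setminus \lambda_1$ need not a priori be a skew shape if $\lambda_1$ is a generic ribbon; one must exploit the rigid, explicitly classified form of cuspidal ribbons from Theorem~A (and the companion analysis in $\S\ref{cuspcons}$) to ensure the complement remains well-formed at each stage. Once these two structural points are secured, the inductive arguments in (i), (ii), and the lex-comparison in (iii) proceed routinely.
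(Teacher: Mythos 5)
Your NW-peeling strategy is the reversal-dual of what the paper does: the paper peels \emph{minimal} $\ssee$-removable ribbons from the southeast, shows those give cuspidal Kostant tableaux (Lemmas~\ref{minremtab}, \ref{minremtabisKos}, which rest on the nontrivial monotonicity Lemma~\ref{remsinorder}), shows any two such tilings coincide (Corollary~\ref{uniquerib}, via Lemma~\ref{othersmaller}), and then shows \emph{every} cuspidal Kostant tiling is of this form (Lemma~\ref{cuspisrib}). Your maximal-at-the-NW picture is exactly what the paper obtains afterward by reversal (Proposition~\ref{revlem} and Theorem~\ref{mainthmcusp}(iii)). So the overall shape of your argument is sound, but there are several genuine gaps.

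First, ``the northwesternmost node of $\tau$'' is not well-defined for a general skew shape. The node $\tau_{\ssww}$ is the bottom-left corner (maximal row, minimal column), $\tau_{\nnee}$ is the top-right corner, and a genuine NW corner (simultaneously minimal row and minimal column) exists only when $\tau$ is a Young diagram. For a generic skew shape there is an entire $\nnww$-boundary, and the first tile of a tableau is constrained only to be an $\nnww$-removable subshape, not to contain any particular distinguished node. You need to phrase the peeling in terms of $\nnww$-removable ribbons, as the paper does.

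Second, your uniqueness argument for $\lambda_1$ is asserted, not proved. You say any other first tile ``would violate maximality of $\beta_1$,'' but the claim that the first tile of \emph{every} cuspidal Kostant tableau must be the $\succeq$-maximal $\nnww$-removable ribbon is precisely the hard direction (it is the content of Lemma~\ref{cuspisrib}, plus the monotonicity of consecutive removals, Lemma~\ref{remsinorder}). The paper's proof of this is a delicate cut-and-paste case analysis of how a removable ribbon can intersect cuspidal tiles, using Lemma~\ref{remhookswap}; there is no obvious shortcut, and your ``classification from Theorem~A pins it down'' does not substitute for it. Conversely, the concern you flag about $\tau\setminus\lambda_1$ remaining a skew shape is actually a non-issue: Lemma~\ref{tabisskew} handles it for any tableau.

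Third, in part (iii) your ``first index of disagreement'' lex comparison establishes only the \emph{left}-lexicographic bound $\bkap^{\Gamma_\tau}\trianglerighteq_L\bkap^{\Lambda}$. The bilexicographic order $\trianglerighteq$ requires both $\trianglerighteq_L$ and $\trianglerighteq_R$, and the right-lexicographic bound does not follow from the NW-peeling alone. You need a dual argument that peels from the SE and chooses $\succeq$-minimal removable ribbons (or, as the paper does, prove one direction directly via Lemma~\ref{othersmaller} and deduce the other by applying everything to $\succeq^\rev$ and invoking Proposition~\ref{revlem}(v)). Also, your ``tile-by-tile'' comparison of two Kostant tableaux does not immediately translate to the partition-level comparison of $(\kappa_\beta)_{\beta\in\Psi}$ when tiles of $\Lambda$ carry divisible contents $m\beta$; the aggregation requires the inductive bookkeeping that Lemma~\ref{othersmaller} carries out.

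The equality $\bkap^{\Gamma_\tau}=\bkap^{\Gamma_\tau^{sc}}$ and the refinement picture in part (ii) are correct and match the paper's proof of Theorem~\ref{mainthmscKost}.
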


We establish existence in Theorem~C(i),(ii) constructively;
in \S\ref{resKos} we show the cuspidal Kostant tiling \(\Gamma_\tau\) may be constructed via progressive removal of {\em minimal ribbons}.

\subsection{Application to representation theory} 

The combinatorial study of cuspidality and Kostant tilings for skew shapes discussed in \S\ref{cuspint}, \S\ref{Kosint} is motivated by a connection to the theory of cuspidal systems and Specht modules over Khovanov-Lauda-Rouquier (KLR) algebras, and has application in that setting.

For any field \(\k\) and \(\theta \in Q_+\), there is an associated KLR \(\k\)-algebra \(R_\theta\). This family of algebras categorifies the positive part of the quantum group \(U_q(\widehat{\mathfrak{sl}}_e)\), see \cite{KL1, KL2, Rouq}. Associated to any skew shape \(\tau\) of content \(\theta\) is a {\em (skew) Specht} \(R_\theta\)-module \(S^\tau\), as defined in \cite{KMR, Muth}. As discussed in \S\ref{Spechtmodsec}, these Specht modules are key objects in the representation theory of cyclotomic KLR algebras, Hecke algebras and symmetric groups, via the connection between these objects proved in \cite{BK}.

Under some restrictions on the ground field characteristic (see \cite{KM}), the category of finitely generated \(R_\theta\)-modules is properly stratified, with strata labeled by \(\Xi(\theta)\) and with the simple modules labeled \(L(\bkap, \blam)\), where \(\bkap \in \Xi(\theta)\) and \(\blam\) is an \((e-1)\)-multipartition of the coefficient of \(\delta\) in the Kostant partition \(\bkap\). {\em Cuspidal} and {\em semicuspidal} \(R_\beta\)-modules associated to every \(\beta \in \Phi_+\) are the building blocks of this stratification theory, see \cite{KCusp, McN, KM}.

An interesting objective is to find a combinatorial rule connecting the {\em cellular} structure in cyclotomic KLR algebra representation theory---built on Specht modules with simple modules labeled via multipartitions---to the {\em stratified} representation theory of the affine KLR algebra---built on cuspidal modules with simple modules labeled via Kostant partitions. Theorems~A, B, C describe a rough step in this direction. 
In Proposition~\ref{cuspiscusp}, we show that Theorems~A and~B give a complete classification of all cuspidal and semicuspidal Specht modules over KLR algebras. In Proposition~\ref{propcuspSpechtsimp} we leverage this connection to give a presentation of all simple cuspidal and semicuspidal modules associated to real positive roots. This generalizes a result proved in \cite{Muth} from balanced convex preorders to arbitrary convex preorders. In Proposition~\ref{simplefactors} we show that Theorem~C provides a tight upper bound, in the bilexicographic order on \(\Xi(\theta)\), on the simple factors which occur in the skew Specht module \(S^\tau\).

\begin{example}\label{BigEx}
We conclude the introduction with a demonstrative example of the combinatorial objects studied in this paper.
Take \(e=3\). Following \cite[Example 3.6]{McN}, we may define a convex preorder on \(\Phi_+\) as follows. Set a total order on \(\QQ^2\) via 
\begin{align*}
(x,y) \geq (x',y') \qquad \iff \qquad x>x' \textup{ or } x=x' \textup{ and }y\geq y',
\end{align*}
for all \((x,y), (x',y') \in \QQ^2\). 
Then define a map \(h: \Phi_+ \to \QQ^2\) by setting
\begin{align*}
h(\alpha_0) = (2,1),
\qquad
h(\alpha_1) = (-1,0),
\qquad
h(\alpha_2) = (-1,-1),
\end{align*}
and extend by \(\ZZ\)-linearity to all of \(\Phi_+\). For \(\beta, \gamma \in \Phi_+\), set 
\begin{align*}
\beta \succeq \gamma \qquad \iff \qquad
\frac{h(\beta)}{\height(\beta)} \geq \frac{h(\gamma)}{\height(\gamma)},
\end{align*}
where \(\height(\beta)\) is the {\em height} of \(\beta\), the sum of the coefficients of simple roots in \(\beta\).

At the maximum end of the preorder \(\succeq\), we have the real positive roots:
\begin{align*}
\alpha_0 \succ \alpha_0 + \alpha_1 \succ \delta + \alpha_0 \succ \alpha_2 + \alpha_0 \succ 2 \delta + \alpha_0 \succ \delta + \alpha_0 + \alpha_1 \succ \cdots
\end{align*}
Cuspidal ribbons \(\zeta^\beta\) associated to these roots, as constructed in \S\ref{cuspcons}, appear in Figure~\ref{fig:BigEx1}, with residues depicted within each node.

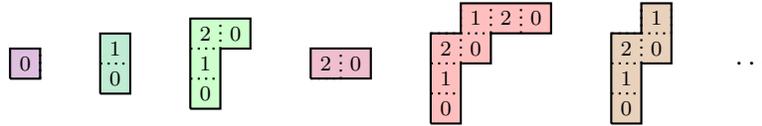
\begin{figure}[h]
\begin{align*}
{}
\hackcenter{
\begin{tikzpicture}[scale=0.4]
%
\draw[thick, fill=violet!25]  (0,0)--(1,0)--(1,1)--(0,1)--(0,0);
%
\draw[thick, dotted] (1,0)--(1,1);
%
%
%
\node at (0.5,0.5){$\scriptstyle0 $};
\end{tikzpicture}
}
\qquad
\hackcenter{
\begin{tikzpicture}[scale=0.4]
%
\draw[thick, fill=blue!30!green!25]  (0,0)--(1,0)--(1,2)--(0,2)--(0,0);
%
\draw[thick, dotted] (0,1)--(1,1);
%
%
%
\node at (0.5,0.5){$\scriptstyle0 $};
\node at (0.5,1.5){$\scriptstyle1 $};
\end{tikzpicture}
}
\qquad
\hackcenter{
\begin{tikzpicture}[scale=0.4]
%
\draw[thick, fill=green!20]  (0,-1)--(1,-1)--(1,1)--(2,1)--(2,2)--(0,2)--(0,-1);
%
\draw[thick, dotted] (0,1)--(1,1)--(1,2);
\draw[thick, dotted] (0,0)--(1,0);
%
%
%
\node at (0.5,-0.5){$\scriptstyle0 $};
\node at (0.5,0.5){$\scriptstyle1 $};
\node at (0.5,1.5){$\scriptstyle2 $};
\node at (1.5,1.5){$\scriptstyle0$};
\end{tikzpicture}
}
\qquad
\hackcenter{
\begin{tikzpicture}[scale=0.4]
%
\draw[thick, fill=purple!25]  (0,1)--(2,1)--(2,2)--(0,2)--(0,1);
%
\draw[thick, dotted] (0,1)--(1,1)--(1,2);
%
%
%
\node at (0.5,1.5){$\scriptstyle2 $};
\node at (1.5,1.5){$\scriptstyle0 $};
\end{tikzpicture}
}
\qquad
\hackcenter{
\begin{tikzpicture}[scale=0.4]
%
\draw[thick, fill=pink]  (0,0)--(1,0)--(1,2)--(2,2)--(2,3)--(4,3)--(4,4)--(1,4)--(1,3)--(0,3)--(0,0);
%
\draw[thick, dotted] (0,1)--(1,1);
\draw[thick, dotted] (0,2)--(1,2);
\draw[thick, dotted] (0,3)--(2,3);
\draw[thick, dotted] (1,2)--(1,4);
\draw[thick, dotted] (2,3)--(2,4);
\draw[thick, dotted] (3,3)--(3,4);
%
%
%
\node at (0.5,0.5){$\scriptstyle0 $};
\node at (0.5,1.5){$\scriptstyle1 $};
\node at (0.5,2.5){$\scriptstyle2 $};
\node at (1.5,2.5){$\scriptstyle0 $};
\node at (1.5,3.5){$\scriptstyle1 $};
\node at (2.5,3.5){$\scriptstyle2 $};
\node at (3.5,3.5){$\scriptstyle0 $};
\end{tikzpicture}
}
\qquad
\hackcenter{
\begin{tikzpicture}[scale=0.4]
%
\draw[thick,  fill=brown!35]  (0,0)--(1,0)--(1,2)--(2,2)--(2,4)--(1,4)--(1,3)--(0,3)--(0,0);
%
\draw[thick, dotted] (0,1)--(1,1);
\draw[thick, dotted] (0,2)--(1,2);
\draw[thick, dotted] (0,3)--(2,3);
\draw[thick, dotted] (1,2)--(1,4);
\draw[thick, dotted] (2,3)--(2,4);
%
%
%
\node at (0.5,0.5){$\scriptstyle0 $};
\node at (0.5,1.5){$\scriptstyle1 $};
\node at (0.5,2.5){$\scriptstyle2 $};
\node at (1.5,2.5){$\scriptstyle0 $};
\node at (1.5,3.5){$\scriptstyle1 $};
\end{tikzpicture}
}
\qquad
\cdots
\end{align*}
\caption{Real cuspidal ribbons \(\zeta^{\alpha_0}; \zeta^{\alpha_0 + \alpha_1}; 
\zeta^{\delta + \alpha_0}; \zeta^{\alpha_2 + \alpha_0}; \zeta^{2 \delta + \alpha_0}; \zeta^{\delta + \alpha_0 + \alpha_1}
\)}
\label{fig:BigEx1}       
\end{figure}

At the minimum end of the preorder \(\succeq\), we have the real positive roots:
\begin{align*}
\cdots \succ 2 \delta + \alpha_1 + \alpha_2 \succ \delta+\alpha_2 \succ \delta + \alpha_1 + \alpha_2 \succ \alpha_1 \succ \alpha_1 + \alpha_2 \succ \alpha_2.
\end{align*}
Cuspidal ribbons \(\zeta^\beta\) associated to these roots appear in Figure~\ref{fig:BigEx2}. 
\begin{figure}[h]
\begin{align*}
{}
\cdots
\qquad
\hackcenter{
\begin{tikzpicture}[scale=0.4]
%
\draw[thick, fill=cyan!30] (0,0)--(3,0)--(3,2)--(4,2)--(4,5)--(3,5)--(3,3)--(2,3)--(2,1)--(0,1)--(0,0);
%
\draw[thick, dotted] (1,0)--(1,1);
\draw[thick, dotted] (2,0)--(2,1);
\draw[thick, dotted] (3,2)--(3,3);
\draw[thick, dotted] (2,1)--(3,1);
\draw[thick, dotted] (2,2)--(3,2);
\draw[thick, dotted] (3,3)--(4,3);
\draw[thick, dotted] (3,4)--(4,4);
%
%
%
\node at (0.5,0.5){$\scriptstyle1 $};
\node at (1.5,0.5){$\scriptstyle2 $};
\node at (2.5,0.5){$\scriptstyle0 $};
\node at (2.5,1.5){$\scriptstyle1 $};
\node at (2.5,2.5){$\scriptstyle2 $};
\node at (3.5,2.5){$\scriptstyle0 $};
\node at (3.5,3.5){$\scriptstyle1 $};
\node at (3.5,4.5){$\scriptstyle2 $};
\end{tikzpicture}
}
\qquad
\hackcenter{
\begin{tikzpicture}[scale=0.4]
%
\draw[thick, fill=purple!35] (2,2)--(4,2)--(4,5)--(3,5)--(3,3)--(2,3)--(2,2);
%
\draw[thick, dotted] (3,2)--(3,3);
\draw[thick, dotted] (3,3)--(4,3);
\draw[thick, dotted] (3,4)--(4,4);
%
%
%
\node at (2.5,2.5){$\scriptstyle2 $};
\node at (3.5,2.5){$\scriptstyle0 $};
\node at (3.5,3.5){$\scriptstyle1 $};
\node at (3.5,4.5){$\scriptstyle2 $};
\end{tikzpicture}
}
\qquad
\hackcenter{
\begin{tikzpicture}[scale=0.4]
%
\draw[thick, fill=orange!50] (1,2)--(4,2)--(4,5)--(3,5)--(3,3)--(1,3)--(1,2);
%
\draw[thick, dotted] (2,2)--(2,3);
\draw[thick, dotted] (3,2)--(3,3);
\draw[thick, dotted] (3,3)--(4,3);
\draw[thick, dotted] (3,4)--(4,4);
%
%
%
\node at (1.5,2.5){$\scriptstyle1 $};
\node at (2.5,2.5){$\scriptstyle2 $};
\node at (3.5,2.5){$\scriptstyle0 $};
\node at (3.5,3.5){$\scriptstyle1 $};
\node at (3.5,4.5){$\scriptstyle2 $};
\end{tikzpicture}
}
\qquad
\hackcenter{
\begin{tikzpicture}[scale=0.4]
%
\draw[thick, fill=yellow!40!orange!50]  (0,0)--(1,0)--(1,1)--(0,1)--(0,0);
%
\draw[thick, dotted] (1,0)--(1,1);
%
%
%
\node at (0.5,0.5){$\scriptstyle1 $};
\end{tikzpicture}
}
\qquad
\hackcenter{
\begin{tikzpicture}[scale=0.4]
%
\draw[thick, fill=yellow]  (0,0)--(1,0)--(1,2)--(0,2)--(0,0);
%
\draw[thick, dotted] (0,1)--(1,1);
%
%
%
\node at (0.5,0.5){$\scriptstyle1 $};
\node at (0.5,1.5){$\scriptstyle2 $};
\end{tikzpicture}
}
\qquad
\hackcenter{
\begin{tikzpicture}[scale=0.4]
%
\draw[thick, fill=blue!30]  (0,0)--(1,0)--(1,1)--(0,1)--(0,0);
%
\draw[thick, dotted] (1,0)--(1,1);
%
%
%
\node at (0.5,0.5){$\scriptstyle2 $};
\end{tikzpicture}
}
\end{align*}
\caption{Real cuspidal ribbons \(
\zeta^{2 \delta + \alpha_1 + \alpha_2}; \zeta^{ \delta+\alpha_2}; \zeta^{\delta + \alpha_1 + \alpha_2}; \zeta^{ \alpha_1}; \zeta^{\alpha_1 + \alpha_2}; \zeta^{\alpha_2}
\)}
\label{fig:BigEx2}       
\end{figure}
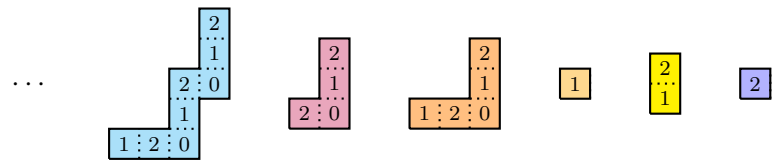

Smack in the middle of the preorder \(\succeq\) we have the imaginary roots \(m \delta\), all of which are equivalent under \(\succeq\). We have an imaginary cuspidal ribbon \(\zeta^{t}\) of content \(\delta\) associated to each \(t \in \ZZ_3\), as shown in Figure~\ref{fig:BigEx3}. 
\begin{figure}[h]
\begin{align*}
{}
\hackcenter{
\begin{tikzpicture}[scale=0.4]
%
\draw[thick, fill=lightgray!50]  (0,0)--(1,0)--(1,3)--(0,3)--(0,0);
%
\draw[thick, dotted] (0,1)--(1,1);
\draw[thick, dotted] (0,2)--(1,2);
%
%
%
\node at (0.5,0.5){$\scriptstyle0 $};
\node at (0.5,1.5){$\scriptstyle1 $};
\node at (0.5,2.5){$\scriptstyle2 $};
\end{tikzpicture}
}
\qquad
\hackcenter{
\begin{tikzpicture}[scale=0.4]
%
\draw[thick, fill=lightgray!50]  (0,0)--(3,0)--(3,1)--(0,1)--(0,0);
%
\draw[thick, dotted] (1,0)--(1,1);
\draw[thick, dotted] (2,0)--(2,1);
%
%
%
\node at (0.5,0.5){$\scriptstyle1 $};
\node at (1.5,0.5){$\scriptstyle2 $};
\node at (2.5,0.5){$\scriptstyle0 $};
\end{tikzpicture}
}
\qquad
\hackcenter{
\begin{tikzpicture}[scale=0.4]
%
\draw[thick, fill=lightgray!50] (0,0)--(2,0)--(2,2)--(1,2)--(1,1)--(0,1)--(0,0);
%
\draw[thick, dotted] (1,0)--(1,1);
\draw[thick, dotted] (2,0)--(2,1);
\draw[thick, dotted] (1,1)--(2,1);
%
%
%
\node at (0.5,0.5){$\scriptstyle2 $};
\node at (1.5,0.5){$\scriptstyle0 $};
\node at (1.5,1.5){$\scriptstyle1 $};
\end{tikzpicture}
}
\end{align*}
\caption{Imaginary cuspidal ribbons \(
\zeta^{\delta,\overline 0}; \zeta^{\delta, \overline 1}; \zeta^{\delta, \overline 2}
\)}
\label{fig:BigEx3}       
\end{figure}
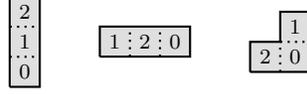

If \(\beta \in \Phi_+^\re\), then the unique semicuspidal skew shape of content \(m \beta\) has \(m\) connected components, each of which is identical to the ribbon \(\zeta^\beta\), per Theorem~\ref{allsemicusp}. On the other hand, semicuspidal connected skew shapes of content \(m \delta\) are in bijection with \(\ZZ_3 \times \SSSS_{\tt c}(m)\) via the dilation map, per Proposition~\ref{conissemicusp}. Roughly speaking, for \(t \in \ZZ_3\), one `\(t\)-dilates' the skew shape \(\lambda \in \SSSS_{\tt c}(m)\) by replacing every node in \(\lambda\) with the cuspidal ribbon \(\zeta^{t}\), see \S\ref{infldef}. In Figure~\ref{fig:Infl} an example of this dilation process is depicted.
\begin{figure}[h]
\begin{align*}
{}
\hackcenter{
\begin{tikzpicture}[scale=0.4]
%
\draw[thick, fill=lightgray!50]  (0,0)--(2,0)--(2,1)--(3,1)--(3,2)--(4,2)--(4,3)--(0,3)--(0,0);
%
\draw[thick, dotted] (0,1)--(2,1);
\draw[thick, dotted] (0,2)--(3,2);
\draw[thick, dotted] (1,0)--(1,3);
\draw[thick, dotted] (2,0)--(2,3);
\draw[thick, dotted] (3,1)--(3,3);
%
%
%
\end{tikzpicture}
}
\;\;\;\;\;
\xrightarrow{\;\;\infl_{\overline 2}\;\;}
\;
\hackcenter{
\begin{tikzpicture}[scale=0.4]
%
\pgfmathsetmacro{\ex}{0}
\pgfmathsetmacro{\ey}{0}
\draw[thick, fill=lightgray!50]  (\ex,\ey)--(\ex+2,\ey)--(\ex+2,\ey+2)--(\ex+1,\ey+2)--(\ex+1,\ey+1)--(\ex,\ey+1)--(\ex,\ey);
\draw[thick, dotted] (\ex+1,\ey)--(\ex+1,\ey+1);
\draw[thick, dotted] (\ex+1,\ey+1)--(\ex+2,\ey+1);
\node at (\ex+0.5,\ey+0.5){$\scriptstyle2 $};
\node at (\ex+1.5,\ey+0.5){$\scriptstyle0 $};
\node at (\ex+1.5,\ey+1.5){$\scriptstyle1 $};
\pgfmathsetmacro{\ex}{2}
\pgfmathsetmacro{\ey}{1}
\draw[thick, fill=lightgray!50]  (\ex,\ey)--(\ex+2,\ey)--(\ex+2,\ey+2)--(\ex+1,\ey+2)--(\ex+1,\ey+1)--(\ex,\ey+1)--(\ex,\ey);
\draw[thick, dotted] (\ex+1,\ey)--(\ex+1,\ey+1);
\draw[thick, dotted] (\ex+1,\ey+1)--(\ex+2,\ey+1);
\node at (\ex+0.5,\ey+0.5){$\scriptstyle2 $};
\node at (\ex+1.5,\ey+0.5){$\scriptstyle0 $};
\node at (\ex+1.5,\ey+1.5){$\scriptstyle1 $};
\pgfmathsetmacro{\ex}{1}
\pgfmathsetmacro{\ey}{2}
\draw[thick, fill=lightgray!50]  (\ex,\ey)--(\ex+2,\ey)--(\ex+2,\ey+2)--(\ex+1,\ey+2)--(\ex+1,\ey+1)--(\ex,\ey+1)--(\ex,\ey);
\draw[thick, dotted] (\ex+1,\ey)--(\ex+1,\ey+1);
\draw[thick, dotted] (\ex+1,\ey+1)--(\ex+2,\ey+1);
\node at (\ex+0.5,\ey+0.5){$\scriptstyle2 $};
\node at (\ex+1.5,\ey+0.5){$\scriptstyle0 $};
\node at (\ex+1.5,\ey+1.5){$\scriptstyle1 $};
\pgfmathsetmacro{\ex}{2}
\pgfmathsetmacro{\ey}{4}
\draw[thick, fill=lightgray!50]  (\ex,\ey)--(\ex+2,\ey)--(\ex+2,\ey+2)--(\ex+1,\ey+2)--(\ex+1,\ey+1)--(\ex,\ey+1)--(\ex,\ey);
\draw[thick, dotted] (\ex+1,\ey)--(\ex+1,\ey+1);
\draw[thick, dotted] (\ex+1,\ey+1)--(\ex+2,\ey+1);
\node at (\ex+0.5,\ey+0.5){$\scriptstyle2 $};
\node at (\ex+1.5,\ey+0.5){$\scriptstyle0 $};
\node at (\ex+1.5,\ey+1.5){$\scriptstyle1 $};
\pgfmathsetmacro{\ex}{3}
\pgfmathsetmacro{\ey}{3}
\draw[thick, fill=lightgray!50]  (\ex,\ey)--(\ex+2,\ey)--(\ex+2,\ey+2)--(\ex+1,\ey+2)--(\ex+1,\ey+1)--(\ex,\ey+1)--(\ex,\ey);
\draw[thick, dotted] (\ex+1,\ey)--(\ex+1,\ey+1);
\draw[thick, dotted] (\ex+1,\ey+1)--(\ex+2,\ey+1);
\node at (\ex+0.5,\ey+0.5){$\scriptstyle2 $};
\node at (\ex+1.5,\ey+0.5){$\scriptstyle0 $};
\node at (\ex+1.5,\ey+1.5){$\scriptstyle1 $};
\pgfmathsetmacro{\ex}{4}
\pgfmathsetmacro{\ey}{5}
\draw[thick, fill=lightgray!50]  (\ex,\ey)--(\ex+2,\ey)--(\ex+2,\ey+2)--(\ex+1,\ey+2)--(\ex+1,\ey+1)--(\ex,\ey+1)--(\ex,\ey);
\draw[thick, dotted] (\ex+1,\ey)--(\ex+1,\ey+1);
\draw[thick, dotted] (\ex+1,\ey+1)--(\ex+2,\ey+1);
\node at (\ex+0.5,\ey+0.5){$\scriptstyle2 $};
\node at (\ex+1.5,\ey+0.5){$\scriptstyle0 $};
\node at (\ex+1.5,\ey+1.5){$\scriptstyle1 $};
\pgfmathsetmacro{\ex}{5}
\pgfmathsetmacro{\ey}{4}
\draw[thick, fill=lightgray!50]  (\ex,\ey)--(\ex+2,\ey)--(\ex+2,\ey+2)--(\ex+1,\ey+2)--(\ex+1,\ey+1)--(\ex,\ey+1)--(\ex,\ey);
\draw[thick, dotted] (\ex+1,\ey)--(\ex+1,\ey+1);
\draw[thick, dotted] (\ex+1,\ey+1)--(\ex+2,\ey+1);
\node at (\ex+0.5,\ey+0.5){$\scriptstyle2 $};
\node at (\ex+1.5,\ey+0.5){$\scriptstyle0 $};
\node at (\ex+1.5,\ey+1.5){$\scriptstyle1 $};
\pgfmathsetmacro{\ex}{6}
\pgfmathsetmacro{\ey}{6}
\draw[thick, fill=lightgray!50]  (\ex,\ey)--(\ex+2,\ey)--(\ex+2,\ey+2)--(\ex+1,\ey+2)--(\ex+1,\ey+1)--(\ex,\ey+1)--(\ex,\ey);
\draw[thick, dotted] (\ex+1,\ey)--(\ex+1,\ey+1);
\draw[thick, dotted] (\ex+1,\ey+1)--(\ex+2,\ey+1);
\node at (\ex+0.5,\ey+0.5){$\scriptstyle2 $};
\node at (\ex+1.5,\ey+0.5){$\scriptstyle0 $};
\node at (\ex+1.5,\ey+1.5){$\scriptstyle1 $};
\pgfmathsetmacro{\ex}{8}
\pgfmathsetmacro{\ey}{7}
\draw[thick, fill=lightgray!50]  (\ex,\ey)--(\ex+2,\ey)--(\ex+2,\ey+2)--(\ex+1,\ey+2)--(\ex+1,\ey+1)--(\ex,\ey+1)--(\ex,\ey);
\draw[thick, dotted] (\ex+1,\ey)--(\ex+1,\ey+1);
\draw[thick, dotted] (\ex+1,\ey+1)--(\ex+2,\ey+1);
\node at (\ex+0.5,\ey+0.5){$\scriptstyle2 $};
\node at (\ex+1.5,\ey+0.5){$\scriptstyle0 $};
\node at (\ex+1.5,\ey+1.5){$\scriptstyle1 $};
%
%
%
%
\end{tikzpicture}
}
\end{align*}
\caption{\(\overline 2\)-dilation of \(\lambda \in \SSSS_{\tt c}(9)\) into a semicuspidal skew shape \(\infl_{\overline 2}(\lambda)\) of content \(9\delta\)}
\label{fig:Infl}       
\end{figure}

By Theorem~\ref{mainthmcusp}, every skew shape has a unique cuspidal Kostant tiling. In Figure~\ref{fig:Young3ways}, we take \(\tau_0, \tau_1, \tau_2\) to be the Young diagrams associated with the partition \((6,5^4,2^2,1)\) and {\em charges} \(0,1,2\) respectively (see \S\ref{otherformsec}).
The unique cuspidal Kostant tiling is shown for each. Per Theorem~\ref{mainthmscKost}, the unique semicuspidal strict Kostant tiling is achieved by choosing tiles to be unions of same-content cuspidal tiles.

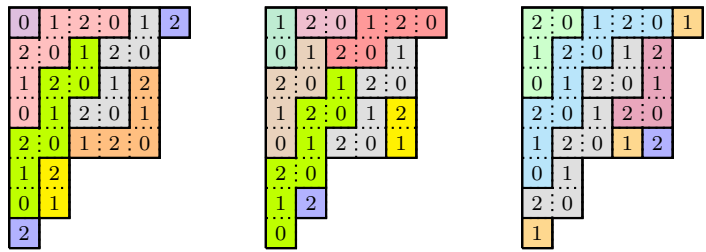
\begin{figure}[h]
\begin{align*}
{}
\hackcenter{
\begin{tikzpicture}[scale=0.4]
%
\draw[thick, fill=lightgray!50]  (0,0)--(1,0)--(1,1)--(2,1)--(2,3)--(5,3)--(5,7)--(6,7)--(6,8)--(0,8)--(0,0);
\draw[thick,  fill=blue!30]  (0,0)--(1,0)--(1,1)--(0,1)--(0,0);
\draw[thick, fill=blue!30]  (5,7)--(6,7)--(6,8)--(5,8)--(5,7);
\draw[thick, fill=lime]  (0,1)--(1,1)--(1,3)--(2,3)--(2,5)--(3,5)--(3,7)--(2,7)--(2,6)--(1,6)--(1,4)--(0,4)--(0,1);
\draw[thick, fill=yellow]  (1,1)--(2,1)--(2,3)--(1,3)--(1,1);
\draw[thick, fill=pink]  (0,4)--(1,4)--(1,6)--(2,6)--(2,7)--(4,7)--(4,8)--(1,8)--(1,7)--(0,7)--(0,4);
\draw[thick, fill=violet!25] (0,7)--(1,7)--(1,8)--(0,8)--(0,7);
\draw[thick, fill=orange!50] (2,3)--(5,3)--(5,6)--(4,6)--(4,4)--(2,4)--(2,3);
\draw[thick, fill=lightgray!50] (2,4)--(4,4)--(4,6)--(3,6)--(3,5)--(2,5)--(2,4);
\draw[thick, fill=lightgray!50] (3,6)--(5,6)--(5,8)--(4,8)--(4,7)--(3,7)--(3,6);
\draw[thick]  (0,0)--(1,0)--(1,1)--(2,1)--(2,3)--(5,3)--(5,7)--(6,7)--(6,8)--(0,8)--(0,0);
%
\draw[thick, dotted] (0,1)--(1,1);
\draw[thick, dotted] (0,2)--(2,2);
\draw[thick, dotted] (0,3)--(2,3);
\draw[thick, dotted] (0,4)--(5,4);
\draw[thick, dotted] (0,5)--(5,5);
\draw[thick, dotted] (0,6)--(5,6);
\draw[thick, dotted] (0,7)--(5,7);
\draw[thick, dotted] (1,0)--(1,8);
\draw[thick, dotted] (2,1)--(2,8);
\draw[thick, dotted] (3,3)--(3,8);
\draw[thick, dotted] (4,3)--(4,8);
\draw[thick, dotted] (5,3)--(5,8);
\node at (0.5,7.5){$\scriptstyle 0$};
\node at (1.5,7.5){$\scriptstyle 1$};
\node at (2.5,7.5){$\scriptstyle 2$};
\node at (3.5,7.5){$\scriptstyle 0$};
\node at (4.5,7.5){$\scriptstyle 1$};
\node at (5.5,7.5){$\scriptstyle 2$};
\node at (0.5,6.5){$\scriptstyle 2$};
\node at (1.5,6.5){$\scriptstyle 0$};
\node at (2.5,6.5){$\scriptstyle 1$};
\node at (3.5,6.5){$\scriptstyle 2$};
\node at (4.5,6.5){$\scriptstyle 0$};
\node at (0.5,5.5){$\scriptstyle 1$};
\node at (1.5,5.5){$\scriptstyle 2$};
\node at (2.5,5.5){$\scriptstyle 0$};
\node at (3.5,5.5){$\scriptstyle 1$};
\node at (4.5,5.5){$\scriptstyle 2$};
\node at (0.5,4.5){$\scriptstyle 0$};
\node at (1.5,4.5){$\scriptstyle 1$};
\node at (2.5,4.5){$\scriptstyle 2$};
\node at (3.5,4.5){$\scriptstyle 0$};
\node at (4.5,4.5){$\scriptstyle 1$};
\node at (0.5,3.5){$\scriptstyle 2$};
\node at (1.5,3.5){$\scriptstyle 0$};
\node at (2.5,3.5){$\scriptstyle 1$};
\node at (3.5,3.5){$\scriptstyle 2$};
\node at (4.5,3.5){$\scriptstyle 0$};
\node at (0.5,2.5){$\scriptstyle 1$};
\node at (1.5,2.5){$\scriptstyle 2$};
\node at (0.5,1.5){$\scriptstyle 0$};
\node at (1.5,1.5){$\scriptstyle 1$};
\node at (0.5,0.5){$\scriptstyle 2$};
\end{tikzpicture}
}
\qquad
\;\;
\hackcenter{
\begin{tikzpicture}[scale=0.4]
%
\draw[thick, fill=red!40]  (0,0)--(1,0)--(1,1)--(2,1)--(2,3)--(5,3)--(5,7)--(6,7)--(6,8)--(0,8)--(0,0);
\draw[thick, fill=lime]  (0,0)--(1,0)--(1,2)--(2,2)--(2,4)--(3,4)--(3,6)--(2,6)--(2,5)--(1,5)--(1,3)--(0,3)--(0,0);
\draw[thick, fill=blue!30] (1,1)--(2,1)--(2,2)--(1,2)--(1,1);
\draw[thick, fill=lightgray!50] (2,3)--(4,3)--(4,5)--(3,5)--(3,4)--(2,4)--(2,3);
\draw[thick, fill=lightgray!50] (3,5)--(5,5)--(5,7)--(4,7)--(4,6)--(3,6)--(3,5);
\draw[thick, fill=yellow] (4,3)--(5,3)--(5,5)--(4,5)--(4,3);
\draw[thick,  fill=brown!35] (0,3)--(1,3)--(1,5)--(2,5)--(2,7)--(1,7)--(1,6)--(0,6)--(0,3);
\draw[thick, fill=blue!30!green!25] (0,6)--(1,6)--(1,8)--(0,8)--(0,6);
\draw[thick, fill=purple!25] (1,7)--(3,7)--(3,8)--(1,8)--(1,7);
\draw[thick]  (0,0)--(1,0)--(1,1)--(2,1)--(2,3)--(5,3)--(5,7)--(6,7)--(6,8)--(0,8)--(0,0);
%
\draw[thick, dotted] (0,1)--(1,1);
\draw[thick, dotted] (0,2)--(2,2);
\draw[thick, dotted] (0,3)--(2,3);
\draw[thick, dotted] (0,4)--(5,4);
\draw[thick, dotted] (0,5)--(5,5);
\draw[thick, dotted] (0,6)--(5,6);
\draw[thick, dotted] (0,7)--(5,7);
\draw[thick, dotted] (1,0)--(1,8);
\draw[thick, dotted] (2,1)--(2,8);
\draw[thick, dotted] (3,3)--(3,8);
\draw[thick, dotted] (4,3)--(4,8);
\draw[thick, dotted] (5,3)--(5,8);
\node at (0.5,7.5){$\scriptstyle 1$};
\node at (1.5,7.5){$\scriptstyle 2$};
\node at (2.5,7.5){$\scriptstyle 0$};
\node at (3.5,7.5){$\scriptstyle 1$};
\node at (4.5,7.5){$\scriptstyle 2$};
\node at (5.5,7.5){$\scriptstyle 0$};
\node at (0.5,6.5){$\scriptstyle 0$};
\node at (1.5,6.5){$\scriptstyle 1$};
\node at (2.5,6.5){$\scriptstyle 2$};
\node at (3.5,6.5){$\scriptstyle 0$};
\node at (4.5,6.5){$\scriptstyle 1$};
\node at (0.5,5.5){$\scriptstyle 2$};
\node at (1.5,5.5){$\scriptstyle 0$};
\node at (2.5,5.5){$\scriptstyle 1$};
\node at (3.5,5.5){$\scriptstyle 2$};
\node at (4.5,5.5){$\scriptstyle 0$};
\node at (0.5,4.5){$\scriptstyle 1$};
\node at (1.5,4.5){$\scriptstyle 2$};
\node at (2.5,4.5){$\scriptstyle 0$};
\node at (3.5,4.5){$\scriptstyle 1$};
\node at (4.5,4.5){$\scriptstyle 2$};
\node at (0.5,3.5){$\scriptstyle 0$};
\node at (1.5,3.5){$\scriptstyle 1$};
\node at (2.5,3.5){$\scriptstyle 2$};
\node at (3.5,3.5){$\scriptstyle 0$};
\node at (4.5,3.5){$\scriptstyle 1$};
\node at (0.5,2.5){$\scriptstyle 2$};
\node at (1.5,2.5){$\scriptstyle 0$};
\node at (0.5,1.5){$\scriptstyle 1$};
\node at (1.5,1.5){$\scriptstyle 2$};
\node at (0.5,0.5){$\scriptstyle 0$};
\end{tikzpicture}
}
\qquad
\;\;
\hackcenter{
\begin{tikzpicture}[scale=0.4]
%
\draw[thick,fill=cyan!25]  (0,0)--(1,0)--(1,1)--(2,1)--(2,3)--(5,3)--(5,7)--(6,7)--(6,8)--(0,8)--(0,0);
\draw[thick, fill=lightgray!50] (0,1)--(2,1)--(2,3)--(1,3)--(1,2)--(0,2)--(0,1);
\draw[thick, fill=lightgray!50] (1,3)--(3,3)--(3,5)--(2,5)--(2,4)--(1,4)--(1,3);
\draw[thick, fill=lightgray!50] (2,5)--(4,5)--(4,7)--(3,7)--(3,6)--(2,6)--(2,5);
\draw[thick, fill=green!20] (0,5)--(1,5)--(1,7)--(2,7)--(2,8)--(0,8)--(0,5);
\draw[thick, fill=purple!35] (3,4)--(5,4)--(5,7)--(4,7)--(4,5)--(3,5)--(3,4);
\draw[thick, fill=yellow!40!orange!50] (0,0)--(1,0)--(1,1)--(0,1)--(0,0);
\draw[thick,  fill=yellow!40!orange!50] (5,7)--(6,7)--(6,8)--(5,8)--(5,7);
\draw[thick,  fill=yellow!40!orange!50] (3,3)--(4,3)--(4,4)--(3,4)--(3,3);
\draw[thick, fill=blue!30] (4,3)--(5,3)--(5,4)--(4,4)--(4,3);
\draw[thick]  (0,0)--(1,0)--(1,1)--(2,1)--(2,3)--(5,3)--(5,7)--(6,7)--(6,8)--(0,8)--(0,0);
%
\draw[thick, dotted] (0,1)--(1,1);
\draw[thick, dotted] (0,2)--(2,2);
\draw[thick, dotted] (0,3)--(2,3);
\draw[thick, dotted] (0,4)--(5,4);
\draw[thick, dotted] (0,5)--(5,5);
\draw[thick, dotted] (0,6)--(5,6);
\draw[thick, dotted] (0,7)--(5,7);
\draw[thick, dotted] (1,0)--(1,8);
\draw[thick, dotted] (2,1)--(2,8);
\draw[thick, dotted] (3,3)--(3,8);
\draw[thick, dotted] (4,3)--(4,8);
\draw[thick, dotted] (5,3)--(5,8);
\node at (0.5,7.5){$\scriptstyle 2$};
\node at (1.5,7.5){$\scriptstyle 0$};
\node at (2.5,7.5){$\scriptstyle 1$};
\node at (3.5,7.5){$\scriptstyle 2$};
\node at (4.5,7.5){$\scriptstyle 0$};
\node at (5.5,7.5){$\scriptstyle 1$};
\node at (0.5,6.5){$\scriptstyle 1$};
\node at (1.5,6.5){$\scriptstyle 2$};
\node at (2.5,6.5){$\scriptstyle 0$};
\node at (3.5,6.5){$\scriptstyle 1$};
\node at (4.5,6.5){$\scriptstyle 2$};
\node at (0.5,5.5){$\scriptstyle 0$};
\node at (1.5,5.5){$\scriptstyle 1$};
\node at (2.5,5.5){$\scriptstyle 2$};
\node at (3.5,5.5){$\scriptstyle 0$};
\node at (4.5,5.5){$\scriptstyle 1$};
\node at (0.5,4.5){$\scriptstyle 2$};
\node at (1.5,4.5){$\scriptstyle 0$};
\node at (2.5,4.5){$\scriptstyle 1$};
\node at (3.5,4.5){$\scriptstyle 2$};
\node at (4.5,4.5){$\scriptstyle 0$};
\node at (0.5,3.5){$\scriptstyle 1$};
\node at (1.5,3.5){$\scriptstyle 2$};
\node at (2.5,3.5){$\scriptstyle 0$};
\node at (3.5,3.5){$\scriptstyle 1$};
\node at (4.5,3.5){$\scriptstyle 2$};
\node at (0.5,2.5){$\scriptstyle 0$};
\node at (1.5,2.5){$\scriptstyle 1$};
\node at (0.5,1.5){$\scriptstyle 2$};
\node at (1.5,1.5){$\scriptstyle 0$};
\node at (0.5,0.5){$\scriptstyle 1$};
\end{tikzpicture}
}
\end{align*}
\caption{Cuspidal Kostant tilings for \(\tau_0, \tau_1, \tau_2\)}
\label{fig:Young3ways}       
\end{figure}
The Kostant partitions associated with these tilings are:
\begin{align*}
\bkap^{\tau_0} &= (\alpha_0 \mid 2\delta+\alpha_0 \mid 2\delta + \alpha_0 + \alpha_1 \mid \delta^2 \mid \delta + \alpha_1 + \alpha_2 \mid \alpha_1 + \alpha_2 \mid \alpha_2^2);\\
\bkap^{\tau_1} &= (\alpha_0 + \alpha_1 \mid \alpha_2 + \alpha_0 \mid \delta + \alpha_0 + \alpha_1 \mid \delta + \alpha_2 + \alpha_0 \mid 2\delta + \alpha_0 + \alpha_1 \mid  \delta^2 \mid \alpha_1 + \alpha_2 \mid \alpha_2);\\
\bkap^{\tau_2} &= (\delta + \alpha_0 \mid 3\delta + \alpha_0 \mid  \delta^3 \mid \delta + \alpha_2 \mid \alpha_1^3 \mid \alpha_2).
\end{align*}
By Theorem~\ref{mainthmcusp}, these Kostant partitions are bilexicographically maximal amongst all Kostant partitions induced by Kostant tilings for \(\tau_0, \tau_1, \tau_2\), respectively. 
\end{example}

\section{Ribbons and skew shapes}\label{skewshapesec}
We introduce the main combinatorial objects in this section. Many results in this section are well known, but we include full proofs for clarity, as some of our terminology and setup is new. 
We denote the set of integers by \(\Z\), the natural numbers \(\NN = \{1,2,\ldots,\}\), and use the shorthand \([a,b] = \{a, a+1, \ldots, b\}\) for all \(a\leq b \in \Z\).

\subsection{Nodes}
Let \(\N = \Z \times \Z\). We refer to elements of \(\N\) as {\em nodes}. By convention, we visually represent nodes as boxes in a \(\Z \times \Z\) `matrix', so that the node \((i,j)\) is a box in the \(i\)th row and \(j\)th column of the array. In this orientation, positive increase in the first component corresponds to a southward move in the array, and positive increase in the second component corresponds to an eastward move in the array. For \(u \in \N\), we will write \(u_1, u_2\) for the components of \(u\), so that \(u = (u_1, u_2)\). We treat \(\N\) as a \(\ZZ\)-module, so that
\begin{align*}
n(u_1, u_2) = (nu_1, nu_2), \qquad \textup{ and } \qquad (u_1,u_2) +(v_1, v_2) = (u_1 + v_1, u_2 + v_2).
\end{align*}

\subsection{Translation}
For \(c \in \N\), we define the {\em translation by \(c\)} map \(\T_c:\N \to \N\) by \(\T_cu = 
u+ c\) for all \(u \in \N\). If \(\tau \subset \N\), we define \(\T_c \tau = \{\T_c u \mid u \in \tau\}\).
We define the special single-unit north, east, south and west translations \(\no, \ea, \so, \we\) by setting 
\begin{align*}
\no := \T_{(-1,0)},\qquad \ea := \T_{(0,1)},\qquad \so := \T_{(1,0)},\qquad \we  := \T_{(0,-1)}.
\end{align*}

\subsection{Relations between nodes}
We define transitive relations \(\searrow, \SEarrow, \nearrow, \NEarrow\) on \(\N\) by
\begin{align*}
u \searrow v &\iff v = \so^k \ea^\ell u \textup{ for some }k,\ell \in \ZZ_{\geq 0}.\\
u \SEarrow v &\iff v = \so^k \ea^\ell u \textup{ for some }k,\ell \in \ZZ_{> 0}.\\
u \nearrow v &\iff v = \no^k \ea^\ell u \textup{ for some }k,\ell \in \ZZ_{\geq 0}.\\
u \NEarrow v &\iff v = \no^k \ea^\ell u \textup{ for some }k,\ell \in \ZZ_{> 0}.
\end{align*}
We note that \(\searrow, \nearrow\) are in fact partial orders on \(\N\), with \(\searrow\) being the product partial order induced by \(\ZZ\) on \(\N\). 
We extend \(\SEarrow, \NEarrow\) to subsets \(\tau, \nu \subset \N\), writing \(\tau \NEarrow \nu\) provided \(u \NEarrow v\) for all \(u \in \tau, v \in \nu\). 
For \(u \in \tau\subset \N\), we say \(u \in \tau\) is {\em maximally northeast} (resp. {\em maximally southwest}) {\em in} \(\tau\) provided that does not exist any \(u \neq v \in \tau\) such that \(u \nearrow v\) (resp. \(v \nearrow u\)).

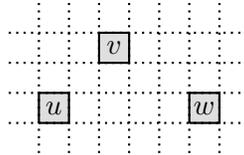
\begin{figure}[h]
\begin{align*}
{}
\hackcenter{
\begin{tikzpicture}[scale=0.4]
%
\draw[thick, fill = lightgray!50] (0,0)--(1,0)--(1,1)--(0,1)--(0,0);
\draw[thick, fill = lightgray!50] (2,2)--(3,2)--(3,3)--(2,3)--(2,2);
\draw[thick, fill = lightgray!50] (5,0)--(6,0)--(6,1)--(5,1)--(5,0);
%
\draw[thick, dotted] (-1,0)--(7,0);
\draw[thick, dotted] (-1,1)--(7,1);
\draw[thick, dotted] (-1,2)--(7,2);
\draw[thick, dotted] (-1,3)--(7,3);
\draw[thick, dotted] (0,-1)--(0,4);
\draw[thick, dotted] (1,-1)--(1,4);
\draw[thick, dotted] (2,-1)--(2,4);
\draw[thick, dotted] (3,-1)--(3,4);
\draw[thick, dotted] (4,-1)--(4,4);
\draw[thick, dotted] (5,-1)--(5,4);
\draw[thick, dotted] (6,-1)--(6,4);
\node at (0.5,0.5){$u$};
\node at (2.5,2.5){$v$};
\node at (5.5,0.5){$w$};
\end{tikzpicture}
}
\end{align*}
\caption{Nodes \(u,v,w\), satisfying \(u  \nearrow v\), \(v \searrow w\), \(u \nearrow w\), \(u \searrow w\)}
\label{fig:nodes}       
\end{figure}

For \(u,v \in \N\), a {\em path from \(u\) to \(v\)} is a sequence of nodes \((z^i)_{i=1}^k\) such that \(z^1 = u\), \(z^k = v\), and such that \(z^{i+1} \in \{\no(z^i), \ea(z^i), \so(z^i), \we(z^i)\}\) for \(i \in [1,k-1]\). We say it is moreover a \(\no/\ea\) (resp. \(\so/\ea\)) path if \(z^{i+1} \in \{\no(z^i), \ea(z^i)\}\) (resp. \(z^{i+1} \in \{\so(z^i), \ea(z^i)\}\)) for all \(i \in [1,k-1]\). Defining \(\dist(u,v) = |v_1 + v_2 - u_1 -u_2|\), we have that \(\dist(u,v)+1\) is the length of the shortest path from \(u\) to \(v\), and is thus the length of any \(\no/\ea\) or \(\so/\ea\) path that connects \(u\) and \(v\).

For \(u \in \N\), set \(\diag(u) = u_2 - u_1 \in \ZZ\), and define the {\em \(n\)th diagonal} in \(\N\) to be the set
\begin{align*}
\D_n &= \{u \in \N \mid \diag(u) = n\}\\
&= \{(\so \ea)^k(0,n) \mid k \in \ZZ_{\geq 0}\} \cup \{(\no \we)^k(0,n) \mid k \in \ZZ_{>0}\}.
\end{align*}

\subsection{Skew shapes}
Now we define the main combinatorial object of study in this paper.

\begin{definition}\label{skewdef}
We say a finite subset \(\tau\) of \(\N\) is:
\begin{enumerate}
\item a {\em skew shape} provided that for all \(u,w \in \tau\), \(v \in \N\), \(u \searrow v \searrow w\) implies \(v \in \tau\).
\item {\em thin} if \(|\D_n \cap \tau| \leq 1\) for all \(n \in \N\).
\item {\em connected} if for all \(u,v \in \tau\), there exists a path from \(u\) to \(v\) contained in \(\tau\).
\item a {\em ribbon} provided that \(\tau\) is a nonempty thin connected skew shape.
\item {\em cornered} if there exists at most one \(u \in \tau\) such that \(\so u, \we u \notin \tau\) and at most one \(v \in \tau\) such that \(\no v, \ea v \notin \tau\).
\item {\em diagonal-convex} provided that for all \(n \in \NN\), \(u,w \in \D_n \cap \tau\), and \(v \in \D_n\), \(u \searrow v \searrow w\) implies \(v \in \tau\).
\item a {\em Young diagram} provided that \(\tau = \varnothing\) or \(\tau\) is a skew shape containing a node \(\tau_{\nnww}\) such that \(\tau_{\nnww} \searrow u\) for all \(u \in \tau\).
\end{enumerate}
We write \(\SSSS\) (resp. \(\SSSS_{\tt c}\)) for the set of all skew shapes (resp. nonempty connected skew shapes), and \(\SSSS(n)\) (resp. \(\SSSS_{\tt c}(n)\)) for the subset of those of cardinality \(n\).
\end{definition}

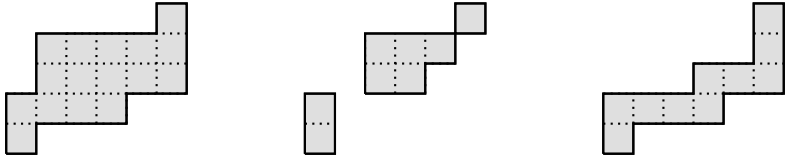
\begin{figure}[h]
\begin{align*}
{}
\hackcenter{
\begin{tikzpicture}[scale=0.4]
%
\draw[thick, fill=lightgray!50]  (0,-1)--(1,-1)--(1,0)--(4,0)--(4,1)--(6,1)--(6,4)--(5,4)--(5,3)--(1,3)--(1,2)--(1,1)--(0,1)--(0,-1);
%
\draw[thick, dotted] (1,0)--(1,1);
\draw[thick, dotted] (2,0)--(2,3);
\draw[thick, dotted] (3,0)--(3,3);
\draw[thick, dotted] (4,0)--(4,3);
\draw[thick, dotted] (5,1)--(5,3);
\draw[thick, dotted] (0,0)--(4,0);
\draw[thick, dotted] (0,1)--(6,1);
\draw[thick, dotted] (1,2)--(6,2);
\draw[thick, dotted] (2,3)--(6,3);
\draw[thick]  (0,-1)--(1,-1)--(1,0)--(4,0)--(4,1)--(6,1)--(6,4)--(5,4)--(5,3)--(1,3)--(1,2)--(1,1)--(0,1)--(0,-1);
%
\end{tikzpicture}
}
\qquad
\qquad
\hackcenter{
\begin{tikzpicture}[scale=0.4]
%
\draw[thick, fill=lightgray!50]  (0,-1)--(1,-1)--(1,1)--(0,1)--(0,-1);
\draw[thick, fill=lightgray!50] (2,1)--(4,1)--(4,2)--(5,2)--(5,3)--(2,3)--(2,1);
\draw[thick, fill=lightgray!50] (5,3)--(6,3)--(6,4)--(5,4)--(5,3);
%
\draw[thick, dotted] (2,1)--(2,3);
\draw[thick, dotted] (3,1)--(3,3);
\draw[thick, dotted] (4,1)--(4,3);
\draw[thick, dotted] (0,0)--(1,0);
\draw[thick, dotted] (2,2)--(5,2);
\draw[thick]  (0,-1)--(1,-1)--(1,1)--(0,1)--(0,-1);
\draw[thick] (2,1)--(4,1)--(4,2)--(5,2)--(5,3)--(2,3)--(2,1);
\draw[thick] (5,3)--(6,3)--(6,4)--(5,4)--(5,3);
%
\end{tikzpicture}
}
\qquad
\qquad
\hackcenter{
\begin{tikzpicture}[scale=0.4]
%
\draw[thick, fill=lightgray!50]  (0,-1)--(1,-1)--(1,0)--(4,0)--(4,1)--(6,1)--(6,4)--(5,4)--(5,2)--(3,2)--(3,1)--(1,1)--(0,1)--(0,-1);
%
\draw[thick, dotted] (1,0)--(1,1);
\draw[thick, dotted] (2,0)--(2,1);
\draw[thick, dotted] (3,0)--(3,1);
\draw[thick, dotted] (4,1)--(4,2);
\draw[thick, dotted] (5,1)--(5,2);
\draw[thick, dotted] (0,0)--(4,0);
\draw[thick, dotted] (0,1)--(6,1);
\draw[thick, dotted] (3,2)--(6,2);
\draw[thick, dotted] (5,3)--(6,3);
\draw[thick]  (0,-1)--(1,-1)--(1,0)--(4,0)--(4,1)--(6,1)--(6,4)--(5,4)--(5,2)--(3,2)--(3,1)--(1,1)--(0,1)--(0,-1);
%
\end{tikzpicture}
}
\end{align*}
\caption{Connected skew shape; disconnected skew shape; ribbon}
\label{fig:examples}       
\end{figure}

\begin{remark}\label{YDrem1}
Ribbons are also variously called {\em skew hooks}, {\em rim hooks} or {\em edge hooks} in the literature. Every skew shape can be realized as a set difference \(\lambda/ \mu := \lambda \backslash \mu\) of Young diagrams, and, in such context, are often called {\em skew Young diagrams}. We discuss this connection further in \S\ref{otherformsec}.
\end{remark}

\subsection{Results on skew shapes}
Now we establish some preliminary lemmas relating the terms in Definition~\ref{skewdef}.

\begin{lemma}\label{skew:cornered=connected}
Let \(\tau\) be a skew shape. Then \(\tau\) is cornered if and only if \(\tau\) is connected.
\end{lemma}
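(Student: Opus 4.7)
My plan is to prove the two implications separately, tackling ``cornered \(\Rightarrow\) connected'' by contrapositive and the converse by contradiction. Throughout I refer to a node \(u\) with \(\so u, \we u \notin \tau\) as an \emph{SW corner} and one with \(\no v, \ea v \notin \tau\) as an \emph{NE corner}.

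First I would handle ``cornered \(\Rightarrow\) connected'' via its contrapositive: assuming \(\tau\) is disconnected, I decompose \(\tau\) into its connected components. A key auxiliary fact I would verify is that each component is itself a skew shape---indeed, if \(u \searrow v\) with \(u, v \in \tau\), then the skew-shape hull property forces the entire axis-aligned rectangle with corners \(u, v\) into \(\tau\), which supplies a \(\so/\ea\)-path showing that \(u\) and \(v\) lie in the same component. Next I would observe that every nonempty skew shape has at least one SW corner: picking a node whose first coordinate is maximal in \(\tau\) and, among those, whose second coordinate is minimal makes both \(\so u\) and \(\we u\) fall outside \(\tau\). Finally, an SW corner of a component \(C\) is automatically an SW corner of \(\tau\), since any single-step neighbor lying in \(\tau\) would lie in \(C\). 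Two or more components thus produce two or more SW corners, contradicting cornered-ness.

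For the converse, I assume \(\tau\) is connected and suppose for contradiction that there are two distinct SW corners \(u \neq u'\). I would first show that \(u\) and \(u'\) are \(\searrow\)-incomparable: if, say, \(u \searrow u'\) with \(u_1 < u'_1\), then \(u \searrow \so u \searrow u'\) forces \(\so u \in \tau\), contradicting the SW-corner condition at \(u\); the other orientations are analogous. So I may assume \(u_1 < u'_1\) and \(u_2 > u'_2\). Using connectedness I pick a path \(u = z^0, z^1, \ldots, z^k = u'\) in \(\tau\); since the second coordinate strictly drops from \(u_2\) to \(u'_2\), I take the first index \(i^*\) where it dips below \(u_2\), obtaining adjacent nodes \(z^{i^* - 1} = (r, u_2)\) and \(z^{i^*} = (r, u_2 - 1)\), both in \(\tau\). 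The intersection of column \(u_2\) with \(\tau\) is an interval (immediate from the skew-shape property), and since \(\so u \notin \tau\) this interval terminates at row \(u_1\), whence \(r \leq u_1\). Applying the hull property to \((r, u_2 - 1) \searrow (u_1, u_2 - 1) \searrow u\) then forces \(\we u = (u_1, u_2 - 1) \in \tau\), contradicting the SW-corner condition at \(u\). Uniqueness of the NE corner follows by the symmetric argument with south/north and east/west swapped.

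The main obstacle will be the connected \(\Rightarrow\) cornered direction, which has to turn the mere existence of a path into a structural contradiction. Because the path need not be monotone in either coordinate, one must identify the right ``crossing event''---here, the first westward step across column \(u_2\)---and then combine the column-interval property with the skew-shape hull property to extract the forbidden neighbor \(\we u \in \tau\). Everything else is routine bookkeeping plus a symmetric repeat for NE corners.
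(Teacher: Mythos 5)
Your proof is correct, and both implications are handled by routes that genuinely differ from the paper's. For ``cornered \(\implies\) connected,'' the paper argues directly: fixing nodes \(u,v\in\tau\), it inducts on \(\dist(u,v)\) and, in the nontrivial case \(u \NEarrow v\), uses cornered-ness (plus the existence of a maximally southwest node) to guarantee that one of \(\so v,\we v\) lies in \(\tau\), shortening the distance by one. You instead argue the contrapositive: a disconnected skew shape splits into \(\geq 2\) connected components, each component inherits the skew-shape property (a self-contained observation on your part), each nonempty skew shape has an SW corner via the lexicographic extremum you describe, and component SW corners are automatically SW corners of the whole. The paper's route has the advantage that essentially the same induction is reused verbatim in Proposition~\ref{pathcrit}; your route is more structural and anticipates material the paper develops later (Lemmas~\ref{seconnect} and~\ref{skewcomps}), but since you prove what you need inline, there is no circularity. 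For ``connected \(\implies\) cornered,'' both proofs first rule out \(\searrow\)-comparability of two purported corners and then run a crossing argument on a connecting path, but the paper crosses a \emph{diagonal} \(\D_d\) strictly between the two corners (extracting a node \(z\) with \(u \SEarrow z\) or \(z \SEarrow u\)), while you cross a \emph{column}: you locate the first westward step of the path across column \(u_2\), combine the column-interval property forced by the skew-shape condition with \(\so u\notin\tau\) to conclude \(r\le u_1\), and then extract \(\we u\in\tau\). Both crossing arguments are correct and comparable in length; the diagonal version is slightly more symmetric (NE and SW corners are treated identically by reflecting across the diagonal), whereas your column version requires a separate symmetric repetition for the NE case, which you rightly note.

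Two small points worth pinning down explicitly if you write this up in full: when you pass to the first index \(i^*\) where the second coordinate drops below \(u_2\), you should note that each step of a path changes exactly one coordinate by \(\pm 1\), so the drop must be a \(\we\)-step from \((r,u_2)\) to \((r,u_2-1)\), both in \(\tau\); and you should record that the column-interval for column \(u_2\) has \(u_1\) as its maximal row precisely because \(u\in\tau\) and \(\so u\notin\tau\), which is what yields \(r \le u_1\). You have sketched both, and they check out.
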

\begin{proof}
The claim is trivial if \(\tau = \varnothing\), so assume \(\tau\) is nonempty.

\((\implies)\)
Let \(u,v\) be nodes in \(\tau\). By induction on \(\dist(u,v)\) we show that there exists a path from \(u\) to \(v\) in \(\tau\). The base case \(\dist(u,v) = 0\) is trivial, so assume \(\dist(u,v) > 0\), and that the claim holds for all \(u',v' \in \tau\) with \(\dist(u',v') < \dist(u,v)\). Without loss of generality, we have either \(u \searrow v\) or \(u \NEarrow v\). If \(u \searrow v\) then since \(\tau\) is a skew shape any \(\so/\ea\) path from \(u\) to \(v\) is in \(\tau\). So assume \(u \NEarrow v\). Since \(\tau\) is finite, there exists a maximally southwest node \(m \in \tau\). Then \(\so m, \we m \notin \tau\), and \(m \neq v\). Then since \(\tau\) is cornered, there exists some \(z \in \{\so v, \we v\} \cap \tau\). Then \(\dist(u,z) < \dist(u,v)\), so by induction there exists a path from \(u\) to \(z\) in \(\tau\). As \(z\) is adjacent to \(u\), this may be extended to a path from \(u\) to \(v\) in \(\tau\). This completes the induction step, and the proof that \(\tau\) is connected.

\((\impliedby)\)
By way of contradiction, assume \(u,v\) are distinct nodes with \(\ea u , \no u, \ea v, \no v \notin \tau\). Without loss of generality, we have either \(u \searrow v\) or \(u \NEarrow v\). Say \(u \searrow v\). If \(v_2>u_2\), then \(u \searrow \ea u \searrow v\), so \(\ea u \in \tau\) since \(\tau\) is a skew shape, a contradiction. If \(v_1>u_1\), then \(u \searrow \no v \searrow v\), so \(\no v  \in \tau\), again a contradiction. Thus it cannot be that \(u \searrow v\), so assume \(u \NEarrow v\). Let \(d= \diag(u) +1\), so that \(\D_d\) is the diagonal directly above the diagonal containing \(u\) (and is therefore below the diagonal containing \(v\)). Since \(\tau\) is connected, there is a path from \(u\) to \(v\) in \(\tau\). This path must intersect \(\D_d\) at some point \(z \in \tau\). Since \(\ea u, \no u \notin \tau\), we have then that either \(u \SEarrow z\) or \(z \SEarrow u\). In the former case, we have that \(u \searrow \ea u \searrow z\), and in the latter case we have that \(z \searrow \no u \searrow u\). But then, as \(\tau\) is a skew shape, this would imply that either \(\ea u \in \tau\) or \(\no u \in \tau\), a contradiction. Thus, in any case we derive a contradiction, so no such nodes \(u,v\) may exist. We may show that there cannot exist distinct nodes \(u,v\) with \(\so u, \we u, \so v, \we v \notin \tau\) in a similar fashion. Thus \(\tau\) is cornered.
\end{proof}

\begin{proposition}\label{CSS=PDC}
Let \(\tau\) be a finite subset of \(\N\). Then \(\tau\) is a connected skew shape if and only if \(\tau\) is cornered and diagonal-convex.
\end{proposition}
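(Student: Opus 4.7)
The forward direction is essentially routine: for a connected skew shape $\tau$, diagonal-convexity is the skew shape closure condition specialized to nodes on a common diagonal, and cornered follows immediately from the $(\impliedby)$ direction of Lemma~\ref{skew:cornered=connected}.

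For the reverse direction, assume $\tau$ is cornered and diagonal-convex. The empty case being trivial, I will assume $\tau$ is nonempty. First I will observe that cornered together with finiteness forces the existence of a unique $\tau_{\nnee} \in \tau$ with $\no \tau_{\nnee}, \ea \tau_{\nnee} \notin \tau$ and a unique $\tau_{\ssww} \in \tau$ with $\so \tau_{\ssww}, \we \tau_{\ssww} \notin \tau$. Starting at any $z \in \tau$ and greedily moving to whichever of $\no z, \ea z$ lies in $\tau$ produces a path in $\tau$ which can only terminate at such a node, hence at $\tau_{\nnee}$; this path has first coordinate non-increasing and visits exactly one node on each diagonal $\D_{\diag(z)}, \D_{\diag(z)+1}, \ldots, \D_{\diag(\tau_{\nnee})}$, since each NE step increments $\diag$ by~$1$. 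A symmetric argument produces SW paths to $\tau_{\ssww}$ with first coordinate non-decreasing.

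The main task is then to show that $\tau$ is a skew shape. Fix $u, w \in \tau$ and $v \in \N$ with $u \searrow v \searrow w$, and set $d = \diag(v)$. I first verify that $\diag(\tau_{\ssww}) \leq d \leq \diag(\tau_{\nnee})$, which follows from $u \nearrow \tau_{\nnee}$, $\tau_{\ssww} \nearrow w$, and $u \searrow v \searrow w$. From $u$ I then walk along an NE path if $\diag(u) \leq d$, or an SW path if $\diag(u) \geq d$, producing a node $u' \in \tau \cap \D_d$; analogously I produce $w' \in \tau \cap \D_d$ from $w$. The key computation bounds the first coordinate: in the NE case $u'_1 \leq u_1 \leq v_1$; in the SW case $u'_1 \leq u_1 + (\diag(u) - d) = u_2 - d \leq v_2 - d = v_1$. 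Symmetric inequalities give $w'_1 \geq v_1$. Diagonal-convexity applied to $u', w' \in \tau \cap \D_d$ with $v \in \D_d$ sandwiched between them then places $v \in \tau$. Connectedness of $\tau$ finally follows from Lemma~\ref{skew:cornered=connected}.

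The main obstacle I anticipate is the bracketing step: organizing the case analysis ($\diag(u)$ versus $d$, and similarly for $w$) carefully, and verifying in each case that the first-coordinate bound computed along the path lands $u'$ and $w'$ on the correct side of $v$ within $\D_d$ so that diagonal-convexity applies.
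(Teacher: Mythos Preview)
Your argument is correct, and it takes a genuinely different route from the paper's. The paper organises \(\tau\) as a union of diagonal slices \(\D_{n,\tau}\), shows each is an interval, proves that the extreme points of adjacent slices are \(\no/\ea\)-adjacent, and then runs a four-way case analysis on the relative positions of \(\diag(u)\), \(\diag(v)\), \(\diag(w)\) to pin \(v\) between the slice endpoints. You instead build greedy \(\no/\ea\) and \(\so/\we\) paths from an arbitrary node to the unique corners \(\tau_{\nnee}\), \(\tau_{\ssww}\), walk along such paths from \(u\) and \(w\) to reach the target diagonal \(\D_d\), and bound the first coordinate along the way; the diagonal-convexity hypothesis then finishes in one stroke. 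Your approach is lighter: the case split is binary (\(\no/\ea\) versus \(\so/\we\)) rather than fourfold, and you avoid setting up the explicit interval-endpoint structure, which the paper does not reuse later anyway.

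One small point of exposition: your justification for \(\diag(\tau_{\ssww}) \leq d \leq \diag(\tau_{\nnee})\) as written lists only \(u \nearrow \tau_{\nnee}\) and \(\tau_{\ssww} \nearrow w\), but in fact you need all four relations \(u,w \nearrow \tau_{\nnee}\) and \(\tau_{\ssww} \nearrow u,w\). For instance, \(d \leq \diag(\tau_{\nnee})\) uses \(v_1 \geq u_1 \geq (\tau_{\nnee})_1\) from \(u \nearrow \tau_{\nnee}\), but also \(v_2 \leq w_2 \leq (\tau_{\nnee})_2\) from \(w \nearrow \tau_{\nnee}\). Since your greedy-path construction already gives \(z \nearrow \tau_{\nnee}\) and \(\tau_{\ssww} \nearrow z\) for every \(z \in \tau\), this is available; just cite all four when you write it out.
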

\begin{proof}
The claim is trivial if \(\tau = \varnothing\), so assume \(\tau\) is nonempty.

\((\implies)\) That \(\tau\) is cornered follows from Lemma~\ref{skew:cornered=connected}, and diagonal-convexity is implied by the fact that \(\tau\) is a skew shape.

\((\impliedby)\) For \(n \in \Z\), let \(\D_{n,\tau} = \D_n \cap \tau\). 
Since \(\tau\) is diagonal-convex, we have either \(\D_{n,\tau} =\varnothing\) or 
\(
D_{n,\tau} = \{\D_{n,\tau}^0, \D_{n,\tau}^1, \ldots, \D_{n,\tau}^{r_n}\}
\)
for some \(r_n \in \NN\), where \(\D_{n,\tau}^i = (\ssee)^i \D_{n,\tau}^0\) for \(i = 0, \ldots, r_n\). Let \(a\) be minimal such that \(\D_{a,\tau} \neq \varnothing\) and \(b\) be maximal such that \(\D_{b,\tau} \neq \varnothing\). 

Note that \(\D_{i,\tau} \neq \varnothing\) for all \(i \in [a,b]\). Indeed, if there were some \(i \in [a,b]\) such that \(\D_{i,\tau}= \varnothing\) and \(\D_{i-1,\tau} \neq \varnothing\), then every \(u \in \D_{i-1,\tau}, \D_{b,\tau}\) would have \(\no u, \ea u \notin \tau\), a contradiction of the cornered-ness of \(\tau\). Therefore we may write \(\tau\) as the union of nonempty sets
\(
\tau = \bigsqcup_{i \in [a,b]} \D_{i,\tau}.
\)

For \(i \in [a,b-1]\), we claim that \(\D_{i+1,\tau}^0 \in \{\no \D_{i,\tau}^0, \ea \D_{i,\tau}^0\}\). Indeed, assume this is not the case. Then we have \(\ea \D_{i,\tau}^0 \SEarrow \D_{i+1,\tau}^0 \) or \(\D_{i+1,\tau}^0 \SEarrow \no\D_{i,\tau}^0\). In the former case we would have \(\no \D^0_{i,\tau}, \ea \D^0_{i,\tau} \notin \tau\) and \(\no \D_{b,\tau}^0, \ea \D_{b,\tau}^0 \notin \tau\). In the latter case we would have \(\so \D^0_{i+1,\tau}, \we \D^0_{i+1,\tau} \notin \tau\) and \(\so \D_{a,\tau}^0, \we \D_{a,\tau}^0 \notin \tau\). Both are contradictions of the cornered-ness of \(\tau\), proving the claim. Therefore we have \(\D_{i,\tau}^0 \nearrow \D_{j,\tau}^0\) for all \(a \leq i \leq j\leq b\). A similar argument shows \(\D_{i,\tau}^{r_i} \nearrow \D_{j,\tau}^{r_j}\) for all \(a \leq i \leq j\leq b\).

Assume \(u \searrow v \searrow w\) for some \(u,w \in \tau\). We claim that \(v \in \tau\). We have that \(u \in \D_{i,\tau}\), \(w \in \D_{j,\tau}\) for some \(i,j \in [a,b]\), so it follows that 
\begin{align*}
\D_{i,\tau}^0 \searrow u \searrow v \searrow  w \searrow \D_{j,\tau}^{r_j}.
\end{align*}
Let \(n = \diag(v)\). If we show that \(n \in [a,b]\) and \(v \in \D_{n,\tau}\), then we are done, since \(\D_{n,\tau} \subseteq \tau\).

We first show that \(n \in [a,b]\). 
If \(n > b\), then it is not the case that \(v \nearrow \D_{b,\tau}^{r_b}\). But we have \(v \searrow \D_{j,\tau}^{r_j} \nearrow \D_{b,\tau}^{r_b}\), so it follows that \((\D_{b,\tau}^{r_b})_1>v_1\). 
Then, since \(\D_{i,\tau}^0 \searrow v\), we have that \((\D_{b,\tau}^{r_b})_1>(\D_{i,\tau}^0)_1\). But this contradicts the fact that \(\D_{i,\tau}^0 \nearrow \D_{b,\tau}^0 = \D_{b,\tau}^{r_b}\). Therefore \(n \leq b\). 
If \(n < a\), then it is not the case that \(\D_{a,\tau}^{0} \nearrow v\). But we have \(\D_{a,\tau}^0 \nearrow \D_{i,\tau}^0 \searrow v\), so it follows that \(v_1>(\D_{a,\tau}^0)_1\). Then, since \(v \searrow \D_{j,\tau}^{r_j}\), we have that \((\D_{j,\tau}^{r_j})_1>(\D_{a,\tau}^0)_1\). But this contradicts the fact that \(\D_{a,\tau}^0 = \D_{a,\tau}^{r_a} \nearrow \D_{j,\tau}^{r_j}\). Thus \(a \leq n\). Therefore \(n \in [a,b]\).

Now we show that \(v \in \D_{n,\tau}\), in four separate cases:

{\em Assume \(i,j\leq n\).} Then \(\D_{i,\tau}^0 \nearrow \D_{n,\tau}^0\) and \(\D_{j,\tau}^{r_j} \nearrow \D_{n,\tau}^{r_n}\). Since \(\D_{i,\tau}^0 \searrow v\), we have that \(v_1\geq (\D_{n,\tau}^0)_1\). Since \(v \searrow \D_{j,\tau}^{r_j}\), we have that \((\D_{n,\tau}^{r_n})_2\geq v_2\). Therefore \(v \in \D_{n,\tau}\).

{\em Assume \(n \leq i,j\).} Then \(\D_{n,\tau}^0 \nearrow \D_{i,\tau}^0\), and \(\D_{n,\tau}^{r_n} \nearrow \D_{j,\tau}^{r_j}\). Since \(\D_{i,\tau}^0 \searrow v\), we have that \(v_2 \geq (\D_{n,\tau}^0)_2\). Since \(v \searrow \D_{j,\tau}^{r_j}\), we have that \((\D_{n,\tau}^{r_n})_1 \geq v_1\). Therefore \(v \in \D_{n,\tau}\). 

{\em Assume \(i \leq n \leq j\).} Then \(\D_{i,\tau}^0 \nearrow \D_{n,\tau}^0\) and \(\D_{n,\tau}^{r_n}\nearrow \D_{j,\tau}^{r_j}\). Then since \(\D_{i,\tau}^0 \searrow v\), we have that \(v_1 \geq (\D_{n,\tau}^0)_1\). Since \(v \searrow \D_{j,\tau}^{r_j}\), we have that \((\D_{n,\tau}^{r_n})_1 \geq v_1\). Therefore \(v \in \D_{n,\tau}\).

{\em Assume \(j \leq n \leq i\).} Then \(\D_{j,\tau}^{r_j} \nearrow \D_{n,\tau}^{r_n}\) and \(\D_{n,\tau}^{0} \nearrow \D_{i,\tau}^{0}\). Then since \(\D_{i,\tau}^0 \searrow v\), we have that \(v_2 \geq (\D_{n,\tau}^0)_2\). Since \(v \searrow \D_{j,\tau}^{r_j}\), we have that \((\D_{n,\tau}^{r_n})_2 \geq v_2\). Therefore \(v \in \D_{n,\tau}\).

Thus, in any case, we have \(v \in \D_n \subset \tau\), so the claim holds, and \(\tau\) is a skew shape. Then, since \(\tau\) is a cornered skew shape, it is connected by Lemma~\ref{skew:cornered=connected}, completing the proof.
\end{proof}

\subsection{A path criterion for connected skew shapes}

\begin{proposition}\label{pathcrit}
Let \(\tau\) be a finite subset of \(\N\). Then \(\tau\) is a connected skew shape if and only if every \(u,v \in \tau\) satisfies:
\begin{enumerate}
\item If \(u \searrow v\), then every \(\so/\ea\) path from \(u\) to \(v\) is contained in \(\tau\);
\item If \(u \nearrow v\), then there exists a \(\no/\ea\) path from \(u\) to \(v\) contained in \(\tau\).
\end{enumerate}
\end{proposition}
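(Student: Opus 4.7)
The plan is to treat the two directions separately, using Proposition~\ref{CSS=PDC} as the main tool for the forward direction.

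The reverse direction is quick: assuming (i) and (ii), if $u, w \in \tau$ with $u \searrow v \searrow w$, then $v$ lies on some $\so/\ea$ path from $u$ to $w$, which is contained in $\tau$ by (i), so $v \in \tau$; this gives the skew-shape property. For connectedness, any two $u, v \in \tau$ are related by $\searrow$ or $\nearrow$ (after possibly swapping $u$ and $v$), and (i) or (ii) then produces the required path inside $\tau$.

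For the forward direction, (i) is immediate: any node $z$ on a $\so/\ea$ path from $u$ to $v$ satisfies $u \searrow z \searrow v$ and so $z \in \tau$. For (ii), I plan to build the $\no/\ea$ path greedily from $u$ to $v$ by repeatedly taking $\no$ or $\ea$ steps inside $\tau$, preserving the relation $\nearrow v$ to the target and thus reducing $k+\ell$ by one at each step. This reduces (ii) to the key local claim: if $w, v \in \tau$ with $w \nearrow v$ and $w \neq v$, then some neighbor in $\{\no w, \ea w\}$ lies in $\tau$ and still satisfies $\nearrow v$. Writing $v = \no^k \ea^\ell w$, the cases $k = 0$ and $\ell = 0$ follow directly from the skew-shape property applied along the common row or column of $w$ and $v$.

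The genuine difficulty is the case $w \NEarrow v$ (both $k, \ell > 0$), where $w$ and $v$ share no row or column; here both $\no w \nearrow v$ and $\ea w \nearrow v$ hold automatically, so it suffices to show $\{\no w, \ea w\} \cap \tau \neq \varnothing$. I would invoke Proposition~\ref{CSS=PDC}: since $\tau$ is diagonal-convex, $\D_{\diag(w)+1, \tau}$ is a contiguous run $\{p_0, p_0 \ssee, \ldots, p_r\}$ along the diagonal $\D_{\diag(w)+1}$, nonempty because $\diag(w) < \diag(v)$. Combining the single-step endpoint relation $\D_{n+1, \tau}^0 \in \{\no \D_{n, \tau}^0, \ea \D_{n, \tau}^0\}$ and its SE-endpoint analogue (both established in the proof of Proposition~\ref{CSS=PDC}) with the fact that $\D_{\diag(w), \tau}^0 \searrow w \searrow \D_{\diag(w), \tau}^{r_{\diag(w)}}$, a short computation shows that $p_0$ lies weakly NW of $\ea w$ and $p_r$ lies weakly SE of $\no w$ on the diagonal. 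Since $\no w$ and $\ea w$ are immediate diagonal-neighbors with $\no w \ssee = \ea w$, contiguity forces at least one of them into $\D_{\diag(w)+1, \tau} \subseteq \tau$. The main obstacle is this careful endpoint tracking; the rest is essentially bookkeeping.
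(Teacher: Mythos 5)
Your plan is correct, and the reverse direction and part (i) of the forward direction match the paper exactly. The genuine difference is in (ii): you build the $\no/\ea$ path greedily \emph{forward} from $u$ toward $v$, shrinking $\dist(u,v)$ at each step, whereas the paper argues by induction on $\dist(u,v)$ and extends the path \emph{backward} from $v$ by locating $v' \in \{\so v, \we v\} \cap \tau$. Both constructions hit the same obstacle (ruling out a dead end in the strict $\NEarrow$ case) and both resolve it via Proposition~\ref{CSS=PDC}, but with different ingredients. The paper uses cornered-ness in one line: the maximally southwest node $m$ of $\tau$ has $\so m, \we m \notin \tau$ and $m \neq v$ (since $u \NEarrow v$), so cornered-ness forces $\{\so v, \we v\} \cap \tau \neq \varnothing$. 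You instead reach for diagonal-convexity plus the single-step endpoint relations from the proof of Proposition~\ref{CSS=PDC}. That computation is sound---the run $[p_0,p_r]$ on diagonal $\diag(w)+1$ must meet the two consecutive positions occupied by $\no w$ and $\ea w$---but it is heavier than needed, and the SE-endpoint single-step relation you cite is only implicit in the paper's proof (``a similar argument shows\ldots''), so you would need to state and verify it. You could shortcut all of this by mirroring the paper's argument from the other corner: cornered-ness says the unique maximally northeast node is the only one with both $\no$- and $\ea$-neighbors outside $\tau$, and $w$ is not that node since $w \nearrow v$ with $w \neq v$, hence $\{\no w, \ea w\} \cap \tau \neq \varnothing$ immediately.
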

\begin{proof}
\((\implies)\) Let \(u,v \in \tau\) be such that \(u \searrow v\). If \((z^i)_{i=1}^k\) is any \(\so/\ea\) path from \(u\) to \(v\), then we clearly have \(u \searrow z^i \searrow v\) for all \(i \in [1,k]\), so the path is contained in \(\tau\) since \(\tau\) is a skew shape, proving (i).

Let \(u,v \in \tau\) be such that \(u \nearrow v\). We show by induction on \(\dist(u,v)\) that there exists a \(\no/\ea\) path from \(u\) to \(v\) in \(\tau\). The base case \(\dist(u,v) = 0\) is clear, so assume \(\dist(u,v)>0\) and make the induction assumption on all nodes \(u' \nearrow v'\) in \(\tau\) with \(\dist(u',v')<\dist(u,v)\). We consider three possible cases:

{\em Assume \(u_1 = v_1\).} Then \(v_2 = u_2 + c\) for some \(c \in \NN\). Then \(((u_1, u_2+i-1))_{i=1}^{c+1}\) is a \(\no/\ea\) path from \(u\) to \(v\), which is contained in \(\tau\) by the skew shape criterion. 

{\em Assume \(u_2 = v_2\).} Then \(v_1 = u_1 - c\) for some \(c \in \NN\). Then \(((u_1 - i + 1, u_2))_{i=1}^{c+1}\) is a \(\no/\ea\) path from \(u\) to \(v\), which is contained in \(\tau\) by the skew shape criterion. 

{\em Assume \(u \NEarrow v\).} 
If \(w\) is a maximally southwest node in \(\tau\), then \(\so w, \we w \notin \tau\), and \(w \neq v\). By Proposition~\ref{CSS=PDC}, \(\tau\) is cornered, so at least one of \(\so v, \we v\) is in \(\tau\); call this node \(v'\). Then \(u \nearrow v'\), so by the induction assumption there exists a \(\no/\ea\) path \((z^i)_{i=1}^{k}\) from \(u\) to \(v'\) contained in \(\tau\). Then, setting \(z^{k+1} = v\), we have that \((z^i)_{i=1}^{k+1}\) is a \(\no/\ea\) path from \(u\) to \(v\) contained in \(\tau\). This completes the induction step and the proof of (ii).

\((\impliedby)\) If \(u \searrow v \searrow w\) for nodes \(u,w \in \tau\), then there clearly exists a \(\so/\ea\) path from \(u\) to \(w\) which contains \(v\). Then by (i), \(v \in \tau\), so \(\tau\) is a skew shape. If \(u,v\) are nodes in \(\tau\), then one of \(u \nearrow v\), \(u \searrow v\), \(v \nearrow u\), \(v \searrow u\) must be true. Then (i), (ii) guarantee that a path exists in \(\tau\) from \(u\) to \(v\) (inverting the path from \(v\) to \(u\) if necessary). Thus \(\tau\) is connected.
\end{proof}

\subsection{Connected components}
Any nonempty finite set \(\tau \subseteq \N\) may be decomposed into {\em connected components}, i.e., nonempty connected sets \(\tau_1, \ldots, \tau_k\) for some \(k \in \NN\), such that there is no path from \(u\) to \(v\) contained in \(\tau\) whenever \(u \in \tau_i, v \in \tau_j\) for \(i \neq j\).

\begin{lemma}\label{seconnect}
Let \(\tau \in \SSSS\), and \(u,v \in \tau\) be such that \(u \searrow v\). Then the nodes \(\{w \in \N \mid u \searrow w \searrow v\}\) belong to the same connected component of \(\tau\).
\end{lemma}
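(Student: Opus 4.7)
\medskip\noindent
\textbf{Proof proposal.}
The plan is to identify the set $R := \{w \in \N \mid u \searrow w \searrow v\}$ as a rectangular block of nodes, deduce that it lies inside $\tau$ from the skew-shape axiom, and finally exhibit short paths inside $R$ joining any two of its nodes.

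First I would unpack the hypothesis $u \searrow v$: writing $u = (u_1,u_2)$, $v=(v_1,v_2)$, we have $u_1 \leq v_1$ and $u_2 \leq v_2$, so $R = [u_1,v_1] \times [u_2, v_2]$, a (possibly degenerate) rectangular set of nodes in $\N$. For any $w \in R$ we have $u \searrow w \searrow v$ with $u, v \in \tau$, so $w \in \tau$ by Definition~\ref{skewdef}(i), giving $R \subseteq \tau$.

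Next I would verify that any two nodes in $R$ lie in the same connected component of $\tau$ by exhibiting an explicit path in $R$ (which is automatically a path in $\tau$). Given $w = (w_1, w_2), w' = (w_1', w_2') \in R$, the sequence that first walks from $w$ to $(w_1', w_2)$ by $|w_1' - w_1|$ many $\so$- or $\no$-steps, and then from $(w_1', w_2)$ to $w'$ by $|w_2' - w_2|$ many $\ea$- or $\we$-steps, is a path from $w$ to $w'$ all of whose nodes lie in $R$ (since $R$ is closed under each single-unit translation so long as we stay within the rectangle $[u_1,v_1]\times[u_2,v_2]$). Hence $w$ and $w'$ belong to the same connected component of $\tau$.

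There is essentially no obstacle here; the content is just the observation that the product-order interval between $u$ and $v$ is a full rectangle, which is both forced into $\tau$ by the skew-shape condition and manifestly connected. No appeal to Proposition~\ref{CSS=PDC} or Proposition~\ref{pathcrit} is needed.
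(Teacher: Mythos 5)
Your proof is correct and follows essentially the same idea as the paper's: observe that the product-order interval between $u$ and $v$ is forced into $\tau$ by the skew-shape axiom, and then connect any two of its nodes by a short path lying inside it. The only difference is presentational: the paper picks a single $\so/\ea$-path from $u$ to $v$ passing through a given $w$ and cites Proposition~\ref{pathcrit} for the fact that such a path lies in $\tau$, whereas you identify the whole rectangle $R = [u_1,v_1]\times[u_2,v_2]$, note $R \subseteq \tau$ directly from Definition~\ref{skewdef}(i), and build an explicit L-shaped path inside $R$. Your version is marginally more self-contained (avoiding the appeal to Proposition~\ref{pathcrit}, as you note), but the substance is the same.
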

\begin{proof}
Let \(w \in \N\) be such that \(u \searrow w \searrow v\). Choose any \(\so/\ea\)-path \((z^i)_{i=1}^k\) from \(u\) to \(v\) which passes through \(w\). We have \(z^i \in \tau\) for all \(i \in [1,k]\) by Proposition~\ref{pathcrit}. This path lies within \(\tau\) by Proposition~\ref{pathcrit}, so \(u,w,v\) are in the same connected component of \(\tau\).
\end{proof}

\begin{lemma}\label{skewcomps}
Connected components of skew shapes are connected skew shapes.
\end{lemma}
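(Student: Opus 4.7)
The plan is to unpack the definitions and rely almost entirely on Lemma~\ref{seconnect}. Let $\tau$ be a skew shape and let $\tau'$ be a connected component of $\tau$. Connectedness of $\tau'$ holds by the very definition of ``connected component,'' so the only thing to verify is that $\tau'$ satisfies the skew shape axiom from Definition~\ref{skewdef}(i).

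To that end, I would fix $u, w \in \tau'$ and $v \in \N$ with $u \searrow v \searrow w$, and argue that $v \in \tau'$. First, since $u, w \in \tau$ and $\tau$ itself is a skew shape, we immediately get $v \in \tau$. The remaining task is to show that $v$ lies in the same connected component of $\tau$ as $u$ and $w$. For this, observe that $u \searrow v \searrow w$ implies $u \searrow w$, so Lemma~\ref{seconnect} applies to the pair $(u,w)$: every node $z \in \N$ with $u \searrow z \searrow w$ belongs to the same connected component of $\tau$ as $u$ and $w$. Taking $z = v$ gives $v \in \tau'$, as required.

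There is essentially no obstacle here; the content of the lemma has already been absorbed into Lemma~\ref{seconnect}, whose role is precisely to guarantee that the $\so/\ea$ rectangle between two comparable nodes stays within a single connected component. The only minor point to check is that the case $u = w$, or $u = v$, or $v = w$, is handled trivially (the offending node coincides with one already in $\tau'$), which follows from the reflexive nature of $\searrow$.
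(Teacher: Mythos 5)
Your proof is correct and follows the same approach as the paper: invoke Lemma~\ref{seconnect} to place the intermediate node in the same connected component. (The preliminary observation that $v \in \tau$ is already subsumed by Lemma~\ref{seconnect}, so it could be omitted, but it does no harm.)
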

\begin{proof}
Let \(\tau\) be a skew shape, and let \(\tau'\) be a connected component of \(\tau\). Let \(u,v \in \tau'\), \(w \in \N\) be such that \(u \searrow w \searrow v\). By Lemma~\ref{seconnect}, \(w \in \tau'\), so \(\tau'\) is a skew shape.
\end{proof}

\begin{proposition}\label{skewshapecondecomp}
Let \(\tau\) be a nonempty finite subset of \(\N\). Then \(\tau \in \SSSS\) if and only if there exists some \(k \in \NN\) and \(\tau_k \NEarrow \cdots \NEarrow \tau_1 \in \SSSS_{\tt c}\) such that \(\tau = \tau_1 \sqcup \cdots \sqcup \tau_k\).
\end{proposition}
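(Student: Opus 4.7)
The plan is to take the connected components of $\tau$ and show they are, after relabeling, precisely the decomposition required; the key structural observation will be that distinct components of a skew shape occupy disjoint sets of diagonals.

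For the forward direction ($\implies$), let $\tau'_1, \ldots, \tau'_k$ be the connected components of $\tau$, which are themselves connected skew shapes by Lemma~\ref{skewcomps}. I first show that the diagonal index sets $\{n \in \Z : \D_n \cap \tau'_i \neq \varnothing\}$ are pairwise disjoint: two nodes on a common diagonal are always $\searrow$-comparable, so if $u \in \tau'_i$ and $v \in \tau'_j$ lay on the same diagonal, Lemma~\ref{seconnect} would force them into the same component. After relabeling so that the diagonal indices of $\tau_1$ are largest and those of $\tau_k$ smallest, the task reduces to verifying $\tau_i \NEarrow \tau_{i-1}$ for each $i$. Given any $u \in \tau_i$, $v \in \tau_{i-1}$, a short case analysis on the signs of $v_1 - u_1$ and $v_2 - u_2$ shows that for $u \neq v$ exactly one of $u \searrow v$, $v \searrow u$, $u \NEarrow v$, $v \NEarrow u$ holds. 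Lemma~\ref{seconnect} eliminates the first two, as $u, v$ lie in different components, while the observation that $v \NEarrow u$ forces $\diag(u) > \diag(v)$ rules out that case in light of the diagonal ordering $\diag(u) < \diag(v)$. Only $u \NEarrow v$ remains, and transitivity of $\NEarrow$ gives the full chain $\tau_k \NEarrow \cdots \NEarrow \tau_1$.

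For the reverse direction ($\impliedby$), I assume the decomposition $\tau = \tau_1 \sqcup \cdots \sqcup \tau_k$ and verify the skew-shape axiom directly. Given $u, w \in \tau$ and $v \in \N$ with $u \searrow v \searrow w$, say $u \in \tau_i$ and $w \in \tau_j$. If $i \neq j$, then WLOG $i > j$ so that $\tau_i \NEarrow \tau_j$, forcing $u \NEarrow w$; but this requires $u_1 > w_1$, contradicting the inequality $u_1 \leq w_1$ implied by $u \searrow w$. Hence $i = j$, and then $v \in \tau_i \subseteq \tau$ because $\tau_i$ is itself a skew shape.

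The main obstacle lies in the forward direction, specifically the passage from ``distinct components are pairwise $\searrow$-incomparable'' to ``distinct components are linearly ordered by $\NEarrow$''. The crux is the diagonal-disjointness of components obtained from Lemma~\ref{seconnect}; once this is in hand, the total order on the integer-valued diagonal indices induces the required $\NEarrow$-ordering on components, since for a pair of nodes in different components the sign of the diagonal difference precisely dictates the $\NEarrow$-direction.
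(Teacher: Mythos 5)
Your argument follows essentially the same route as the paper's: decompose $\tau$ into connected components (skew shapes by Lemma~\ref{skewcomps}), show that distinct components occupy disjoint diagonal index sets via Lemma~\ref{seconnect}, and deduce the $\NEarrow$-ordering from a sign analysis on adjacent components. The reverse direction is sound and in fact spells out a step the paper leaves implicit.

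There is, however, a gap in the forward direction. You claim diagonal-disjointness alone is the crux (``once [disjointness] is in hand, the total order on the integer-valued diagonal indices induces the required $\NEarrow$-ordering''), and you then invoke $\diag(u) < \diag(v)$ for \emph{every} $u \in \tau_i$, $v \in \tau_{i-1}$. But disjointness of the sets $D_i = \{n : \D_n \cap \tau_i \neq \varnothing\}$ does not by itself rule out interleaving: for instance $\{1,3\}$ and $\{2\}$ are disjoint yet neither lies pointwise above the other, so no relabeling would give the uniform inequality you use. The missing observation, which the paper's proof states explicitly, is that each $D_i$ is an \emph{interval}: since $\tau_i$ is connected, a path inside $\tau_i$ changes $\diag$ by exactly $\pm 1$ at each step, so $D_i$ is a contiguous run of integers. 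Pairwise-disjoint intervals in $\Z$ cannot interleave, so one can relabel with $\max D_i < \min D_{i-1}$, and only then does your case analysis apply uniformly to all node pairs. With that one observation added, your proof is complete and essentially identical to the paper's.
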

\begin{proof}
\((\impliedby)\) Let \(u,v \in \tau\), with \(u \searrow v\). Then by the assumption, we must have that \(u,v \in \tau_i\) for some \(i \in [1,k]\). Then, for any \(w \in \N\) with \(u \searrow w \searrow v\), we have \(w \in \tau_i \subseteq \tau\) since \(\tau_i\) is a skew shape. Thus \(\tau\) is a skew shape.

\((\implies)\) 
Let \(\tau_1, \ldots, \tau_k\) be the connected components of \(\tau\), ordered such that \(i \leq j\) implies that \(\diag(u) \geq \diag(v)\) for some \(u \in \tau_i, v \in \tau_j\). By Lemma~\ref{skewcomps}, these are all skew shapes. We must show that \(\tau_j \NEarrow \tau_i\) when \(j>i\).

Set \(D_i = \{n \in \N \mid \D_n \cap \tau_i \neq \varnothing\}\). Note that \(D_i \cap D_j \neq \varnothing\) only if \(i = j\), as nodes on the same diagonal in \(\tau\) must be in the same connected component by Lemma~\ref{seconnect}. Moreover, \(D_i\) is an interval for all \(i\), since \(\tau_i\) is connected. Thus we have that \(i<j\) implies \(m> n\) for all \(m \in D_i\), \(n \in D_j\), and \(\diag(u) > \diag(v)\) for all \(u \in \tau_i, v \in \tau_j\).
Let \(i \in [1,k-1]\), and assume \(u \in \tau_i\), \(v \in \tau_{i+1}\). It cannot be that \(u \searrow v\) or \(v \searrow u\), else \(u,v\) would be in the same connected component by Lemma~\ref{seconnect}. Thus \(u \NEarrow v\) or \(v \NEarrow u\). But \(\diag(u) > \diag(v)\) by the previous paragraph, so we have \(v \NEarrow u\), and thus \(\tau_{i+1} \NEarrow \tau_i\), as desired.
\end{proof}

\begin{lemma}\label{uniquemax}
Let \(\tau\) be a nonempty skew shape. Then there exists a unique maximally southwest node \(u \in \tau\), a unique maximally northeast node \(v \in \tau\), and \(u \nearrow w \nearrow v\) for all \(w \in \tau\).
\end{lemma}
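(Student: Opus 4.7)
The plan is to prove the lemma by a direct lexicographic argument, without reducing to the connected case via Proposition~\ref{skewshapecondecomp}. Since $\tau$ is finite and nonempty, I will take $u \in \tau$ to be the unique node maximizing $u_1$ and, among those, minimizing $u_2$; symmetrically, $v \in \tau$ will be the unique node with $v_1$ minimal and $v_2$ maximal. The main content is to show $u \nearrow w \nearrow v$ for every $w \in \tau$; everything else follows quickly.

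To verify $u \nearrow w$, I need $w_1 \leq u_1$ and $w_2 \geq u_2$. The first inequality is immediate from the max-row defining property of $u$, and when $w_1 = u_1$ the second is immediate from the min-column tiebreak. The only delicate case is $w_1 < u_1$, where I will assume $w_2 < u_2$ and derive a contradiction. In that situation, the node $z := (u_1, w_2)$ satisfies $w \searrow z \searrow u$ by direct coordinate inspection, so the skew-shape axiom forces $z \in \tau$. But $z$ has row $u_1$ and column $w_2 < u_2$, violating the defining minimality of $u_2$ among maximum-row nodes. The argument for $w \nearrow v$ is the mirror image.

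Maximality and uniqueness then drop out from antisymmetry of $\nearrow$, noted just after Definition~\ref{skewdef} to be a partial order on $\N$: any $w \in \tau$ with $w \nearrow u$ satisfies $u \nearrow w$ by the main claim, so $w = u$; and any other maximally southwest $u' \in \tau$ satisfies $u \nearrow u'$ by the claim, hence $u = u'$. I expect the only real obstacle to be the case $w_1 < u_1$ combined with $w_2 < u_2$, where $w$ is strictly northwest of $u$ and no direct interpolation on the segment from $u$ to $w$ is apparent. The lexicographic choice of $u$ is precisely what makes the squeezed node $(u_1, w_2)$ useful, landing on the same maximal row as $u$ so as to invoke the tiebreaker. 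Notably this approach avoids any appeal to Lemma~\ref{skew:cornered=connected} or Proposition~\ref{skewshapecondecomp}.
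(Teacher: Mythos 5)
Your proof is correct, and it takes a genuinely different route from the paper's. The paper first establishes the claim for connected skew shapes by invoking the corneredness criterion (Proposition~\ref{CSS=PDC}, hence Lemma~\ref{skew:cornered=connected}), and then extends to the disconnected case via the component decomposition of Proposition~\ref{skewshapecondecomp}. Your argument bypasses all of that machinery: you pick $u$ by the lexicographic rule ``maximize $u_1$, then minimize $u_2$,'' and when $w \in \tau$ sits strictly northwest of $u$ (i.e.\ $w_1 < u_1$ and $w_2 < u_2$), you observe that the auxiliary node $z = (u_1, w_2)$ satisfies $w \searrow z \searrow u$, so the skew-shape axiom forces $z \in \tau$, which then contradicts the tiebreak on $u_2$. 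This is exactly the right ``squeeze'' and shows, more directly than the paper's route, that the convexity condition in Definition~\ref{skewdef}(i) alone, without first passing through corneredness or the component decomposition, suffices. The only thing worth making explicit is the step showing your chosen $u$ really is maximally southwest before invoking uniqueness, but as you note this falls immediately out of the main claim together with antisymmetry of $\nearrow$. What your approach buys is self-containedness and brevity; what the paper's buys is a bundling of the lemma with structural facts about skew shapes (corneredness, component ordering) that it has already proved and will reuse elsewhere.
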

\begin{proof}
We first prove the claim in the event \(\tau\) is connected. Any maximally southwest node \(u \in \tau\) will have \(\{\so u, \we u\} \cap \tau = \varnothing\), but by Proposition~\ref{CSS=PDC}, \(\tau\) is cornered, so \(u\) is the unique such node. Assume \(u \neq w \in \tau\). We cannot have \(w \nearrow u\) as \(u\) is maximally southwest. If \(u \SEarrow w\), then \(\so u \in \tau\) since \(\tau\) is a skew shape, a contradiction. If \(w \SEarrow u\), then \(\we u \in \tau\) since \(\tau\) is a skew shape, another contradiction. Thus we have \(u \nearrow w\). The proof for the unique maximally northeast node \(v \in \tau\) is similar.

Removing the connectedness assumption, we decompose \(\tau = \tau_1 \sqcup \cdots \sqcup \tau_k\) into connected skew shape components with \(\tau_k \NEarrow \cdots \NEarrow \tau_1\) as in Proposition~\ref{skewshapecondecomp}. Then, by the previous paragraph, there is a unique maximally southwest node \(u \in \tau_k\), and a unique maximally northeast node \(v \in \tau_1\), and \(u \nearrow w \nearrow v\) for all \(w \in \tau\).
\end{proof}

In view of Lemma~\ref{uniquemax}, for a nonempty skew shape \(\tau\), we may give the unique maximally southwest (resp. northeast) element the label \(\tau_{\ssww}\) (resp. \(\tau_{\nnee}\)). 

\subsection{Results on ribbons}
The following lemma is clear from definitions:
\begin{lemma}\label{dumbdiag}
Let \((z^i)_{i = 1}^k\) be a \(\no/\ea\) path. Then \(\diag(z^i) = \diag(z^1) + i -1\) for \(i \in [1,k]\).
\end{lemma}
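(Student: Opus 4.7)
The plan is to proceed by a straightforward induction on $i$, with the key computational observation being that each step of a $\no/\ea$ path increases the diagonal by exactly $1$.

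First I would unpack the definitions: $\no u = (u_1-1, u_2)$ and $\ea u = (u_1, u_2+1)$, while $\diag(u) = u_2 - u_1$. Then I would check the two possible single-step cases. If $z^{i+1} = \no(z^i)$, then
\[
\diag(z^{i+1}) = z^i_2 - (z^i_1 - 1) = \diag(z^i) + 1,
\]
and if $z^{i+1} = \ea(z^i)$, then
\[
\diag(z^{i+1}) = (z^i_2 + 1) - z^i_1 = \diag(z^i) + 1.
\]
Thus in either case $\diag(z^{i+1}) = \diag(z^i) + 1$.

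The base case $i=1$ of the induction is trivial, as $\diag(z^1) = \diag(z^1) + 1 - 1$. For the inductive step, assuming $\diag(z^i) = \diag(z^1) + i - 1$, the single-step computation above yields $\diag(z^{i+1}) = \diag(z^i) + 1 = \diag(z^1) + i$, completing the induction. There is no real obstacle here; the lemma is essentially a direct consequence of the definitions of $\no$, $\ea$, and $\diag$.
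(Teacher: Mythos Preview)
Your proof is correct. The paper does not actually give a proof of this lemma, stating only that it ``is clear from definitions''; your argument simply makes explicit the one-line computation the paper leaves to the reader.
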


\begin{lemma}\label{hookispath}
Let \(\xi\) be a finite subset of \(\N\). Then \(\xi\) is a ribbon if and only if there exists a \(\no/\ea\) path  \((z^i)_{i = 1}^{|\xi|}\) such that \(\xi = \{z^i\}_{i = 1}^{|\xi|}\).
\end{lemma}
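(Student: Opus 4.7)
The plan is to prove both directions by leveraging Proposition~\ref{pathcrit} (the path criterion for connected skew shapes) together with Lemma~\ref{dumbdiag} (which says $\no/\ea$ paths visit consecutive diagonals) and Lemma~\ref{uniquemax} (uniqueness of $\xi_{\ssww}$ and $\xi_{\nnee}$).

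For the ``if'' direction, suppose $\xi = \{z^i\}_{i=1}^k$ arises from a $\no/\ea$ path. Lemma~\ref{dumbdiag} places the $z^i$ on pairwise distinct diagonals, so $|\xi| = k$ and $\xi$ is automatically thin. Connectedness is immediate since the path itself connects any two of its nodes. To check $\xi$ is a skew shape, I will verify the two conditions of Proposition~\ref{pathcrit}. If $z^i \searrow z^j$, then since $\no/\ea$ moves only weakly decrease the first coordinate, equality of first coordinates forces $i \le j$ and $z^{i+1}, \ldots, z^j$ to be consecutive $\ea$-moves, giving the unique $\so/\ea$ path from $z^i$ to $z^j$, entirely contained in $\xi$. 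If $z^i \nearrow z^j$, then Lemma~\ref{dumbdiag} forces $i \le j$, and the subpath $(z^i, z^{i+1}, \ldots, z^j)$ is a $\no/\ea$ path in $\xi$ from $z^i$ to $z^j$. Thus $\xi$ is a connected skew shape, and being nonempty and thin, a ribbon.

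For the ``only if'' direction, assume $\xi$ is a ribbon. By Lemma~\ref{uniquemax}, $\xi_{\ssww}$ and $\xi_{\nnee}$ exist and $\xi_{\ssww} \nearrow w \nearrow \xi_{\nnee}$ for every $w \in \xi$. In particular, $\xi_{\ssww} \nearrow \xi_{\nnee}$, so Proposition~\ref{pathcrit}(ii) furnishes a $\no/\ea$ path $(z^i)_{i=1}^k$ from $\xi_{\ssww}$ to $\xi_{\nnee}$ contained in $\xi$. By Lemma~\ref{dumbdiag} these $z^i$ occupy every diagonal from $\diag(\xi_{\ssww})$ up to $\diag(\xi_{\nnee})$ exactly once. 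For an arbitrary $w \in \xi$, Lemma~\ref{uniquemax} and the definitions give $\diag(\xi_{\ssww}) \le \diag(w) \le \diag(\xi_{\nnee})$, so $w$ lies on a diagonal already occupied by some $z^i \in \xi$; thinness of $\xi$ then forces $w = z^i$. Hence $\xi = \{z^i\}_{i=1}^k$ and $k = |\xi|$, completing the proof.

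I do not anticipate a serious obstacle: the key calculation is the observation that, in the ``if'' direction, the constraint $z^i \searrow z^j$ on a $\no/\ea$ path collapses the path to a purely eastward segment, which one must check carefully to invoke Proposition~\ref{pathcrit}(i). All other steps are direct appeals to earlier results.
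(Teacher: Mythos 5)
Your ``only if'' direction matches the paper's argument exactly. For the ``if'' direction you take a genuinely different route: the paper checks that $\{z^i\}_{i=1}^{|\xi|}$ is cornered and diagonal-convex and invokes Proposition~\ref{CSS=PDC}, whereas you verify the two conditions of the path criterion, Proposition~\ref{pathcrit}. Both approaches are legitimate; the paper's is marginally shorter since the cornered and diagonal-convex conditions follow almost immediately from thinness.

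However, your verification of Proposition~\ref{pathcrit}(i) has a gap. You assert that $z^i \searrow z^j$ on a $\no/\ea$ path ``collapses the path to a purely eastward segment,'' forcing $z^i_1 = z^j_1$ and $i \le j$. That only treats the case $i \le j$. If instead $i > j$, the $\no/\ea$ subpath from $z^j$ to $z^i$ gives $z^i_1 \le z^j_1$ and $z^i_2 \ge z^j_2$, which combined with $z^i \searrow z^j$ (so $z^j_2 \ge z^i_2$) forces $z^i_2 = z^j_2$: the two nodes lie in the same \emph{column}, and the subpath $(z^j,\ldots,z^i)$ consists entirely of $\no$-moves. For instance, in the $\no/\ea$ path $((2,0),(1,0),(0,0))$ one has $z^3 \searrow z^1$ with $3 > 1$ and unequal first coordinates. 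In that case the unique $\so/\ea$ path from $z^i$ to $z^j$ is the reversed (purely south) subsegment $(z^i, z^{i-1}, \ldots, z^j)$, which is still contained in $\xi$, so your conclusion survives---but this second case must be stated and handled explicitly. With that addition the proof is complete.
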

\begin{proof}
\((\implies)\) Let \(\xi\) be a ribbon. Then by Proposition~\ref{pathcrit} and Lemma~\ref{uniquemax} there exists a \(\no/\ea\) path \((z^i)_{i=1}^k\) from \(\xi_{\ssww}\) to \(\xi_{\nnee}\) contained in \(\xi\). Let \(w \in \tau\). By Lemma~\ref{uniquemax}, we have \(\diag(\xi_{\ssww}) \leq \diag(w) \leq \diag(\xi_{\nnee})\), so by Lemma~\ref{dumbdiag} there exists some \(i \in [1,k]\) such that \(\diag(w) = \diag(z^i)\). As \(\xi\) is thin, this implies \(w = z^i\). Thus \(k = |\xi|\) and \(\xi = \{z^i\}_{i=1}^{|\xi|}\).

\((\impliedby)\) By Lemma~\ref{dumbdiag}, we have that \(\xi\) is thin, and thus diagonal-convex. It is clear that \(z^1\) is the unique node \(u \in \xi\) such that \(\so u, \we u \notin \xi\), and \(z^{|\xi|}\) is the unique node \(u \in \xi\) such that \(\no u, \ea u \notin \xi\). Thus \(\xi\) is cornered. Thus by Proposition~\ref{CSS=PDC}, \(\xi\) is a connected skew shape, so it is a ribbon.
\end{proof}

This gives the immediate corollary

\begin{corollary}\label{sizehook}
For a ribbon \(\xi\), we have \(|\xi| = \dist(\xi_{\ssww}, \xi_{\nnee})+1\).
\end{corollary}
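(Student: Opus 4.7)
The plan is to read off the corollary directly from Lemma~\ref{hookispath}, by identifying the endpoints of the guaranteed $\no/\ea$ path with the distinguished corner nodes $\xi_{\ssww}$ and $\xi_{\nnee}$.

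First I would invoke Lemma~\ref{hookispath} to obtain a $\no/\ea$ path $(z^i)_{i=1}^{|\xi|}$ whose image is exactly $\xi$. The key observation is that in any $\no/\ea$ path, only the first node $z^1$ has the property that neither its south nor its west neighbor lies in the path, and only the last node $z^{|\xi|}$ has the property that neither its north nor its east neighbor lies in the path. Since the path's image equals $\xi$, this means $z^1$ is the unique node $u\in\xi$ with $\so u,\we u\notin\xi$, and $z^{|\xi|}$ is the unique node $v\in\xi$ with $\no v,\ea v\notin \xi$. By the characterization of $\xi_{\ssww}$ and $\xi_{\nnee}$ as the unique maximally southwest and northeast nodes (Lemma~\ref{uniquemax}), together with the same argument used in the proof of Lemma~\ref{uniquemax} ruling out $\SEarrow$ relations between an extremal node and its neighbors in a skew shape, we conclude $z^1 = \xi_{\ssww}$ and $z^{|\xi|} = \xi_{\nnee}$.

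Next I would use the identity $\dist(z^1, z^{|\xi|}) = |\xi|-1$, which holds because every step in a $\no/\ea$ path changes the value $u_1+u_2$ by exactly $\pm 1$ in the component sum (specifically, $-1$ for a north step and $+1$ for an east step, so the total number of steps equals $|\xi|-1$ and also equals $\dist(z^1,z^{|\xi|})$ by the very definition of $\dist$ given earlier in the paper). Rearranging gives the claim $|\xi| = \dist(\xi_{\ssww},\xi_{\nnee})+1$.

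There is essentially no obstacle: the corollary is a bookkeeping consequence of Lemma~\ref{hookispath} and the definition of $\dist$. The only mildly nontrivial step is the identification of the endpoints of the path with $\xi_{\ssww}$ and $\xi_{\nnee}$, but this is immediate from uniqueness of the extremal nodes in a connected skew shape (Lemma~\ref{uniquemax}).
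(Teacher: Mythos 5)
The identification of the path endpoints with \(\xi_\ssww\) and \(\xi_\nnee\) is correct, though slightly redundant: the proof of Lemma~\ref{hookispath} already produces the \(\no/\ea\) path as one from \(\xi_\ssww\) to \(\xi_\nnee\), so \(z^1 = \xi_\ssww\) and \(z^{|\xi|} = \xi_\nnee\) can be read off directly rather than re-derived from Lemma~\ref{uniquemax}.

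The real gap is in the final paragraph. You correctly observe that a north step changes \(u_1+u_2\) by \(-1\) and an east step changes it by \(+1\); but then the \emph{net} change of \(u_1+u_2\) over the whole path is (number of east steps) minus (number of north steps), which can be anything between \(-(|\xi|-1)\) and \(|\xi|-1\), and in general is not \(|\xi|-1\). So the conclusion that the step count equals \(\dist(z^1,z^{|\xi|})\) does not follow from your \(\pm 1\) accounting; already for a three-cell ribbon making one turn, \(|v_1+v_2-u_1-u_2|=0\) while \(|\xi|-1=2\). The quantity that increases by \(1\) at \emph{every} step of a \(\no/\ea\) path, north or east, is the diagonal \(\diag(u)=u_2-u_1\), and this is exactly what Lemma~\ref{dumbdiag} records: \(\diag(z^i)=\diag(z^1)+i-1\). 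Taking \(i=|\xi|\) gives \(\diag(\xi_\nnee)-\diag(\xi_\ssww)=|\xi|-1\), which is the content of the corollary once one interprets \(\dist(u,v)\) as \(|\diag(v)-\diag(u)|=|(v_2-v_1)-(u_2-u_1)|\). (The formula \(\dist(u,v)=|v_1+v_2-u_1-u_2|\) printed in \S 2.3 has the opposite sign on \(u_1,v_1\) and, as your own computation shows, does not count steps of a \(\no/\ea\) path; the paper places Lemma~\ref{dumbdiag} immediately before Lemma~\ref{hookispath} and this corollary precisely so the ``immediate'' deduction runs through \(\diag\).) In short: replace the \(u_1+u_2\) bookkeeping with an appeal to Lemma~\ref{dumbdiag}.
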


\subsection{Tilings and tableaux}\label{tilingsec}
Let \(\tau\) be a nonempty skew shape. A {\em tiling} of \(\tau\) is a set \(\Lambda\) of pairwise disjoint nonempty skew shapes such that \(\tau = \bigsqcup_{\lambda \in \Lambda} \lambda\). We call the members of \(\Lambda\) {\em tiles}.
A {\em \(\Lambda\)-tableau} \(\ttt\) is a bijection \(\ttt: [1, |\Lambda|] \to \Lambda\) such that \(u \in \ttt(i), v \in \ttt(j)\) and \(u \searrow v\) imply \(i\leq j\). We also refer to the pair \((\Lambda, \ttt)\), or the tile sequence \((\ttt(1), \ldots, \ttt(|\Lambda|))\), as a {\em tableau for \(\tau\)}.
Roughly speaking, the ordering condition means the tiles \(\ttt(1), \ldots, \ttt(|\Lambda|)\) can be sequentially `slid into place' from the southeast without their constituent boxes colliding. For instance, in Figure~\ref{fig:tilex}, a tiling \(\Lambda\) for a skew shape \(\tau\) is shown, along with the (only) two possible \(\Lambda\)-tableaux.

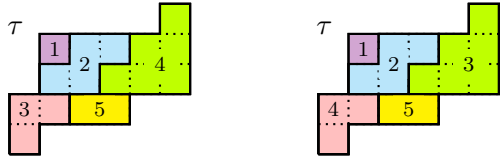
\begin{figure}[h]
\begin{align*}
{}
\hackcenter{
\begin{tikzpicture}[scale=0.4]
%
\draw[thick, fill=violet!35]   (0,-1)--(1,-1)--(1,0)--(4,0)--(4,1)--(6,1)--(6,4)--(5,4)--(5,3)--(1,3)--(1,2)--(1,1)--(0,1)--(0,-1);
\draw[thick, fill=pink]  (0,-1)--(1,-1)--(1,0)--(2,0)--(2,1)--(1,1)--(0,1)--(0,-1);
\draw[thick, fill=yellow] (2,0)--(4,0)--(4,1)--(2,1)--(2,0);
\draw[thick,fill=lime] (3,1)--(6,1)--(6,4)--(5,4)--(5,3)--(4,3)--(4,2)--(3,2)--(3,1);
\draw[thick, fill=cyan!25] (1,1)--(3,1)--(3,2)--(4,2)--(4,3)--(2,3)--(2,2)--(1,2)--(1,1);
%
\draw[thick, dotted] (1,0)--(1,1);
\draw[thick, dotted] (2,0)--(2,3);
\draw[thick, dotted] (3,0)--(3,3);
\draw[thick, dotted] (4,0)--(4,3);
\draw[thick, dotted] (5,1)--(5,3);
\draw[thick, dotted] (0,0)--(4,0);
\draw[thick, dotted] (0,1)--(6,1);
\draw[thick, dotted] (1,2)--(6,2);
\draw[thick, dotted] (2,3)--(6,3);
\node at (1.5,2.5){ $\scriptstyle 1$};
\node at (0.5,0.5){ $\scriptstyle3$};
\fill[fill=cyan!25] (2,1.5)--(3,1.5)--(3,2.5)--(2,2.5)--(2,0);
\node at (2.5,2){ $\scriptstyle2$};
\fill[fill=lime] (4.5,1.5)--(5.5,1.5)--(5.5,2.5)--(4.5,2.5)--(4.5,1.5);
\node at (5,2){ $\scriptstyle4$};
\fill[fill=yellow] (2.5,0)--(3.5,0)--(3.5,1)--(2.5,1)--(2.5,0);
\node at (3,0.5){ $\scriptstyle5$};
\node at (0.2,3.2){ $\tau$};
\draw[thick]   (0,-1)--(1,-1)--(1,0)--(4,0)--(4,1)--(6,1)--(6,4)--(5,4)--(5,3)--(1,3)--(1,2)--(1,1)--(0,1)--(0,-1);
\draw[thick]  (0,-1)--(1,-1)--(1,0)--(2,0)--(2,1)--(1,1)--(0,1)--(0,-1);
\draw[thick] (2,0)--(4,0)--(4,1)--(2,1)--(2,0);
\draw[thick] (3,1)--(6,1)--(6,4)--(5,4)--(5,3)--(4,3)--(4,2)--(3,2)--(3,1);
\draw[thick] (1,1)--(3,1)--(3,2)--(4,2)--(4,3)--(2,3)--(2,2)--(1,2)--(1,1);
\end{tikzpicture}
}
\qquad
\qquad
\hackcenter{
\begin{tikzpicture}[scale=0.4]
%
\draw[thick, fill=violet!35]   (0,-1)--(1,-1)--(1,0)--(4,0)--(4,1)--(6,1)--(6,4)--(5,4)--(5,3)--(1,3)--(1,2)--(1,1)--(0,1)--(0,-1);
\draw[thick, fill=pink]  (0,-1)--(1,-1)--(1,0)--(2,0)--(2,1)--(1,1)--(0,1)--(0,-1);
\draw[thick, fill=yellow] (2,0)--(4,0)--(4,1)--(2,1)--(2,0);
\draw[thick,fill=lime] (3,1)--(6,1)--(6,4)--(5,4)--(5,3)--(4,3)--(4,2)--(3,2)--(3,1);
\draw[thick, fill=cyan!25] (1,1)--(3,1)--(3,2)--(4,2)--(4,3)--(2,3)--(2,2)--(1,2)--(1,1);
%
\draw[thick, dotted] (1,0)--(1,1);
\draw[thick, dotted] (2,0)--(2,3);
\draw[thick, dotted] (3,0)--(3,3);
\draw[thick, dotted] (4,0)--(4,3);
\draw[thick, dotted] (5,1)--(5,3);
\draw[thick, dotted] (0,0)--(4,0);
\draw[thick, dotted] (0,1)--(6,1);
\draw[thick, dotted] (1,2)--(6,2);
\draw[thick, dotted] (2,3)--(6,3);
\node at (1.5,2.5){ $\scriptstyle 1$};
\node at (0.5,0.5){ $\scriptstyle4$};
\fill[fill=cyan!25] (2,1.5)--(3,1.5)--(3,2.5)--(2,2.5)--(2,0);
\node at (2.5,2){ $\scriptstyle2$};
\fill[fill=lime] (4.5,1.5)--(5.5,1.5)--(5.5,2.5)--(4.5,2.5)--(4.5,1.5);
\node at (5,2){ $\scriptstyle3$};
\fill[fill=yellow] (2.5,0)--(3.5,0)--(3.5,1)--(2.5,1)--(2.5,0);
\node at (3,0.5){ $\scriptstyle5$};
\node at (0.2,3.2){ $\tau$};
\draw[thick]   (0,-1)--(1,-1)--(1,0)--(4,0)--(4,1)--(6,1)--(6,4)--(5,4)--(5,3)--(1,3)--(1,2)--(1,1)--(0,1)--(0,-1);
\draw[thick]  (0,-1)--(1,-1)--(1,0)--(2,0)--(2,1)--(1,1)--(0,1)--(0,-1);
\draw[thick] (2,0)--(4,0)--(4,1)--(2,1)--(2,0);
\draw[thick] (3,1)--(6,1)--(6,4)--(5,4)--(5,3)--(4,3)--(4,2)--(3,2)--(3,1);
\draw[thick] (1,1)--(3,1)--(3,2)--(4,2)--(4,3)--(2,3)--(2,2)--(1,2)--(1,1);
\end{tikzpicture}
}
\end{align*}
\caption{A tiling \(\Lambda\) for a skew shape \(\tau\), and two \(\Lambda\)-tableaux---label \(i\) indicates the tile \(\ttt(i)\)}
\label{fig:tilex}       
\end{figure}

\begin{remark}
If \((\Lambda, \ttt)\) is a tableau for \(\tau\) with \(|\lambda| =1\) for all \(\lambda \in \Lambda\), then \((\Lambda, \ttt)\) is a Young tableau in the traditional sense. Young tableaux are important objects in combinatorics, representation theory and algebraic geometry, and the definition above can be viewed as a generalization of Young tableaux to larger tiles.
\end{remark}

\begin{lemma}\label{tabisskew}
Let \((\Lambda, \ttt)\) be a tableau for a skew shape \(\tau\). For \(1 \leq h \leq \ell \leq |\Lambda|\), define \(\tau_{h,\ell} = \bigsqcup_{i = h}^\ell \ttt(i)\). Then \(\tau_{h,\ell}\) is a skew shape.
\end{lemma}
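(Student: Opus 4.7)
The plan is a short direct verification of the skew shape condition from Definition~\ref{skewdef}(i) applied to $\tau_{h,\ell}$. The proof will hinge on the transitivity of $\searrow$ and the defining property of a $\Lambda$-tableau, so no induction or case analysis should be needed.

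First I would fix $u, w \in \tau_{h,\ell}$ and $v \in \N$ with $u \searrow v \searrow w$, and observe that $\tau_{h,\ell}$ is a subset of $\tau$. Since $u, w \in \tau$ and $\tau$ is itself a skew shape, Definition~\ref{skewdef}(i) applied to $\tau$ forces $v \in \tau$. So $v$ must lie in some tile $\ttt(j)$ of the tiling $\Lambda$; the remaining task is to show that $j \in [h, \ell]$.

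Next I would use the tableau ordering twice. By hypothesis $u \in \ttt(i)$ and $w \in \ttt(k)$ for some $i, k \in [h, \ell]$. Since $u \searrow v$ with $u \in \ttt(i)$ and $v \in \ttt(j)$, the defining condition of a $\Lambda$-tableau gives $i \leq j$. Since $v \searrow w$ with $v \in \ttt(j)$ and $w \in \ttt(k)$, the same condition gives $j \leq k$. Chaining these inequalities yields $h \leq i \leq j \leq k \leq \ell$, so $j \in [h, \ell]$ and thus $v \in \ttt(j) \subseteq \tau_{h,\ell}$, verifying the skew shape axiom.

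There is really no main obstacle here: the essential content is that the tableau's monotonicity condition, which is phrased for pairs of tiles, automatically propagates through any intermediate node lying in $\tau$, because membership of $v$ in $\tau$ is guaranteed for free from $\tau$ being a skew shape. The only thing to double-check while writing is that I have not implicitly assumed $\tau_{h,\ell}$ is nonempty or connected; neither is needed, as Definition~\ref{skewdef}(i) permits the empty set and makes no connectedness requirement.
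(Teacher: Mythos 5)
Your proposal is correct and follows essentially the same argument as the paper: use the skew shape property of $\tau$ to conclude $v \in \tau$, then apply the tableau ordering condition to each of $u \searrow v$ and $v \searrow w$ to sandwich the index $j$ of the tile containing $v$ into $[h,\ell]$. Nothing to add.
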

\begin{proof}
Let \(u, w \in \tau_{h,\ell}\), \(v \in \N\), with \(u \searrow v\searrow w\). Then \(u \in \ttt(i)\), \(w \in \ttt(k\)) for some \(i,k \in [h, \ell]\). As \(\tau\) is a skew shape, we have \(v \in \tau\). Then \(v \in \ttt(j)\) for some \(j \in [1, |\Lambda|]\). Since \(u \searrow v\) we have \(h \leq i \leq j\), and since \(v \searrow w\) we have \(j \leq k\leq \ell\). But then \(j \in [h, \ell]\), so \(v \in \tau_{h, \ell}\). Thus \(\tau_{h, \ell}\) is a skew shape.
\end{proof}

\subsection{Removable ribbons}
We say that a skew shape \(\mu \subseteq \tau\) is {\em \(\ssee\)-removable in \(\tau\)} if \(\mu = \tau\) or \((\tau\backslash\mu, \mu)\) is a tableau for \(\tau\). We likewise say that \(\mu\) is {\em \(\nnww\)-removable in \(\tau\)} if \(\mu = \tau\) or \((\mu,\tau\backslash\mu)\) is a tableau for \(\tau\).
Roughly speaking, this means that \(\mu\) is \(\ssee\)-removable if it can be `slid away' to the southeast without colliding with \(\tau\backslash\mu\).
The next lemma follows directly from definitions.
\begin{lemma}\label{remcond}
Let \(\mu \subseteq \tau\) be skew shapes. Then \(\mu\) is \(\ssee\)-removable in \(\tau\) if and only if there does not exist \(u \in \mu\), \(v \in \tau\backslash \mu\) such that \(u \searrow v\). 
\end{lemma}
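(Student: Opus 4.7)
The plan is to unpack both sides of the equivalence directly from the definitions of tableau, tiling, and $\ssee$-removable, with the only non-trivial step being verification that $\tau\backslash \mu$ is itself a skew shape under the hypothesis of the reverse direction.

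First I would dispose of the trivial cases. If $\mu = \tau$ then $\tau\backslash\mu = \varnothing$, there is no $v \in \tau\backslash\mu$ at all, and $\mu$ is $\ssee$-removable by definition, so both sides hold. Thus one may assume $\mu \subsetneq \tau$, and in particular that $\tau\backslash\mu$ is nonempty.

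For the forward direction, suppose $\mu$ is $\ssee$-removable, so $(\tau\backslash\mu, \mu)$ is a tableau for $\tau$ with $\ttt(1) = \tau\backslash\mu$ and $\ttt(2) = \mu$. If there were $u \in \mu$ and $v \in \tau\backslash\mu$ with $u \searrow v$, then the defining property of a tableau would force $2 \leq 1$, a contradiction. Hence no such pair exists.

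For the reverse direction, assume no such pair exists. The main (though still short) step is to check that $\tau\backslash\mu$ is a skew shape: given $u, w \in \tau\backslash\mu$ and $v \in \N$ with $u \searrow v \searrow w$, we have $v \in \tau$ since $\tau$ is a skew shape; if $v$ lay in $\mu$, then the pair $v \in \mu$, $w \in \tau\backslash\mu$ would violate the hypothesis, so $v \in \tau\backslash \mu$. Now $\Lambda = \{\tau\backslash\mu, \mu\}$ is a tiling of $\tau$ into nonempty skew shapes, and to see that $\ttt(1) = \tau\backslash\mu$, $\ttt(2) = \mu$ is a tableau it suffices to verify the order condition: whenever $u \in \ttt(i)$, $v \in \ttt(j)$, $u \searrow v$, we need $i \leq j$. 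The only case which could fail is $i = 2$, $j = 1$, i.e.\ $u \in \mu$, $v \in \tau\backslash\mu$ with $u \searrow v$, and this is precisely what is ruled out by hypothesis. Hence $(\tau\backslash\mu,\mu)$ is a tableau and $\mu$ is $\ssee$-removable, completing the proof.
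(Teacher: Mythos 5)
Your proof is correct and follows the same route the paper takes: the paper simply asserts the lemma ``follows directly from definitions,'' and you unpack exactly those definitions. The one detail you supply beyond that — verifying that $\tau\backslash\mu$ is itself a skew shape under the hypothesis, so that $\{\tau\backslash\mu,\mu\}$ is genuinely a tiling — is a worthwhile observation the paper leaves implicit.
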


Let \(\tau\) be a skew shape, and define \(\textup{Rem}_\tau\) to be the set of all pairs \((u,v) \in \tau\) such that
\(u,v\) are in the same connected component of \(\tau\), \(u \nearrow v\), and \(\so u, \ea v \notin \tau\).
Then for \((u,v) \in \textup{Rem}_\tau\), define 
\begin{align*}
\xi^\tau_{u,v} := \{w \in \tau \mid u \nearrow w \nearrow v, \ssee w \notin \tau\} \subseteq \tau.
\end{align*}
An example of the set \(\xi^\tau_{u,v}\) for \((u,v) \in \textup{Rem}_\tau\) is shown in Figure~\ref{fig:xiuvex}.

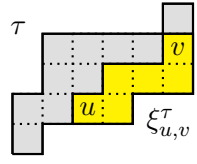
\begin{figure}[h]
\begin{align*}
{}
\hackcenter{
\begin{tikzpicture}[scale=0.4]
%
\draw[thick, fill=lightgray!50]   (0,-1)--(1,-1)--(1,0)--(4,0)--(4,1)--(6,1)--(6,4)--(5,4)--(5,3)--(1,3)--(1,2)--(1,1)--(0,1)--(0,-1);
\draw[thick, fill=yellow] (2,0)--(4,0)--(4,1)--(6,1)--(6,3)--(5,3)--(5,2)--(3,2)--(3,1)--(2,1)--(2,0);
%
\draw[thick, dotted] (1,0)--(1,1);
\draw[thick, dotted] (2,0)--(2,3);
\draw[thick, dotted] (3,0)--(3,3);
\draw[thick, dotted] (4,0)--(4,3);
\draw[thick, dotted] (5,1)--(5,3);
\draw[thick, dotted] (0,0)--(4,0);
\draw[thick, dotted] (0,1)--(6,1);
\draw[thick, dotted] (1,2)--(6,2);
\draw[thick, dotted] (2,3)--(6,3);
\draw[thick]   (0,-1)--(1,-1)--(1,0)--(4,0)--(4,1)--(6,1)--(6,4)--(5,4)--(5,3)--(1,3)--(1,2)--(1,1)--(0,1)--(0,-1);
\draw[thick] (2,0)--(4,0)--(4,1)--(6,1)--(6,3)--(5,3)--(5,2)--(3,2)--(3,1)--(2,1)--(2,0);
\node at (0.2,3.2){ $\tau$};
\node at (5.2,0){ $\xi_{u,v}^\tau$};
\node at (2.5,0.5){ $u$};
\node at (5.5,2.5){ $v$};
%
\end{tikzpicture}
}
\end{align*}
\caption{\(\ssee\)-removable ribbon \(\xi^\tau_{u,v}\) in skew shape \(\tau\)}
\label{fig:xiuvex}       
\end{figure}

\begin{lemma}\label{xiuv}
Let \(\tau\) be a skew shape. Then \(\{ \xi^\tau_{u,v} \mid (u,v) \in \textup{Rem}_\tau\}\) is a complete set of \(\ssee\)-removable ribbons in \(\tau\).
\end{lemma}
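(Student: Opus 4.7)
The plan is to establish both containments: every \(\xi^\tau_{u,v}\) with \((u,v)\in\textup{Rem}_\tau\) is an \(\ssee\)-removable ribbon of \(\tau\), and conversely every \(\ssee\)-removable ribbon of \(\tau\) has this form.

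For the forward direction, fix \((u,v)\in\textup{Rem}_\tau\) and set \(\xi=\xi^\tau_{u,v}\). First I will verify that \(\xi\) is thin: if distinct \(w_1,w_2\in\xi\) lay on the same diagonal with \(w_1\searrow w_2\), then \(w_1\searrow\ssee w_1\searrow w_2\) would force \(\ssee w_1\in\tau\) by skew shape, contradicting \(w_1\in\xi\). Next I construct a \(\no/\ea\) path in \(\xi\) from \(u\) to \(v\) via the greedy rule \(z^1=u\) and \(z^{i+1}=\ea z^i\) if \(\ea z^i\in\tau\), else \(z^{i+1}=\no z^i\). The technical core is to check at each stage:
\begin{enumerate}
\item \(z^{i+1}\in\tau\)---the north case (\(\ea z^i\notin\tau\)) uses that \(z^i\) and \(v\) lie in the same connected component of \(\tau\), so Proposition~\ref{pathcrit}(ii) supplies a \(\no/\ea\) path from \(z^i\) to \(v\) whose first step is forced to be \(\no z^i\);
\item \(z^{i+1}\nearrow v\)---the potentially problematic configuration \(v_2=z^i_2\) is excluded by the chain \(v\searrow\ea v\searrow\ea z^i\), which combined with \(\ea v\notin\tau\) and skew shape rules out \(\ea z^i\in\tau\);
\item \(\ssee z^{i+1}\notin\tau\)---in the east case, \(\ssee z^i\notin\tau\) propagates to \(\ssee\ea z^i\notin\tau\) via the chain \(z^i\searrow\ssee z^i\searrow\ssee\ea z^i\) and skew shape; in the north case \(\ssee\no z^i=\ea z^i\notin\tau\) by construction.
\end{enumerate}
Each step decreases \(\dist(z^i,v)\) by one, so the path reaches \(v\) after \(\dist(u,v)\) steps and visits exactly one node on each diagonal in \([\diag(u),\diag(v)]\). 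Combining thinness of \(\xi\) with the fact that \(u\nearrow w\nearrow v\) constrains \(\diag(w)\) to this interval, every \(w\in\xi\) coincides with the unique path node on its diagonal. Hence \(\xi\) is the image of the path and is a ribbon by Lemma~\ref{hookispath}.

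To establish \(\ssee\)-removability via Lemma~\ref{remcond}, I suppose \(w\in\xi\), \(w'\in\tau\backslash\xi\) with \(w\searrow w'\), and split on the displacement \(w'-w\). Displacements with both coordinates strictly positive place \(\ssee w\) on a \(\searrow\)-chain to \(w'\), forcing \(\ssee w\in\tau\) and contradicting \(w\in\xi\). For purely southward \(w'=\so^a w\) (\(a\geq 1\)), an argument parallel to step~(iii) above shows \(\so w\in\xi\) (using \(\so u\notin\tau\) to exclude the case \(w_1=u_1\), which would otherwise force \(\so u\in\tau\) via skew shape); iteration on \(a\) then places \(w'\) itself in \(\xi\), a contradiction. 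Purely eastward displacements are handled symmetrically using \(\ea v\notin\tau\).

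For the converse, given an \(\ssee\)-removable ribbon \(\xi\subseteq\tau\), I set \(u=\xi_\ssww\) and \(v=\xi_\nnee\) via Lemma~\ref{uniquemax}. The conditions \(\so u\notin\tau\) and \(\ea v\notin\tau\) follow from Lemma~\ref{remcond} applied to \(\xi\) (else \(\so u\) or \(\ea v\) would furnish a violating pair), so \((u,v)\in\textup{Rem}_\tau\). For \(\xi=\xi^\tau_{u,v}\), containment \(\xi\subseteq\xi^\tau_{u,v}\) follows since for each \(w\in\xi\), Lemma~\ref{uniquemax} yields \(u\nearrow w\nearrow v\), thinness of \(\xi\) precludes \(\ssee w\in\xi\), and \(\ssee\)-removability precludes \(\ssee w\in\tau\backslash\xi\). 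Equality is then forced by cardinality: Corollary~\ref{sizehook} gives \(|\xi|=\dist(u,v)+1=|\xi^\tau_{u,v}|\). The main obstacle will be in the forward direction, specifically the delicate case analysis needed to show the greedy path remains in \(\xi\); the removability verification and reverse containment are then relatively routine.
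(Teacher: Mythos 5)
Your proposal is correct and reaches the same conclusion, but the verification that \(\xi^\tau_{u,v}\) is a ribbon goes by a genuinely different route from the paper's. The paper establishes that \(\xi^\tau_{u,v}\) is thin, then separately checks the \emph{cornered} condition and invokes Proposition~\ref{CSS=PDC} to conclude it is a connected skew shape (hence a ribbon). You instead build the \(\no/\ea\) path from \(u\) to \(v\) explicitly by the greedy rule (east if possible, else north), show by induction that the path stays in \(\xi^\tau_{u,v}\), and invoke Lemma~\ref{hookispath}. The paper's approach is more structural and leans on previously established machinery; yours is more computational, and has the advantage of producing the ribbon's path description directly, which pays off when you immediately need to count its cardinality for the reverse containment. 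Your removability argument is structurally similar to the paper's but iterates (\(\so w\in\xi\Rightarrow\so^2 w\in\xi\Rightarrow\cdots\)) where the paper shows \(\ssee z\notin\tau\) and \(u\nearrow z\nearrow v\) once and for all via the observation that \(w\SEarrow z\) is impossible. The converse direction is essentially identical to the paper's. Two small expository gaps you should fill in a full write-up: (a) the base case of the induction (showing \(\ssee u\notin\tau\) from \(\so u\notin\tau\) via skew shape), and (b) in your step~(ii) you explicitly rule out \(v_2=z^i_2\) only in the east case; the symmetric problematic configuration \(v_1=z^i_1\) in the north case also needs ruling out (it forces \(\ea z^i\in\tau\) by skew shape, contradicting the north-case hypothesis). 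Also note that your invocation of Proposition~\ref{pathcrit}(ii) must be applied to the connected component of \(\tau\) containing \(z^i\) and \(v\) (via Lemma~\ref{skewcomps}), not to \(\tau\) itself, since \(\tau\) need not be connected; your mention of "same connected component" suggests you understood this but it is worth making explicit.
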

\begin{proof}
Let \((u,v) \in \textup{Rem}_\tau\). 
Since \(\tau\) is a skew shape, it is cornered and diagonal-convex by Proposition~\ref{CSS=PDC}. Since \(\tau\) is diagonal-convex, for every \(n \in \NN\) there is at most one \(w \in \D_n \cap \tau\) such that \(\ssee w \notin \tau\). Thus \(\xi^\tau_{u,v}\) is thin, and therefore also diagonal-convex. 

We now show that \(\xi^\tau_{u,v}\) is cornered.
Note that since \(\so u \notin \tau\), we have that \(\ssee u \notin \tau\) because \(\tau\) is a skew shape. Therefore \(u\) is a maximally southwest node in \(\xi^\tau_{u,v}\), and similarly, \(v\) is a maximally northeast node in \(\xi^\tau_{u,v}\). 
Let \(w \neq u \in \xi^\tau_{u,v}\). 
Then by Proposition~\ref{pathcrit} we have that \(\{\so w, \we w\} \cap \tau \neq \varnothing\). First assume that \(\so w \notin \tau\), so that \(\we w \in \tau\). It must be that \(w_2> u_2\) since \(\tau\) is a skew shape. Thus \(u \nearrow \we w \nearrow w \nearrow v\), and since \(\ssee \we w = \so w \notin \tau\), we have that \(\we w \in \xi^\tau_{u,v}\) by the definition of \(\xi^\tau_{u,v}\). On the other hand, assume that \(\so w \in\tau\). Since \(\so u \notin \tau\), we must have that \(u_1 > w_1\) since \(\tau\) is a skew shape. Thus \(u \nearrow \so w \nearrow w \nearrow v\). As \(w \in \xi^\tau_{u,v}\), we have that \(\ssee w \notin \tau\). Therefore \(\ssee \so w = \so \ssee w \notin \tau\) since \(\tau\) is a skew shape. Thus we have that \(\so w \in \xi^\tau_{u,v}\) by the definition of \(\xi^\tau_{u,v}\). So, in any case, we have that \(\{\so w, \we w \} \cap \xi^\tau_{u,v} \neq \varnothing\). We may show in a similar fashion that \(\{\no w, \ea w \} \cap \xi^\tau_{u,v} \neq \varnothing\) as well, so we have that \(\xi^\tau_{u,v}\) is cornered. 

As \(\xi^\tau_{u,v}\) is thin, cornered, and diagonal-convex, we have that it is a ribbon by Proposition~\ref{CSS=PDC}. Now we show that \(\xi^\tau_{u,v}\) is \(\ssee\)-removable. Let \(w \in \xi^\tau_{u,v}\), \(w \neq z \in \tau\), and assume that \(w \searrow z\). 
Since \(\ssee w \notin \tau\), we cannot have that \(w \SEarrow z\) because \(\tau\) is a skew shape, and we similarly have that \(\ssee z \notin \tau\). Therefore we have either that \(w_1 = z_1, w_2< z_2\), or that \(w_2=z_2, w_1<z_1\). Assume \(w_1 = z_1, w_2<z_2\). Then \(u \nearrow z\). If \(z_2>v_2\), then we would have \(v \searrow \ea v \searrow z\), a contradiction since \(\tau\) is a skew shape and \(\ea v \notin \tau\) by assumption. Thus \(v_2 \leq z_2\), so \(u \nearrow z \nearrow v\). On the other hand, assume that \(w_2=z_2, w_1<z_1\). Then \(z \nearrow v\). If \(u_1 < z_1\), then we would have \(u \nearrow \so u \nearrow z\), a contradiction since \(\tau\) is a skew shape and \(\so u \notin \tau\) by assumption. Thus \(u_1 \geq z_1\). Thus \(u \nearrow z \nearrow v\). In any case then, we have that \(\ssee z \notin \tau\) and \(u \nearrow z \nearrow v\), so \(z \in \xi^\tau_{u,v}\). Therefore we have by Lemma~\ref{remcond} that \(\xi^\tau_{u,v}\) is an \(\ssee\)-removable ribbon.

Now assume that \(\xi\) is some \(\ssee\)-removable ribbon in \(\tau\). Since \(\xi\) is connected, \(\xi\) must be contained entirely within one connected component, and so \(\xi_{\ssww}\) and \(\xi_{\nnee}\) are in the same connected component of \(\tau\). Since \(\xi\) is a connected skew shape, we have that \(\xi_{\ssww} \nearrow w \nearrow \xi_{\nnee}\) for all \(w \in \xi\) by Lemma~\ref{uniquemax}. 
Note that \(\so \xi_{\ssww} \notin \xi\) by the definition of \(\xi_{\ssww}\), and by Lemma~\ref{remcond} we have \(\xi_{\ssww} \notin \tau \backslash \xi\) because \(\xi\) is \(\ssee\)-removable in \(\tau\). Thus \(\so \xi_{\ssww} \notin \tau\). It may similarly be shown that \(\we \xi_{\nnee} \notin \tau\). Therefore \((\xi_{\ssww}, \xi_{\nnee}) \in \textup{Rem}_\tau\).

We now show that \(\xi = \xi^\tau_{\xi_{\ssww}, \xi_{\nnee}}\). Let \(w \in \xi\). Then \(w \in \tau\) and \(\xi_{\ssww} \nearrow w \nearrow \xi_{\nnee}\) as previously noted. We have \(\ssee w \notin \xi\) because \(\xi\) is thin, and we have \(\ssee w \notin \tau \backslash \xi\) by Lemma~\ref{remcond} because \(\xi\) is \(\ssee\)-removable in \(\tau\). Therefore \(\ssee w \notin \tau\), and thus \(w \in  \xi^\tau_{\xi_{\ssww}, \xi_{\nnee}}\), so \(\xi \subseteq  \xi^\tau_{\xi_{\ssww}, \xi_{\nnee}}\). By construction, \((\xi^\tau_{\xi_{\ssww}, \xi_{\nnee}})_{\ssww} = \xi_{\ssww}\), and \((\xi^\tau_{\xi_{\ssww}, \xi_{\nnee}})_{\nnee} = \xi_{\nnee}\), so by Corollary~\ref{sizehook} we have
\(
|\xi|  = \dist(\xi_{\ssww}, \xi_{\nnee}) +1 = |\xi^\tau_{\xi_{\ssww}, \xi_{\nnee}}|,
\)
which gives the equality \(\xi = \xi^\tau_{\xi_{\ssww}, \xi_{\nnee}}\), completing the proof.
\end{proof}

 \begin{lemma}\label{remhookswap}
Let \(\tau' \subsetneq \tau\) be skew shapes, let \(u,v,z,w\) be nodes in the same connected component of \(\tau\), and assume \((u,v) \in \textup{Rem}_\tau\), \(\tau' = \tau \backslash \xi^{\tau}_{u,v}\), and \((z,w) \in \textup{Rem}_{\tau'}\). Then we have the following implications:
\begin{enumerate}
\item \(u \nearrow v \nearrow z \nearrow w \implies (u,w) \in \textup{Rem}_{\tau}\);
\item \(z \nearrow w \nearrow u \nearrow v \implies (z,v) \in \textup{Rem}_\tau\);
\item \(u \nearrow z \nearrow v \nearrow w \implies (\ssee z, w) \in \textup{Rem}_{\tau}\);
\item \(z \nearrow u \nearrow w \nearrow v \implies (z, \ssee w) \in \textup{Rem}_{\tau}\);
\item \(u \nearrow z \nearrow w \nearrow v \implies (\ssee z, \ssee w) \in \textup{Rem}_\tau\);
\item \(z \nearrow u \nearrow v \nearrow w \implies (z,w) \in \textup{Rem}_\tau\);
\end{enumerate}
 \end{lemma}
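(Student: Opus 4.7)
The plan is to verify, for each of the six implications, the three defining conditions of $\textup{Rem}_\tau$: that the two nodes share a connected component of $\tau$, that they satisfy the required $\nearrow$-relation, and that $\so(\cdot)\notin\tau$ for the first entry and $\ea(\cdot)\notin\tau$ for the second. In cases (iii)--(v) one must also confirm that the shifted nodes $\ssee z$ and/or $\ssee w$ lie in $\tau$. This follows from the key observation: any $x\in\tau'$ with $u\nearrow x\nearrow v$ must satisfy $\ssee x\in\tau$, since otherwise $x$ would belong to $\xi^\tau_{u,v}$ by its defining condition.

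For the $\nearrow$-relations, transitivity handles cases (i), (ii), (vi) directly. In cases (iii)--(v), the shift by $\ssee$ preserves the $\nearrow$-relation unless a ``degenerate'' same-row or same-column alignment occurs, and such degeneracies are ruled out by the skew-shape property. For instance, in case (iii), if $z$, $v$, $w$ shared a column, then $\ea v\notin\tau$ combined with $v,\ssee z\in\tau$ would contradict the skew-shape condition applied to the column $v_2+1$; the analogous obstructions in cases (iv) and (v) are dispatched similarly.

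For the boundary conditions, the corresponding facts already hold in $\tau'$ (from $(z,w)\in\textup{Rem}_{\tau'}$) or in $\tau$ (from $(u,v)\in\textup{Rem}_\tau$), so it suffices to show that the relevant boundary node does not belong to $\xi^\tau_{u,v}$. Membership in $\xi^\tau_{u,v}$ would force $u\nearrow(\cdot)\nearrow v$; chaining this with the hypothesized positions of $u,v,z,w$ yields either an immediate coordinate contradiction or an equality precluded by $\xi^\tau_{u,v}\cap\tau'=\varnothing$. For shifted conditions such as $\so\ssee z\notin\tau$, we split on whether $\so z\in\xi^\tau_{u,v}$ (in which case $\so\ssee z=\ssee\so z\notin\tau$ by the definition of $\xi^\tau_{u,v}$) or $\so z\notin\tau$ (in which case the skew-shape property applied to $z,\ssee z\in\tau$ forces $\so\ssee z\notin\tau$ as well). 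The same-component condition follows from the hypothesis together with Lemma~\ref{seconnect}, which guarantees that $\ssee x$ lies in the same component as $x$ whenever both are in $\tau$.

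The main obstacle is not conceptual but enumerative: the six implications admit a uniform template, yet each requires its own coordinate bookkeeping and its own version of the degeneracy exclusion, so the full verification is somewhat lengthy despite its repetitiveness. The most delicate step in each case is ruling out the edge configurations where a shifted node would coincide with $u$ or $v$, where one must invoke the disjointness of $\xi^\tau_{u,v}$ and $\tau'$ together with the skew-shape property.
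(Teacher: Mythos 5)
Your plan is correct and follows essentially the same route as the paper's proof, which likewise verifies the three defining conditions of $\textup{Rem}_\tau$ directly using the defining property of $\xi^\tau_{u,v}$ (your key observation) and the skew-shape property, organizing the boundary-condition checks into four auxiliary claims and then leaving the remaining coordinate bookkeeping to the reader. One small correction: your degeneracy discussion should apply only to cases (iii) and (iv) --- in case (v) both endpoints are shifted by the same $\ssee$ translation, so $z\nearrow w$ is literally equivalent to $\ssee z\nearrow\ssee w$ and there is no alignment to rule out.
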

 \begin{proof}
For ease of visualizing of these relationships, examples of cases (i)--(vi) are shown in Figure~\ref{fig:overlaps}.
We first prove a number of claims:
\begin{align}
v \nearrow w & \implies \ea w \notin \tau \label{vw}\\
z \nearrow u & \implies \so z \notin \tau \label{zu}\\
u \nearrow z \nearrow v &\implies \ssee z \in \tau \textup{ and } \so \ssee z \notin \tau \label{uzv}\\
u \nearrow w \nearrow v & \implies \ssee w  \in \tau \textup{ and } \ea \ssee w  \notin \tau. \label{uwv}
\end{align} 
Since \((z,w) \in \textup{Rem}_{\tau'}\), we must have \(\ea w, \so z \notin \tau'\). Thus, if \(v \nearrow w\), we have \(\ea w \notin \tau\) since \(x \nearrow v \nearrow w\) for all \(x \in \xi_{u,v}^\tau\), and if \(z \nearrow u\), we have \(\so z \notin \tau\) since \(z \nearrow u \nearrow x\) for all \(x \in \xi_1\), proving (\ref{vw}, \ref{zu}).

For (\ref{uzv}), assume that \(u \nearrow z \nearrow v\). We have \(z \in \tau' = \tau \backslash \xi^\tau_{u,v}\), so we must have \(\ssee z \in \tau\), else \(z\) would by definition belong to \(\xi^\tau_{u,v}\). Then we must have \(\so z \in \tau\) as well. But since \((z,w) \in \textup{Rem}_{\tau'}\), we must have that \(\so z \notin \tau'\), so it follows that \(\so z \in \xi^{\tau}_{u,v}\). Then \(\so \ssee z = \ssee \so z \notin \tau\). 

For (\ref{uwv}), assume that \(u \nearrow w \nearrow v\). We have \(w \in \tau' = \tau \backslash \xi^\tau_{u,v}\), so we must have \(\ssee w \in \tau\), else \(w\) would by definition belong to \(\xi^\tau_{u,v}\). Then we must have \(\ea w \in \tau\) as well. But since \(
(z,w) \in \textup{Rem}_{\tau'}\), we must have that \(\ea w \notin \tau'\), so it follows that \(\ea w \in \xi^{\tau}_{u,v}\). Then \(\ea \ssee w = \ssee \ea w \notin \tau\). 

Now for each pair of nodes \((a,b)\) on the right side of any of the implication statements in (i)--(vi), it is straightforward to verify via (\ref{vw}--\ref{uwv}) that \(a,b \in \tau\), \(a \nearrow b\), and \(\so a, \ea b \notin \tau\), so that \((a,b) \in \textup{Rem}_\tau\). 
\end{proof}

\begin{figure}[h]
\begin{align*}
{}
\hackcenter{
\begin{tikzpicture}[scale=0.4]
%
\draw[thick]   (1,0)--(2,0);
\draw[thick]   (7,5)--(7,6);
\draw[thick, fill=cyan!25]   (2,0)--(4,0)--(4,1)--(6,1)--(6,3)--(7,3)--(7,5)--(6,5)--(6,4)--(5,4)--(5,2)--(3,2)--(3,1)--(2,1)--(2,0);
\draw[thick, fill=yellow] (2,0)--(4,0)--(4,1)--(6,1)--(6,2)--(3,2)--(3,1)--(2,1)--(2,0);
%
\draw[thick, dotted] (3,0)--(3,5);
\draw[thick, dotted] (4,0)--(4,5.5);
\draw[thick, dotted] (5,1)--(5,5.75);
\draw[thick, dotted] (6,1)--(6,6);
\draw[thick, dotted] (1,1)--(6,1);
\draw[thick, dotted] (1.25,2)--(6,2);
\draw[thick, dotted] (1.5,3)--(6,3);
\draw[thick, dotted] (2,4)--(7,4);
\draw[thick]   (2,0)--(4,0)--(4,1)--(6,1)--(6,3)--(7,3)--(7,5)--(6,5)--(6,4)--(5,4)--(5,2)--(3,2)--(3,1)--(2,1)--(2,0);
\draw[thick] (2,0)--(4,0)--(4,1)--(6,1)--(6,2)--(3,2)--(3,1)--(2,1)--(2,0);
\node at (2,5){ $\tau$};
\node at (2.5,0.5){ $u$};
\node at (5.5,1.5){ $v$};
\node at (5.5,2.5){ $z$};
\node at (6.5,4.5){ $w$};
%
\end{tikzpicture}
}
\qquad
\hackcenter{
\begin{tikzpicture}[scale=0.4]
%
\draw[thick]   (1,0)--(2,0);
\draw[thick]   (7,5)--(7,6);
\draw[thick, fill=yellow]   (2,0)--(4,0)--(4,1)--(6,1)--(6,3)--(7,3)--(7,5)--(6,5)--(6,4)--(5,4)--(5,2)--(3,2)--(3,1)--(2,1)--(2,0);
\draw[thick, fill=cyan!25] (2,0)--(4,0)--(4,1)--(5,1)--(5,2)--(3,2)--(3,1)--(2,1)--(2,0);
%
\draw[thick, dotted] (3,0)--(3,5);
\draw[thick, dotted] (4,0)--(4,5.5);
\draw[thick, dotted] (5,1)--(5,5.75);
\draw[thick, dotted] (6,1)--(6,6);
\draw[thick, dotted] (1,1)--(6,1);
\draw[thick, dotted] (1.25,2)--(6,2);
\draw[thick, dotted] (1.5,3)--(6,3);
\draw[thick, dotted] (2,4)--(7,4);
\draw[thick]   (2,0)--(4,0)--(4,1)--(6,1)--(6,3)--(7,3)--(7,5)--(6,5)--(6,4)--(5,4)--(5,2)--(3,2)--(3,1)--(2,1)--(2,0);
\draw[thick] (2,0)--(4,0)--(4,1)--(5,1)--(5,2)--(3,2)--(3,1)--(2,1)--(2,0);
\node at (2,5){ $\tau$};
\node at (2.5,0.5){ $z$};
\node at (4.5,1.5){ $w$};
\node at (5.5,1.5){ $u$};
\node at (6.5,4.5){ $v$};
%
\end{tikzpicture}
}
\end{align*}
\begin{align*}
{}
\hackcenter{
\begin{tikzpicture}[scale=0.4]
%
\draw[thick]   (1,0)--(2,0);
\draw[thick]   (7,5)--(7,6);
\draw[thick, fill=cyan!25]   (2,0)--(4,0)--(4,1)--(6,1)--(6,3)--(7,3)--(7,5)--(6,5)--(6,4)--(4,4)--(4,3)--(3,3)--(3,1)--(2,1)--(2,0);
\draw[thick, fill=yellow] (2,0)--(4,0)--(4,1)--(6,1)--(6,3)--(5,3)--(5,2)--(3,2)--(3,1)--(2,1)--(2,0);
%
\draw[thick, dotted] (3,0)--(3,5);
\draw[thick, dotted] (4,0)--(4,5.5);
\draw[thick, dotted] (5,1)--(5,5.75);
\draw[thick, dotted] (6,1)--(6,6);
\draw[thick, dotted] (1,1)--(6,1);
\draw[thick, dotted] (1.25,2)--(6,2);
\draw[thick, dotted] (1.5,3)--(6,3);
\draw[thick, dotted] (2,4)--(7,4);
\draw[thick]   (2,0)--(4,0)--(4,1)--(6,1)--(6,3)--(7,3)--(7,5)--(6,5)--(6,4)--(4,4)--(4,3)--(3,3)--(3,1)--(2,1)--(2,0);
\draw[thick] (2,0)--(4,0)--(4,1)--(6,1)--(6,3)--(5,3)--(5,2)--(3,2)--(3,1)--(2,1)--(2,0);
\node at (2,5){ $\tau$};
\node at (2.5,0.5){ $u$};
\node at (5.5,2.5){ $v$};
\node at (3.5,2.5){ $z$};
\node at (6.5,4.5){ $w$};
%
\end{tikzpicture}
}
\qquad
\hackcenter{
\begin{tikzpicture}[scale=0.4]
%
\draw[thick]   (1,0)--(2,0);
\draw[thick]   (7,5)--(7,6);
\draw[thick, fill=cyan!25]   (2,0)--(4,0)--(4,1)--(6,1)--(6,3)--(7,3)--(7,5)--(6,5)--(6,4)--(4,4)--(4,3)--(3,3)--(3,1)--(2,1)--(2,0);
\draw[thick, fill=yellow] (4,1)--(6,1)--(6,3)--(7,3)--(7,5)--(6,5)--(6,4)--(5,4)--(5,2)--(4,2)--(4,1);
%
\draw[thick, dotted] (3,0)--(3,5);
\draw[thick, dotted] (4,0)--(4,5.5); 
\draw[thick, dotted] (5,1)--(5,5.75);
\draw[thick, dotted] (6,1)--(6,6);
\draw[thick, dotted] (1,1)--(6,1);
\draw[thick, dotted] (1.25,2)--(6,2);
\draw[thick, dotted] (1.5,3)--(6,3);
\draw[thick, dotted] (2,4)--(7,4);
\draw[thick] (2,0)--(4,0)--(4,1)--(6,1)--(6,3)--(7,3)--(7,5)--(6,5)--(6,4)--(4,4)--(4,3)--(3,3)--(3,1)--(2,1)--(2,0);
\draw[thick] (4,1)--(6,1)--(6,3)--(7,3)--(7,5)--(6,5)--(6,4)--(5,4)--(5,2)--(4,2)--(4,1);
\node at (2,5){ $\tau$};
\node at (2.5,0.5){ $z$};
\node at (4.5,3.5){ $w$};
\node at (4.5,1.5){ $u$};
\node at (6.5,4.5){ $v$};
%
\end{tikzpicture}
}
\qquad
\hackcenter{
\begin{tikzpicture}[scale=0.4]
%
\draw[thick]   (1,0)--(2,0);
\draw[thick]   (7,5)--(7,6);
\draw[thick, fill=cyan!25]   (2,0)--(4,0)--(4,1)--(6,1)--(6,3)--(7,3)--(7,5)--(6,5)--(6,4)--(4,4)--(4,3)--(3,3)--(3,1)--(2,1)--(2,0);
\draw[thick, fill=yellow] (3,2)--(3,1)--(2,1)--(2,0)--(4,0)--(4,1)--(6,1)--(6,3)--(7,3)--(7,5)--(6,5)--(6,4)--(5,4)--(5,2)--(4,2)--(3,2);
%
\draw[thick, dotted] (3,0)--(3,5);
\draw[thick, dotted] (4,0)--(4,5.5); 
\draw[thick, dotted] (5,1)--(5,5.75);
\draw[thick, dotted] (6,1)--(6,6);
\draw[thick, dotted] (1,1)--(6,1);
\draw[thick, dotted] (1.25,2)--(6,2);
\draw[thick, dotted] (1.5,3)--(6,3);
\draw[thick, dotted] (2,4)--(7,4);
\draw[thick] (2,0)--(4,0)--(4,1)--(6,1)--(6,3)--(7,3)--(7,5)--(6,5)--(6,4)--(4,4)--(4,3)--(3,3)--(3,1)--(2,1)--(2,0);
\draw[thick] (3,2)--(3,1)--(2,1)--(2,0)--(4,0)--(4,1)--(6,1)--(6,3)--(7,3)--(7,5)--(6,5)--(6,4)--(5,4)--(5,2)--(4,2)--(3,2);
\node at (2,5){ $\tau$};
\node at (2.5,0.5){ $u$};
\node at (4.5,3.5){ $w$};
\node at (3.5,2.5){ $z$};
\node at (6.5,4.5){ $v$};
%
\end{tikzpicture}
}
\qquad
\hackcenter{
\begin{tikzpicture}[scale=0.4]
%
\draw[thick]   (1,0)--(2,0);
\draw[thick]   (7,5)--(7,6);
\draw[thick, fill=yellow]   (2,0)--(4,0)--(4,1)--(6,1)--(6,3)--(7,3)--(7,5)--(6,5)--(6,4)--(4,4)--(4,3)--(3,3)--(3,1)--(2,1)--(2,0);
\draw[thick, fill=cyan!25] (2,0)--(4,0)--(4,2)--(5,2)--(5,3)--(7,3)--(7,5)--(6,5)--(6,4)--(4,4)--(4,3)--(3,3)--(3,1)--(2,1)--(2,0);
%
\draw[thick, dotted] (3,0)--(3,5);
\draw[thick, dotted] (4,0)--(4,5.5); 
\draw[thick, dotted] (5,1)--(5,5.75);
\draw[thick, dotted] (6,1)--(6,6);
\draw[thick, dotted] (1,1)--(6,1);
\draw[thick, dotted] (1.25,2)--(6,2);
\draw[thick, dotted] (1.5,3)--(6,3);
\draw[thick, dotted] (2,4)--(7,4);
\draw[thick]   (2,0)--(4,0)--(4,1)--(6,1)--(6,3)--(7,3)--(7,5)--(6,5)--(6,4)--(4,4)--(4,3)--(3,3)--(3,1)--(2,1)--(2,0);
\draw[thick] (2,0)--(4,0)--(4,2)--(5,2)--(5,3)--(7,3)--(7,5)--(6,5)--(6,4)--(4,4)--(4,3)--(3,3)--(3,1)--(2,1)--(2,0);
\node at (2,5){ $\tau$};
\node at (2.5,0.5){ $z$};
\node at (5.5,2.5){ $v$};
\node at (4.5,1.5){ $u$};
\node at (6.5,4.5){ $w$};
%
\end{tikzpicture}
}
\end{align*}
\caption{Cases (i)--(vi) in Lemma~\ref{remhookswap}}
\label{fig:overlaps}       
\end{figure}
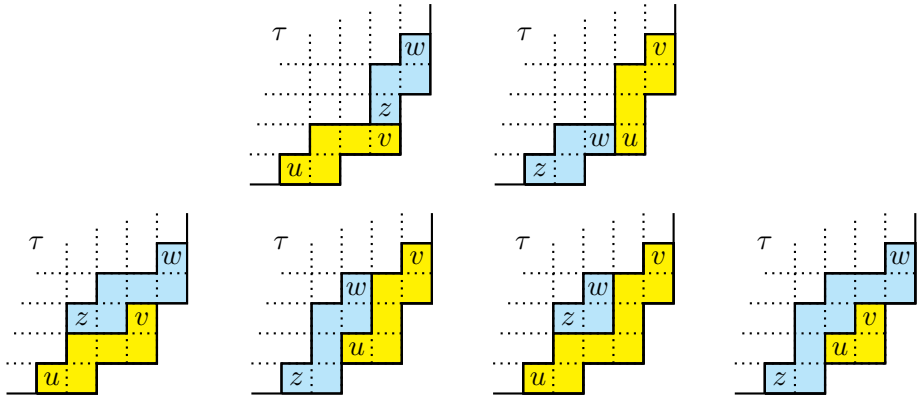

\section{Positive roots and convex preorders}\label{posrootsec}
We fix now and throughout the paper some choice of \(e \in \ZZ_{>1}\). Associated to \(e\) is an affine root system of type \({\tt A}_{e-1}^{(1)}\), corresponding to the affine Dynkin diagram shown in Figure~\ref{fig:dynkin}
(see \cite[\S 4, Table Aff 1]{Kac}). This root system plays a crucial role in the representation theory of the Kac-Moody Lie algebra \(\widehat{\mathfrak{sl}}_e\) and its associated quantum group, as well as modular representation theory of the symmetric group.
We describe the root system directly in the sequel.

\begin{figure}
\begin{align*}
{}
\hackcenter{
\begin{tikzpicture}[scale=0.4]
%
\coordinate (0) at (5,1);
\coordinate (1) at (0,-1);
\coordinate (2) at (2,-1);
\coordinate (3) at (4,-1);
\coordinate (4) at (6,-1);
\coordinate (5) at (8,-1);
\coordinate (6) at (10,-1);
\coordinate (0a) at (5,0.9);
\coordinate (1a) at (0,-1.1);
\coordinate (2a) at (2,-1.1);
\coordinate (3a) at (4,-1.1);
\coordinate (4a) at (6,-1.1);
\coordinate (5a) at (8,-1.1);
\coordinate (6a) at (10,-1.1);
\draw [thin, black,shorten <= 0.1cm, shorten >= 0.1cm]   (0) to (1);
\draw [thin, black,shorten <= 0.1cm, shorten >= 0.1cm]   (1) to (2);
\draw [thin, black,shorten <= 0.1cm, shorten >= 0.1cm]   (2) to (3);
\draw [thin, black,shorten <= 0.1cm, shorten >= 0.4cm]   (3) to (4);
\draw [thin, black,shorten <= 0.3cm, shorten >= 0.1cm]   (4) to (5);
\draw [thin, black,shorten <= 0.1cm, shorten >= 0.1cm]   (5) to (6);
\draw [thin, black,shorten <= 0.1cm, shorten >= 0.1cm]   (0) to (6);
\draw (0a) node[below]{$\scriptstyle{0}$};
\draw (1a) node[below]{$\scriptstyle{1}$};
\draw (2a) node[below]{$ \scriptstyle{2}$};
\draw (3a) node[below]{$\scriptstyle{3}$};
\draw (5a) node[below]{$ \scriptstyle{e-2}$};
\draw (6a) node[below]{$\scriptstyle{e-1}$};
\blackdot(5,1);
\blackdot(0,-1);
\blackdot(2,-1);
\blackdot(4,-1);
\blackdot(8,-1);
\blackdot(10,-1);
\draw(6,-1) node{$\cdots$};
\end{tikzpicture}
}
\end{align*}
\caption{Dynkin diagram of type \({\tt A}^{(1)}_{e-1}\)}
\label{fig:dynkin}       
\end{figure}
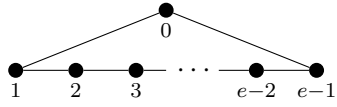

We write \(\ZZ_e = \ZZ/e\ZZ\), and will indicate elements of \(\ZZ_e\) with barred integers, i.e. \(\overline t = t + e\ZZ\) for \(t \in \ZZ\), freely omitting bars for \(t \in [0,e-1]\) when the context is clear.
Let \(\ZZ I\) be the free \(\ZZ\)-module of rank \(e\), with basis \(I = \{\alpha_i \mid i \in \ZZ_e\}\), and set \(Q_+ = \ZZ_{\geq 0}I\). For \(\beta = \sum_{i \in \Z_e} c_i\alpha_i \in Q_+\), we write \(\height(\beta) := \sum_{i \in \Z_e} c_i \in \ZZ_{\geq 0}\) for the {\em height} of \(\beta\).
For any \(t \in \ZZ_e\), \(h \in \NN\), we define \(\alpha(t,h) \in Q_+\) via
\begin{align*}
\alpha(t,h):= \alpha_t + \alpha_{t+\overline{1}} + \cdots + \alpha_{t+ \overline{h-1}}.
\end{align*}
We have then that \(\height(\alpha(t,h)) = h\). Of particular importance is the {\em null root} of height \(e\):
\begin{align*}
\delta := \alpha_{ 0} + \cdots + \alpha_{e-1} = \alpha(t,e)\qquad (t \in \ZZ_e).
\end{align*}
It follows from definitions that 
\begin{align}\label{splithookcont}
\alpha(t,h_1 + h_2) = \alpha(t, h_1) + \alpha(t + \overline{h}_1, h_2) 
\end{align}
for all \(t \in \ZZ_e, h_1,h_2 \in \NN\). The element \(\alpha(t,h)\) corresponds to the sum of simple roots in a counterclockwise path of length \(h\) in the Dynkin diagram in Figure~\ref{fig:dynkin} which begins at the vertex labeled \(t\).

\begin{definition} \(\)
\begin{enumerate}
\item
We say \(\beta \in Q_+\) is a {\em positive root} if \(\beta = \alpha(t,h)\) for some \(t \in \ZZ, h \in \NN\). We write \(\Phi_+\) for the set of all positive roots, so we have
\begin{align}
\Phi_+ = \{ \alpha(t,h) \mid t \in \ZZ_e, h \in \NN\} \subset \ZZ I.
\end{align}
\item We say \(\beta \in \Phi_+\) is {\em real} if \(\overline{\height(\beta)} \neq \overline 0\). Writing \(\Phi_+^\re\) for the set of all real positive roots, we have
\begin{align}
\Phi_+^\re =  \{ \alpha(t,h) \mid t \in \ZZ_e, h \in \NN, \overline h \neq \overline 0\} \subset \Phi_+.
\end{align}
\item We say \(\beta \in \Phi_+\) is {\em imaginary} if \(\overline{\height(\beta)} = \overline 0\).  Writing \(\Phi_+^\im\) for the set of imaginary positive roots, we have
\begin{align}\label{allimag}
\Phi_+^\im =  \{ \alpha(t,h) \mid t \in \ZZ_e, h \in \NN, \overline h = \overline 0\} = \{m \delta \mid m \in\NN \} \subset \Phi_+.
\end{align}
\item We say a positive root \(\beta\) is {\em divisible} if there exists \(\beta' \in \Phi_+\), \(m \in \ZZ_{>1}\) such that \(\beta= m \beta'\), and {\em indivisible} if not. Writing \(\Psi\) for the set of indivisible roots, we have
\begin{align}\label{PsiDef}
\Psi = \Phi_+^\re \sqcup \{ \delta\} .
\end{align}
\item We define \(\Phi_+'\) to be the set of all positive integer multiples of positive roots, so we have
\begin{align}\label{multphi}
\Phi_+' = \{m \beta \mid m \in \NN, \beta \in \Phi_+\} = \{m \beta \mid m \in \NN, \beta \in \Psi\} \subset Q_+.
\end{align}
\end{enumerate}
\end{definition}

\subsection{Convex preorders}
A {\em convex preorder} on \(\Phi_+\) is a binary relation \(\succeq\) on \(\Phi_+\) which, for all \(\beta, \gamma, \nu \in \Phi_+\) satisfies the following:
\begin{enumerate}
\item \(\beta \succeq \beta\) (reflexivity);
\item \(\beta \succeq \gamma\) and \(\gamma \succeq \nu\) imply \(\beta \succeq \nu\) (transitivity);
\item \(\beta \succeq \gamma\) or \(\gamma \succeq \beta\) (totality);
\item \(\beta \succeq \gamma\) and \(\beta + \gamma \in \Phi_+\) imply \(\beta \succeq \beta  + \gamma \succeq \gamma\) (convexity);
\item \(\beta \succeq \gamma\) and \(\gamma \succeq \beta\) if and only if \(\beta = \gamma\) or \(\beta, \gamma \in \Phi_+^\im\) (imaginary equivalency).
\end{enumerate}
We write \(\beta \succ \gamma\) if \(\beta \succeq \gamma\) and \(\gamma \not \succeq \beta\). Then (iii) and (v) together imply that \(\succ\) restricts to a total order on \(\Psi\). We also write \(\beta \approx \gamma\) if \(\beta \succeq \gamma\) and \(\gamma \succeq \beta\), so (v) and (\ref{allimag}) imply that \(\beta \approx \gamma\), \(\beta \neq \gamma\) if and only if \(\beta = m \delta\), \(\gamma = m'\delta\), for some \(m \neq m' \in \NN\).

Convex preorders are known to exist for any choice of \(e\), see \cite[Example 3.5, 3.6]{McN} for explicit constructions. One example of a convex preorder in the case \(e=3\) is shown in Example~\ref{BigEx}. A more straightforward example is the following:

\begin{example}\label{e2order}
Take \(e=2\). Below we show one of two possible convex preorders on \(\Phi_+\). The other is the reverse preorder (see \S\ref{revsec}).
\begin{align*}
\alpha_1 \succ
\delta+\alpha_1 
\succ
2\delta+\alpha_1
\succ
\cdots
\succ
m\delta
\succ
\cdots
\succ
2\delta+\alpha_0
\succ
\delta+\alpha_0
\succ
\alpha_0.
\end{align*}
\end{example}

\subsection{Implications of convexity} 
The next lemma follows immediately from definitions:
\begin{lemma}\label{notapprox}
Let \(\beta, \beta' \in \Phi_+\), Then we have:
\begin{enumerate}
\item If \(\beta \approx \beta'\) and \(\height(\beta) > \height(\beta')\), then \(\beta, \beta' \in \Phi_+^\im\);
\item If \(\beta \in \Psi\) and \(\height(\beta) > \height(\beta')\), then \(\beta \not \approx \beta'\);
\item If \(\beta, \beta' \in \Psi\), then \(\beta \approx \beta'\) if and only if \(\beta = \beta'\).
\end{enumerate}
\end{lemma}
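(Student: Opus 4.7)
The proof will be an essentially direct unpacking of axiom (v) of a convex preorder together with the description \(\Psi = \Phi_+^\re \sqcup \{\delta\}\) and \(\Phi_+^\im = \{m\delta \mid m \in \NN\}\) from (\ref{PsiDef}) and (\ref{allimag}). Since nothing here involves the combinatorics of skew shapes, I do not expect any genuine obstacle; the three statements really form a short logical chain and the hardest thing is simply keeping track of what ``\(\approx\)'' is allowed to mean.

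For (i), I plan to apply axiom (v) of the convex preorder to \(\beta \approx \beta'\). This forces \(\beta = \beta'\) or \(\beta, \beta' \in \Phi_+^\im\). The assumption \(\height(\beta) > \height(\beta')\) immediately rules out the first alternative, leaving both roots imaginary.

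For (ii), I argue by contradiction: suppose \(\beta \approx \beta'\) with \(\beta \in \Psi\) and \(\height(\beta) > \height(\beta')\). By (i), \(\beta, \beta' \in \Phi_+^\im\). But \(\Psi \cap \Phi_+^\im = \{\delta\}\) by (\ref{PsiDef}), so \(\beta = \delta\) and \(\height(\beta) = e\). Using (\ref{allimag}), write \(\beta' = m'\delta\) for some \(m' \in \NN\); then \(\height(\beta') = m'e \geq e = \height(\beta)\), contradicting the height strict inequality.

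For (iii), one direction is trivial from reflexivity. For the converse, suppose \(\beta, \beta' \in \Psi\) and \(\beta \approx \beta'\). If \(\beta \neq \beta'\) then axiom (v) forces both into \(\Phi_+^\im\), and as in (ii) the intersection \(\Psi \cap \Phi_+^\im = \{\delta\}\) forces \(\beta = \delta = \beta'\), a contradiction. Hence \(\beta = \beta'\). The whole argument should fit comfortably in a few lines.
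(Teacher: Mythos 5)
Your argument is correct and is exactly the kind of direct unpacking of axiom (v) together with $\Psi \cap \Phi_+^\im = \{\delta\}$ that the paper has in mind (the paper states the lemma "follows immediately from definitions" and gives no written proof). Nothing to add.
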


We have the following useful generalization of the convexity property:

\begin{lemma} \label{GenConvexLem} \cite{KCusp, BKT}
Let \(\gamma, \beta_1, \ldots, \beta_k \in \Phi_+\), \(m \in \NN\) be such that \(\beta_1 + \cdots + \beta_k = m \gamma\). Then we have the following:
\begin{enumerate}
\item If \(\gamma \in \Phi_+^\re\), and \(\beta_i \succeq \gamma\) for all \(i \in [1,k]\) or \(\gamma \succeq \beta_i\) for all \(i \in [1,k]\), then \(\gamma = \beta_i\) for all \(i \in [1,k]\).
\item If \(\gamma \in \Phi_+^\im\), and \(\beta_i \succeq \gamma\) for all \(i \in [1,k]\) or \(\gamma \succeq \beta_i\) for all \(i \in [1,k]\), then \(\beta_i \in \Phi_+^\im\) for all \(i \in [1,k]\).
\item If \(\beta_1 \succeq \cdots \succeq \beta_k\), then \(\beta_1 \succeq \gamma \succeq \beta_k\).
\item If \(\beta_1 \succeq \cdots \succeq \beta_k\) and \(\beta_1 \succ \beta_k\), then \(\beta_1 \succ \gamma \succ \beta_k\).
\end{enumerate}
\end{lemma}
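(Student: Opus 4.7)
My plan is to prove (iii) first, and then derive (i), (ii), (iv) as consequences of (iii).

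For (iii), I induct on $k$. The base case $k = 1$ is direct: $\beta_1 = m\gamma$, and since the identity $\alpha(t, qe+r) = q\delta + \alpha(t, r)$ shows that the simple-root coefficients of a real positive root take only the two consecutive values $q$ and $q+1$, we get $m = 1$ when $\gamma \in \Phi_+^\re$, while for $\gamma \in \Phi_+^\im$ the element $m\gamma$ is imaginary and axiom (v) yields $\beta_1 \approx \gamma$. For $k \geq 2$, the strategy is to locate indices $i < j$ with $\beta_i + \beta_j \in \Phi_+$; the convexity axiom then gives $\beta_i \succeq \beta_i + \beta_j \succeq \beta_j$, so replacing $(\beta_i, \beta_j)$ by the single term $\beta_i + \beta_j$ (reinserted in sorted position) produces a sorted sequence of length $k-1$ with unchanged total $m\gamma$, and the induction hypothesis applied to this shorter sequence yields $\beta_1 \succeq \gamma \succeq \beta_k$, since the new extremes remain pinched between $\beta_1$ and $\beta_k$.

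The main obstacle is establishing existence of such a pair $(i,j)$, together with the sub-case $k=2$, $\gamma \in \Phi_+^\re$, $m \geq 2$, in which no such pair exists since $\beta_1 + \beta_2 = m\gamma \notin \Phi_+$. For the existence claim I would argue combinatorially via the cyclic-arc description of positive roots: each $\beta_i = \alpha(t_i, h_i)$ is an arc on the Dynkin diagram of Figure~\ref{fig:dynkin}, the constraint $\sum \beta_i = m\gamma$ forces these arcs to cover a specific uniform coefficient pattern, and a pigeonhole on endpoints produces two arcs that concatenate via (\ref{splithookcont}) to another positive root $\alpha(t_i, h_i + h_j) \in \Phi_+$. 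For the exceptional $k = 2$ case I would use the dichotomy on $A = \{j : \beta_j \succ \gamma\}$ and $B = \{j : \beta_j \prec \gamma\}$: if both are nonempty, (iii) is immediate from sorting; the remaining case when all $\beta_j$ lie strictly on one side of $\gamma$ I would exclude by refining the decomposition—splitting individual $\beta_j$ via (\ref{splithookcont}) until the arc-pigeonhole above applies. This combinatorial analysis is the technical heart of the lemma and parallels the arguments in \cite{KCusp, BKT}.

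Given (iii), parts (i) and (ii) follow by a uniform argument. Assume without loss of generality that all $\beta_i \succeq \gamma$ and reorder as $\beta_1 \succeq \cdots \succeq \beta_k$. By (iii) we have $\gamma \succeq \beta_k$, hence $\beta_k \approx \gamma$. When $\gamma \in \Phi_+^\re$, axiom (v) forces $\beta_k = \gamma$, and the stripped sum satisfies $(m-1)\gamma = \beta_1 + \cdots + \beta_{k-1}$; for $k \geq 2$ this forces $m \geq 2$ (else the stripped sum vanishes), and induction on $k$ completes (i). When $\gamma = a\delta \in \Phi_+^\im$, axiom (v) gives $\beta_k = b\delta$ imaginary, and the stripped sum $(ma - b)\delta$ is again a positive integer multiple of an imaginary positive root, so induction on $k$ yields (ii).

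For (iv), suppose $\beta_1 \succeq \cdots \succeq \beta_k$ with $\beta_1 \succ \beta_k$; by (iii), $\beta_1 \succeq \gamma \succeq \beta_k$. If $\gamma \approx \beta_1$, then all $\beta_i \preceq \beta_1 \approx \gamma$, so part (i) (real case) would force all $\beta_i = \gamma$ and part (ii) (imaginary case) would force all $\beta_i \in \Phi_+^\im$ and hence pairwise $\approx$; both contradict $\beta_1 \succ \beta_k$. The case $\gamma \approx \beta_k$ is symmetric.
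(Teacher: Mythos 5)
Your proposal inverts the logical structure of the paper's proof: the paper simply cites parts (i) and (ii) as the known properties (Con1) and (Con3) of \cite{KCusp}, which ultimately rest on \cite[Lemma~2.9]{BKT}, and then observes that (iii) and (iv) are contrapositive reformulations of those. You instead aim to prove (iii) first and then derive (i), (ii), (iv). Your derivations of (i), (ii), and (iv) from (iii) are correct and clean --- the stripping argument for (i)/(ii) and the $\gamma\approx\beta_1$ exclusion for (iv) both work. (To be fully precise: deriving (iii), (iv) from (i), (ii) as the paper does is also only a few lines, so neither direction is a genuine shortcut; both need the same hard core.)

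The genuine gap is your proof of (iii). The ``arc-pigeonhole'' for finding $i<j$ with $\beta_i+\beta_j\in\Phi_+$ is not established, and it genuinely fails in cases that go beyond the one you flag. For instance, with $k=2$, $\gamma$ real, $m=2$, one can have $\beta_1=\gamma+n\delta$ and $\beta_2=\gamma-n\delta$ (for $n\ge 1$), where $\beta_1+\beta_2=2\gamma\notin\Phi_+$ and neither arc's end abuts the other's start. Resolving such a configuration requires decomposing one term through an \emph{intermediate} root (here $n\delta$) and then invoking convexity in a non-routine way; the claim that ``splitting via (\ref{splithookcont}) until the pigeonhole applies'' excludes the bad case is not carried out, and in fact the excluded case ``all $\beta_j$ strictly on one side of $\gamma$'' \emph{is precisely the content of (i)/(ii)}. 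So if you lean on that exclusion, the derivation of (i)/(ii) from (iii) becomes circular. You acknowledge the combinatorial analysis as ``the technical heart'' and refer back to \cite{KCusp,BKT}, which in effect reduces your proposal to the same citations the paper already makes --- but with an incomplete new argument in front of them. Either fill in the concatenation/pigeonhole argument completely (type-$A$ specific, and more work than you allot), or follow the paper and take (i), (ii) directly from \cite{KCusp,BKT} and derive (iii), (iv) from them.
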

\begin{proof}
Claims (i) and (ii) appear as the properties \cite[(Con1), (Con3)]{KCusp}, which follow from \cite[Lemma 3.1]{KCusp}, which utilizes \cite[Lemma 2.9]{BKT}. Claims (iii) and (iv) are contrapositive reformulations implied by (i) and (ii).
\end{proof}

\begin{lemma} \label{psidef} 
The function \(\NN \times \Psi \to \Phi_+'\), \((n, \beta) \mapsto n \beta\) is a bijection.
\end{lemma}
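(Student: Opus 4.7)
The plan is to check surjectivity and injectivity of the given map separately. Surjectivity is immediate from the alternate characterization in~(\ref{multphi}): every element of $\Phi_+'$ is already of the form $m\beta$ with $(m,\beta) \in \NN \times \Psi$, so it lies in the image.

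For injectivity, the central claim I will establish is that every $\beta \in \Psi$ has a coefficient vector in $\ZZ I$ whose entries have greatest common divisor equal to $1$. This is trivial for $\beta = \delta$ since all its coefficients equal $1$. For $\beta = \alpha(t,h) \in \Phi_+^\re$, I will examine the multiplicities directly: the coefficient $c_i(\beta)$ counts how many indices $j \in [0,h-1]$ satisfy $t + \overline{j} = i$ in $\ZZ_e$, which is either $\lfloor h/e \rfloor$ or $\lceil h/e \rceil$. Since $\overline h \neq \overline 0$, these are consecutive integers $\lfloor h/e\rfloor$ and $\lfloor h/e\rfloor + 1$, and both values actually occur (for example, $\alpha_t$ appears $\lceil h/e \rceil$ times while $\alpha_{t-\overline 1}$ appears $\lfloor h/e \rfloor$ times). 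Two consecutive nonnegative integers have GCD $1$, so the claim follows.

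With this in hand, the rest is a short GCD computation. Suppose $n\beta = n'\beta'$ for $(n,\beta), (n',\beta') \in \NN \times \Psi$. Then on one side the GCD of coefficients is $n \cdot \gcd_i c_i(\beta) = n$, and on the other it is $n'$, so $n = n'$. Dividing the equality $n\beta = n\beta'$ by $n$ in the torsion-free group $\ZZ I$ yields $\beta = \beta'$, completing injectivity.

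I do not anticipate any real obstacle. The only step that requires any care is the verification that the coefficient vector of $\alpha(t,h) \in \Phi_+^\re$ has GCD $1$, and this reduces to the elementary observation that distributing $h$ consecutive simple roots cyclically around the Dynkin diagram in Figure~\ref{fig:dynkin} gives each node either $\lfloor h/e\rfloor$ or $\lceil h/e\rceil$ hits, with both values present precisely when $e \nmid h$. Note, in particular, that no use of the convex preorder $\succeq$ is needed—this lemma is purely a statement about the abelian group structure of $Q_+$ together with the combinatorics of the roots $\alpha(t,h)$.
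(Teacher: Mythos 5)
Your proof is correct, and it takes a genuinely different route from the paper. The paper proves injectivity by appealing to the convex preorder: from $n\beta = n'\beta'$ it invokes Lemma~\ref{GenConvexLem}(iii) to conclude $\beta \approx \beta'$, and then Lemma~\ref{notapprox}(iii) to upgrade this to $\beta = \beta'$ since both lie in $\Psi$. Your argument instead observes that every $\beta \in \Psi$ is a \emph{primitive} vector in $\ZZ I$ (GCD of coefficients equal to $1$), verified directly for $\delta$ and for real roots via the pigeonhole computation showing that $\alpha(t,h)$ with $e \nmid h$ has coefficients taking exactly the two consecutive values $\lfloor h/e\rfloor$ and $\lceil h/e\rceil$. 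Injectivity then follows from a one-line GCD comparison and torsion-freeness of $\ZZ I$. Your approach is more elementary and self-contained: it makes no use of the convex preorder at all, correctly reflecting the fact that the statement itself is preorder-independent. The paper's argument, while shorter on the page, imports the nontrivial convexity machinery of \cite{KCusp, BKT} through Lemma~\ref{GenConvexLem}; yours buys independence from that machinery at the cost of a short explicit residue count. Both are valid, and the edge-case in your argument ($q = \lfloor h/e\rfloor = 0$, so the two coefficient values are $0$ and $1$) is handled correctly since $\gcd(0,1) = 1$.
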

\begin{proof}
Surjectivity is clear by (\ref{multphi}). For injectivity, assume \(n \beta = n' \beta'\) for some \(n,n' \in \NN\), \(\beta,\beta' \in \Psi\). Then by Lemma~\ref{GenConvexLem} we have \(\beta \approx \beta'\). As \(\beta, \beta' \in \Psi\), this implies that \(\beta = \beta'\) and thus \(n = n'\).
\end{proof}

By Lemma~\ref{psidef} we have well-defined functions \(m:\Phi_+' \to \NN\) and \(\psi: \Phi_+' \to \Psi\) such that \(\gamma = m(\gamma) \psi(\gamma)\) for all \(\gamma \in \Phi_+'\).

\subsection{Kostant partitions}
For \(\theta \in Q_+\), a {\em Kostant partition} of \(\gamma\) is a tuple of non-negative integers \(\bkap = (\kappa_\beta)_{\beta \in \Psi}\) such that \(\sum_{\beta \in \Psi} \kappa_\beta \beta = \theta\). If \(\beta_1 \succ \cdots \succ \beta_k\) are the members of \(\Psi\) such that \(\kappa_{\beta_1} \neq 0\), then it is convenient to write \(\bkap\) in the form
\begin{align*}
\bkap = 
(\beta_1^{\kappa_{\beta_1}} \mid \ldots \mid \beta_k^{\kappa_{\beta_k}}).
\end{align*}

We write \(\Xi(\theta)\) for the set of all Kostant partitions of \(\theta\).
The convex preorder \(\succeq\) on \(\Phi_+\) restricts to a total order \(\succ\) on \(\Psi\), which induces a right lexicographic total order \(\triangleright_R\) on \(\Xi(\beta)\), where
\begin{align*}
\bkap \triangleright_R \bkap'
\iff
\textup{there exists \(\beta \in \Psi\) such that }
\kappa_\beta > \kappa'_\beta \textup{ and } \kappa_\nu = \kappa'_\nu \textup{ for all } \beta \succ \nu.
\qquad
\end{align*}
We also have a 
left lexicographic total order \(\triangleright_L\) on \(\Xi(\beta)\), where
\begin{align*}
\bkap \triangleright_L \bkap'
\iff
\textup{there exists \(\beta \in \Psi\) such that }
\kappa_\beta > \kappa'_\beta \textup{ and } \kappa_\nu = \kappa'_\nu \textup{ for all } \beta \prec \nu.
\qquad
\end{align*}
Then the orders \(\triangleright_R\), \(\triangleright_L\) induce a bilexicographic partial order \(\trianglerighteq\) on \(\Xi(\beta)\), given by
\begin{align*}
\bkap \trianglerighteq \bkap'
\qquad
\iff
\qquad
\bkap \trianglerighteq_R \bkap'
\textup{ and }
\bkap \trianglerighteq_L \bkap'.
\end{align*}

For a sequence \(\bbeta = (\beta_i)_{i=1}^k\) of elements of \(\Phi_+'\), we say \(\bbeta\) is a {\em Kostant sequence} if \(\psi(\beta_i) \succeq \psi(\beta_j)\) whenever \(i \leq j\).
To any Kostant sequence \(\bbeta = (\beta_i)_{i=1}^k\) we may associate a Kostant partition \(\bkap^{\bbeta}\) by setting
\begin{align*}
\kappa^{\bbeta}_\nu = \sum_{ i\in [1,k], \psi(\beta_i) = \nu} m(\beta_i) \qquad \qquad(\nu \in \Psi).
\end{align*}

\section{Content and cuspidal skew shapes}

\subsection{Content}\label{contentsec}
We define the {\em residue} of a node \(u \in \N\) to be 
\(\res(u) =\overline{u_2 - u_1} = \overline{\diag(u)}\).
The map \(\res: \N \to \Z_e\) is a \(\Z\)-module homomorphism, so we have
\begin{align*}
\res(n u) = n \res(u)
\qquad
\textup{and}
\qquad
\res(u+v) = \res(u) + \res(v),
\end{align*}
for all \(n \in \ZZ\), \(u,v \in \N\). For \(t \in \Z_e\), we write
\begin{align*}
\N_t := \{ u \in \N \mid \res(u) = t\}.
\end{align*}

The {\em content} of a skew shape \(\tau\) is defined as
\begin{align*}
\cont(\tau) = \sum_{u \in \tau} \alpha_{\res(u)} \in Q_+.
\end{align*}
For \(\theta \in Q_+\), we write \(\SSSS(\theta)\) (resp. \(\SSSS_{\tt c}(\theta)\)) for the set of skew shapes (resp. nonempty connected skew shapes) of content \(\theta\). It is clear from definitions then that if \(\Lambda\) is any tiling of \(\tau\), we have
\(
\cont(\tau) = \sum_{\lambda \in \Lambda} \cont(\lambda).
\)
In our visual representation, we label each node with its associated residue. See Figure~\ref{fig:e4con} for examples.

\begin{figure}[h]
\begin{align*}
{}
\hackcenter{
\begin{tikzpicture}[scale=0.4]
%
\draw[thick, fill=lightgray!50]  (0,-1)--(1,-1)--(1,0)--(4,0)--(4,1)--(6,1)--(6,4)--(5,4)--(5,3)--(1,3)--(1,2)--(1,1)--(0,1)--(0,-1);
%
\draw[thick, dotted] (1,0)--(1,1);
\draw[thick, dotted] (2,0)--(2,3);
\draw[thick, dotted] (3,0)--(3,3);
\draw[thick, dotted] (4,0)--(4,3);
\draw[thick, dotted] (5,1)--(5,3);
\draw[thick, dotted] (0,0)--(4,0);
\draw[thick, dotted] (0,1)--(6,1);
\draw[thick, dotted] (1,2)--(6,2);
\draw[thick, dotted] (2,3)--(6,3);
\draw[thick]   (0,-1)--(1,-1)--(1,0)--(4,0)--(4,1)--(6,1)--(6,4)--(5,4)--(5,3)--(1,3)--(1,2)--(1,1)--(0,1)--(0,-1);
\node at (0.5,-0.5){$\scriptstyle0 $};
\node at (0.5,0.5){$\scriptstyle1 $};
\node at (1.5,0.5){$\scriptstyle2 $};
\node at (2.5,0.5){$\scriptstyle0 $};
\node at (3.5,0.5){$\scriptstyle1 $};
\node at (1.5,1.5){$\scriptstyle0 $};
\node at (2.5,1.5){$\scriptstyle1 $};
\node at (3.5,1.5){$\scriptstyle2 $};
\node at (4.5,1.5){$\scriptstyle0 $};
\node at (5.5,1.5){$\scriptstyle1 $};
\node at (1.5,2.5){$\scriptstyle1 $};
\node at (2.5,2.5){$\scriptstyle2 $};
\node at (3.5,2.5){$\scriptstyle0 $};
\node at (4.5,2.5){$\scriptstyle1 $};
\node at (5.5,2.5){$\scriptstyle2 $};
\node at (5.5,3.5){$\scriptstyle0 $};
\end{tikzpicture}
}
\qquad
\qquad
\hackcenter{
\begin{tikzpicture}[scale=0.4]
%
\draw[thick, fill=lightgray!50]  (0,-1)--(1,-1)--(1,0)--(4,0)--(4,1)--(6,1)--(6,4)--(5,4)--(5,2)--(3,2)--(3,1)--(1,1)--(0,1)--(0,-1);
%
\draw[thick, dotted] (1,0)--(1,1);
\draw[thick, dotted] (2,0)--(2,1);
\draw[thick, dotted] (3,0)--(3,1);
\draw[thick, dotted] (4,1)--(4,2);
\draw[thick, dotted] (5,1)--(5,2);
\draw[thick, dotted] (0,0)--(4,0);
\draw[thick, dotted] (0,1)--(6,1);
\draw[thick, dotted] (3,2)--(6,2);
\draw[thick, dotted] (5,3)--(6,3);
\draw[thick]  (0,-1)--(1,-1)--(1,0)--(4,0)--(4,1)--(6,1)--(6,4)--(5,4)--(5,2)--(3,2)--(3,1)--(1,1)--(0,1)--(0,-1);
\node at (0.5,-0.5){$\scriptstyle0 $};
\node at (0.5,0.5){$\scriptstyle1 $};
\node at (1.5,0.5){$\scriptstyle2 $};
\node at (2.5,0.5){$\scriptstyle0 $};
\node at (3.5,0.5){$\scriptstyle1 $};
\node at (3.5,1.5){$\scriptstyle2 $};
\node at (4.5,1.5){$\scriptstyle0 $};
\node at (5.5,1.5){$\scriptstyle1 $};
\node at (5.5,2.5){$\scriptstyle2 $};
\node at (5.5,3.5){$\scriptstyle0 $};
\end{tikzpicture}
}
\end{align*}
\caption{Residues for \(e=3\). Skew shape of content \(6 \alpha_0 + 6 \alpha_1 + 4 \alpha_2\); ribbon of content \(4 \alpha_0 + 3 \alpha_1 + 3 \alpha_2 = \alpha(\overline 0, 10)\)}
\label{fig:e4con}       
\end{figure}
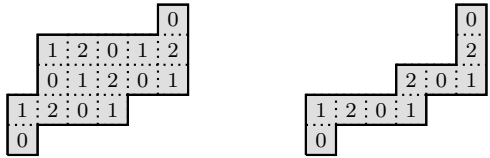

\subsection{Similarity}\label{similaritysec}
If \(\tau, \nu \in \SSSS_{\tt c}\) are such that \(\T_d \tau = \nu\) for some \(d \in \N\), then we will write \(\tau \sim \nu\), and say that \(\tau, \nu\) are {\em similar}.

We now consider a refinement of this similarity condition which takes into account the \(e\)-modular residues.
If \(c \in \N_{\overline 0}\), then we have \(\res(\T_c u) = \res(u)\) for all \(u \in \N\). If \(\nu, \omega\) are skew shapes such that \(\T_c \nu= \omega\) for some \(c \in \N_{\overline{0}}\), then we say \(\omega\) is a {\em residue-preserving translation} of \(\nu\).

Let \(\bmu = (\mu_1, \ldots, \mu_\ell) \in \SSSS_{\tt c}^\ell\). We then define the {\em \(e\)-similarity class} \([\bmu]_e = [\mu_1, \ldots, \mu_\ell]_e \subset \SSSS\) as follows. Let \(\tau\) be a nonempty skew shape, with connected component decomposition \(\tau = \tau_1 \sqcup \cdots \sqcup \tau_k\) with \(\tau_k \NEarrow \cdots \NEarrow \tau_1\) as in Proposition~\ref{skewshapecondecomp}. We say \(\tau \in [\bmu]_e\) provided that \(k = \ell\) and \(\tau_i\) is a residue-preserving translation of \(\mu_i\) for all \(i \in [1,\ell]\). We write \(\nu \sim_e \omega\) and say that \(\nu, \omega\) are {\em \(e\)-similar} if \(\nu,\omega \in [\bmu]_e\) for some \(\bmu\). Roughly speaking, 
\(e\)-similar skew shapes \(\nu\sim_e\omega\) consist of the same connected `shapes', in the same top-to-bottom order, with corresponding nodes in each having the same residue; only the absolute position and spacing between connected components in each may differ. Clearly \(\nu \sim_e \omega\) implies \(\cont(\nu) = \cont(\omega)\). As \(\N\) is infinite and residues are cyclic, it is easy to see that \([\bmu]_e\) is nonempty for every sequence of nonempty connected skew shapes \(\bmu\).

For classification purposes, we treat skew shapes in the same \(e\)-similarity class as identical (importantly, they have isomorphic associated Specht modules up to grading shift, see \S\ref{Spechtmodsec}).
When we display a skew shape \(\tau\) with residues, as in Figure~\ref{fig:e4con}, we are generally displaying the \(e\)-similarity class of the skew shape, disregarding the absolute position of \(\tau\) in \(\N\).

\subsection{Other combinatorial formulations}\label{otherformsec}
The combinatorial setup for skew shapes, residues, and contents outlined in this paper is convenient for present purposes, but bears slight differences with other approaches to the subject. Our formulation fixes a residue function, and allows the position of diagrams to vary, while in some other formulations, the position of diagrams is fixed and the residue function varies. We explain how we translate between formulations for the benefit of readers familiar with the subject.

It is common to define Young diagrams \( \lambda\) as subsets of \( \NN \times \NN\), where \(\lambda_{\nnww} = (1,1)\); see Remark~\ref{YDrem1}. In this setting, a {\em charge} \(c \in \ZZ_e\) is chosen, residues of nodes in \(\lambda\) are given by \(\res(u) = \overline{u_2 - u_1} + c\), and a skew shape (better called a skew Young diagram here) is defined via set difference \(\lambda / \mu := \lambda \backslash \mu\) for some Young diagrams \(\lambda, \mu\). One may transport the skew Young diagram \(\lambda/\mu\) defined thusly into a skew shape in this paper's combinatorial setting by placing the nodes of \(\lambda\backslash\mu\) in \(\N\) and suitably translating this shape as needed in order to match residues (e.g., shifting \(c\) units to the east to account for the charge \(c\)). All such choices of placement are equivalent up to \(e\)-similarity.

More generally, we may consider a {\em level \(\ell\)} skew Young diagram \(\blam/\bmu = (\lambda^{(1)}/\mu^{(1)}, \ldots, \lambda^{(\ell)}/\mu^{(\ell)})\), with {\em multicharge} \(\bc = (c_1, \ldots, c_\ell) \in \Z_e^\ell\), as in \cite{KMR, HM, Muth}. We associate this with a sequence \(\btau = (\tau_1, \ldots, \tau_\ell)\) of skew shapes in \(\N\) as in the previous paragraph, such that \(\tau_\ell \NEarrow \cdots \NEarrow \tau_1\), thereby associating \(\blam/\bmu\) with \(\tau:= \tau_1 \sqcup \cdots \sqcup \tau_\ell\), see Figure~\ref{fig:Muth1}. Again, any such choice of \(\tau\) is equivalent up to \(e\)-similarity.

In the other direction, we may associate a skew shape \(\tau\) in our combinatorial setting to a skew Young diagram and charge in a straightforward manner. There exists some Young diagrams \(\lambda^\tau, \mu^\tau\) such that \((\mu^\tau, \tau)\) is a tableau for \(\lambda^\tau\). For instance, one may take
\begin{align*}
\lambda^\tau = \{ u \in \N \mid ((\tau_{\nnee})_1, (\tau_{\ssww})_2) \searrow u \searrow v, \textup{ for some }v \in \tau\},
\;\;
\textup{and}
\;\;
\mu^\tau = \lambda^\tau \backslash \tau.
\end{align*}
Then we may associate \(\tau\) with the skew Young diagram \(\lambda^\tau / \mu^\tau\) with charge \(c = \res(\lambda^\tau_{\nnww})\); see Figure~\ref{fig:Muth1}.

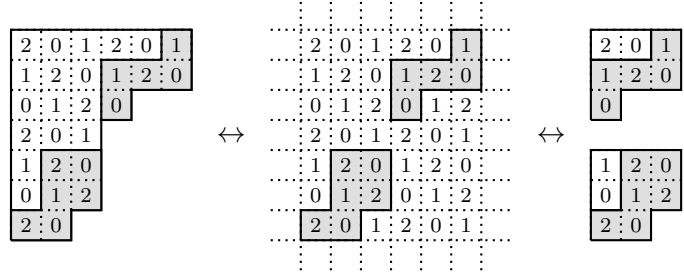
\begin{figure}[h]
\begin{align*}
{}
\hackcenter{
\begin{tikzpicture}[scale=0.4]
%
\draw[thick]  (0,0)--(2,0)--(2,1)--(3,1)--(3,4)--(4,4)--(4,5)--(6,5)--(6,7)--(0,7)--(0,0);
\draw[thick, fill=lightgray!50]  (0,0)--(2,0)--(2,1)--(3,1)--(3,3)--(1,3)--(1,1)--(0,1)--(0,0);
\draw[thick, fill=lightgray!50]  (0+3,4)--(1+3,4)--(1+3,5)--(3+3,5)--(3+3,7)--(2+3,7)--(2+3,6)--(0+3,6)--(0+3,4);
%
\draw[thick, dotted] (0,1)--(2,1);
\draw[thick, dotted] (0,2)--(3,2);
\draw[thick, dotted] (0,3)--(3,3);
\draw[thick, dotted] (0,4)--(3,4);
\draw[thick, dotted] (0,5)--(4,5);
\draw[thick, dotted] (0,6)--(6,6);
\draw[thick, dotted] (0,7)--(6,7);
\draw[thick, dotted] (0,0)--(0,7);
\draw[thick, dotted] (1,0)--(1,7);
\draw[thick, dotted] (2,1)--(2,7);
\draw[thick, dotted] (3,4)--(3,7);
\draw[thick, dotted] (4,5)--(4,7);
\draw[thick, dotted] (5,5)--(5,7);
\draw[thick, dotted] (6,6)--(6,7);
\node at (0.5,0.5){$\scriptstyle2 $};
\node at (0.5,1.5){$\scriptstyle0 $};
\node at (0.5,2.5){$\scriptstyle1 $};
\node at (0.5,3.5){$\scriptstyle 2 $};
\node at (0.5,4.5){$\scriptstyle0 $};
\node at (0.5,5.5){$\scriptstyle1 $};
\node at (0.5,6.5){$\scriptstyle2 $};
\node at (1.5,0.5){$\scriptstyle0 $};
\node at (1.5,1.5){$\scriptstyle1 $};
\node at (1.5,2.5){$\scriptstyle2 $};
\node at (1.5,3.5){$\scriptstyle 0 $};
\node at (1.5,4.5){$\scriptstyle1$};
\node at (1.5,5.5){$\scriptstyle2 $};
\node at (1.5,6.5){$\scriptstyle0 $};
\node at (2.5,1.5){$\scriptstyle2 $};
\node at (2.5,2.5){$\scriptstyle0 $};
\node at (2.5,3.5){$\scriptstyle 1 $};
\node at (2.5,4.5){$\scriptstyle2$};
\node at (2.5,5.5){$\scriptstyle0 $};
\node at (2.5,6.5){$\scriptstyle1 $};
\node at (3.5,4.5){$\scriptstyle0$};
\node at (3.5,5.5){$\scriptstyle1 $};
\node at (3.5,6.5){$\scriptstyle2 $};
\node at (4.5,5.5){$\scriptstyle2 $};
\node at (4.5,6.5){$\scriptstyle0 $};
\node at (5.5,5.5){$\scriptstyle0 $};
\node at (5.5,6.5){$\scriptstyle1 $};
%
\end{tikzpicture}
}
\;\;
\leftrightarrow
\;\;
\hackcenter{
\begin{tikzpicture}[scale=0.4]
%
\draw[thick, fill=lightgray!50]  (0,0)--(2,0)--(2,1)--(3,1)--(3,3)--(1,3)--(1,1)--(0,1)--(0,0);
\draw[thick, fill=lightgray!50]  (0+3,4)--(1+3,4)--(1+3,5)--(3+3,5)--(3+3,7)--(2+3,7)--(2+3,6)--(0+3,6)--(0+3,4);
%
\draw[thick, dotted] (-1,0)--(7,0);
\draw[thick, dotted] (-1,1)--(7,1);
\draw[thick, dotted] (-1,2)--(7,2);
\draw[thick, dotted] (-1,3)--(7,3);
\draw[thick, dotted] (-1,4)--(7,4);
\draw[thick, dotted] (-1,5)--(7,5);
\draw[thick, dotted] (-1,6)--(7,6);
\draw[thick, dotted] (-1,7)--(7,7);
\draw[thick, dotted] (0,-1)--(0,8);
\draw[thick, dotted] (1,-1)--(1,8);
\draw[thick, dotted] (2,-1)--(2,8);
\draw[thick, dotted] (3,-1)--(3,8);
\draw[thick, dotted] (4,-1)--(4,8);
\draw[thick, dotted] (5,-1)--(5,8);
\draw[thick, dotted] (6,-1)--(6,8);
\node at (0.5,0.5){$\scriptstyle2 $};
\node at (0.5,1.5){$\scriptstyle0 $};
\node at (0.5,2.5){$\scriptstyle1 $};
\node at (0.5,3.5){$\scriptstyle 2 $};
\node at (0.5,4.5){$\scriptstyle0 $};
\node at (0.5,5.5){$\scriptstyle1 $};
\node at (0.5,6.5){$\scriptstyle2 $};
\node at (1.5,0.5){$\scriptstyle0 $};
\node at (1.5,1.5){$\scriptstyle1 $};
\node at (1.5,2.5){$\scriptstyle2 $};
\node at (1.5,3.5){$\scriptstyle 0 $};
\node at (1.5,4.5){$\scriptstyle1$};
\node at (1.5,5.5){$\scriptstyle2 $};
\node at (1.5,6.5){$\scriptstyle0 $};
\node at (2.5,0.5){$\scriptstyle1 $};
\node at (2.5,1.5){$\scriptstyle2 $};
\node at (2.5,2.5){$\scriptstyle0 $};
\node at (2.5,3.5){$\scriptstyle 1 $};
\node at (2.5,4.5){$\scriptstyle2$};
\node at (2.5,5.5){$\scriptstyle0 $};
\node at (2.5,6.5){$\scriptstyle1 $};
\node at (3.5,0.5){$\scriptstyle2 $};
\node at (3.5,1.5){$\scriptstyle0 $};
\node at (3.5,2.5){$\scriptstyle1 $};
\node at (3.5,3.5){$\scriptstyle 2 $};
\node at (3.5,4.5){$\scriptstyle0$};
\node at (3.5,5.5){$\scriptstyle1 $};
\node at (3.5,6.5){$\scriptstyle2 $};
\node at (4.5,0.5){$\scriptstyle0 $};
\node at (4.5,1.5){$\scriptstyle1 $};
\node at (4.5,2.5){$\scriptstyle2 $};
\node at (4.5,3.5){$\scriptstyle 0 $};
\node at (4.5,4.5){$\scriptstyle1$};
\node at (4.5,5.5){$\scriptstyle2 $};
\node at (4.5,6.5){$\scriptstyle0 $};
\node at (5.5,0.5){$\scriptstyle1 $};
\node at (5.5,1.5){$\scriptstyle2 $};
\node at (5.5,2.5){$\scriptstyle0 $};
\node at (5.5,3.5){$\scriptstyle 1 $};
\node at (5.5,4.5){$\scriptstyle2$};
\node at (5.5,5.5){$\scriptstyle0 $};
\node at (5.5,6.5){$\scriptstyle1 $};
\end{tikzpicture}
}
\;\;
\leftrightarrow
\;\;
\hackcenter{
\begin{tikzpicture}[scale=0.4]
%
\draw[thick, fill=lightgray!50]  (0,0)--(2,0)--(2,1)--(3,1)--(3,3)--(1,3)--(1,1)--(0,1)--(0,0);
\draw[thick, fill=lightgray!50]  (0,4)--(1,4)--(1,5)--(3,5)--(3,7)--(2,7)--(2,6)--(0,6)--(0,4);
\draw[thick]  (0,1)--(1,1)--(1,3)--(0,3)--(0,1);
\draw[thick]  (0,6)--(2,6)--(2,7)--(0,7)--(0,6);
%
\draw[thick, dotted] (1,1)--(2,1);
\draw[thick, dotted] (0,2)--(3,2);
\draw[thick, dotted] (0,3)--(3,3);
\draw[thick, dotted] (0,0)--(0,3);
\draw[thick, dotted] (1,0)--(1,1);
\draw[thick, dotted] (2,1)--(2,3);
\draw[thick, dotted] (0,5)--(1,5);
\draw[thick, dotted] (0,6)--(3,6);
\draw[thick, dotted] (0,7)--(3,7);
\draw[thick, dotted] (0,5)--(0,7);
\draw[thick, dotted] (1,5)--(1,7);
\draw[thick, dotted] (2,5)--(2,7);
\node at (0.5,0.5){$\scriptstyle2 $};
\node at (0.5,1.5){$\scriptstyle0 $};
\node at (0.5,2.5){$\scriptstyle1 $};
\node at (1.5,0.5){$\scriptstyle0 $};
\node at (1.5,1.5){$\scriptstyle1 $};
\node at (1.5,2.5){$\scriptstyle2 $};
\node at (2.5,1.5){$\scriptstyle2 $};
\node at (2.5,2.5){$\scriptstyle0 $};
\node at (0.5,4.5){$\scriptstyle0 $};
\node at (0.5,5.5){$\scriptstyle1 $};
\node at (0.5,6.5){$\scriptstyle2 $};
\node at (1.5,5.5){$\scriptstyle2 $};
\node at (1.5,6.5){$\scriptstyle0 $};
\node at (2.5,5.5){$\scriptstyle0 $};
\node at (2.5,6.5){$\scriptstyle1 $};
\end{tikzpicture}
}
\end{align*}
\caption{Skew Young diagram \(\lambda^\tau/\mu^\tau\) with charge \(2\); Skew shape \(\tau\); Level two skew Young diagram \(\blam/\bmu\) with multicharge \((2,1)\); all are associated as in \S\ref{otherformsec}}
\label{fig:Muth1}       
\end{figure}

\subsection{Ribbons and roots}
The following lemma establishes that the content of a ribbon depends only on the positions of the southwest/northeast end nodes, and not on the nodes in between.
\begin{lemma}\label{hookcont}
If \(\xi\) is a ribbon, then \(\cont(\xi) = \alpha(\res(\xi_{\ssww}), \dist(\xi_{\ssww},\xi_{\nnee})+1)\).
\end{lemma}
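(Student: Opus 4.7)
The plan is to exploit the path characterization of ribbons from Lemma~\ref{hookispath}, which tells us that a ribbon $\xi$ is precisely the node set of a $\no/\ea$ path $(z^i)_{i=1}^{|\xi|}$. First I would observe that the endpoints of such a path must coincide with $\xi_{\ssww}$ and $\xi_{\nnee}$: since each step of a $\no/\ea$ path goes $\no$ or $\ea$, the starting node $z^1$ has no $\so$- or $\we$-neighbor inside the path, and likewise $z^{|\xi|}$ has no $\no$- or $\ea$-neighbor, so by the uniqueness statement in Lemma~\ref{uniquemax} we must have $z^1 = \xi_{\ssww}$ and $z^{|\xi|} = \xi_{\nnee}$.

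Next I would use Lemma~\ref{dumbdiag} to compute residues along the path: it gives $\diag(z^i) = \diag(\xi_{\ssww}) + (i-1)$, and since $\res$ is the mod-$e$ reduction of $\diag$, this yields
\begin{equation*}
\res(z^i) = \res(\xi_{\ssww}) + \overline{i-1} \qquad (i \in [1,|\xi|]).
\end{equation*}

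Then the content sum collapses immediately:
\begin{equation*}
\cont(\xi) = \sum_{i=1}^{|\xi|} \alpha_{\res(z^i)} = \sum_{i=1}^{|\xi|} \alpha_{\res(\xi_{\ssww}) + \overline{i-1}} = \alpha(\res(\xi_{\ssww}),|\xi|),
\end{equation*}
where the last equality is just the definition of $\alpha(t,h)$ from Section~\ref{posrootsec}.

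Finally I would invoke Corollary~\ref{sizehook}, which gives $|\xi| = \dist(\xi_{\ssww},\xi_{\nnee}) + 1$, to rewrite the above as $\alpha(\res(\xi_{\ssww}), \dist(\xi_{\ssww},\xi_{\nnee})+1)$, completing the proof. There is no real obstacle here; the statement is essentially an assembly of Lemma~\ref{hookispath}, Lemma~\ref{dumbdiag}, and Corollary~\ref{sizehook}, with the only minor care being the identification of the path endpoints with the canonical southwest/northeast corners of $\xi$.
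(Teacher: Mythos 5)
Your proof is correct and follows essentially the same route as the paper's: invoke Lemma~\ref{hookispath} for the $\no/\ea$ path, Lemma~\ref{dumbdiag} for the diagonal/residue progression, and Corollary~\ref{sizehook} to convert $|\xi|$ into $\dist(\xi_{\ssww},\xi_{\nnee})+1$. You supply a short extra justification (via Lemma~\ref{uniquemax}) that the path endpoints coincide with $\xi_{\ssww}$ and $\xi_{\nnee}$, which the paper simply asserts; otherwise the arguments agree.
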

\begin{proof}
There is a \(\no/\ea\) path \((z^i)_{i=1}^{|\xi|}\) such that \(\xi = \{z^i\}_{i=1}^{|\xi|}\) by Lemma~\ref{hookispath}. We have then that \(z^1 = \xi_{\ssww}\) and \(z^{|\xi|} = \xi_{\nnee}\). Then by Corollary~\ref{sizehook} we have \(|\xi| = \dist(\xi_{\ssww}, \xi_{\nnee})+1\), and so
\begin{align*}
\cont(\xi) &= \sum_{i=1}^{|\xi|} \alpha_{\res(z^i)} = \sum_{i=1}^{|\xi|} \alpha_{\overline{ \diag(z^i)  }}
=
\sum_{i=1}^{|\xi|} \alpha_{\overline{ \diag(z^1) + i-1  }}
=
\sum_{i=0}^{|\xi|-1} \alpha_{\overline{ \diag(z^1)} + \overline{i}  }\\
&=
\sum_{i=0}^{\dist(\xi_{\ssww}, \xi_{\nnee})} \alpha_{ \res(\xi_{\ssww}) + \overline{i}}
=
 \alpha(\res(\xi_{\ssww}), \dist(\xi_{\ssww},\xi_{\nnee})+1),
\end{align*}
 applying Lemma~\ref{dumbdiag} for the third equality.
\end{proof}

\begin{corollary}\label{skewpos}
Let \(\beta \in Q_+\). Then \(\beta \in \Phi_+\) if and only if \(\beta = \cont(\xi)\) for some ribbon \(\xi\).
\end{corollary}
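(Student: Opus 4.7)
The forward direction ($\impliedby$) is an immediate consequence of Lemma~\ref{hookcont}: if $\xi$ is a ribbon with $\cont(\xi) = \beta$, then setting $t = \res(\xi_{\ssww}) \in \ZZ_e$ and $h = \dist(\xi_{\ssww},\xi_{\nnee})+1 \in \NN$ gives $\beta = \alpha(t,h) \in \Phi_+$.

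For the reverse direction ($\implies$), I would produce an explicit ribbon realizing the content. Given $\beta \in \Phi_+$, write $\beta = \alpha(t,h)$ for some $t \in \ZZ_e$ and $h \in \NN$. Choose any integer $c$ with $\overline{c} = t$ (e.g.\ $c \in [0,e-1]$ with $c+e\ZZ = t$) and define
\begin{align*}
\xi := \{(0, c), (0, c+1), \ldots, (0, c+h-1)\} \subset \N.
\end{align*}
The sequence $(z^i)_{i=1}^{h}$ with $z^i = (0, c+i-1)$ is a $\no/\ea$ path (in fact purely eastward), so by Lemma~\ref{hookispath}, $\xi$ is a ribbon. Its southwest endpoint is $\xi_{\ssww} = (0,c)$ with $\res(\xi_{\ssww}) = \overline{c} = t$, and $\dist(\xi_{\ssww},\xi_{\nnee}) = h-1$, so Lemma~\ref{hookcont} yields $\cont(\xi) = \alpha(t,h) = \beta$.

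There is no real obstacle here; the statement is essentially a direct corollary of Lemma~\ref{hookcont} together with the observation that any single row (or column) of $h$ consecutive boxes is a ribbon whose content is $\alpha(t,h)$, where $t$ is the residue of its southwest node. One could equivalently use a vertical strip of $h$ boxes in place of the horizontal strip, since only $\res(\xi_{\ssww})$ and $|\xi|$ matter for the content.
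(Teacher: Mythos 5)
Your proposal is correct and follows essentially the same route as the paper's proof: both directions rest on Lemma~\ref{hookcont}, and the reverse direction is established by constructing a horizontal strip of $h$ boxes starting at a node of residue $t$ and invoking Lemma~\ref{hookispath}. The only cosmetic difference is that you fix a concrete starting node $(0,c)$ whereas the paper starts from an arbitrary $z\in\N_t$.
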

\begin{proof}
We have by Lemma~\ref{hookcont} that \(\cont(\xi) \in \Phi_+\) for all ribbons \(\xi\). Now let \(\beta = \alpha(t,h) \in \Phi_+\). Let \(z \in \N_t\), and define nodes \(z^i = (z_1, z_2 + i-1)\) for \(i \in [1,h]\). Then \((z^i)_{i=1}^h\) is a \(\no/\ea\) path, and so \(\xi = \{z^i\}_{i=1}^h\) is a ribbon by Lemma~\ref{hookispath}. By construction, \(\xi_{\ssww} = z\), \(\xi_{\nnee} = (z_1, z_2+h-1)\), so \(\dist(\xi_{\ssww}, \xi_{\nnee}) = h-1\). Then by Lemma~\ref{hookcont} we have
\(
\cont(\xi) = \alpha(\res(z), h) = \alpha(t,h) = \beta,
\)
as desired.
\end{proof}

\section{Cuspidality}
Recall that we have fixed a convex preorder \(\succeq\) on \(\Phi_+\).
\begin{definition}\label{cuspdef}
Let \(\tau \in \SSSS(\beta)\) for some \(\beta \in \Phi_+\). We say that \(\tau\) is {\em cuspidal} provided that, for every tableau \((\lambda_1, \lambda_2)\) of \(\tau\), the following conditions hold:
\begin{enumerate}
\item There exist \(\gamma_1, \ldots, \gamma_k \in \Phi_+\) such that \(\cont(\lambda_1) = \gamma_1 + \cdots + \gamma_k\) and \(\beta \succ \gamma_i\) for all \(i \in [1,k]\), and;
\item There exist \(\nu_1, \ldots, \nu_\ell \in \Phi_+\) such that \(\cont(\lambda_2) = \nu_1 + \cdots + \nu_\ell\) and \(\nu_i \succ \beta\) for all \(i \in [1,\ell]\).
\end{enumerate}
\end{definition}

\begin{definition}\label{cuspdef}
Let \( \tau \in \SSSS(m \beta)\) for some \(m \in \NN\), \(\beta \in \Phi_+\). We say that \(\tau\) is {\em semicuspidal} provided that, for every tableau \((\lambda_1, \lambda_2)\) of \(\tau\), the following conditions hold:
\begin{enumerate}
\item There exist \(\gamma_1, \ldots, \gamma_k \in \Phi_+\) such that \(\cont(\lambda_1) = \gamma_1 + \cdots + \gamma_k\) and \(\beta \succeq \gamma_i\) for all \(i \in [1,k]\), and;
\item There exist \(\nu_1, \ldots, \nu_\ell \in \Phi_+\) such that \(\cont(\lambda_2) = \nu_1 + \cdots + \nu_\ell\) and \(\nu_i \succeq \beta\) for all \(i \in [1,\ell]\).
\end{enumerate}
\end{definition}

It follows from definitions that cuspidality is invariant under \(e\)-similarity: 
\begin{lemma}
Let \(\tau, \nu\) be skew shapes, with \(\tau \sim_e \nu\). Then \(\tau\) is cuspidal (resp. semicuspidal) if and only if \(\nu\) is cuspidal (resp. semicuspidal). 
\end{lemma}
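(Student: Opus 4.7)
The plan is to exhibit a content-preserving bijection between two-part tableaux of $\tau$ and those of $\nu$; since the (semi)cuspidality conditions depend only on the contents of the two parts and on the ambient content $\cont(\tau) = \cont(\nu)$, this bijection immediately yields the claim. By the definition of $\sim_e$ in \S\ref{similaritysec}, the connected component decompositions $\tau = \tau_1 \sqcup \cdots \sqcup \tau_k$ and $\nu = \nu_1 \sqcup \cdots \sqcup \nu_k$ (ordered via Proposition~\ref{skewshapecondecomp} so that $\tau_k \NEarrow \cdots \NEarrow \tau_1$ and likewise for $\nu$) satisfy $\nu_i = \T_{c_i} \tau_i$ for some $c_i \in \N_{\overline 0}$, and each translation $\T_{c_i}$ preserves residues as well as the $\searrow$ relation.

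Given a two-part tableau $(\lambda_1, \lambda_2)$ of $\tau$, I would set $\lambda'_s := \bigsqcup_{i} \T_{c_i}(\lambda_s \cap \tau_i)$ for $s \in \{1,2\}$ and verify that $(\lambda'_1, \lambda'_2)$ is a two-part tableau of $\nu$, with the inverse defined analogously via $\T_{-c_i}$. The key observation enabling this is that nodes in distinct components of $\tau$ (or of $\nu$) are $\NEarrow$-related and hence never $\searrow$-related, as is immediate from the ordering in Proposition~\ref{skewshapecondecomp} and consistent with Lemma~\ref{seconnect}. Consequently, both the skew-shape closure condition on each $\lambda'_s$ and the tableau ordering condition on $(\lambda'_1, \lambda'_2)$ restrict to constraints within a single component, and the verification reduces to the near-tautological fact that $\T_{c_i}$ carries the tableau data $(\lambda_1 \cap \tau_i, \lambda_2 \cap \tau_i)$ of $\tau_i$ to corresponding data on $\nu_i$. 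Since each $\T_{c_i}$ is residue-preserving, $\cont(\lambda'_s) = \cont(\lambda_s)$ for $s \in \{1,2\}$.

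No substantial obstacle is expected; the argument is really just careful bookkeeping. The only point requiring brief attention is that some restriction $\lambda_s \cap \tau_i$ may be empty when an entire component lies in the other part, but this is harmless, since the empty set is a skew shape and translation preserves emptiness. With the content-preserving bijection in hand, the conditions on $\cont(\lambda_1)$ and $\cont(\lambda_2)$ appearing in the definitions of cuspidal and semicuspidal skew shapes transfer verbatim between $\tau$ and $\nu$, establishing the lemma in both cases simultaneously.
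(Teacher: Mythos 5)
Your proof is correct and is precisely the unwinding the paper has in mind; the paper offers no explicit argument, simply asserting the lemma "follows from definitions." Your component-wise translation $(\lambda_1,\lambda_2)\mapsto(\lambda'_1,\lambda'_2)$ with $\lambda'_s=\bigsqcup_i \T_{c_i}(\lambda_s\cap\tau_i)$ does define a content-preserving bijection between two-part tableaux of $\tau$ and of $\nu$, and your reduction to single components via the observation that $\NEarrow$ and $\searrow$ are mutually exclusive on distinct components (Proposition~\ref{skewshapecondecomp}, Lemma~\ref{seconnect}) is exactly the bookkeeping needed; the only further point worth making explicit is that each $\lambda'_s$ is nonempty (since $\lambda_s\neq\varnothing$ forces some $\lambda_s\cap\tau_i\neq\varnothing$), which you note in passing.
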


\begin{lemma}\label{shapehooktest}
If \(\tau\) is a cuspidal (resp. semicuspidal) skew shape, then for every \(\ssee\)-removable ribbon \(\xi \subsetneq \tau\) we have \(\cont(\xi) \succ \cont(\tau)\) (resp. \(\cont(\xi) \succeq \cont(\tau)\)).
\end{lemma}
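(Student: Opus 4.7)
The plan is to reduce the statement to the defining property of (semi)cuspidality applied to a single carefully chosen tableau, and then convert a sum-decomposition in $\Phi_+$ into a comparison via the generalized convexity lemma.

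First I would observe that since $\xi\subsetneq\tau$ is $\ssee$-removable in $\tau$, the ordered pair $(\tau\setminus\xi,\,\xi)$ is a tableau for $\tau$ by the definition of removability (and Lemma~\ref{tabisskew} guarantees $\tau\setminus\xi$ is a skew shape). Writing $\cont(\tau)=m\beta$ with $\beta\in\Phi_+$ (and $m=1$ in the cuspidal case), condition~(ii) in Definitions of (semi)cuspidality applied to this tableau with $\lambda_{2}=\xi$ produces positive roots $\nu_{1},\ldots,\nu_{\ell}\in\Phi_+$ such that
\[
\cont(\xi)=\nu_{1}+\cdots+\nu_{\ell},
\]
with $\nu_{i}\succ\beta$ in the cuspidal case and $\nu_{i}\succeq\beta$ in the semicuspidal case.

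Next, since $\xi$ is a ribbon, Corollary~\ref{skewpos} (or more directly Lemma~\ref{hookcont}) gives $\cont(\xi)\in\Phi_+$, so I may write the tautological decomposition $\cont(\xi)=1\cdot\cont(\xi)$ with $\gamma:=\cont(\xi)\in\Phi_+$. After reindexing so that $\nu_{1}\succeq\cdots\succeq\nu_{\ell}$, Lemma~\ref{GenConvexLem}(iii) applied to $\nu_{1}+\cdots+\nu_{\ell}=1\cdot\gamma$ gives $\nu_{1}\succeq\cont(\xi)\succeq\nu_{\ell}$.

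From here the result follows in both cases: in the cuspidal case, $\nu_{\ell}\succ\beta$ forces $\cont(\xi)\succeq\nu_{\ell}\succ\beta=\cont(\tau)$, hence $\cont(\xi)\succ\cont(\tau)$; in the semicuspidal case, $\nu_{\ell}\succeq\beta$ yields $\cont(\xi)\succeq\beta$, which is the required comparison $\cont(\xi)\succeq\cont(\tau)$ (interpreted via the natural extension of $\succeq$ to $\Phi_+'$, sending $m\beta$ to its associated indivisible root). The main subtlety is the semicuspidal case: making sure the comparison $\cont(\xi)\succeq m\beta$ is really what we want, which is immediate once we note $\cont(\xi)$ is itself a positive root, so the only way to land strictly below every $\nu_{i}\succeq\beta$ in the preorder would be to contradict convexity. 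I do not expect any serious obstacle beyond choosing the right tableau and correctly invoking Lemma~\ref{GenConvexLem}(iii).
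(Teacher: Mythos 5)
Your proof is correct and follows the same route as the paper's: pass to the tableau $(\tau\setminus\xi,\xi)$, invoke condition (ii) of the (semi)cuspidality definition to write $\cont(\xi)$ as a sum of positive roots compared to $\beta$, use Corollary~\ref{skewpos} to see $\cont(\xi)\in\Phi_+$, and sandwich via Lemma~\ref{GenConvexLem}(iii). You are somewhat more explicit than the paper on two small points -- reindexing the $\nu_i$ to be weakly decreasing before applying the convexity lemma, and spelling out that $\cont(\xi)\succeq\nu_\ell\succ\beta$ forces a strict inequality in the cuspidal case -- and you correctly flag that $\cont(\xi)\succeq\cont(\tau)$ in the semicuspidal case must be read as a comparison with $\psi(\cont(\tau))$ when $\cont(\tau)\notin\Phi_+$, which is indeed the reading the paper uses (e.g.\ in the proof of Lemma~\ref{sciffcusptile}).
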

\begin{proof}
 Let \(\tau\) be cuspidal. If \(\xi\) is an \(\ssee\)-removable ribbon in \(\tau\), then \((\tau \backslash \xi, \xi)\) is a skew decomposition for \(\tau\). 
 Then by the cuspidality property, we have \(\cont(\xi) = \gamma_1 + \cdots + \gamma_k\), for some positive roots \(\gamma_1, \ldots, \gamma_k \succ \cont(\tau)\). Then it follows by Corollary~\ref{skewpos} and Lemma~\ref{GenConvexLem}(iii) that \(\cont(\xi) \succ \cont(\tau)\). The proof in the semicuspidal case is similar.
\end{proof}

 \begin{lemma}\label{CuspForSkewHook}
Let \(\xi\) be a ribbon. Then \(\xi\) is cuspidal if and only if for every \(\ssee\)-removable ribbon \(\nu \subsetneq \xi\), we have \(\cont(\nu) \succ \cont(\xi)\).
 \end{lemma}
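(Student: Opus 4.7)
The forward direction is immediate from Lemma~\ref{shapehooktest}, so I focus on the backward direction: assuming every $\ssee$-removable ribbon $\nu \subsetneq \xi$ satisfies $\cont(\nu) \succ \beta := \cont(\xi)$, I will prove that $\xi$ is cuspidal. The plan is to use the $\no/\ea$ path description of $\xi$ from Lemma~\ref{hookispath} to translate both the tableau ordering condition and the $\ssee$-removability criterion of Lemma~\ref{xiuv} into combinatorics on intervals of path indices, then to close via convexity (Lemma~\ref{GenConvexLem}).

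Fix a tableau $(\lambda_1, \lambda_2)$ of $\xi$ and write $\xi = \{z^1, \ldots, z^n\}$ along the path. Any subset of a ribbon is thin, so $\lambda_1, \lambda_2$ decompose into connected sub-ribbons $\lambda_2 = \mu_1 \sqcup \cdots \sqcup \mu_k$ and $\lambda_1 = \eta_1 \sqcup \cdots \sqcup \eta_\ell$, each occupying a maximal contiguous interval of $[1, n]$ and interleaving along the path. A direct analysis of the $\no/\ea$ path shows that $z^a \searrow z^b$ holds exactly when $a \le b$ and the sub-path from $z^a$ to $z^b$ is all-$\ea$, or $b \le a$ and the sub-path from $z^b$ to $z^a$ is all-$\no$; combined with the tableau ordering condition, this forces every path move crossing from a $\lambda_1$-interval into a $\lambda_2$-interval to be $\ea$ and every move crossing in the opposite direction to be $\no$. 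Hence a contiguous sub-interval $[p, q]$ is $\ssee$-removable in $\xi$ iff $p = 1$ or $z^p = \ea z^{p-1}$, and $q = n$ or $z^{q+1} = \no z^q$. In particular each $\mu_i$ is $\ssee$-removable, so by hypothesis $\cont(\mu_i) \succ \beta$; taking $\nu_i := \cont(\mu_i)$ yields condition~(ii).

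For condition~(i), I show $\cont(\eta_j) \prec \beta$ for every $\eta_j$, so that $\gamma_j := \cont(\eta_j)$ works. Let $C_1, \ldots, C_r$ be the path-ordered components of $\xi$ and fix $s$ with $\eta_j = C_s$. The boundary characterization shows that the prefix $\nu_{\mathrm{pre}} := C_1 \cup \cdots \cup C_{s-1}$ (when $s > 1$) and the suffix $\nu_{\mathrm{suf}} := C_{s+1} \cup \cdots \cup C_r$ (when $s < r$) are $\ssee$-removable ribbons of $\xi$, so each has content $\succ \beta$; since $\lambda_2$ is nonempty, at least one of these is a proper nonempty sub-ribbon. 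The decomposition $\beta = \cont(\nu_{\mathrm{pre}}) + \cont(\eta_j) + \cont(\nu_{\mathrm{suf}})$ (with an empty summand omitted if $s = 1$ or $s = r$) is a sum of positive roots. If $\cont(\eta_j) \succeq \beta$, every summand is $\succeq \beta$: when $\beta \in \Phi_+^\re$, Lemma~\ref{GenConvexLem}(i) forces each summand to equal $\beta$, which is inconsistent with their sum being $\beta$; when $\beta \in \Phi_+^\im$, Lemma~\ref{GenConvexLem}(ii) forces each summand to be imaginary, contradicting $\cont(\nu_{\mathrm{pre}}) \succ \beta$ or $\cont(\nu_{\mathrm{suf}}) \succ \beta$ (since an imaginary summand would satisfy $\cont(\nu_{\mathrm{pre}}) \approx \beta$, not $\succ \beta$). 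Hence $\cont(\eta_j) \prec \beta$ in every case. The main technical obstacle is establishing the boundary-move characterization of $\ssee$-removable sub-intervals of the path; once that local criterion is in place, the $\ssee$-removability of the $\mu_i$ and of the bracketing prefix/suffix surrounding each $\eta_j$ becomes routine, and the convexity input from Lemma~\ref{GenConvexLem} closes the argument.
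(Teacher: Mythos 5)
Your proof is correct, but takes a different route from the paper for condition~(i).

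The forward direction and condition~(ii) are essentially identical to the paper's: decompose $\lambda_2$ into connected sub-ribbons $\mu_i$, each of which is automatically $\ssee$-removable because $(\lambda_1,\lambda_2)$ is a tableau (the paper uses Lemma~\ref{remcond} implicitly here, which gives this without your path-level boundary analysis), so $\cont(\mu_i)\succ\cont(\xi)$ by hypothesis. For condition~(i), however, the paper does something slicker: it fixes a component $\varepsilon_i$ of $\lambda_1$ with $\cont(\varepsilon_i)\succeq\cont(\xi)$ (toward contradiction), observes that $(\varepsilon_i,\xi\backslash\varepsilon_i)$ is itself a tableau of $\xi$, and then simply re-applies the already-proved condition~(ii) to that tableau to write $\cont(\xi\backslash\varepsilon_i)$ as a sum of roots $\succ\cont(\xi)$. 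A short case split (via Lemma~\ref{notapprox} and Lemma~\ref{GenConvexLem}(iii)) closes the argument without any explicit path manipulation. You instead isolate the component $\eta_j=C_s$, explicitly name the prefix $\nu_{\mathrm{pre}}$ and suffix $\nu_{\mathrm{suf}}$ along the $\no/\ea$ path, derive a local criterion for $\ssee$-removability of contiguous intervals (correctly: $\so z^p\notin\xi$ iff $p=1$ or $z^p=\ea z^{p-1}$, etc., which follows from Lemma~\ref{xiuv} and thinness), verify that prefix and suffix are $\ssee$-removable, and run the convexity contradiction on the three-term decomposition $\beta=\cont(\nu_{\mathrm{pre}})+\cont(\eta_j)+\cont(\nu_{\mathrm{suf}})$. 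Both arguments are sound: the paper's reduction is shorter and reuses its own machinery; yours is more concrete and makes the combinatorics of the path entirely explicit, at the cost of establishing the boundary-move characterization (which is the "technical obstacle" you correctly flag, and which does go through).
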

 \begin{proof}
The `only if' direction is provided by Lemma~\ref{shapehooktest}, so we focus on the `if' direction. 
 Assume that \(\xi\) is a ribbon with
 the property that \(\cont(\nu) \succ \cont(\xi)\) for every \(\ssee\)-removable ribbon \(\nu\) in \( \xi\). We have that \(\cont(\xi) \in \Phi_+\) by Corollary~\ref{skewpos}. We will show that \(\xi\) is cuspidal.

 Let \((\varepsilon, \mu)\) be any tableau of \(\xi\). Let \(\mu_1, \ldots, \mu_t\) be the connected components of \(\mu\). Since each \(\mu_i\) is connected and a subset of a ribbon, and \((\varepsilon, \mu)\) is a tableau, we have that each \(\mu_i\) is an \(\ssee\)-removable ribbon in \(\xi\). Thus, by assumption, \(\cont(\mu_i) \succ \cont(\xi)\) for all \(i\). But then \(\cont(\mu) = \sum_{i=1}^t \cont(\mu_i)\), and so \(\xi\) satisfies condition (ii) in Definition~\ref{cuspdef}.
 
 Let \(\varepsilon_1, \ldots, \varepsilon_u\) be the connected components of \(\varepsilon\). Again, each \(\varepsilon_i\) is a ribbon. Assume by way of contradiction there is some \(i\) such that \(\cont(\varepsilon_i) \succeq \cont(\xi)\). Since \((\varepsilon, \mu)\) is a tableau, we have that \((\varepsilon_i, \xi\backslash \varepsilon_i)\) is a tableau. By the previous paragraph, we have that \(\cont(\xi \backslash \varepsilon_i)\) can be written as a sum of positive roots \(\gamma_1 + \cdots + \gamma_s\), where \(\gamma_j \succ \cont(\xi)\) for all \(j \in [1,s]\). 
 If \(\cont(\varepsilon_i) \approx \cont(\xi)\), then we have that \(\cont(\varepsilon_i),\cont(\xi) \in \Phi_+^\im\) by Lemma~\ref{notapprox}. But this implies that \(\cont(\xi \backslash \varepsilon_i) = \cont(\xi) - \cont(\varepsilon_i) \in \Phi_+^\im\), and so \(\cont(\xi\backslash \varepsilon_i) \approx \cont(\xi)\). But since \(\cont(\xi \backslash \varepsilon_i) \in \Phi_+\), we have by Lemma~\ref{GenConvexLem}(iii) that \(\cont(\xi \backslash \varepsilon_i) =  \gamma_1 + \cdots + \gamma_s \succ \cont(\xi)\), a contradiction. Therefore \(\cont(\varepsilon_i) \succ \cont(\xi)\). But then we have by Lemma~\ref{GenConvexLem}(iii) that 
 \begin{align*}
 \cont(\xi) = \cont(\xi \backslash \varepsilon_i) + \cont(\varepsilon_i) = \gamma_1 + \cdots + \gamma_s + \cont(\varepsilon_i) \succ \cont(\xi),
 \end{align*}
 another contradiction. Therefore \(\cont(\varepsilon_i) \prec \cont(\xi)\) for all \(i \in [1,u]\). Since \(\cont(\varepsilon) = \sum_{i=1}^u \cont(\varepsilon_i)\), we have that \(\xi\) satisfies condition (i) in Definition~\ref{cuspdef}. Thus \(\xi\) is cuspidal.
 \end{proof}

 \subsection{Constructing cuspidal ribbons}\label{cuspcons}
For \(\beta \in \Psi\), we write \(\init(\beta) \subseteq \N\) for the set of nodes \(b \in \N\) such that \(\beta = \alpha(\res(b), \height(\beta))\). Note then that \(\init(\delta) = \N\), and if \(\beta \in \Phi_+^\re\) then \(\init(\beta) = \N_t\) for some \(t \in \Z_e\).

\begin{definition}\label{defxib}
Let \(\beta \in \Psi\) and \(b \in \init(\beta)\). Define a \(\no/\ea\) path \((z^i)_{i=1}^{\height(\beta)}\) by setting \(z^1 = b\), and 
\begin{align*}
z^i = \begin{cases}
\no z^{i-1} & \textup{if } \alpha(\res(b), i-1) \succ \beta;\\
\ea z^{i-1} & \textup{if } \beta \succ \alpha(\res(b), i-1),\\
\end{cases}
\end{align*}
for \(i = 2, \ldots, \height(\beta)\). Then define \(\zeta^{(\beta, b)} = \{z^i\}_{i=1}^{\height(\beta)}\).
\end{definition}

\begin{lemma}\label{zetaiscusp}
The set of nodes  
\(\zeta^{(\beta,b)}\) is a cuspidal ribbon of content \(\beta\), with \(\zeta^{(\beta,b)}_{\ssww} = b\).
\end{lemma}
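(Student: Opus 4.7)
The plan is to handle parts (1)--(3) quickly, then focus on the cuspidality check via Lemma~\ref{CuspForSkewHook}.

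First I would observe that the construction of $\zeta^{(\beta,b)}$ explicitly produces a $\no/\ea$ path $(z^i)_{i=1}^{\height(\beta)}$ starting at $z^1=b$, so by Lemma~\ref{hookispath}, $\zeta^{(\beta,b)}$ is a ribbon. Because each step moves $\no$ or $\ea$, the starting node $b=z^1$ is the unique maximally southwest element, so $\zeta^{(\beta,b)}_{\ssww}=b$. Then Lemma~\ref{hookcont} gives $\cont(\zeta^{(\beta,b)})=\alpha(\res(b),\height(\beta))$, which equals $\beta$ by the assumption $b\in\init(\beta)$.

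For cuspidality, I plan to invoke Lemma~\ref{CuspForSkewHook} and reduce to showing $\cont(\nu)\succ\beta$ for every $\ssee$-removable proper sub-ribbon $\nu\subsetneq\zeta^{(\beta,b)}$. Since $\zeta^{(\beta,b)}$ is thin, any connected sub-skew-shape is automatically thin, and since non-consecutive path nodes are never adjacent via unit steps, $\nu$ must be a consecutive block $\nu=\{z^a,z^{a+1},\ldots,z^d\}$ for some $1\le a\le d\le\height(\beta)$. A short analysis of the $\searrow$-relation on the path (noting that, within a $\no/\ea$ path, $z^i\searrow z^j$ with $i\ne j$ forces $z^j$ to lie directly south of $z^i$ when $j<i$, or directly east of $z^i$ when $j>i$), combined with Lemma~\ref{remcond}, shows that $\ssee$-removability of $\nu$ is equivalent to the following: either $a=1$ or the step $z^{a-1}\to z^a$ is east, \emph{and} either $d=\height(\beta)$ or the step $z^d\to z^{d+1}$ is north. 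Writing $\beta_k:=\alpha(\res(b),k)$, Definition~\ref{defxib} translates these conditions into $\beta_{a-1}\prec\beta$ (when $a>1$) and $\beta_d\succ\beta$ (when $d<\height(\beta)$). Combining the identity (\ref{splithookcont}) with Lemma~\ref{hookcont} yields $\cont(\nu)=\beta_d-\beta_{a-1}$.

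Next I would split into three cases. If $a=1$, then $\cont(\nu)=\beta_d\succ\beta$ directly from the construction. If $d=\height(\beta)$, so $\beta_d=\beta$, I apply convexity to the decomposition $\beta=\beta_{a-1}+\cont(\nu)$: the assumption $\beta_{a-1}\prec\beta$ rules out $\beta_{a-1}\succeq\cont(\nu)$ (which would force $\beta_{a-1}\succeq\beta$), so $\cont(\nu)\succeq\beta_{a-1}$, and convexity then gives $\cont(\nu)\succeq\beta$. If $1<a$ and $d<\height(\beta)$, I apply the same strategy to the decomposition $\beta_d=\beta_{a-1}+\cont(\nu)$: the sandwich $\beta_{a-1}\prec\beta\prec\beta_d$ rules out $\beta_{a-1}\succeq\cont(\nu)$, so convexity forces $\cont(\nu)\succeq\beta_d$, and composing with $\beta_d\succ\beta$ yields the desired strict inequality.

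The main obstacle will be upgrading $\cont(\nu)\succeq\beta$ to strict $\succ\beta$ in the second case, since convexity delivers only $\succeq$. By Lemma~\ref{notapprox}, the only way to have $\cont(\nu)\approx\beta$ without $\cont(\nu)=\beta$ (the latter forcing $\beta_{a-1}=0$, impossible for $a>1$) is that both are imaginary. Since $\beta\in\Psi$, either $\beta$ is real—in which case no real positive root is imaginarily equivalent to $\beta$—or $\beta=\delta$, in which case $\cont(\nu)=\delta-\beta_{a-1}$ has height $e-a+1\in[1,e-1]$ and is therefore real, ruling out imaginary equivalence. A parallel height argument ensures the needed strictness in the third case.
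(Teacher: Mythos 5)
Your proof is correct and follows essentially the same strategy as the paper's: verify the ribbon/content facts via Lemmas~\ref{hookispath} and~\ref{hookcont}, reduce cuspidality to Lemma~\ref{CuspForSkewHook}, parametrize the $\ssee$-removable sub-ribbons by their endpoints on the path, translate the removability conditions into comparisons $\beta_{a-1}\prec\beta$ and $\beta_d\succ\beta$ via Definition~\ref{defxib}, and finish with the generalized convexity lemma applied to $\beta_d=\beta_{a-1}+\cont(\nu)$. The one genuine difference is that the paper cites Lemma~\ref{xiuv} to identify removable ribbons as the sets $\xi^\zeta_{z^i,z^j}$ with $\so z^i,\ea z^j\notin\zeta$, whereas you re-derive that characterization from scratch by analyzing the $\searrow$-relation on the path; both are fine, yours a bit more self-contained. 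Your three-way case split (the paper merges your last two into a single case $i>1$) and the explicit height/$\Psi$-membership argument for strictness make the nonstrict-to-strict upgrade cleaner than the paper's somewhat compressed appeal to Lemma~\ref{GenConvexLem}(iii). One small inaccuracy: your closing remark that ``a parallel height argument ensures the needed strictness in the third case'' is superfluous --- once you have $\cont(\nu)\succeq\beta_d$ with $\beta_d\succ\beta$, strictness $\cont(\nu)\succ\beta$ follows directly from the preorder axioms (supposing $\beta\succeq\cont(\nu)$ yields $\beta\succeq\beta_d$, contradicting $\beta_d\succ\beta$), with no need for Lemma~\ref{notapprox}. This is cosmetic and does not affect correctness.
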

\begin{proof}
First, note that the path \((z^i)_{i=1}^{\height(\beta)}\) is well-defined, which follows from Lemma~\ref{notapprox}(ii). We have that \(\zeta^{(\beta,b)}\) is a ribbon by Lemma~\ref{hookispath}, and \(\cont(\zeta^{(\beta,b)}) = \beta\) by Lemma~\ref{hookcont}.

It remains to show that \( \zeta = \zeta^{(\beta,b)}\) is cuspidal. Let \(\nu \subsetneq \zeta\) be an \(\ssee\)-removable ribbon in \(\zeta\). By Lemma~\ref{CuspForSkewHook}, it will be enough to show that \(\cont(\nu) \succ \beta\). By Lemma~\ref{xiuv}, we have 
\(\nu = \xi^\zeta_{z^i,z^j}\) for some \(1 \leq i \leq j \leq h\), where \(\so z^i, \ea z^j \notin \zeta\).

First assume \(i=1, j<h\). Since \(\ea z^j  \notin \zeta\), it follows from the construction of \(\zeta\) that \(z^{j+1} = \no z^j\), and thus \(\alpha(\res(b),j) \succ \beta\). But then by Lemma~\ref{hookcont} we have 
\begin{align*}
\cont(\nu) = \cont(\xi^\zeta_{z^1, z^j}) =  \alpha(\res(b),j) \succ \beta,
\end{align*}
as desired.

Now assume \(1<i\). Since \(\so z^i \notin \zeta\), we have \(z^i = \ea z^{i-1} \) and thus \(\beta \succ \alpha(\res(b), i-1)\). If \(j<h\), then since \(\ea z^j  \notin \zeta\), it follows from the construction of \(\zeta\) that \(z^{j+1} = \no z^j\), and thus \(\alpha(\res(b),j) \succ \beta\). On the other hand, if \(j=h\), then \(\alpha(\res(b),j) = \beta\), so in any case we have \(\alpha(\res(b),j) \succeq \beta\).
Therefore, applying Lemma~\ref{hookcont}, we have
\begin{align*}
\alpha(\res(b), j) &= 
\cont(\xi^\zeta_{z^1, z^j}) 
=\cont(\xi^\zeta_{z^1, z^{i-1}}\sqcup \xi^\zeta_{z^i, z^j})\\
&=
\cont(\xi^\zeta_{z^1, z^{i-1}}) + \cont(\xi^\zeta_{z^i, z^j})
= \alpha(\res(b), i-1) + \cont(\nu).
\end{align*}
If \(\beta \succeq \cont(\nu)\), we would have by Lemma~\ref{GenConvexLem}(iii) that \(\beta \succ \alpha(\res(b), j)\), a contradiction. Thus \(\cont(\nu) \succ \beta\), as desired.
\end{proof}

\begin{lemma}\label{onlycuspribbon}
Let \(\nu\) be a ribbon of content \(\beta \in \Psi\). Then \(\nu\) is cuspidal if and only if \(\nu = \zeta^{(\beta, \nu_{\ssww})}\).
\end{lemma}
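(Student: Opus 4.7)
The forward implication is Lemma~\ref{zetaiscusp}, so only the converse requires proof. My plan is to fix a cuspidal ribbon \(\nu\) of content \(\beta \in \Psi\), set \(b = \nu_{\ssww}\), and use Lemma~\ref{hookispath} to write \(\nu = \{z^i\}_{i=1}^{h}\) for a \(\no/\ea\) path with \(z^1 = b\) and \(h = \height(\beta)\). I would then show by induction on \(i\) that \(z^i\) agrees with the canonical node \(\tilde z^i\) prescribed in Definition~\ref{defxib}, whence \(\nu = \zeta^{(\beta, b)}\); the base case \(i=1\) is immediate.

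For the inductive step, first note that \(\res(z^i) = \res(b) + \overline{i-1}\) along any \(\no/\ea\) path, so \(\alpha(\res(b), i-1)\) is a positive root of height \(i-1 < h\). By Lemma~\ref{notapprox}(ii) it is not \(\approx\)-equivalent to \(\beta\), hence by totality of \(\succeq\) exactly one of the two cases in Definition~\ref{defxib} occurs. The key asymmetry I would exploit comes from Lemma~\ref{xiuv} applied to \(\nu\): a sub-ribbon \(\{z^a, \ldots, z^c\} \subseteq \nu\) is \(\ssee\)-removable precisely when (\(a=1\) or \(z^a = \ea z^{a-1}\)) and (\(c=h\) or \(z^{c+1} = \no z^c\)). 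In particular the tail \(\{z^i, \ldots, z^h\}\) is \(\ssee\)-removable exactly when \(z^i = \ea z^{i-1}\), whereas the prefix \(\{z^1, \ldots, z^{i-1}\}\) is \(\ssee\)-removable exactly when \(z^i = \no z^{i-1}\).

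Suppose first \(\alpha(\res(b), i-1) \succ \beta\). I would rule out \(z^i = \ea z^{i-1}\): in that case the tail \(C = \{z^i, \ldots, z^h\}\) is a proper \(\ssee\)-removable sub-ribbon, so Lemma~\ref{shapehooktest} yields \(\cont(C) \succ \beta\), while Lemma~\ref{hookcont} and identity~(\ref{splithookcont}) give the decomposition \(\alpha(\res(b), i-1) + \cont(C) = \beta\). Applying Lemma~\ref{GenConvexLem}(iii) to this two-term sum forces \(\beta\) to weakly dominate the \(\succeq\)-smaller summand, contradicting that both summands strictly exceed \(\beta\). Hence \(z^i = \no z^{i-1} = \tilde z^i\). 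In the opposite case \(\beta \succ \alpha(\res(b), i-1)\), I would instead rule out \(z^i = \no z^{i-1}\): the prefix \(P = \{z^1, \ldots, z^{i-1}\}\) would then be a proper \(\ssee\)-removable sub-ribbon of \(\nu\) with \(\cont(P) = \alpha(\res(b), i-1)\), and Lemma~\ref{shapehooktest} would give \(\alpha(\res(b), i-1) \succ \beta\), directly contradicting the case hypothesis.

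The main point requiring care is that the two cases are handled using different sub-ribbons, reflecting the asymmetry of \(\ssee\)-removability in a ribbon: in each case, the \emph{wrong} step direction at position \(i\) is precisely what manufactures a sub-ribbon whose content violates the cuspidality bound, so each forbidden choice is refuted through a different portion of \(\nu\).
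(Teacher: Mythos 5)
Your proof is correct and follows essentially the same route as the paper's: the paper argues by contradiction at the first index $t$ where $\nu$ and $\zeta^{(\beta,b)}$ disagree and then derives a contradiction using the same prefix/tail $\ssee$-removable sub-ribbons with the same convexity and cuspidality estimates, whereas you phrase this as an induction on the path index in which each forbidden step is refuted. The key asymmetry you highlight — using the prefix when $\beta \succ \alpha(\res(b),i-1)$ and the tail when $\alpha(\res(b),i-1) \succ \beta$ — is exactly the mechanism in the paper's two cases.
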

\begin{proof}
The `if' direction is proved in Lemma~\ref{zetaiscusp}. Now we prove the `only if' direction. Let \(b =\nu_{\ssww}\), and \(h = \height(\beta)\). By Lemma~\ref{hookcont} we must have \(\beta = \alpha(\res(b),h)\). Consider the skew shape \(\zeta = \zeta^{(\beta,b)}\), and let \((z^i)_{i=1}^h\) be the \(\no/\ea\) path constructed in Definition~\ref{defxib} so that \(\zeta = \{z^i\}_{i=1}^h\). By Lemma~\ref{hookispath}, there is a \(\no/\ea\) path \((w^i)_{i=1}^h\), with \(w^1 = b\) and \(\nu = \{w^i\}_{i=1}^h\). Assume by way of contradiction that \(\nu \neq \zeta\). Then there exists \(2 \leq t \leq h\) such that \(w^i = z^i\) for \(i<t\), and \(w^t \neq z^t\). Note that by Lemma~\ref{notapprox}, we have \(\beta \not \approx \alpha(\res(b),t-1)\).

First assume \(\beta \succ \alpha(\res(b),t-1)\). Then \(z^t = \ea z^{t-1}\).
We have \(w^t \in \{\no w^{t-1}, \ea w^{t-1}\} = \{\no z^{t-1}, \ea z^{t-1}\}\), but \(w^t \neq z^t\), so \(w^t = \no z^{t-1}\). Then \(\ea w^{t-1} \notin \nu\), and \(\ea w^h \notin \nu\), so \(\xi^\nu_{w^1, w^{t-1}}\) is \(\ssee\)-removable in \(\nu\) by Lemma~\ref{xiuv}. By cuspidality of \(\nu\) and Lemma~\ref{CuspForSkewHook} we have that
\begin{align*}
\alpha(\res(\nu_1), t-1) \succ \cont(\xi^\nu_{\nu_t, \nu_h}) \succ \beta,
\end{align*}
a contradiction.

Now assume \(\alpha(\res(b),t-1) \succ \beta\). Then \(z^t = \no z^{t-1}\), and we have \(w^t \in \{\no w^{t-1}, \ea w^{t-1}\} = \{\no z^{t-1}, \ea z^{t-1}\}\), but \(w^t \neq z^t\), so \(w^t = \ea z^{t-1}\). Then \(\so w^t \notin \nu\), and \(\ea w^h \notin \nu\), so \(\xi^\nu_{w^t, w^h}\) is \(\ssee\)-removable in \(\nu\) by Lemma~\ref{xiuv}. By cuspidality of \(\nu\) and Lemma~\ref{CuspForSkewHook} we have that
\begin{align*}
\alpha(\res(w^t), h-t + 1) = \cont(\xi^\nu_{w^t, w^h}) \succ \beta.
\end{align*}
But then by convexity and (\ref{splithookcont}) we have
\begin{align*}
\beta = \alpha(\res(b), h) =  \alpha(\res(b),t-1) + \alpha(\res(w^t), h-t + 1) \succ \beta,
\end{align*}
a contradiction. Thus, in any case we derive a contradiction, so we must have \(\nu = \zeta\) as desired.
\end{proof}

\subsection{Distinguished cuspidal ribbons}
For each \(\beta \in \Phi_+^\re\), we make a distinguished choice of \(b^\beta \in \init(\beta)\). For each \(t \in \Z_e\), we make a distinguished choice of \(b^t \in \N_t\). Then we define the distinguished cuspidal ribbons, for all \(\beta \in \Phi_+^\re\) and \(t \in \N_t\):
\begin{align*}
\zeta^\beta := \zeta^{(\beta, b^{\beta})}, \qquad \zeta^t := \zeta^{(\delta, b^t)}
\end{align*}

By construction, the path associated with \(\zeta^{(\beta, b)}\) in Definition~\ref{defxib} relies only on \(\res(b)\) and \(\beta\), and not on the specific location of \(b\). Thus by Lemmas~\ref{zetaiscusp} and~\ref{onlycuspribbon} we have the following result:

\begin{proposition}\label{allcusprib}
For all \(\beta \in \Phi_+^\re\), \(t \in \ZZ_e\), we have
\begin{align*}
[\zeta^{\beta}] = \{\zeta^{(\beta,b)} \mid b \in \init(\beta)\},
\qquad
\textup{and}
\qquad
[\zeta^{t}] = \{\zeta^{(\delta,b)} \mid b \in \N_t\}.
\end{align*}
The set \(\{\zeta^{\beta} \mid \beta \in \Phi_+^\re\} \cup \{\zeta^t \mid t \in \Z_e\}\) represents a complete and irredundant set of cuspidal ribbons with content in \(\Psi\), up to \(e\)-similarity.
\end{proposition}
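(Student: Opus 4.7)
The plan is to deduce both assertions of Proposition~\ref{allcusprib} directly from Lemmas~\ref{zetaiscusp} and~\ref{onlycuspribbon}, together with a translation-equivariance observation about the construction in Definition~\ref{defxib}. Specifically, the choices $z^i = \no z^{i-1}$ vs. $z^i = \ea z^{i-1}$ are governed by the comparisons between $\alpha(\res(b), i-1)$ and $\beta$, which depend on $b$ only through $\res(b)$. Hence if $b, b' \in \N$ satisfy $\res(b) = \res(b')$, then setting $c := b' - b \in \N_{\overline 0}$ gives $\T_c \zeta^{(\beta,b)} = \zeta^{(\beta,b')}$. I would record this as a preliminary observation before tackling the two displayed equalities.

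For the first displayed equality, fix $\beta \in \Phi_+^\re$ and note that $\init(\beta) = \N_t$ for some $t \in \Z_e$, with $b^\beta \in \N_t$ by construction. For any $b \in \init(\beta)$ the difference $b - b^\beta$ lies in $\N_{\overline 0}$, so translation-equivariance yields $\zeta^{(\beta, b)} = \T_{b-b^\beta}\zeta^\beta \in [\zeta^\beta]$. Conversely, any $\nu \in [\zeta^\beta]$ is, by the definition of $e$-similarity for connected shapes, of the form $\T_c \zeta^\beta$ for some $c \in \N_{\overline 0}$; then $\nu_{\ssww} = b^\beta + c \in \N_t = \init(\beta)$, and translation-equivariance identifies $\nu$ with $\zeta^{(\beta, \nu_{\ssww})}$. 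The argument for $[\zeta^t] = \{\zeta^{(\delta, b)} \mid b \in \N_t\}$ runs identically, using $\init(\delta) = \N$ and the fact that $b^t \in \N_t$ by definition.

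For the second assertion, completeness is immediate from Lemma~\ref{onlycuspribbon}: any cuspidal ribbon $\xi$ with content $\beta \in \Psi$ satisfies $\xi = \zeta^{(\beta, \xi_{\ssww})}$, and the first part places it in $[\zeta^\beta]$ when $\beta \in \Phi_+^\re$, or in $[\zeta^t]$ with $t = \res(\xi_{\ssww})$ when $\beta = \delta$. For irredundancy, $e$-similar skew shapes share the same content, which distinguishes $[\zeta^{\beta_1}]$ from $[\zeta^{\beta_2}]$ for distinct $\beta_1, \beta_2 \in \Phi_+^\re$ and separates any such class from each $[\zeta^t]$; and $e$-similar shapes also share the residue of the southwest corner, so the classes $[\zeta^t]$ for varying $t \in \Z_e$ are pairwise disjoint as well.

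The only real subtlety---and thus the main place to be careful---is in matching the $e$-similarity condition (translation by $c \in \N_{\overline 0}$) against the constraint $b \in \init(\beta)$; the point that makes the argument go through is that $\init(\beta)$ is precisely one residue fiber in the real case, and all of $\N$ in the imaginary case, so the two constraints are perfectly compatible. Beyond this bookkeeping, no further structural input is needed once Lemmas~\ref{zetaiscusp} and~\ref{onlycuspribbon} are in hand.
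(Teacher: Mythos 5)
Your proposal is correct and follows essentially the same route the paper intends: the paper's justification is precisely the one-line observation that the path in Definition~\ref{defxib} depends on \(b\) only through \(\res(b)\), combined with Lemmas~\ref{zetaiscusp} and~\ref{onlycuspribbon}. You have simply unpacked the translation-equivariance, the forward/backward containments defining the \(e\)-similarity classes, and the completeness/irredundancy bookkeeping that the paper leaves implicit, all of which check out.
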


We will show in Theorem~\ref{allcusp} that this in fact describes all cuspidal {\em skew shapes} up to \(e\)-similarity.

\begin{example}\label{e2shapes}
Take the case \(e=2\) and the convex preorder as defined in Example~\ref{e2order}. In this case we have a highly regular stairstep pattern for all real cuspidal ribbons, and a pair of two-node dominoes for the imaginary cuspidal ribbons, see Figure~\ref{fig:e2cusps}.
\begin{figure}[h]
\begin{align*}
{}
\hackcenter{
\begin{tikzpicture}[scale=0.4]
%
\fill[fill=lightgray!50] (0,0)--(1,0)--(1,1)--(2,1)--(2,2)--(3,2)--(3,3)--(4,3)--(4,4)--(5,4)--(5,5)--(6,5)--(6,6)--(4,6)--(4,5)--(3,5)--(3,4)--(2,4)--(2,3)--(1,3)--(1,2)--(0,2)--(0,0);
\draw[thick] (4,3.5)--(4,4)--(5,4)--(5,5)--(6,5)--(6,6)--(4,6)--(4,5)--(3,5)--(3,4.5);
\draw[thick]  (1.5,3)--(1,3)--(1,2)--(0,2)--(0,0)--(1,0)--(1,1)--(2,1)--(2,2)--(2.5,2);
%
\draw[thick, dotted] (1,0)--(1,2);
\draw[thick, dotted] (2,1)--(2,3);
\draw[thick, dotted] (0,1)--(1,1);
\draw[thick, dotted] (1,2)--(2,2);
\draw[thick, dotted] (3,4)--(4,4)--(4,5)--(5,5)--(5,6);
\node at (0.5,0.5){$\scriptstyle1 $};
\node at (0.5,1.5){$\scriptstyle0 $};
\node at (1.5,1.5){$\scriptstyle1 $};
\node at (1.5,2.5){$\scriptstyle0 $};
\node at (3.5,4.5){$\scriptstyle0 $};
\node at (4.5,4.5){$\scriptstyle1 $};
\node at (4.5,5.5){$\scriptstyle0 $};
\node at (5.5,5.5){$\scriptstyle1 $};
\node at (2.75,3.5){$\scriptstyle \iddots $};
%
\end{tikzpicture}
}
\hackcenter{
\begin{tikzpicture}[scale=0.4]
%
\fill[fill=lightgray!50] (0,0)--(2,0)--(2,1)--(3,1)--(3,2)--(4,2)--(4,3)--(5,3)--(5,4)--(6,4)--(6,6)--(5,6)--(5,5)--(4,5)--(4,4)--(3,4)--(3,3)--(2,3)--(2,2)--(1,2)--(1,1)--(0,1)--(0,0);
\draw[thick] (4.5,3)--(5,3)--(5,4)--(6,4)--(6,6)--(5,6)--(5,5)--(4,5)--(4,4)--(3.5,4);
\draw[thick]  (2,2.5)--(2,2)--(1,2)--(1,1)--(0,1)--(0,0)--(2,0)--(2,1)--(3,1)--(3,1.5);
%
\draw[thick, dotted] (1,0)--(1,1)--(2,1)--(2,2)--(3,2);
\draw[thick, dotted] (4,3)--(4,4)--(5,4)--(5,5)--(6,5);
\node at (0.5,0.5){$\scriptstyle0 $};
\node at (1.5,0.5){$\scriptstyle1 $};
\node at (1.5,1.5){$\scriptstyle0 $};
\node at (2.5,1.5){$\scriptstyle1 $};
\node at (4.5,3.5){$\scriptstyle1 $};
\node at (4.5,4.5){$\scriptstyle0 $};
\node at (5.5,4.5){$\scriptstyle1 $};
\node at (5.5,5.5){$\scriptstyle0 $};
\node at (3.25,3){$\scriptstyle \iddots $};
%
\end{tikzpicture}
}
\qquad
\hackcenter{
\begin{tikzpicture}[scale=0.4]
%
\fill[fill=lightgray!50] (0,0)--(2,0)--(2,1)--(0,1)--(0,0);
\draw[thick]  (0,0)--(2,0)--(2,1)--(0,1)--(0,0);
%
\draw[thick, dotted] (1,0)--(1,1);
\node at (0.5,0.5){$\scriptstyle0 $};
\node at (1.5,0.5){$\scriptstyle1 $};
%
%
\end{tikzpicture}
}
\qquad
\hackcenter{
\begin{tikzpicture}[scale=0.4]
%
\fill[fill=lightgray!50] (0,0)--(1,0)--(1,2)--(0,2)--(0,0);
\draw[thick]  (0,0)--(1,0)--(1,2)--(0,2)--(0,0);
%
\draw[thick, dotted] (0,1)--(1,1);
\node at (0.5,0.5){$\scriptstyle1 $};
\node at (0.5,1.5){$\scriptstyle0 $};
%
%
\end{tikzpicture}
}
\end{align*}
\caption{Cuspidal ribbons, \(e=2\) case: \( \zeta^{n\delta + \alpha_1}; \zeta^{n \delta + \alpha_0}; \zeta^{ \overline{0}}; \zeta^{\overline{1}}\)}
\label{fig:e2cusps}       
\end{figure}
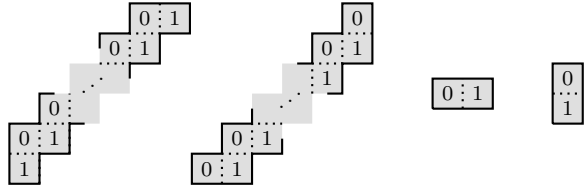
\end{example}

We refer the reader to Example~\ref{BigEx} for a preorder in the \(e=3\) case with more irregular cuspidal ribbon shapes.

\subsection{Minimal ribbons}
Let \(\tau\) be a skew shape, and \(\xi\) be an \(\ssee\)-removable ribbon in \(\tau\). We say \(\xi\) is a {\em minimal \(\ssee\)-removable ribbon} (resp. {\em maximal}) provided that for every \(\ssee\)-removable ribbon \(\nu\) in \(\tau\) we have \(\cont(\nu) \succeq \cont(\xi)\) (resp. \(\cont(\nu) \preceq \cont(\xi)\)). We extend these notions as well to \(\nnww\)-removable ribbons in the obvious way.

\begin{lemma}\label{minremiscusp}
Let \(\xi\) be a minimal \(\ssee\)-removable ribbon in a skew shape \(\tau\), with \(\cont(\xi) \in \Psi\). Then \(\xi\) is cuspidal.
\end{lemma}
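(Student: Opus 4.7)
The plan is to apply Lemma~\ref{CuspForSkewHook}: it suffices to show that for every proper $\ssee$-removable ribbon $\nu \subsetneq \xi$, we have $\cont(\nu) \succ \cont(\xi)$.

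The first key step is to verify that $\ssee$-removability is transitive in the relevant sense: if $\nu$ is $\ssee$-removable in $\xi$ and $\xi$ is $\ssee$-removable in $\tau$, then $\nu$ is $\ssee$-removable in $\tau$. I would prove this via Lemma~\ref{remcond} by contradiction: any pair $u \in \nu$, $v \in \tau \setminus \nu$ with $u \searrow v$ must, by $\ssee$-removability of $\xi$ in $\tau$, have $v \in \xi$, hence $v \in \xi \setminus \nu$; but this contradicts $\ssee$-removability of $\nu$ in $\xi$. (Alternatively one may observe that $(\tau\setminus\xi,\,\xi\setminus\nu,\,\nu)$ is a tableau for $\tau$, so Lemma~\ref{tabisskew} gives that $\tau\setminus\nu$ is a skew shape and $(\tau\setminus\nu,\nu)$ is a tableau.)

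The second step invokes the hypothesis: since $\nu$ is an $\ssee$-removable ribbon in $\tau$ and $\xi$ is minimal such in $\tau$, we immediately obtain $\cont(\nu) \succeq \cont(\xi)$.

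The third step upgrades to strict comparison. Because $\nu \subsetneq \xi$ are both ribbons, Corollary~\ref{sizehook} and Lemma~\ref{hookcont} give
\[
\height(\cont(\nu)) = |\nu| < |\xi| = \height(\cont(\xi)).
\]
Since $\cont(\xi) \in \Psi$ by hypothesis, Lemma~\ref{notapprox}(ii) forbids $\cont(\nu) \approx \cont(\xi)$, and combined with $\cont(\nu) \succeq \cont(\xi)$ this yields $\cont(\nu) \succ \cont(\xi)$. Lemma~\ref{CuspForSkewHook} then concludes that $\xi$ is cuspidal.

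The main obstacle is the transitivity verification in step one; the remainder is a short accounting argument driven by minimality and the height comparison enabled by the $\cont(\xi) \in \Psi$ hypothesis (this hypothesis is exactly what rules out the imaginary equivalence class collapsing the inequality).
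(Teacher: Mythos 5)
Your proof is correct and follows essentially the same route as the paper's: reduce to the criterion of Lemma~\ref{CuspForSkewHook}, observe that $\ssee$-removability is transitive so any $\ssee$-removable ribbon $\nu \subsetneq \xi$ is $\ssee$-removable in $\tau$, invoke minimality of $\xi$ to get $\cont(\nu)\succeq\cont(\xi)$, and then rule out $\cont(\nu)\approx\cont(\xi)$ via Lemma~\ref{notapprox}(ii). The paper states these steps more tersely (it asserts the transitivity and the non-equivalence without spelling out the height comparison you give), so your write-up is simply a more fully justified version of the same argument.
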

\begin{proof}
Take any \(\ssee\)-removable ribbon \(\nu \subsetneq \xi\) in \(\xi\). Since \(\xi\) is \(\ssee\)-removable in \(\tau\), we have that \(\nu\) is \(\ssee\)-removable in \(\tau\). By the minimality of \(\xi\), we have \(\cont(\nu) \succeq \cont(\xi)\), and by Lemma~\ref{notapprox}(ii) we have \(\cont(\nu) \not \approx \cont(\xi)\), so \(\cont(\nu) \succ \cont(\xi)\). Thus \(\xi\) is cuspidal by Lemma~\ref{CuspForSkewHook}.
\end{proof}

\begin{lemma}\label{mdeltaribbon}
If \(\nu\) is a ribbon of content \(m\delta\), then \(\nu\) has an \(\ssee\)-removable ribbon of content \(\delta\).
\end{lemma}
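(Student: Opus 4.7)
The plan is to realize $\nu$ concretely as a $\no/\ea$ path and to locate a contiguous length-$e$ segment of the path that is $\ssee$-removable. First I would apply Lemma~\ref{hookispath} to write $\nu = \{z^1, \ldots, z^{me}\}$, where $(z^i)_{i=1}^{me}$ is a $\no/\ea$ path, and record for each step $i \in [1, me - 1]$ whether $z^{i+1} = \no z^i$ (``the $i$th step is north'') or $z^{i+1} = \ea z^i$ (``the $i$th step is east''). For any $p \in [1, me - e + 1]$, the segment $\xi_p := \{z^p, z^{p+1}, \ldots, z^{p+e-1}\}$ is a sub-ribbon of $\nu$ of cardinality $e$, hence by Lemma~\ref{hookcont} has content $\alpha(\res(z^p), e) = \delta$.

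Next I would verify that the $\ssee$-removability of $\xi_p$ is governed solely by the two boundary steps of the path. The essential geometric input is that a ribbon contains no $2\times 2$-block, which forces $\ssee w \notin \nu$ for every $w \in \nu$; hence in the notation of Lemma~\ref{xiuv} one has $\xi_p = \xi^\nu_{z^p, z^{p+e-1}}$ precisely when $\so z^p \notin \nu$ and $\ea z^{p+e-1} \notin \nu$. Using that $\diag$ strictly increases along a $\no/\ea$ path, the first condition unpacks to ``$p = 1$ or step $p - 1$ is east,'' and the second to ``$p + e - 1 = me$ or step $p + e - 1$ is north.''

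Finally I would prove existence of a suitable $p$ by a discrete intermediate-value argument. Introducing virtual boundary conventions ``step $0$ is east'' and ``step $me$ is north,'' I would inspect the sampled sequence of steps at indices $0, e, 2e, \ldots, me$: this list of length $m+1$ begins with east and ends with north, so choosing $K \in [0, m-1]$ maximal with ``step $Ke$ is east'' forces ``step $(K+1)e$ is north.'' Setting $p := Ke + 1$ then satisfies both boundary conditions above, producing an $\ssee$-removable sub-ribbon $\xi_p \subseteq \nu$ of content $\delta$.

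The main obstacle is the bookkeeping in the second paragraph, namely pinning down precisely which neighbors of $z^p$ and $z^{p+e-1}$ lie in $\nu$ purely in terms of the step sequence; once that translation is made, the existence step in the third paragraph is essentially trivial.
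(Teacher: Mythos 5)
Your proposal is correct. It uses essentially the same idea as the paper (realize $\nu$ as a $\no/\ea$-path, and identify a length-$e$ block whose boundary steps certify $\ssee$-removability), but the paper packages the search for the right block as an induction on $m$ — checking step $e$, and either stopping or recursing on the suffix of content $(m-1)\delta$ — whereas you locate the transition point directly via a discrete intermediate-value argument over the sampled step sequence at indices $0, e, \ldots, me$.
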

\begin{proof}
We go by induction on \(m\). The base case \(m=1\) is trivial, so assume \(m>1\) and make the induction assumption on \(m' < m\). By Lemma~\ref{hookispath} there exists a \(\no/\ea\) path \((z^i)_{i=1}^{me}\) such that \(\nu = \{z^i\}_{i=1}^{me}\). If \(z^{e+1} = \no z^e\), it follows by Lemma~\ref{xiuv} that \(\xi^\nu_{z^1, z^e}\) is \(\ssee\)-removable in \(\nu\), and we have \(\cont(\xi^\nu_{z^1,z^e}) = \delta\) by Lemma~\ref{hookcont}. On the other hand, if \(z^{e+1} = \ea z^e\), it follows by Lemma~\ref{xiuv} that \(\xi^\nu_{z^{e+1}, z^{me}}\) is \(\ssee\)-removable in \(\nu\), and we have \(\cont(\xi^\nu_{z^{e+1}, z^{me}}) = (m-1)\delta\) by Lemma~\ref{hookcont}. Then by induction there exists an \(\ssee\)-removable ribbon \(\omega\) in \(\xi^\nu_{z^{e+1}, z^{me}}\) of content \(\delta\). As \(\omega\) is \(\ssee\)-removable in \(\xi^\nu_{z^{e+1}, z^{me}}\), it is \(\ssee\)-removable in \(\nu\). This completes the induction step, and the proof.
\end{proof}

\begin{lemma}\label{minremexist}
If \(\tau\) is a skew shape, then \(\tau\) has a minimal \(\ssee\)-removable ribbon \(\xi\) such that \(\cont(\xi) \in \Psi\).
\end{lemma}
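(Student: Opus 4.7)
The plan is to first locate any $\ssee$-removable ribbon whose content is as small as possible in the convex preorder, then, if that content happens to be a non-indivisible imaginary root, to extract a smaller sub-ribbon of content $\delta$ from it.

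First I would observe that $\tau$ has only finitely many $\ssee$-removable ribbons: by Lemma~\ref{xiuv}, they are parametrized by the finite set $\textup{Rem}_\tau \subseteq \tau \times \tau$. By Corollary~\ref{skewpos}, each has content in $\Phi_+$. Since $\succeq$ is a convex preorder (in particular total), this finite collection of contents has a $\succeq$-minimum, so I may pick an $\ssee$-removable ribbon $\xi$ of $\tau$ whose content $\cont(\xi) \in \Phi_+$ is $\succeq$-minimal among the contents of all $\ssee$-removable ribbons of $\tau$.

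If $\cont(\xi) \in \Psi$, then $\xi$ itself is the desired ribbon. Otherwise, $\cont(\xi) = m\delta$ for some $m > 1$. Then Lemma~\ref{mdeltaribbon} produces an $\ssee$-removable sub-ribbon $\omega \subseteq \xi$ with $\cont(\omega) = \delta \in \Psi$. The next step is to verify that $\omega$ is still $\ssee$-removable in the larger skew shape $\tau$: using Lemma~\ref{remcond}, for any $u \in \omega$ and $v \in \tau \setminus \omega = (\xi \setminus \omega) \sqcup (\tau \setminus \xi)$, the relation $u \searrow v$ is ruled out either by the removability of $\omega$ in $\xi$ (if $v \in \xi \setminus \omega$) or by the removability of $\xi$ in $\tau$ (if $v \in \tau \setminus \xi$, using $u \in \xi$).

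Finally, I would check minimality. For any $\ssee$-removable ribbon $\nu$ in $\tau$, the choice of $\xi$ gives $\cont(\nu) \succeq \cont(\xi) = m\delta$; since $m\delta \approx \delta = \cont(\omega)$ by the imaginary equivalence axiom, we get $\cont(\nu) \succeq \cont(\omega)$, and so $\omega$ is a minimal $\ssee$-removable ribbon with $\cont(\omega) \in \Psi$, completing the proof. The only genuinely subtle point is the transitive argument that an $\ssee$-removable sub-ribbon of an $\ssee$-removable ribbon remains $\ssee$-removable in the ambient shape; everything else is a direct invocation of Lemma~\ref{mdeltaribbon}, Corollary~\ref{skewpos}, and the finiteness/totality of $\succeq$ on the content set.
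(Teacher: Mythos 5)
Your proof is correct and follows essentially the same route as the paper's: take a $\ssee$-removable ribbon with $\succeq$-minimal content, observe by Lemma~\ref{mdeltaribbon} that if its content is $m\delta$ with $m>1$ one can extract an $\ssee$-removable sub-ribbon of content $\delta$, and use $\delta \approx m\delta$ together with transitivity of $\ssee$-removability to conclude minimality. You spell out a few steps (finiteness of $\textup{Rem}_\tau$, the transitivity of $\ssee$-removability via Lemma~\ref{remcond}) that the paper takes as read, and cite Corollary~\ref{skewpos} where the paper cites Lemma~\ref{hookcont}, but the argument is the same.
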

\begin{proof}
Let \(\nu\) be a minimal \(\ssee\)-removable ribbon in \(\tau\). Then \(\cont(\nu) \in \Phi_+\) by Lemma~\ref{hookcont}. If \(\cont(\nu) \in \Phi_+^\re\), we are done by (\ref{PsiDef}), so assume otherwise. Then 
by (\ref{allimag}), we have \(\cont(\nu) = m\delta\) for some \(m\in \NN\).
Then by Lemma~\ref{mdeltaribbon}, \(\nu\) has an \(\ssee\)-removable ribbon \(\xi\) of content \(\delta \in \Psi\). But then \(\xi\) is \(\ssee\)-removable in \(\tau\), and \(\delta \approx m\delta\), so \(\xi\) is minimal in \(\tau\) as well.
\end{proof}

The next key lemma establishes that consecutively \(\ssee\)-removed minimal ribbons are non-decreasing in the convex preorder. 


 \begin{lemma}\label{remsinorder}
Let \(\tau\) be a skew shape. Let \(\xi_1\) be a minimal \(\ssee\)-removable ribbon in \(\tau\), and let \(\xi_2\) be a minimal \(\ssee\)-removable ribbon in \(\tau'=\tau\backslash \xi_1\). Then \(\cont(\xi_2) \succeq \cont(\xi_1)\).
 \end{lemma}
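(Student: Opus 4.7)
The plan is to argue by contradiction: assume $\beta_2 := \cont(\xi_2) \prec \cont(\xi_1) =: \beta_1$ and produce an SE-removable ribbon of $\tau$ whose content is $\preceq$-smaller than $\beta_1$, violating minimality of $\xi_1$. If $\xi_2$ is already SE-removable in $\tau$, minimality of $\xi_1$ gives $\beta_2 \succeq \beta_1$ and we are done. Otherwise, by Lemma~\ref{remcond} there exist $x \in \xi_2$ and $y \in \tau \setminus \xi_2$ with $x \searrow y$; SE-removability of $\xi_2$ in $\tau'$ forces $y \in \xi_1$, so by Lemma~\ref{seconnect} the ribbons $\xi_1, \xi_2$ lie in the same connected component of $\tau$. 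Writing $\xi_1 = \xi^\tau_{u,v}$ and $\xi_2 = \xi^{\tau'}_{z,w}$ (Lemma~\ref{xiuv}), Lemma~\ref{remhookswap} places us in one of the six cases (i)--(vi).

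In cases (iii)--(vi), the lemma produces a pair $(a,b) \in \textup{Rem}_\tau$ with $a \in \{z, \ssee z\}$ and $b \in \{w, \ssee w\}$. Since $\ssee$-translation preserves both residues and the NE-distance $\dist$, Lemma~\ref{hookcont} gives $\cont(\xi^\tau_{a,b}) = \alpha(\res(z), \dist(z,w)+1) = \beta_2$. Thus $\xi^\tau_{a,b}$ is an SE-removable ribbon of $\tau$ of content $\beta_2$, contradicting minimality of $\xi_1$.

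Cases (i) and (ii) are the main obstacle. In case (i) ($u \nearrow v \nearrow z \nearrow w$), the row/column bounds on nodes of $\xi_1$ and $\xi_2$, combined with the witness $x \searrow y$, force $y_2 = v_2 = z_2 = x_2$, so $v$ and $z$ lie in the same column with $z$ strictly north of $v$. Skew-shape convexity along this column would place $\so z$ in $\tau$ whenever $\so z \neq v$, yet such a $\so z$ could not lie in $\xi_1$ (the relation $\so z \nearrow v$ fails), so it would lie in $\tau'$, contradicting the $\textup{Rem}_{\tau'}$-condition $\so z \notin \tau'$. Hence $\so z = v$, i.e.\ $z = \no v$, so $\dist(v,z) = 1$ and $\res(z) = \res(v)+1$. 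Using~(\ref{splithookcont}), the SE-removable ribbon $\xi^\tau_{u,w}$ produced by Lemma~\ref{remhookswap}(i) then has content
\begin{equation*}
\alpha(\res(u), \dist(u,v)+1) + \alpha(\res(z), \dist(z,w)+1) = \beta_1 + \beta_2.
\end{equation*}
Minimality of $\xi_1$ gives $\beta_1 + \beta_2 \succeq \beta_1$; since $\beta_2 \prec \beta_1$, Lemma~\ref{GenConvexLem}(iii) also gives $\beta_1 \succeq \beta_1 + \beta_2$, forcing $\beta_1 + \beta_2 \approx \beta_1$. Axiom (v) of the convex preorder then places $\beta_1, \beta_1 + \beta_2 \in \Phi_+^\im$, whence $\beta_2 \in \Phi_+^\im$ as well; but then imaginary equivalency gives $\beta_2 \approx \beta_1$, contradicting $\beta_2 \prec \beta_1$. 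Case (ii) is symmetric: the witness forces $u = \ea w$, and the analogous computation on $\xi^\tau_{z,v}$ again yields content $\beta_1 + \beta_2$ and the same convexity contradiction.

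The principal difficulty is the geometric reduction in cases (i)--(ii), sharpening the raw overlap $x \searrow y$ into the equality $\dist(v,z) = 1$ (resp.~$\dist(w,u) = 1$) by playing the $\textup{Rem}_{\tau'}$ constraint against skew-shape convexity. Once this reduction is in hand, the remaining work is a uniform application of Lemmas~\ref{hookcont} and~\ref{GenConvexLem}(iii) together with axiom~(v), which is the same template used in the (iii)--(vi) cases.
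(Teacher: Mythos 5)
Your proof is essentially the same as the paper's: both split into the six arrangements of $(u,v,z,w)$ supplied by Lemma~\ref{remhookswap}, compute contents via Lemma~\ref{hookcont} and~(\ref{splithookcont}), and close by minimality of $\xi_1$ together with convexity. The only minor deviations are your witness-based reduction to $z = \no v$ (resp.\ $w = \we u$) in cases (i)--(ii), where the paper argues directly from connectedness of $\xi_1 \sqcup \xi_2$ and the fact that $\ea v \notin \tau$, and your convexity step, which detours through imaginary equivalency where Lemma~\ref{GenConvexLem}(iv) applied to the strict inequality $\beta_1 \succ \beta_2$ already gives $\beta_1 \succ \beta_1 + \beta_2$, contradicting minimality at once.
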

 \begin{proof}
 
 First, note that if \(\xi_1 \sqcup \xi_2\) is disconnected, then \(\xi_2\) is an \(\ssee\)-removable ribbon in \(\tau\), so by minimality of \(\xi_1\), we would have \(\cont(\xi_2) \succeq \cont(\xi_1)\), as desired. Thus we may assume \(\xi_1 \sqcup \xi_2\) is a connected skew shape, and therefore \(\xi_1, \xi_2\) belong to the same connected component of \(\tau\). 
  We have \(\xi_1 = \xi^\tau_{u,v}\) for some \((u,v) \in \textup{Rem}_\tau\), and \(\xi_2 = \xi^{\tau'}_{z,w}\) for some \((z,w) \in \textup{Rem}_{\tau'}\), where \(u,v,z,w\) belong to the same connected component of \(\tau\).  
We consider the six possible cases of arrangements of the nodes \(u,v,z,w\) separately.
 
 {\em Case 1: Assume \(u \nearrow v \nearrow z \nearrow w\).} First, note that since \(\xi_1 \sqcup \xi_2\) is connected, we must have \(z \in \{\no v, \ea v\}\). But, since \((u,v) \in \textup{Rem}_\tau\), we have \(\ea v \notin \tau\), so \(z=\no v\). 
By Lemma~\ref{remhookswap}(i) we have that \((u,w) \in \textup{Rem}_\tau\) and \(\xi^\tau_{u,w}\) is an \(\ssee\)-removable ribbon in \(\tau\) by Lemma~\ref{xiuv}.
 
We have
\begin{align*}
\dist(u,w) = 
\dist(u,v) + \dist(v,z) + \dist(z,w) = 
 \dist(u,v) + \dist(z,w) + 1
\end{align*}
and
\begin{align*}
\res(z) = \res(\ea v) = \res(v) + \overline{1} = \res(u) + \overline{\dist(u,v) + 1}.
\end{align*}
It follows by (\ref{splithookcont}) and Lemma~\ref{hookcont} that 
\begin{align*}
\cont(\xi^\tau_{u,w}) &= \alpha(\res(u), \dist(u,w)+1) = \alpha(\res(u),  \dist(u,v) + \dist(z,w) + 2)\\
&= \alpha(\res(u), \dist(u,v)+1) + \alpha(\res(u) + \overline{\dist(u,v)+1}, \dist(z,w)+1)\\
&= \alpha(\res(u),  \dist(u,v)+1) + \alpha(\res(z), \dist(z,w)+1)\\
&= \cont(\xi_1) + \cont(\xi_2).
\end{align*}

Assume by way of contradiction that \(\cont(\xi_1) \succ \cont(\xi_2)\). Then we have \(\cont(\xi_1) \succ \cont(\xi^\tau_{u,w}) \succ \cont(\xi_2)\) by Lemma~\ref{GenConvexLem}. But, since \(\xi^\tau_{u,w}\) is \(\ssee\)-removable in \(\tau\), this contradicts \(\xi_1\) being a minimal \(\ssee\)-removable ribbon in \(\tau\). Therefore \(\cont(\xi_2) \succeq \cont(\xi_1)\), as desired.

 {\em Case 2: Assume \(w \nearrow u\).} Note that since \(\xi_1 \sqcup \xi_2\) is connected, we must have \(w \in \{\so u, \we u\}\). But, since \((u,v) \in \textup{Rem}_\tau\),
we have \(\so u \notin \tau\), so \(w=\we u\). We have by Lemma~\ref{remhookswap}(ii) that \((z,v) \in \textup{Rem}_\tau\). Therefore \(\xi^\tau_{z,v}\) is an \(\ssee\)-removable ribbon in \(\tau\) by Lemma~\ref{xiuv}. 
 It can be shown along the lines of Case 1 that \(\cont(\xi^\tau_{z,v}) = \cont(\xi_1) + \cont(\xi_2)\). Thus if \(\cont(\xi_1) \succ \cont(\xi_2)\), we derive a contradiction along the same lines as Case 1 as well, so we have \(\cont(\xi_2) \succeq \cont(\xi_1)\), as desired.
 
 {\em Case 3: Assume \(u \nearrow z \nearrow v \nearrow w\)}. It follows from Lemma~\ref{remhookswap}(iii) that \((\ssee z, w) \in \textup{Rem}_\tau\). 
Therefore \(\xi^\tau_{\ssee z,w}\) is an \(\ssee\)-removable ribbon in \(\tau\) by Lemma~\ref{xiuv}.
 We have
\(
 \dist(\ssee z ,w) = \dist(z,w),
\)
 and
\(
 \res(\ssee z) =\res(z),
\)
 so by Lemma~\ref{hookcont} it follows that 
 \begin{align*}
 \cont(\xi^\tau_{\ssee z,w}) &= \alpha(\res(\ssee z), \dist(\ssee z,w)+1)\\
 &= \alpha(\res(z),\dist(z,w)+1) = \cont(\xi_2).
 \end{align*}
As \(\xi_1\) is a minimal \(\ssee\)-removable ribbon in \(\tau\), we have then that
\(
\cont(\xi_2) = \cont(\xi^\tau_{\ssee z,w}) \succeq \cont(\xi_1),
\)
as desired.

{\em Case 4: Assume \(z \nearrow u \nearrow w \nearrow v\)}. 
It follows from Lemma~\ref{remhookswap}(iv) that \((z, \ssee w) \in \textup{Rem}_\tau\).
Therefore \(\xi^\tau_{z, \ssee w}\) is an \(\ssee\)-removable ribbon in \(\tau\) by Lemma~\ref{xiuv}.
As in Case 3, it is straightforward to show that \(\cont(\xi^\tau_{z, \ssee w}) = \cont(\xi_2)\), and thus by the minimality of \(\xi_1\) we have
\(
\cont(\xi_2) = \cont(\xi^\tau_{z, \ssee w}) \succeq \cont(\xi_1),
\)
as desired.

{\em Case 5: Assume \(u \nearrow z \nearrow w \nearrow v\)}. It follows from Lemma~\ref{remhookswap}(v) that \((\ssee z, \ssee w) \in \textup{Rem}_\tau\tau\). Then \(\xi^\tau_{\ssee z, \ssee w}\) is an \(\ssee\)-removable ribbon in \(\tau\). 

We have
\(
 \dist(\ssee z,\ssee w) = \dist (z,w),
\)
 and
\(
 \res(\ssee z) = \res(z)
\), so it follows by Lemma~\ref{hookcont} that 
 \begin{align*}
 \cont(\xi^\tau_{\ssee z,\ssee w}) &= \alpha(\res(\ssee z), \dist(\ssee z,\ssee w)+1)\\
 &= \alpha(\res(z),\dist(z,w)+1) = \cont(\xi_2).
 \end{align*}
As \(\xi_1\) is a minimal \(\ssee\)-removable ribbon in \(\tau\), it follows that
\(
\cont(\xi_2) = \cont(\xi^\tau_{\ssee(z),w}) \succeq \cont(\xi_1),
\)
as desired.

{\em Case 6: Assume \(z \nearrow u \nearrow v \nearrow w\)}.  It follows from Lemma~\ref{remhookswap}(vi) that \((z, w) \in \textup{Rem}_\tau\tau\). Thus \(\xi^\tau_{z,w}\) is an \(\ssee\)-removable ribbon in \(\tau\). 
But then by Lemma~\ref{xiuv} and the minimality of \(\xi_1\) we have
\(
\cont(\xi_2) =\cont(\xi^\tau_{z,w}) \succeq \cont(\xi_1),
\)
as desired, completing the proof. 
\end{proof}

\section{Kostant tilings}\label{Kostsec}

We say a tiling \(\Lambda\) of a skew shape \(\tau\) is a {\em ribbon} (resp. {\em cuspidal}) (resp. {\em semicuspidal}) tiling if \(\lambda\) is a ribbon (resp. cuspidal skew shape) (resp. semicuspidal skew shape) for all \(\lambda \in \Lambda\).
We are particularly interested in such tilings whose constituent skew shapes are arranged in a way that respects the convex preorder \(\succeq\), as follows.

\begin{definition}
Assume that \(\Lambda\) is a tiling of a skew shape \(\tau\) such that \(\cont(\lambda) \in \Phi_+'\) for all \(\lambda\in \Lambda\), and there exists a \(\Lambda\)-tableau \(\ttt\) such that \((\cont(\ttt(i)))_{i=1}^{|\Lambda|}\) is a Kostant sequence. Then we say \(\Lambda\) is a {\em Kostant} tiling, and that \((\Lambda,\ttt)\) is a {\em Kostant} tableau for \(\tau\).
\end{definition}

If \((\Lambda,\ttt)\) is a Kostant tableau for \(\tau\), then we have an associated Kostant sequence \(\bbeta^{\ttt}:= (\cont(\ttt(i)))_{i=1}^{|\Lambda|}\), and associated Kostant partition \(\bkap^\Lambda = \bkap^{\bbeta^{\ttt}}\). The Kostant partition \(\bkap^\Lambda\) depends only on \(\Lambda\) and not on the choice of \(\Lambda\)-tableau \(\ttt\).

\begin{definition}
Let \(\tau\) be a skew shape, and let \((\Lambda,\ttt)\) be a tableau for \(\tau\). 
We say \((\Lambda,\ttt)\) is a {\em minimal \(\ssee\)-removable ribbon tableau} of \(\tau\) provided that \(\cont(\lambda_j) \in \Psi\) and \(\lambda_j\) is a minimal \(\ssee\)-removable ribbon in \(\bigsqcup_{i=1}^j \lambda_i\) for all \(j \in [1, |\Lambda|]\). We say \(\Lambda\) is a {\em minimal \(\ssee\)-removable ribbon tiling} of \(\tau\) if there exists a \(\Lambda\)-tableau \(\ttt\) such that \((\Lambda, \ttt)\) is a minimal \(\ssee\)-removable ribbon tableau.
\end{definition}

We have the following dual notion as well:

\begin{definition}
Let \(\tau\) be a skew shape, and let \((\Lambda,\ttt)\) be a tableau for \(\tau\). 
We say \((\Lambda,\ttt)\) is a {\em maximal \(\nnww\)-removable ribbon tableau} of \(\tau\) provided that \(\cont(\lambda_j) \in \Psi\) and \(\lambda_j\) is a maximal \(\nnww\)-removable ribbon in \(\bigsqcup_{i=j}^{|\Lambda|} \lambda_i\) for all \(j \in [1, |\Lambda|]\). We say \(\Lambda\) is a {\em maximal \(\nnww\)-removable ribbon tiling} of \(\tau\) if there exists a \(\Lambda\)-tableau \(\ttt\) such that \((\Lambda, \ttt)\) is a maximal \(\nnww\)-removable ribbon tableau.
\end{definition}

\begin{example}\label{e2Kosex}
Take \(e=2\) and recall the convex preorder \(\succeq\) from Example~\ref{e2order} and the cuspidal ribbons from Example~\ref{e2shapes}. We show two tilings for a skew shape \(\tau\) in Figure~\ref{fig:Kosex1}. The tiling \(\Lambda_1\) on the left is Kostant for this preorder, with associated Kostant partition
\begin{align*}
\bkap^{\Lambda_1} = ( 3\delta + \alpha_1 \mid  \delta^3 \mid \delta + \alpha_0).
\end{align*}
The tiling \(\Lambda_2\) on the right in Figure~\ref{fig:Kosex1} is a cuspidal Kostant tiling for \(\tau\), which is (not coincidentally, per Theorem~\ref{mainthmcusp}) a minimal \(\ssee\)-removable ribbon tiling, and maximal \(\nnww\)-removable ribbon tiling as well. The associated Kostant partition is
\begin{align*}
\bkap^{\Lambda_2} = (\alpha_1^2 \mid \delta + \alpha_1 \mid  \delta^3 \mid \delta + \alpha_0 \mid \alpha_0^2).
\end{align*}
\end{example}

\begin{figure}[h]
\begin{align*}
{}
\hackcenter{
\begin{tikzpicture}[scale=0.4]
%
\draw[thick, fill=brown!35]  (0,-1)--(1,-1)--(1,0)--(4,0)--(4,1)--(6,1)--(6,4)--(5,4)--(5,3)--(1,3)--(1,2)--(1,1)--(0,1)--(0,-1);
\draw[thick, fill=pink]  (1,1)--(4,1)--(4,2)--(5,2)--(5,3)--(1,3)--(1,1);
\draw[thick, fill=cyan!25]  (4,1)--(6,1)--(6,4)--(5,4)--(5,2)--(4,2)--(4,1);
\draw[thick,  fill=blue!40!green!45]  (1,0)--(4,0)--(4,1)--(1,1)--(1,0);
%
\draw[thick, dotted] (1,0)--(1,1);
\draw[thick, dotted] (2,0)--(2,3);
\draw[thick, dotted] (3,0)--(3,3);
\draw[thick, dotted] (4,0)--(4,3);
\draw[thick, dotted] (5,1)--(5,3);
\draw[thick, dotted] (0,0)--(4,0);
\draw[thick, dotted] (0,1)--(6,1);
\draw[thick, dotted] (1,2)--(6,2);
\draw[thick, dotted] (2,3)--(6,3);
\draw[thick]   (0,-1)--(1,-1)--(1,0)--(4,0)--(4,1)--(6,1)--(6,4)--(5,4)--(5,3)--(1,3)--(1,2)--(1,1)--(0,1)--(0,-1);
\node at (0.5,-0.5){$\scriptstyle0 $};
\node at (0.5,0.5){$\scriptstyle1 $};
\node at (1.5,0.5){$\scriptstyle0 $};
\node at (2.5,0.5){$\scriptstyle1 $};
\node at (3.5,0.5){$\scriptstyle0 $};
\node at (1.5,1.5){$\scriptstyle1 $};
\node at (2.5,1.5){$\scriptstyle0 $};
\node at (3.5,1.5){$\scriptstyle1 $};
\node at (4.5,1.5){$\scriptstyle0 $};
\node at (5.5,1.5){$\scriptstyle1 $};
\node at (1.5,2.5){$\scriptstyle0 $};
\node at (2.5,2.5){$\scriptstyle1 $};
\node at (3.5,2.5){$\scriptstyle0 $};
\node at (4.5,2.5){$\scriptstyle1 $};
\node at (5.5,2.5){$\scriptstyle0 $};
\node at (5.5,3.5){$\scriptstyle1 $};
\end{tikzpicture}
}
\qquad
\qquad
\hackcenter{
\begin{tikzpicture}[scale=0.4]
%
\draw[thick,  fill=violet!35]  (0,-1)--(1,-1)--(1,0)--(4,0)--(4,1)--(6,1)--(6,4)--(5,4)--(5,3)--(1,3)--(1,2)--(1,1)--(0,1)--(0,-1);
\draw[thick, fill=yellow]  (0,-1)--(1,-1)--(1,0)--(0,0)--(0,-1);
\draw[thick, fill=orange!60]  (0,0)--(1,0)--(1,1)--(0,1)--(0,0);
\draw[thick, fill=yellow]  (3,0)--(4,0)--(4,1)--(3,1)--(3,0);
\draw[thick, fill=lightgray!50]  (1,0)--(3,0)--(3,1)--(4,1)--(4,2)--(5,2)--(5,3)--(3,3)--(3,2)--(2,2)--(2,1)--(1,1)--(1,0);
\draw[thick]  (2,1)--(3,1);
\draw[thick]  (3,2)--(4,2);
\draw[thick, fill=lime] (4,1)--(6,1)--(6,3)--(5,3)--(5,2)--(4,2)--(4,1);
\draw[thick, fill=orange!60]  (5,3)--(6,3)--(6,4)--(5,4)--(5,3);
%
\draw[thick, dotted] (1,0)--(1,1);
\draw[thick, dotted] (2,0)--(2,3);
\draw[thick, dotted] (3,0)--(3,3);
\draw[thick, dotted] (4,0)--(4,3);
\draw[thick, dotted] (5,1)--(5,3);
\draw[thick, dotted] (0,0)--(4,0);
\draw[thick, dotted] (0,1)--(6,1);
\draw[thick, dotted] (1,2)--(6,2);
\draw[thick, dotted] (2,3)--(6,3);
\draw[thick]   (0,-1)--(1,-1)--(1,0)--(4,0)--(4,1)--(6,1)--(6,4)--(5,4)--(5,3)--(1,3)--(1,2)--(1,1)--(0,1)--(0,-1);
\node at (0.5,-0.5){$\scriptstyle0 $};
\node at (0.5,0.5){$\scriptstyle1 $};
\node at (1.5,0.5){$\scriptstyle0 $};
\node at (2.5,0.5){$\scriptstyle1 $};
\node at (3.5,0.5){$\scriptstyle0 $};
\node at (1.5,1.5){$\scriptstyle1 $};
\node at (2.5,1.5){$\scriptstyle0 $};
\node at (3.5,1.5){$\scriptstyle1 $};
\node at (4.5,1.5){$\scriptstyle0 $};
\node at (5.5,1.5){$\scriptstyle1 $};
\node at (1.5,2.5){$\scriptstyle0 $};
\node at (2.5,2.5){$\scriptstyle1 $};
\node at (3.5,2.5){$\scriptstyle0 $};
\node at (4.5,2.5){$\scriptstyle1 $};
\node at (5.5,2.5){$\scriptstyle0 $};
\node at (5.5,3.5){$\scriptstyle1 $};
\end{tikzpicture}
}
\end{align*}
\caption{Kostant tiling for \(\tau\); cuspidal Kostant tiling for \(\tau\)}
\label{fig:Kosex1}       
\end{figure}
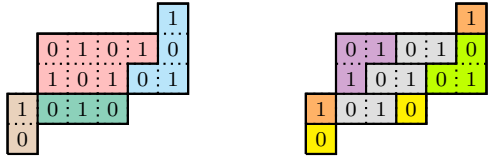

\subsection{Results on Kostant tilings}\label{resKos}

\begin{lemma}\label{minremtab}
Every nonempty skew shape \(\tau\) has a minimal \(\ssee\)-removable ribbon tableau \((\Lambda,\ttt)\).
\end{lemma}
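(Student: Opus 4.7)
The plan is to proceed by induction on $|\tau|$. The base case $|\tau| = 1$ is immediate: $\tau = \{u\}$ is itself a ribbon of content $\alpha_{\res(u)} \in \Phi_+^\re \subseteq \Psi$, trivially minimal $\ssee$-removable in itself, so $\Lambda = \{\tau\}$ with $\ttt(1) = \tau$ does the job. For the inductive step, assume the claim holds for all nonempty skew shapes of size less than $|\tau|$. By Lemma~\ref{minremexist}, there exists a minimal $\ssee$-removable ribbon $\xi$ in $\tau$ with $\cont(\xi) \in \Psi$. Since $\xi$ is $\ssee$-removable, $\tau \backslash \xi$ is a skew shape, and if it is nonempty the induction hypothesis provides a minimal $\ssee$-removable ribbon tableau $(\Lambda', \ttt')$ for $\tau \backslash \xi$.

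The construction is then to set $\Lambda = \Lambda' \sqcup \{\xi\}$ and define $\ttt$ by $\ttt(i) = \ttt'(i)$ for $i \in [1, |\Lambda'|]$ and $\ttt(|\Lambda|) = \xi$. I would then verify three things. First, $(\Lambda, \ttt)$ is indeed a tableau for $\tau$: the ordering condition among the first $|\Lambda'|$ tiles is inherited from $\ttt'$, and for any $u \in \xi$, $v \in \ttt(i) \subseteq \tau \backslash \xi$ with $u \searrow v$, we would contradict the $\ssee$-removability of $\xi$ in $\tau$ via Lemma~\ref{remcond}, so no such pair exists and the ordering is preserved. Second, $\cont(\ttt(j)) \in \Psi$ for each $j$: this holds for $j \leq |\Lambda'|$ by the inductive hypothesis, and for $j = |\Lambda|$ by our choice of $\xi$.

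Third and most importantly, I must verify the minimal $\ssee$-removable ribbon property: for each $j$, $\ttt(j)$ is a minimal $\ssee$-removable ribbon in $\bigsqcup_{i=1}^{j} \ttt(i)$. For $j < |\Lambda|$, we have $\bigsqcup_{i=1}^j \ttt(i) = \bigsqcup_{i=1}^j \ttt'(i)$, so this condition is inherited directly from $(\Lambda', \ttt')$ being a minimal $\ssee$-removable ribbon tableau of $\tau \backslash \xi$. For $j = |\Lambda|$, we have $\bigsqcup_{i=1}^{|\Lambda|} \ttt(i) = (\tau \backslash \xi) \sqcup \xi = \tau$, so the requirement is exactly that $\xi$ is a minimal $\ssee$-removable ribbon in $\tau$, which holds by construction.

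The whole argument is essentially immediate from Lemma~\ref{minremexist} and the inductive removal of ribbons; I do not anticipate any real obstacle. The only mild subtlety is ensuring that the definition of minimal $\ssee$-removable ribbon tableau is read with the correct orientation (partial skew shapes built up from the northwest) so that appending $\xi$ as the final tile $\ttt(|\Lambda|)$ gives the right verification, rather than prepending it.
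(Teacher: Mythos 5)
Your proof is correct and follows essentially the same approach as the paper: induct on $|\tau|$, use Lemma~\ref{minremexist} to extract a minimal $\ssee$-removable ribbon $\xi$ of content in $\Psi$, apply the induction hypothesis to $\tau\backslash\xi$ (a skew shape by Lemma~\ref{tabisskew}), and append $\xi$ as the last tile. Your verification of the tableau ordering condition and the minimality condition is more explicit than the paper's, but the argument is the same.
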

\begin{proof}
We go by induction on \(|\tau|\). If \(|\tau| = 1\), then we trivially take \(\Lambda = \{\tau\}\) and \(\ttt(1) = \tau\). Now let \(|\tau|>1\) and make the induction assumption on skew shapes \(\tau'\) with \(|\tau'| <|\tau|\).
By Lemma~\ref{minremexist}, \(\tau\) has a minimal \(\ssee\)-removable ribbon \(\xi\) such that \(\cont(\xi) \in \Psi\). Let \(\tau' = \tau \backslash \xi\). If \(\tau' = \varnothing\), we are done, taking \(\Lambda = \{\xi\}\), \(\ttt(1) = \xi\). Assume \(\tau' \neq \varnothing\). Then \(\tau'\) is a skew shape by Lemma~\ref{tabisskew}, which by the induction assumption has a minimal \(\ssee\)-removable ribbon tableau \((\Lambda', \ttt')\). But then taking \(\Lambda = \Lambda' \cup \{\xi\}\), \(\ttt(i) = \ttt'(i)\), \(\ttt(|\Lambda|) = \xi\), for all \(i \in [1,|\Lambda'|]\), we have that \((\Lambda, \ttt)\) is a minimal \(\ssee\)-removable ribbon tableau for \(\tau\).
\end{proof}

\begin{lemma}\label{minremtabisKos}
If \((\Lambda,\ttt)\) is a minimal \(\ssee\)-removable ribbon tableau for a skew shape \(\tau\), then \((\Lambda,\ttt)\) is a cuspidal Kostant tableau for \(\tau\).
\end{lemma}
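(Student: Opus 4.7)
The plan is to directly verify the two conditions defining a cuspidal Kostant tableau: (i) each tile \(\ttt(j)\) is cuspidal, and (ii) the sequence \((\cont(\ttt(j)))_{j=1}^{|\Lambda|}\) is a Kostant sequence with each entry in \(\Phi_+'\). Write \(\lambda_j := \ttt(j)\) and \(\tau_j := \bigsqcup_{i=1}^j \lambda_i\), which is a skew shape by Lemma~\ref{tabisskew}. Note that the tableau property of \((\Lambda,\ttt)\) is already part of the hypothesis, so nothing extra needs to be checked on that front.

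For (i), the hypothesis provides, for each \(j\), that \(\lambda_j\) is a minimal \(\ssee\)-removable ribbon in \(\tau_j\) with \(\cont(\lambda_j) \in \Psi\). Lemma~\ref{minremiscusp} then immediately yields that \(\lambda_j\) is cuspidal. This also gives \(\cont(\lambda_j) \in \Psi \subseteq \Phi_+'\), which takes care of the requirement that contents lie in \(\Phi_+'\).

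For (ii), since every \(\cont(\lambda_j) \in \Psi\) we have \(\psi(\cont(\lambda_j)) = \cont(\lambda_j)\), so the Kostant-sequence condition reduces to the chain of inequalities \(\cont(\lambda_1) \succeq \cont(\lambda_2) \succeq \cdots \succeq \cont(\lambda_{|\Lambda|})\). I would establish each step by applying Lemma~\ref{remsinorder} with \(\xi_1 := \lambda_{j+1}\) and \(\xi_2 := \lambda_j\): by hypothesis, \(\xi_1\) is a minimal \(\ssee\)-removable ribbon in \(\tau_{j+1}\), and \(\xi_2\) is a minimal \(\ssee\)-removable ribbon in \(\tau_{j+1} \backslash \xi_1 = \tau_j\). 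The lemma then yields \(\cont(\lambda_j) = \cont(\xi_2) \succeq \cont(\xi_1) = \cont(\lambda_{j+1})\), as desired.

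Combining the two parts, \((\Lambda, \ttt)\) is a cuspidal Kostant tableau for \(\tau\). I do not expect any obstacle: this lemma essentially repackages Lemmas~\ref{minremiscusp} and~\ref{remsinorder} into tableau language, and together with Lemma~\ref{minremtab} it sets up the existence half of Theorem~C(i) by producing a cuspidal Kostant tiling \(\Gamma_\tau\) for every nonempty skew shape \(\tau\).
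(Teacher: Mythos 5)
Your proof is correct and follows essentially the same route as the paper: Lemma~\ref{minremiscusp} handles cuspidality of each tile, and Lemma~\ref{remsinorder} applied with \(\xi_1 = \ttt(j+1)\), \(\xi_2 = \ttt(j)\) inside \(\bigsqcup_{i=1}^{j+1}\ttt(i)\) establishes the Kostant-sequence ordering. The observation that \(\cont(\ttt(j)) \in \Psi\) reduces the Kostant-sequence condition to a plain chain of inequalities under \(\succeq\) is exactly the unstated step the paper takes as well.
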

\begin{proof}
By Lemma~\ref{minremiscusp}, every \(\lambda \in \Lambda\) is cuspidal, so it remains to verify that \((\cont(\ttt(i)))_{i=1}^{|\Lambda|}\) is a Kostant sequence by showing that \(\cont(\ttt(j)) \succeq \cont(\ttt(j+1))\) for all \(j \in [1,|\Lambda|-1]\). Indeed, \(\ttt(j+1)\) is a minimal \(\ssee\)-removable ribbon tableau in \(\bigsqcup_{i=1}^{j+1}\ttt(i)\), and \(\ttt(i)\) is a minimal \(\ssee\)-removable ribbon tableau in \(\bigsqcup_{i=1}^{j} \ttt(i)= (\bigsqcup_{i=1}^{j+1}\ttt(i)) \backslash \{ \ttt(j+1)\}\), so the result follows by Lemma~\ref{remsinorder}.
\end{proof}

\begin{lemma}\label{cuspisribbon}
Every cuspidal skew shape is a ribbon.
\end{lemma}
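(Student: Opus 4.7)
\medskip

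The plan is to show that a cuspidal skew shape $\tau$ admits, via the machinery just developed, only a trivial decomposition as a minimal $\ssee$-removable ribbon tableau, and therefore is itself a ribbon. Let $\beta = \cont(\tau) \in \Phi_+$. By Lemma~\ref{minremtab}, $\tau$ has a minimal $\ssee$-removable ribbon tableau $(\Lambda, \ttt)$, and by Lemma~\ref{minremtabisKos}, this is a cuspidal Kostant tableau. Writing $\lambda_i = \ttt(i)$ and $\beta_i = \cont(\lambda_i) \in \Psi$, we therefore have $\beta_1 \succeq \beta_2 \succeq \cdots \succeq \beta_{|\Lambda|}$ and $\sum_i \beta_i = \beta$. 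It suffices to show $|\Lambda| = 1$, for then $\tau = \lambda_1$ is a (cuspidal) ribbon by construction.

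Assume for contradiction that $|\Lambda| \geq 2$. By Lemma~\ref{tabisskew}, for any $1 \leq h \leq \ell \leq |\Lambda|$ the union $\bigsqcup_{i=h}^{\ell}\lambda_i$ is a skew shape, so the pair $(\lambda_1,\ \lambda_2 \sqcup \cdots \sqcup \lambda_{|\Lambda|})$ is a valid tableau of $\tau$ in the sense of Definition~\ref{cuspdef}. Cuspidality of $\tau$ then writes $\beta_1 = \cont(\lambda_1)$ as a sum of positive roots each strictly $\prec \beta$. Since $\beta_1 \in \Phi_+$, Lemma~\ref{GenConvexLem}(iii) applied to this decomposition gives $\beta_1 \preceq \gamma \prec \beta$ for the maximal summand $\gamma$, from which $\beta_1 \prec \beta$ strictly (if $\beta_1 \approx \beta$ then transitivity would yield $\beta \preceq \gamma \prec \beta$, a contradiction). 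Symmetrically, applying cuspidality to the pair-tableau $(\lambda_1 \sqcup \cdots \sqcup \lambda_{|\Lambda|-1},\ \lambda_{|\Lambda|})$ yields $\beta_{|\Lambda|} \succ \beta$ strictly.

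Combining with the Kostant ordering $\beta_1 \succeq \beta_{|\Lambda|}$, we obtain $\beta_1 \succeq \beta_{|\Lambda|} \succ \beta$, hence $\beta_1 \succ \beta$, contradicting $\beta_1 \prec \beta$. Therefore $|\Lambda| = 1$, and $\tau$ is a ribbon.

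I do not anticipate a serious obstacle here: all the substantive combinatorics---the existence of the minimal $\ssee$-removable ribbon tableau, the fact that its content sequence is a Kostant sequence, and the general convexity consequences for roots---has been established in Lemmas~\ref{minremtab}, \ref{minremtabisKos}, and~\ref{GenConvexLem}. The only point that requires a brief verification is that strictness of $\beta_1 \prec \beta$ and $\beta_{|\Lambda|} \succ \beta$ is preserved when passing from the cuspidality decomposition to the single root $\beta_i$, which follows readily from Lemma~\ref{GenConvexLem}(iii) together with the imaginary equivalency axiom.
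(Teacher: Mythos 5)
Your proof is correct and takes essentially the same route as the paper's: both start from the minimal $\ssee$-removable ribbon tableau (Lemmas~\ref{minremtab},~\ref{minremtabisKos}), and both derive the contradiction from the tension between the Kostant ordering on the tile contents and a strict inequality forced by cuspidality. The only cosmetic difference is that you unfold the cuspidality definition directly on both ends of the tableau to get $\beta_1\prec\beta$ and $\beta_{|\Lambda|}\succ\beta$, whereas the paper packages the bottom-end inequality into Lemma~\ref{shapehooktest} and gets the top-end bound $\cont(\ttt(1))\succeq\cont(\tau)$ more cheaply from Lemma~\ref{GenConvexLem}; in fact your argument only needs one of the two strict inequalities, since combining $\beta_1\succeq\beta\succeq\beta_{|\Lambda|}$ (from GenConvexLem) with $\beta_{|\Lambda|}\succ\beta$ already suffices.
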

\begin{proof}
Let \(\tau\) be a cuspidal skew shape, so that \(\cont(\tau) \in \Phi_+\), and assume by way of contradiction that \(\tau\) is not a ribbon. By Lemma~\ref{minremtab}, there exists a minimal \(\ssee\)-removable ribbon tableau for \(\tau\). As \(\tau\) is not itself a ribbon, it must be that \(|\Lambda|>1\). By Lemma~\ref{minremtabisKos} we have 
\begin{align*}
\cont(\ttt(1)) \succeq \cdots \succeq \cont(\ttt(|\Lambda|)).
\end{align*}
As
\(
\cont(\ttt(1)) + \cdots + \cont(\ttt(|\Lambda|)) = \cont(\tau), 
\)
we have by Lemma~\ref{GenConvexLem} that
\begin{align*}
\cont(\ttt(1)) \succeq \cont(\tau)\succeq \cont(\ttt(|\Lambda|)).
\end{align*}
But \(\ttt(|\Lambda|)\) is a minimal \(\ssee\)-removable ribbon in \(\tau\), so \(\cont(\ttt(|\Lambda|)) \succ \cont(\tau)\) by Lemma~\ref{shapehooktest}, giving the desired contradiction.
\end{proof}

\begin{lemma}\label{cuspisindiv}
If \(\xi\) is a cuspidal skew shape, then \(\cont(\xi) \in \Psi\).
\end{lemma}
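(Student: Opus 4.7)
The plan is to reduce quickly to the case where $\cont(\xi)$ is imaginary, then derive a contradiction from the convex preorder's behavior on imaginary roots. By Lemma~\ref{cuspisribbon}, a cuspidal skew shape $\xi$ is automatically a ribbon, and by Lemma~\ref{hookcont} (or Corollary~\ref{skewpos}) its content $\beta := \cont(\xi)$ lies in $\Phi_+$. Since $\Psi = \Phi_+^\re \sqcup \{\delta\}$ by (\ref{PsiDef}), we need only rule out $\beta = m\delta$ with $m > 1$.

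So assume for contradiction that $\beta = m\delta$ with $m \geq 2$. Then $\xi$ is a ribbon of content $m\delta$, so by Lemma~\ref{mdeltaribbon} it contains an $\ssee$-removable ribbon $\nu$ of content $\delta$. Since $|\nu| = e < me = |\xi|$, we have $\nu \subsetneq \xi$, so Lemma~\ref{shapehooktest} applied to the cuspidal $\xi$ forces
\begin{align*}
\delta = \cont(\nu) \succ \cont(\xi) = m\delta.
\end{align*}
However, $\delta$ and $m\delta$ both belong to $\Phi_+^\im$, so by the imaginary equivalency axiom (v) of the convex preorder we have $\delta \approx m\delta$, which directly contradicts the strict inequality $\delta \succ m\delta$.

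The only genuinely subtle step is the production of a $\delta$-content $\ssee$-removable ribbon inside $\xi$, which is already packaged in Lemma~\ref{mdeltaribbon}; the remainder is bookkeeping with definitions, so no real obstacle is expected. Hence $m = 1$ and $\cont(\xi) \in \{\delta\} \cup \Phi_+^\re = \Psi$, as required.
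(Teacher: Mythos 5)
Your proof is correct and follows essentially the same route as the paper's: reduce to ribbons via Lemma~\ref{cuspisribbon}, extract a $\delta$-content $\ssee$-removable sub-ribbon, and contradict the strict inequality required by cuspidality using imaginary equivalency. The only cosmetic difference is that you cite Lemma~\ref{shapehooktest} where the paper cites the ribbon-specific Lemma~\ref{CuspForSkewHook}, and you invoke Lemma~\ref{mdeltaribbon} explicitly where the paper leaves it implicit.
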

\begin{proof}
By Lemma~\ref{cuspisribbon} we have that \(\xi\) is a ribbon. If \(\cont(\xi) \in \Phi_+ \backslash \Psi\), then \(\cont(\xi) = m\delta\) for some \(m >1\). But then \(\xi\) has an \(\ssee\)-removable skew hook \(\nu \subsetneq \xi\) of content \(\cont(\xi) \succeq \delta\), a contradiction of Lemma~\ref{CuspForSkewHook}, so \(\cont(\xi) \in \Psi\).
\end{proof}

\begin{lemma}\label{othersmaller}
Let \(\Lambda\) be a minimal \(\ssee\)-removable ribbon tiling for \(\tau\), and \(\Lambda'\) be a Kostant tiling for \(\tau\). Then \(\bkap^\Lambda \trianglerighteq_R \bkap^{\Lambda'}\), and \(\bkap^\Lambda = \bkap^{\Lambda'}\) if and only if every \(\lambda' \in \Lambda'\) is a union of tiles \(\lambda \in \Lambda\) with \(\psi(\cont(\lambda')) = \cont(\lambda)\).
\end{lemma}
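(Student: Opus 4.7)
The plan is to prove this by induction on $|\tau|$, with the base case $|\tau|=0$ being trivial. For the inductive step, let $\ttt,\ttt'$ be the tableaux witnessing $\Lambda,\Lambda'$, set $\xi=\ttt(|\Lambda|)$, which is a cuspidal ribbon of some content $\beta\in\Psi$ by Lemma~\ref{minremiscusp}, and set $\lambda'=\ttt'(|\Lambda'|)$ with $\cont(\lambda')=m^*\beta^*$. By Lemmas~\ref{minremtabisKos} and~\ref{remsinorder}, $\beta$ is the $\succeq$-minimum of the contents of the tiles of $\Lambda$, and $\beta^*$ is the $\succeq$-minimum of the $\psi$-values of the contents of the tiles of $\Lambda'$.

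My first step is to establish $\beta^*\succeq\beta$. I would apply Lemma~\ref{minremtab} to the skew shape $\lambda'$ to obtain a minimal $\ssee$-removable ribbon tableau $(\nu_1,\ldots,\nu_s)$ of $\lambda'$ with each $\cont(\nu_i)\in\Psi$; by Lemma~\ref{minremtabisKos}, $\cont(\nu_1)\succeq\cdots\succeq\cont(\nu_s)$. The last ribbon $\nu_s$ is $\ssee$-removable in $\lambda'$ and hence in $\tau$ (since $\lambda'$ is $\ssee$-removable in $\tau$), so minimality of $\xi$ forces $\cont(\nu_s)\succeq\beta$. On the other hand $\sum_i\cont(\nu_i)=m^*\beta^*$, so Lemma~\ref{GenConvexLem}(iii) yields $\cont(\nu_s)\preceq\beta^*$, and transitivity produces $\beta^*\succeq\beta$. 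Since $\beta,\beta^*\in\Psi$, Lemma~\ref{notapprox}(iii) implies that either $\beta^*=\beta$ or $\beta^*\succ\beta$.

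If $\beta^*\succ\beta$, then $\kappa^{\Lambda'}_\nu=0$ for every $\nu\preceq\beta$ while $\kappa^\Lambda_\beta\geq 1$, so $\bkap^\Lambda\triangleright_R\bkap^{\Lambda'}$; the structural equality criterion fails, because $\xi\in\Lambda$ with $\cont(\xi)=\beta$ cannot lie in any $\Lambda'$-tile of $\psi$-content $\beta$ (there are none). If instead $\beta^*=\beta$, Lemma~\ref{GenConvexLem}(i) or (ii) applied to $\sum_i\cont(\nu_i)=m^*\beta$ with each summand in $\Psi$ and $\succeq\beta$ forces each $\cont(\nu_i)=\beta$; hence $\lambda'$, and likewise every tile of $\Lambda'$ of $\psi$-content $\beta$, decomposes into cuspidal ribbons of content $\beta$ via Lemma~\ref{minremiscusp}. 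Refining these tiles yields a Kostant tiling $\tilde\Lambda'$ with $\bkap^{\tilde\Lambda'}=\bkap^{\Lambda'}$, already achieving the structural refinement required on the $\psi$-minimum part of $\Lambda'$.

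To complete the induction in the case $\beta^*=\beta$, let $\eta$ be the last tile of $\tilde\Lambda'$, a cuspidal $\beta$-ribbon $\ssee$-removable in $\tau$. When $\xi=\eta$, remove it from both tableaux, apply the inductive hypothesis to $\tau\setminus\xi$, and add back a $\beta$-part to each Kostant partition to recover both the inequality and the equality criterion on $\tau$. The main obstacle is the case $\xi\neq\eta$: here I would use Lemma~\ref{remhookswap} together with a case analysis of how the two cuspidal $\beta$-ribbons $\xi,\eta$ overlap in $\tau$ to construct an alternative minimal $\ssee$-removable ribbon tableau $\ttt_\eta$ for $\tau$ ending at $\eta$ and satisfying $\bkap^{\Lambda_\eta}=\bkap^\Lambda$, thereby reducing to the $\xi=\eta$ case. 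Verifying this exchange invariance---essentially the intrinsicness of the cuspidal Kostant partition of a skew shape---is the subtlest combinatorial point of the proof; the ``only if'' direction of the structural criterion then drops out by tracking which inequalities are strict through the induction, while the ``if'' direction is a direct count on the refined tile structure.
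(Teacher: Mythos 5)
Your proposal runs the same induction on $|\tau|$ and correctly establishes $\beta^*\succeq\beta$, and your handling of the $\beta^*\succ\beta$ case and the $\xi=\eta$ subcase are fine. But there is a genuine gap precisely where you flag it: the ``exchange invariance'' you need for $\xi\neq\eta$ --- that there exists a minimal $\ssee$-removable ribbon tableau $\ttt_\eta$ ending at $\eta$ with $\bkap^{\Lambda_\eta}=\bkap^\Lambda$ --- is not something you can wave at Lemma~\ref{remhookswap}. That invariance is essentially the uniqueness of the cuspidal Kostant partition of $\tau$, which in the paper is Corollary~\ref{uniquerib}, a \emph{consequence} of the present lemma (apply it twice with the roles swapped). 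So assuming it inside the proof risks circularity, and establishing it directly from Lemma~\ref{remhookswap} would require a separate, nontrivial case analysis of overlapping cuspidal ribbons of the same content --- a genuinely difficult argument that you have not supplied. Similarly, the ``only if'' direction of the union criterion is asserted to ``drop out by tracking which inequalities are strict,'' but in the swap branch this tracking depends on the unproved invariance.

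The paper's proof avoids the swap entirely by decomposing from the $\Lambda'$ side rather than the $\Lambda$ side. It fixes the $\Lambda'$-tile $\nu$ of minimal $\psi$-content that is $\ssee$-removable in $\tau$, and for each $\lambda\in\Lambda$ looks at the intersection $\nu^\lambda=\nu\cap\lambda$. Because $\nu$ is $\ssee$-removable in $\tau$, each nonempty $\nu^\lambda$ is $\ssee$-removable in $\lambda$; since each $\lambda$ is \emph{cuspidal} (Lemma~\ref{minremiscusp}), any proper such intersection has content expressible as a sum of roots strictly $\succ\cont(\lambda)$. Summing over $\lambda$ gives an expression for $\cont(\nu)=m(\cont(\nu))\psi(\cont(\nu))$, and Lemma~\ref{GenConvexLem} then forces a dichotomy: either some $\lambda$ meeting $\nu$ has $\cont(\lambda)\prec\psi(\cont(\nu))$ (giving $\bkap^\Lambda\triangleright_R\bkap^{\Lambda'}$ outright and union-condition failure), or every $\lambda$ meeting $\nu$ is wholly contained in $\nu$ with $\cont(\lambda)=\psi(\cont(\nu))$, whence $\Lambda\setminus\{\lambda:\lambda\subseteq\nu\}$ and $\Lambda'\setminus\{\nu\}$ are tilings of $\tau\setminus\nu$ and induction applies cleanly. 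This leverages cuspidality of the $\Lambda$-tiles as the engine, which is exactly what renders the overlap analysis unnecessary; your proposal never invokes cuspidality of the $\Lambda$-tiles against the tile $\nu$, which is why you end up needing the exchange step.
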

\begin{proof}
For sake of space, if \(\Lambda, \Lambda'\) are such that every \(\lambda' \in \Lambda'\) is a union of tiles \(\lambda \in \Lambda\) with \(\psi(\cont(\lambda')) = \cont(\lambda)\), we will refer to this as the `union condition'. It is clear from the definition of \(\bkap^\Lambda\), \(\bkap^{\Lambda'}\) that \(\bkap^\Lambda = \bkap^{\Lambda'}\) when \(\Lambda, \Lambda'\) satisfy the union condition.

We prove the claim by induction on \(|\tau|\), the base case \(|\tau| = 1\) being trivial. Assume that \(|\tau| >1\), and the claim holds for all \(\tau'\) with \(|\tau'| < |\tau|\). There exists some \(\nu \in \Lambda'\) which is \(\ssee\)-removable in \(\tau\) and \(\psi(\cont(\nu')) \succeq \psi(\cont(\nu))\) for all \(\nu' \in \Lambda'\).

For all \(\lambda \in \Lambda\), set \(\nu^\lambda = \nu \cap \lambda\). Set \(\Lambda_0 = \{\lambda \in \Lambda \mid \nu^\lambda = \lambda\}\) and \(\Lambda_1 = \{\lambda \in \Lambda \mid \nu^\lambda \neq \varnothing, \lambda\}\).
By Lemma~\ref{minremtabisKos}, each \(\lambda \in \Lambda\) is cuspidal, and \(\nu^\lambda\) is \(\ssee\)-removable in each \(\lambda\) because \(\nu\) is \(\ssee\)-removable in \(\tau\). Thus, for all \(\lambda \in \Lambda_1\), we have
 \(
\cont(\nu^\lambda) = \beta^{\lambda, 1} + \cdots + \beta^{\lambda, r_\lambda}
\)
for some \(\cont(\lambda) \prec \beta^{\lambda, 1}, \cdots, \beta^{\lambda, r_\lambda} \in \Phi_+\). Then we have
\begin{align*}
\cont(\nu) = \sum_{\lambda \in \Lambda_0} \cont(\lambda) + \sum_{\Lambda \in \Lambda_1} \beta^{\lambda, 1} + \cdots + \beta^{\lambda, r_{\lambda}}.
\end{align*}

If there exists \(\lambda \in \Lambda_0\) such that \(\psi(\cont(\nu)) \succ \cont(\lambda)\), then \(\Lambda, \Lambda'\) do not satisfy the union condition, and we have \(\bkap^\Lambda \triangleright_R \bkap^{\Lambda'}\). If there exists \(\lambda \in \Lambda_1\), \(j \in [1, r_\lambda]\) such that \(\psi(\cont(\nu)) \succeq \beta^{\lambda, j}\), then \(\Lambda, \Lambda'\) do not satisfy the union condition, and we again have \(\psi(\cont(\nu)) \succ \cont(\lambda)\), so \(\bkap^\Lambda \triangleright_R \bkap^{\Lambda'}\). 

We may assume then that \(\cont(\lambda) \succeq \psi(\cont(\nu))\) for all \(\lambda \in \Lambda_0\) and \(\beta^{\lambda,j} \succ \psi(\cont(\nu))\) for all \(\lambda \in \Lambda_1, j \in [1,r_\lambda]\). By Lemma~\ref{GenConvexLem}, this implies that \(\Lambda_1 = \varnothing\), and \(\cont(\lambda) = \psi(\cont(\nu))\) for all \(\lambda \in \Lambda_0\). If \(\tau':=\tau \backslash \nu = \varnothing\), then we are done, as \(\Lambda, \Lambda'\) satisfy the union condition and \(\bkap^\Lambda = \bkap^{\Lambda'}\). Assume then that \(\tau' \neq \varnothing\). Then we have that \(\Lambda \backslash \Lambda_{0}\) is a minimal \(\ssee\)-removable ribbon tiling for \(\tau'\), and \(\Lambda'\backslash\{\nu\}\) is a Kostant tiling for \(\tau'\). By the induction assumption, we have \(\bkap^{\Lambda \backslash \Lambda_{0}} \trianglerighteq_R \bkap^{\Lambda' \backslash \{\nu\}}\), with equality if and only if \(\Lambda \backslash \Lambda_{0}\), \( \Lambda' \backslash \{\nu\}\) satisfy the union condition. But since \(\nu = \bigsqcup_{\lambda \in \Lambda_0} \lambda\) and \(\cont(\lambda) = \psi(\cont(\nu))\) for all \(\lambda \in \Lambda_0\), it follows that \(\bkap^\Lambda \trianglerighteq_R \bkap^{\Lambda'}\), with equality if and only if \(\Lambda \backslash \Lambda_{0}\), \( \Lambda' \backslash \{\nu\}\) satisfy the union condition, which occurs if and only if \(\Lambda, \Lambda'\) satisfy the union condition. This completes the induction step, and the proof.
\end{proof} 

\begin{corollary}\label{uniquerib}
Every skew shape has a unique minimal \(\ssee\)-removable ribbon tiling.
\end{corollary}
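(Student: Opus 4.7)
The plan is to combine existence from Lemma~\ref{minremtab} with a symmetric application of Lemma~\ref{othersmaller} to obtain uniqueness. Existence is immediate: by Lemma~\ref{minremtab}, the skew shape $\tau$ admits a minimal $\ssee$-removable ribbon tableau $(\Lambda, \ttt)$, so $\Lambda$ is a minimal $\ssee$-removable ribbon tiling.

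For uniqueness, I would let $\Lambda$ and $\Lambda'$ both be minimal $\ssee$-removable ribbon tilings of $\tau$. By Lemma~\ref{minremtabisKos}, each is a cuspidal Kostant tiling. Applying Lemma~\ref{othersmaller} with the minimal tiling $\Lambda$ against the Kostant tiling $\Lambda'$ gives $\bkap^\Lambda \trianglerighteq_R \bkap^{\Lambda'}$; swapping the roles yields the reverse inequality $\bkap^{\Lambda'} \trianglerighteq_R \bkap^\Lambda$. Since $\triangleright_R$ is a total order, these force $\bkap^\Lambda = \bkap^{\Lambda'}$, and the equality clause of Lemma~\ref{othersmaller} then applies in both directions: every $\lambda' \in \Lambda'$ is a union of tiles $\lambda \in \Lambda$ with $\cont(\lambda) = \psi(\cont(\lambda'))$, and symmetrically.

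The final step is to upgrade this ``union'' relationship to outright equality. This is where Lemma~\ref{cuspisindiv} does the work: every $\lambda' \in \Lambda'$ is cuspidal, so $\cont(\lambda') \in \Psi$, hence $\psi(\cont(\lambda')) = \cont(\lambda')$ and $m(\cont(\lambda')) = 1$. If $\lambda'$ is expressed as $\bigsqcup_{i=1}^k \lambda_i$ with each $\cont(\lambda_i) = \cont(\lambda')$, then summing contents gives $\cont(\lambda') = k\cdot \cont(\lambda')$, forcing $k = 1$ and $\lambda' = \lambda_i \in \Lambda$. Thus $\Lambda' \subseteq \Lambda$, and since both tile $\tau$, we conclude $\Lambda = \Lambda'$.

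I do not anticipate any serious obstacle: all the heavy lifting has been done in Lemma~\ref{minremtab}, Lemma~\ref{minremtabisKos}, Lemma~\ref{othersmaller}, and Lemma~\ref{cuspisindiv}. The only subtlety is observing that the ``union condition'' of Lemma~\ref{othersmaller}, when applied to two tilings each of whose tiles has indivisible content, forces the tiles themselves to coincide rather than merely refine one another.
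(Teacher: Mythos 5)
Your proposal is correct and follows essentially the same route as the paper: existence from Lemma~\ref{minremtab}, and uniqueness by applying Lemma~\ref{othersmaller} symmetrically to get $\bkap^\Lambda = \bkap^{\Lambda'}$ and the mutual union condition. Your extra step — invoking Lemma~\ref{minremtabisKos} and Lemma~\ref{cuspisindiv} to observe that each tile has indivisible content, so a union of same-content tiles equal to a single tile forces $k=1$ — is a clean way to make explicit the paper's terse ``it follows that $\Lambda = \Lambda'$,'' and is a welcome bit of rigor.
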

\begin{proof}
Existence is established in Lemma~\ref{minremtab}.
If \(\Lambda, \Lambda'\) are minimal \(\ssee\)-removable ribbon tilings of \(\tau\), then by Lemma~\ref{othersmaller} we have \(\bkap^{\Lambda} \trianglerighteq_R \bkap^{\Lambda'}\) and \(\bkap^{\Lambda'} \trianglerighteq_R \bkap^{\Lambda}\), so \(\bkap^{\Lambda} = \bkap^{\Lambda'}\) and thus every \(\lambda \in \Lambda\) is a union of tiles in \(\lambda' \in \Lambda'\), and vice versa. It follows that \(\Lambda = \Lambda'\).
\end{proof}

\begin{lemma}\label{cuspisrib}
Every cuspidal Kostant tiling is a minimal \(\ssee\)-removable ribbon tiling.
\end{lemma}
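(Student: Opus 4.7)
The plan is to induct on $|\tau|$, with the empty case vacuous. For the inductive step, I fix a cuspidal Kostant tableau $(\Lambda, \ttt)$ of $\tau$ with $\cont(\ttt(1)) \succeq \cdots \succeq \cont(\ttt(n))$, where $n = |\Lambda|$, and denote the last tile by $\lambda_n := \ttt(n)$. Because $\ttt$ is a $\Lambda$-tableau, $\lambda_n$ is $\ssee$-removable in $\tau$; by Lemma~\ref{cuspisribbon} and Lemma~\ref{cuspisindiv} it is also a ribbon with $\cont(\lambda_n) \in \Psi$. The central reduction is to show $\lambda_n$ is a \emph{minimal} $\ssee$-removable ribbon in $\tau$. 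Once that is established, the induction hypothesis applied to the cuspidal Kostant tiling $\Lambda \setminus \{\lambda_n\}$ of the strictly smaller skew shape $\tau \setminus \lambda_n$ provides a minimal $\ssee$-removable ribbon tableau $\ttt'$ of $\tau \setminus \lambda_n$; appending $\lambda_n$ to $\ttt'$ yields a valid $\Lambda$-tableau of $\tau$ (using $\ssee$-removability of $\lambda_n$) which is manifestly a minimal $\ssee$-removable ribbon tableau of $\tau$, so $\Lambda$ is a minimal $\ssee$-removable ribbon tiling.

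To prove $\lambda_n$ has minimal $\ssee$-removable ribbon content, I fix a minimal $\ssee$-removable ribbon $\xi \subseteq \tau$ with $\cont(\xi) \in \Psi$ using Lemma~\ref{minremexist}, and argue by contradiction: assume $\cont(\xi) \prec \cont(\lambda_n)$. By the Kostant ordering, $\cont(\lambda) \succeq \cont(\lambda_n) \succ \cont(\xi)$ for every $\lambda \in \Lambda$. I then examine each intersection $\xi \cap \lambda$, which is a thin skew shape, and claim that every connected component $C$ of $\xi \cap \lambda$ is an $\ssee$-removable ribbon in $\lambda$. The ribbon property follows from Lemma~\ref{skewcomps} combined with thinness. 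For $\ssee$-removability, I use the following path argument: if $u \in C$ and $v \in \lambda \setminus C$ with $u \searrow v$, then either $v \in \lambda \setminus \xi$---which contradicts $\ssee$-removability of $\xi$ in $\tau$---or $v$ lies in a different component $C'$ of $\xi \cap \lambda$; in the latter case any $\so/\ea$-path from $u$ to $v$ lies in $\xi$ (Proposition~\ref{pathcrit} applied to the skew shape $\xi$) and in $\lambda$ (likewise for $\lambda$), hence in $\xi \cap \lambda$, forcing $u$ and $v$ into the same component of $\xi \cap \lambda$, a contradiction.

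With component-level $\ssee$-removability in hand, cuspidality of each $\lambda$ via Lemma~\ref{shapehooktest} yields $\cont(C) \succ \cont(\lambda)$ whenever $C \subsetneq \lambda$, while tiles $\lambda \subseteq \xi$ contribute $\cont(\lambda)$, itself $\succ \cont(\xi)$ by the Kostant ordering. Assembling contributions,
\begin{align*}
\cont(\xi) \;=\; \sum_{\lambda \subseteq \xi} \cont(\lambda) \;+\; \sum_{\substack{\lambda \,:\, \varnothing \neq \xi \cap \lambda \subsetneq \lambda \\ C \text{ comp.\ of } \xi \cap \lambda}} \cont(C),
\end{align*}
expresses $\cont(\xi) \in \Psi \subseteq \Phi_+$ as a nonempty sum of positive roots each strictly greater than $\cont(\xi)$ in the convex preorder, directly contradicting Lemma~\ref{GenConvexLem}(iii). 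Hence $\cont(\xi) \not\prec \cont(\lambda_n)$; combined with the reverse inequality $\cont(\xi) \preceq \cont(\lambda_n)$ from minimality of $\xi$, and using that $\approx$-equivalence on $\Psi$ collapses to equality by Lemma~\ref{notapprox}(iii), we conclude $\cont(\lambda_n) = \cont(\xi)$, so $\lambda_n$ is itself minimal $\ssee$-removable in $\tau$.

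The step I expect to be the main obstacle is the component-level $\ssee$-removability verification---checking that the intersection $\xi \cap \lambda$ decomposes cleanly across components and that each component acts like an $\ssee$-removable ribbon in the cuspidal tile $\lambda$. Everything downstream---the cuspidality estimates via Lemma~\ref{shapehooktest}, the convexity contradiction via Lemma~\ref{GenConvexLem}(iii), and the appending of tiles in the induction---follows in a fairly routine manner once the intersection analysis is set up correctly.
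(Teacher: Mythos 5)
Your proof is correct and follows essentially the same strategy as the paper's: induct on $|\tau|$, show the last tile $\ttt(|\Lambda|)$ is minimal by intersecting a putatively smaller minimal $\ssee$-removable ribbon $\xi$ with the tiles, and derive a contradiction via convexity (Lemma~\ref{GenConvexLem}). The only cosmetic difference is that you decompose each $\xi\cap\lambda$ into its connected ribbon components and invoke Lemma~\ref{shapehooktest}, whereas the paper applies the definition of cuspidality directly to the (possibly disconnected) $\ssee$-removable skew shape $\xi\cap\ttt(j)$, which makes the component decomposition unnecessary but harmless.
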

\begin{proof}
We prove the claim by induction on \(|\tau|\), the base case \(|\tau|=1\) being clear. Assume that \(|\tau| >1\), and the claim holds for all \(\tau'\) with \(|\tau'| < |\tau|\). 
Let \((\Lambda, \ttt)\) be a cuspidal Kostant tableau for \(\tau\). Then by Lemma~\ref{cuspisribbon}, each \(\lambda \in \Lambda\) is a ribbon, so \(\ttt(j)\) is an \(\ssee\)-removable ribbon in \(\bigsqcup_{i=1}^j \ttt(i)\) for all \(j \in [1, |\Lambda|]\). We also have that \(\cont(\ttt(j)) \in \Psi\) for all \(j \in [1,|\Lambda|]\) by Lemma~\ref{cuspisindiv}.
If \(|\Lambda| = 1\), the claim is clearly true, so assume \(|\Lambda|>1\). Note that \(\Lambda \backslash \ttt(|\Lambda|)\) is a cuspidal Kostant tiling for \(\tau':= \tau \backslash \ttt(|\Lambda|)\), so if \(\ttt(|\Lambda|)\) is a minimal for \(\tau\), the claim follows by induction. 

Assume by way of contradiction that \(\ttt(|\Lambda|)\) is not minimal for \(\tau\). There exists a minimal \(\ssee\)-removable skew hook \(\xi\) in \(\tau\), and \(\cont(\ttt(j)) \succeq \cont(\ttt(|\Lambda|)) \succ \cont(\xi)\) for all \(j \in [1,|\Lambda|]\). Let \(\xi^j = \xi \cap \ttt(j)\) for all \(j \in [1,|\Lambda|]\), and set \(J = \{  j \in [1,\Lambda] \mid \xi^j \neq \varnothing\}\). Since \(\xi\) is \(\ssee\)-removable in \(\tau\), \(\xi^j\) is \(\ssee\)-removable in \(\ttt(j)\) for all \(j \in J\). But then, since \(\ttt(j)\) is cuspidal, we have that \(\cont(\xi^j) = \beta^{j,1} + \cdots + \beta^{j,r_j}\) for some \(\cont(\ttt(j)) \preceq \beta^{j,1}, \ldots, \beta^{j,r_j} \in \Phi_+\). Then by Lemma~\ref{GenConvexLem} we have
\begin{align*}
\cont(\xi) = \sum_{j \in J} \cont(\xi^j) =  \sum_{j \in J} \beta^{j,1} + \cdots + \beta^{j,r_j} \succeq \cont(\ttt(|\Lambda|)) \succ \cont(\xi),
\end{align*}
the desired contradiction. This completes the induction step and the proof.
\end{proof}

\subsection{Reversal}\label{revsec}
There is an inherent symmetry to much of the combinatorial data considered herein. For \(u \in \N\), define the {\em reversal} \(u^\rev:= (-u_2,-u_1)\) and extend this to \(\tau^\rev:= \{u^\rev \mid u \in \tau\}\) for \(\tau \subset \N\). Reversal preserves residue and content, and sends skew shapes to skew shapes and ribbons to ribbons.

If \((\Lambda, \ttt)\) is a tableau for \(\tau\), then we may define a tableau \((\Lambda^\rev, \ttt^\rev)\) for \(\tau^\rev\) by setting \(\Lambda^\rev :=\{ \lambda^\rev \mid \lambda \in \Lambda\}\) and \(\ttt^\rev(i) = \ttt(|\Lambda| - i +1)^\rev\) for \(i \in [1,|\Lambda|]\).

For a convex preorder \(\succeq\), we may also define the {\em reversal} convex preorder \(\succeq^\rev\) by setting \(\beta \succeq^\rev \beta'\) if and only if \(\beta' \succeq \beta\). 

The following proposition is straightforward to verify from definitions. To avoid confusion, we label here the terms `cuspidal' and `Kostant', and the bilexicographic partial order \(\triangleleft\) which depend upon a chosen convex preorder with the symbol for that preorder.

\begin{proposition}\label{revlem}
Let \(m \in \N\), \(\beta \in \Phi_+\), \(\theta \in Q_+\), \(\xi \in \SSSS(\beta)\), \(\mu \in \SSSS(m \beta)\), \(\tau \in \SSSS(\theta)\).
\begin{enumerate}
\item \(\xi\) is \(\succeq\)-cuspidal if and only if \(\xi^\rev\) is \(\succeq^\rev\)-cuspidal.
\item \(\mu\) is \(\succeq\)-semicuspidal if and only if \(\mu^\rev\) is \(\succeq^\rev\)-semicuspidal.
\item \(\xi\) is a \(\succeq\)-minimal \(\ssee\)-removable ribbon in \(\tau\) if and only if \(\xi^\rev\) is a \(\succeq^\rev\)-maximal \(\nnww\)-removable ribbon in \(\tau^\rev\).
\item \((\Lambda, \ttt)\) is a \(\succeq\)-Kostant tableau for \(\tau\) if and only if \((\Lambda^\rev, \ttt^\rev)\) is a \(\succeq^\rev\)-Kostant tableau for \(\tau^\rev\).
\item For \(\bkap, \bnu \in \Xi(\beta)\), we have \(\bkap \triangleright^{\succeq}_R \bnu\) if and only if \(\bkap \triangleright^{\succeq^\rev}_L \bnu\).
\end{enumerate}
\end{proposition}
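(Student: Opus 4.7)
The plan is to first establish a small toolkit of basic properties of the reversal involution $u \mapsto u^{\rev} = (-u_2, -u_1)$, after which each of (i)--(v) reduces to an unpacking of definitions. Reversal is an involutive bijection $\N \to \N$ which preserves residue (since $\overline{(-u_2) - (-u_1)} = \overline{u_1 - u_2}$... actually the residue is $\overline{u_2 - u_1}$, and $\res(u^\rev) = \overline{-u_1 -(-u_2)} = \overline{u_2 - u_1} = \res(u)$), so $\cont(\tau^\rev) = \cont(\tau)$ for any $\tau \subset \N$. Next, $u \searrow v$ iff $u_1 \leq v_1$ and $u_2 \leq v_2$, equivalent to $-v_2 \leq -u_2$ and $-v_1 \leq -u_1$, i.e., $v^\rev \searrow u^\rev$. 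Thus reversal sends skew shapes to skew shapes and ribbons to ribbons, and reverses the $\searrow$ partial order. In particular, $(\lambda_1, \ldots, \lambda_k)$ is a tableau for $\tau$ iff $(\lambda_k^\rev, \ldots, \lambda_1^\rev)$ is a tableau for $\tau^\rev$; equivalently, $(\Lambda, \ttt)$ is a tableau for $\tau$ iff $(\Lambda^\rev, \ttt^\rev)$ is a tableau for $\tau^\rev$. Combined with the identities $(\so u)^\rev = \no(u^\rev)$ and $(\ea u)^\rev = \we(u^\rev)$, Lemma~\ref{remcond} immediately yields that $\xi \subseteq \tau$ is $\ssee$-removable in $\tau$ iff $\xi^\rev$ is $\nnww$-removable in $\tau^\rev$.

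With these tools in hand, statements (i) and (ii) come from translating cuspidality under reversal. For any tableau $(\lambda_1, \lambda_2)$ of $\xi$, the two cuspidality requirements of Definition~\ref{cuspdef} say that $\cont(\lambda_1)$ is a sum of positive roots strictly $\prec \beta$ and $\cont(\lambda_2)$ is a sum strictly $\succ \beta$; using the dictionary $\beta \succ^\rev \gamma \iff \gamma \succ \beta$, the content-preserving correspondence of tableaux $(\lambda_1, \lambda_2) \leftrightarrow (\lambda_2^\rev, \lambda_1^\rev)$ turns these conditions into exactly the $\succeq^\rev$-cuspidality conditions for $\xi^\rev$. Part (ii) is identical with weak inequalities. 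For (iii), each $\ssee$-removable ribbon $\nu$ in $\tau$ corresponds to an $\nnww$-removable ribbon $\nu^\rev$ in $\tau^\rev$ with the same content, so the minimality hypothesis ``$\cont(\nu) \succeq \cont(\xi)$ for every $\ssee$-removable $\nu$ in $\tau$'' translates literally to ``$\cont(\nu^\rev) \preceq^\rev \cont(\xi^\rev)$ for every $\nnww$-removable $\nu^\rev$ in $\tau^\rev$,'' which is the definition of $\succeq^\rev$-maximality of $\xi^\rev$ in $\tau^\rev$.

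Part (iv) applies the same principle to sequences. A tableau $(\Lambda, \ttt)$ is $\succeq$-Kostant iff each $\cont(\ttt(i)) \in \Phi_+'$ and $\psi(\cont(\ttt(i))) \succeq \psi(\cont(\ttt(j)))$ for $i \leq j$. Since $\ttt^\rev(i) = \ttt(|\Lambda|-i+1)^\rev$ and content is preserved, setting $i' = |\Lambda|-j+1$, $j' = |\Lambda|-i+1$, the condition rewrites as $\psi(\cont(\ttt^\rev(j'))) \succeq \psi(\cont(\ttt^\rev(i')))$ for $i' \leq j'$, i.e., $\psi(\cont(\ttt^\rev(i'))) \succeq^\rev \psi(\cont(\ttt^\rev(j')))$ for $i' \leq j'$, which is the $\succeq^\rev$-Kostant condition for $(\Lambda^\rev, \ttt^\rev)$. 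Finally, (v) is purely formal: $\bkap \triangleright_R^{\succeq} \bnu$ means there is $\beta \in \Psi$ with $\kappa_\beta > \nu_\beta$ and $\kappa_\mu = \nu_\mu$ for every $\mu$ with $\beta \succ \mu$, while $\bkap \triangleright_L^{\succeq^\rev} \bnu$ means there is $\beta \in \Psi$ with $\kappa_\beta > \nu_\beta$ and $\kappa_\mu = \nu_\mu$ for every $\mu$ with $\mu \succ^\rev \beta$, i.e., with $\beta \succ \mu$---the same condition.

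The only genuine bookkeeping obstacle is consistently tracking the index reversal in (iv) and the flip $\succeq \leftrightarrow \preceq^\rev$ throughout, but no substantive new idea is required; the proposition is essentially tautological once the basic properties of reversal on $\searrow$, tableaux, and removability are verified.
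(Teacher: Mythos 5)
Your approach is correct and matches the paper's intent exactly: the paper states that Proposition~\ref{revlem} is ``straightforward to verify from definitions'' and gives no proof, and your argument fills that in by first verifying that reversal preserves residue and content and reverses the $\searrow$ order, then tracking what each definition becomes under $u \mapsto u^\rev$ and $\succeq \mapsto \succeq^\rev$. All five parts check out.

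One small slip worth fixing: the unit-translation identities you state are reversed. Since $u^\rev = (-u_2, -u_1)$, moving south (first coordinate up by $1$) becomes moving west (second coordinate down by $1$), and moving east becomes moving north. That is, $(\so u)^\rev = \we(u^\rev)$ and $(\ea u)^\rev = \no(u^\rev)$, not $(\so u)^\rev = \no(u^\rev)$ and $(\ea u)^\rev = \we(u^\rev)$ as you wrote. This does not affect your argument because Lemma~\ref{remcond} characterizes $\ssee$-removability purely in terms of $\searrow$, and the composite $\ssee = \so\ea$ still maps to $\nnww = \we\no$ under either attribution; but the stated identities are incorrect as written and should be corrected.
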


\subsection{Main theorems, cuspidal version}

In the next theorem we show that, up to \(e\)-similarity, there is a unique cuspidal skew shape associated to every real positive root, and there are \(e\) distinct cuspidal skew shapes associated to the null root \(\delta\).

\begin{theorem}\label{allcusp}
The set \(\{\zeta^{\beta} \mid \beta \in \Phi_+^\re\} \cup \{\zeta^t \mid t \in \Z_e\}\) represents a complete and irredundant set of cuspidal skew shapes, up to \(e\)-similarity.
\end{theorem}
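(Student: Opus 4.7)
The plan is to deduce Theorem~\ref{allcusp} essentially immediately from the three key results already established: Lemma~\ref{cuspisribbon} (every cuspidal skew shape is a ribbon), Lemma~\ref{cuspisindiv} (every cuspidal skew shape has content in $\Psi$), and Proposition~\ref{allcusprib} (classification of cuspidal ribbons with content in $\Psi$ up to $e$-similarity). The theorem extends the classification from cuspidal ribbons to cuspidal skew shapes, and the content is essentially bookkeeping after the heavy lifting done earlier.

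First I would address completeness. Let $\tau$ be an arbitrary cuspidal skew shape, so $\cont(\tau) \in \Phi_+$ by definition. Applying Lemma~\ref{cuspisribbon}, $\tau$ must be a ribbon. Applying Lemma~\ref{cuspisindiv}, $\cont(\tau) \in \Psi = \Phi_+^\re \sqcup \{\delta\}$. Therefore $\tau$ is a cuspidal ribbon with content in $\Psi$, and Proposition~\ref{allcusprib} then provides that $\tau \sim_e \zeta^\beta$ for some $\beta \in \Phi_+^\re$ or $\tau \sim_e \zeta^t$ for some $t \in \ZZ_e$.

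Next I would address irredundancy. Since $e$-similarity preserves content (as remarked in \S\ref{similaritysec}), any two skew shapes which are $e$-similar must share the same content. Hence $\zeta^{\beta}$ and $\zeta^{\beta'}$ cannot be $e$-similar when $\beta \neq \beta' \in \Phi_+^\re$, and neither $\zeta^\beta$ (for $\beta \in \Phi_+^\re$) nor any $\zeta^t$ (of content $\delta$) can be $e$-similar to the other. For the remaining case of $\zeta^t$ vs.\ $\zeta^{t'}$ with $t \neq t' \in \ZZ_e$, irredundancy is already part of Proposition~\ref{allcusprib}.

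I do not anticipate any real obstacle, since Proposition~\ref{allcusprib} was stated precisely to handle the classification at the level of ribbons, and Lemmas~\ref{cuspisribbon} and~\ref{cuspisindiv} reduce the skew shape setting to the ribbon setting. The only subtle point is being careful that \emph{cuspidality}, \emph{content}, and the ribbon structure are all invariant under $e$-similarity, so that the passage from $\tau$ to its representative in $[\zeta^\beta]_e$ or $[\zeta^t]_e$ respects all relevant properties; this invariance is already recorded in the excerpt.
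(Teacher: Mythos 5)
Your proposal is correct and follows exactly the paper's own route: the paper's proof of Theorem~\ref{allcusp} is a one-line citation of Lemmas~\ref{cuspisribbon}, \ref{cuspisindiv}, and Proposition~\ref{allcusprib}, which are precisely the three results you invoke and combine in the same way.
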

\begin{proof}
Follows from Lemmas~\ref{cuspisribbon}, \ref{cuspisindiv}, and Proposition~\ref{allcusprib}.
\end{proof}

Parts (i), (iv) of the next theorem establish that every skew shape \(\tau\) possesses a unique cuspidal Kostant tiling, and the associated Kostant partition is bilexicographically maximal among all Kostant tilings for \(\tau\). Moreover, parts (ii),(iii) show that the unique cuspidal Kostant tiling can be directly constructed via progressive minimal (or maximal) ribbon removals. We refer the reader back to Examples~\ref{BigEx} and~\ref{e2Kosex} for demonstrative examples of cuspidal Kostant tilings.

\begin{theorem}\label{mainthmcusp}
Let \(\tau\) be a nonempty skew shape. Then:
\begin{enumerate}
\item There exists a unique cuspidal Kostant tiling \(\Gamma_\tau\) for \(\tau\). 
\item The tiling \(\Gamma_\tau\) is the unique minimal \(\ssee\)-removable ribbon tiling for \(\tau\).
\item The tiling \(\Gamma_\tau\) is the unique maximal \(\nnww\)-removable ribbon tiling for \(\tau\).
\item For any Kostant tiling \(\Lambda\) for \(\tau\), we have \(\bkap(\Gamma_\tau) \trianglerighteq \bkap(\Lambda) \), with \(\bkap(\Gamma_\tau)=\bkap(\Lambda) \) if and only if every \(\lambda \in \Lambda\) is a union of tiles \(\gamma \in \Gamma_\tau\) with \(\psi(\cont(\lambda)) = \cont(\gamma)\).
\end{enumerate}
\end{theorem}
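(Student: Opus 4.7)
The plan is to assemble Theorem~\ref{mainthmcusp} from the lemmas already in place, using reversal symmetry to transfer statements about $\ssee$-removal to statements about $\nnww$-removal and about $\trianglerighteq_R$ to $\trianglerighteq_L$.

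First, for parts (i) and (ii) I would combine three results already proved. By Lemma~\ref{minremtab}, $\tau$ admits a minimal $\ssee$-removable ribbon tableau, and by Lemma~\ref{minremtabisKos} that tableau is a cuspidal Kostant tableau, so a cuspidal Kostant tiling for $\tau$ exists. Conversely, by Lemma~\ref{cuspisrib} every cuspidal Kostant tiling is a minimal $\ssee$-removable ribbon tiling. Thus the classes of cuspidal Kostant tilings and of minimal $\ssee$-removable ribbon tilings coincide, and Corollary~\ref{uniquerib} tells us the latter is a singleton; call the unique element $\Gamma_\tau$. This simultaneously proves (i) and (ii).

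For (iii) I would invoke reversal. Part (i) applied to the skew shape $\tau^\rev$ and the convex preorder $\succeq^\rev$ produces a unique $\succeq^\rev$-cuspidal Kostant tiling $\Gamma'$ for $\tau^\rev$. By Proposition~\ref{revlem}(i),(iv), the tiling $(\Gamma')^\rev$ is a $\succeq$-cuspidal Kostant tiling for $\tau$, and by uniqueness of $\Gamma_\tau$ it must equal $\Gamma_\tau$. Now by part (ii) applied in the reversed setting, $\Gamma' = \Gamma_\tau^\rev$ is the unique $\succeq^\rev$-minimal $\ssee$-removable ribbon tiling of $\tau^\rev$, which by Proposition~\ref{revlem}(iii) translates into $\Gamma_\tau$ being the unique $\succeq$-maximal $\nnww$-removable ribbon tiling of $\tau$.

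For (iv), the inequality $\bkap^{\Gamma_\tau} \trianglerighteq_R \bkap^\Lambda$ is exactly Lemma~\ref{othersmaller}, since by (ii) the tiling $\Gamma_\tau$ is a minimal $\ssee$-removable ribbon tiling. To get $\bkap^{\Gamma_\tau} \trianglerighteq_L \bkap^\Lambda$, I pass to reversal: $\Lambda^\rev$ is a $\succeq^\rev$-Kostant tiling of $\tau^\rev$ by Proposition~\ref{revlem}(iv), and $\Gamma_\tau^\rev$ is a $\succeq^\rev$-minimal $\ssee$-removable ribbon tiling of $\tau^\rev$ by (iii) and Proposition~\ref{revlem}(iii). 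Applying Lemma~\ref{othersmaller} in the reversed setting yields $\bkap^{\Gamma_\tau^\rev} \trianglerighteq_R^{\succeq^\rev} \bkap^{\Lambda^\rev}$, which translates via Proposition~\ref{revlem}(v) (together with the fact that reversal preserves content and hence the underlying Kostant partition is unchanged) into $\bkap^{\Gamma_\tau} \trianglerighteq_L \bkap^\Lambda$. Combining the two gives $\bkap^{\Gamma_\tau} \trianglerighteq \bkap^\Lambda$. For the equality clause, both Lemma~\ref{othersmaller} (applied to $\tau$) and its reversed counterpart (applied to $\tau^\rev$) characterize equality by the union condition, which is manifestly symmetric under reversal. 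So equality in $\trianglerighteq$ is equivalent to equality in $\trianglerighteq_R$ alone, which by Lemma~\ref{othersmaller} is exactly the stated union condition.

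The only step requiring care is the bookkeeping for (iv): I must check that the union condition in Lemma~\ref{othersmaller} really is the same condition on $\Lambda$ versus $\Lambda^\rev$, so that the two lexicographic inequalities collapse to the single bilexicographic statement with the advertised equality criterion. This is routine because reversal is an involution preserving containment, content, and the map $\psi$, but it is the one place where I would want to be explicit in the write-up.
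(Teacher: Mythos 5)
Your proposal is correct and follows essentially the same route as the paper: parts (i)--(ii) from Lemmas~\ref{minremtab}, \ref{minremtabisKos}, \ref{cuspisrib} and Corollary~\ref{uniquerib}; part (iii) by transporting (ii) through Proposition~\ref{revlem}; and part (iv) by combining Lemma~\ref{othersmaller} with the reversal trick for the $\trianglerighteq_L$ half. The only minor remark is that the equality clause in (iv) is already exactly the statement of Lemma~\ref{othersmaller}, so the symmetry discussion at the end, while not wrong, is unnecessary.
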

\begin{proof}
Uniqueness in (ii) is provided by Corollary~\ref{uniquerib}, and the equality in (ii) is provided by Lemmas~\ref{minremtabisKos} and~\ref{cuspisrib}. Existence in (i) follows from Lemma~\ref{minremtab}, and the uniqueness in (i) from uniqueness in (ii). Then (iii) follows from (ii) and Proposition~\ref{revlem}. Finally, (iv) follows from (ii) and Lemma~\ref{othersmaller} and Proposition~\ref{revlem}.
\end{proof}

\section{Semicuspidal tableaux}\label{semicuspcons}
In this section we build on \S\ref{Kostsec} to investigate semicuspidal skew shapes and tableaux. Recall that every nonempty skew shape \(\tau\) has a unique cuspidal Kostant tiling \(\Gamma_\tau\), as in Theorem~\ref{mainthmcusp}.

\begin{lemma}\label{sciffcusptile}
Let \(\tau\) be a skew shape of content \(\theta \in \Phi_+'\). Then \(\tau\) is semicuspidal if and only if, for all \(\gamma \in \Gamma_\tau\) we have \(\cont(\gamma) = \psi(\theta)\).
\end{lemma}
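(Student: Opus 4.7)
The plan is to deduce both directions from the strong characterizations of the cuspidal Kostant tiling $\Gamma_\tau$ provided by Theorem~\ref{mainthmcusp}, together with the convexity statements of Lemma~\ref{GenConvexLem}. Write $\theta = m\beta$ with $\beta = \psi(\theta) \in \Psi$, let $(\Gamma_\tau, \ttt)$ be the unique cuspidal Kostant tableau for $\tau$, and set $\psi_i := \cont(\ttt(i)) \in \Psi$ (using Lemma~\ref{cuspisindiv}), so that $\psi_1 \succeq \cdots \succeq \psi_k$ and $\psi_1 + \cdots + \psi_k = m\beta$.

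For $(\Rightarrow)$, I would assume $\tau$ is semicuspidal and aim to show $\psi_1 = \beta = \psi_k$; the remaining equalities $\psi_i = \beta$ then follow by squeezing $\beta \succeq \psi_i \succeq \beta$ and using Lemma~\ref{notapprox}(iii), since all terms lie in $\Psi$. Lemma~\ref{GenConvexLem}(iii) applied to the total sum already yields $\psi_1 \succeq \beta \succeq \psi_k$. To push $\psi_1 \preceq \beta$, I would assume $\psi_1 \succ \beta$ for contradiction, take $j$ maximal with $\psi_j = \psi_1$ (noting $j < k$, since $j = k$ would force $\psi(\theta) = \psi_1 \neq \beta$), and form the tableau $(\lambda_1, \lambda_2)$ of $\tau$ with $\lambda_1 = \bigsqcup_{i \leq j} \ttt(i)$ via Lemma~\ref{tabisskew}. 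Condition (i) of semicuspidality then expresses $\cont(\lambda_1) = j\psi_1$ as a sum of positive roots each $\preceq \beta$; since $j\psi_1$ is a nontrivial multiple of $\psi_1 \in \Phi_+$, Lemma~\ref{GenConvexLem}(iii) applied (after reordering) forces the largest summand to satisfy $\gamma_1 \succeq \psi_1$, contradicting $\gamma_1 \preceq \beta \prec \psi_1$. A dual argument with condition (ii), splitting off the bottom block where $\psi_i = \psi_k$, gives $\psi_k = \beta$.

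For $(\Leftarrow)$, I would assume every $\gamma \in \Gamma_\tau$ has content $\beta$. Then Theorem~\ref{mainthmcusp}(ii),(iii) identifies $\Gamma_\tau$ as both the minimal $\ssee$-removable and the maximal $\nnww$-removable ribbon tiling of $\tau$; since every tile has content $\beta$, every $\ssee$-removable ribbon in $\tau$ has content $\succeq \beta$, and every $\nnww$-removable ribbon in $\tau$ has content $\preceq \beta$. Given an arbitrary tableau $(\lambda_1, \lambda_2)$ of $\tau$, I take the cuspidal Kostant tilings $\Gamma_{\lambda_i}$ of each piece. The first tile of the Kostant tableau for $\Gamma_{\lambda_1}$ is $\nnww$-removable in $\lambda_1$, and concatenating with $(\lambda_1, \lambda_2)$ shows it is $\nnww$-removable in $\tau$; by the observation above, its content is $\preceq \beta$, and since the Kostant ordering is decreasing it has maximal content in $\Gamma_{\lambda_1}$, so every tile of $\Gamma_{\lambda_1}$ has content $\preceq \beta$. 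The decomposition $\cont(\lambda_1) = \sum_{\gamma \in \Gamma_{\lambda_1}} \cont(\gamma)$ therefore verifies condition (i) of semicuspidality. The dual argument applied to the last tile of $\Gamma_{\lambda_2}$, which is $\ssee$-removable in $\tau$, gives condition (ii).

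The delicate point is really just the contradiction step in $(\Rightarrow)$: one must ensure the sum $\cont(\lambda_1)$ is a bona fide positive multiple of something in $\Phi_+$ in order to invoke Lemma~\ref{GenConvexLem}(iii), which is why the exclusion $j < k$ is essential. Everything else reduces, as sketched, to routine bookkeeping with Kostant tableau concatenation and the fact that $\approx$ collapses to equality on $\Psi$.
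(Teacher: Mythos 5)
Your argument is correct, and both directions depart from the paper's proof in genuine ways.

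For the forward direction, the paper applies Lemma~\ref{shapehooktest} to the last tile $\gamma_k$ of a cuspidal Kostant tableau to obtain $\cont(\gamma_k)\succeq\psi(\theta)$ directly, then squeezes with Lemma~\ref{GenConvexLem}(i),(ii) and Lemma~\ref{notapprox}(iii). You bypass Lemma~\ref{shapehooktest} entirely: you assume $\psi_1\succ\beta$, cut the cuspidal Kostant tableau after the top block of equal-content tiles to manufacture a two-part tableau $(\lambda_1,\lambda_2)$ of $\tau$, and feed $\cont(\lambda_1)=j\psi_1$ into Definition~\ref{cuspdef}(i) together with Lemma~\ref{GenConvexLem}(iii) to force a contradiction. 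This works; your observation that $j<k$ (needed so that $\lambda_2$ is nonempty and the pair is a legitimate tableau) is indeed the delicate point, and it is correctly handled by Lemma~\ref{psidef}. The price is some extra bookkeeping, but you avoid quoting the slightly loosely stated semicuspidal half of Lemma~\ref{shapehooktest}, which is a mild advantage.

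For the converse, the paper's argument is local to each tile of $\Gamma_\tau$: it splits $\Gamma_\tau$ into tiles contained in $\lambda_1$, tiles contained in $\lambda_2$, and tiles straddling both, and applies cuspidality of each straddled tile to $(\gamma\cap\lambda_1,\gamma\cap\lambda_2)$ to get the root decompositions. You instead bound the contents of the cuspidal Kostant tilings $\Gamma_{\lambda_1}$ and $\Gamma_{\lambda_2}$ globally, using the fact that $\Gamma_\tau$ is simultaneously the minimal $\ssee$-removable and maximal $\nnww$-removable ribbon tiling (Theorem~\ref{mainthmcusp}(ii),(iii)) to conclude that every $\nnww$-removable (resp.\ $\ssee$-removable) ribbon in $\tau$ has content $\preceq\beta$ (resp.\ $\succeq\beta$), and then showing the extreme tiles of $\Gamma_{\lambda_i}$ are such removable ribbons in $\tau$ by concatenating tableaux. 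This is valid and has the virtue of not needing to reason about intersections $\gamma\cap\lambda_i$; on the other hand, the paper's version is slightly more self-contained, as it only needs Theorem~\ref{mainthmcusp}(i) rather than the full strength of parts (ii)–(iv).
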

\begin{proof}
Let \((\gamma_i)_{i=1}^k\) be a cuspidal Kostant tableau for \(\tau\). Then \(\Gamma_\tau = \{\gamma_i\}_{i=1}^k\), and \(\gamma_i\) is a ribbon for all \(i \in [1,k]\). 

\((\implies)\) Since \(\tau\) is semicuspidal, we must have \(\cont(\gamma_1) \succeq \cont(\gamma_k) \succeq \psi(\theta)\) by Lemma~\ref{shapehooktest}. Then by Lemma~\ref{GenConvexLem}(i),(ii), we have that \(\cont(\gamma_i) \approx \psi(\theta)\) for all \(i\). But \(\cont(\gamma_i) \in \Psi\) for all \(i\), so by Lemma~\ref{notapprox}, we have \(\cont(\gamma_i) = \psi(\theta)\) for all \(i \in [1,k]\). 

\((\impliedby)\) Let \((\lambda_1, \lambda_2)\) be any tableau for \(\tau\). Define:
\begin{align*}
\Gamma^0_\tau &= \{\gamma \in \Gamma_\tau \mid \gamma \cap \lambda_1 \neq \varnothing, \gamma\cap \lambda_2 \neq \varnothing\};\\
 \Gamma^1_\tau &= \{\gamma \in \Gamma_\tau \mid \gamma \subseteq \lambda_1\};\\
  \Gamma^2_\tau &= \{\gamma \in \Gamma_\tau \mid \gamma \subseteq \lambda_2\}.
  \end{align*}
Let \(\gamma \in \Gamma^0_\tau\). Then \((\gamma \cap \lambda_1, \gamma \cap \lambda_2)\) is a tableau for \(\gamma\). As \(\gamma\) is cuspidal, we have that \(\cont(\gamma \cap \lambda_1)\) is a sum of positive roots less than \(\psi(\theta)\), and \(\cont(\gamma \cap \lambda_2)\) is a sum of positive roots greater than \(\psi(\theta)\). By assumption, \(\cont(\gamma) = \psi(\theta)\) for all \(\gamma \in \Lambda_1, \Lambda_2\). Then we have
\begin{align*}
\cont(\lambda_1) &= \sum_{\gamma \in \Gamma_\tau^0} \cont(\gamma \cap \lambda_1) + \sum_{\gamma \in \Gamma^1_\tau} \psi(\theta);\\
\cont(\lambda_2) &= \sum_{\gamma \in \Gamma_\tau^0} \cont(\gamma \cap \lambda_2) + \sum_{\gamma \in \Gamma^2_\tau} \psi(\theta),
\end{align*}
so it follows that \(\cont(\lambda_1)\) can be written as a sum of positive roots greater than or equal to \(\cont(\psi(\theta))\), and \(\cont(\lambda_2)\) can be written as a sum of positive roots greater than or equal to \(\cont(\psi(\theta))\). Thus \(\tau\) is semicuspidal.
\end{proof}

As all cuspidal shapes are (connected) ribbons by Theorem~\ref{allcusp}, any cuspidal tiling of a skew shape is a union of cuspidal tilings of its connected components. Therefore Lemma~\ref{sciffcusptile} implies the following

\begin{corollary}\label{concompsc}
Let \(m \in \NN\) and \(\beta \in \Phi_+\). A skew shape \(\tau \in \SSSS(m \beta)\) is semicuspidal if and only if every connected component of \(\tau\) is a semicuspidal skew shape of content \(m'\beta\) for some \(m' \leq m\). 
\end{corollary}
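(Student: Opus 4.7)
The plan is to reduce the problem to Lemma~\ref{sciffcusptile} applied componentwise. The crucial observation that makes this work is that, by Theorem~\ref{allcusp}, every cuspidal skew shape is a (connected) ribbon. Consequently, each tile of the unique cuspidal Kostant tiling $\Gamma_\tau$ (see Theorem~\ref{mainthmcusp}(i)) lies in a single connected component of $\tau$, so $\Gamma_\tau$ decomposes as a disjoint union of cuspidal Kostant tilings of the components.

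First I would write $\tau = \tau^{(1)} \sqcup \cdots \sqcup \tau^{(k)}$ for the connected-component decomposition and set $\Gamma^{(i)} := \{\gamma \in \Gamma_\tau \mid \gamma \subseteq \tau^{(i)}\}$. Because cuspidal tiles are connected, $\Gamma_\tau = \bigsqcup_{i=1}^k \Gamma^{(i)}$, and each $\Gamma^{(i)}$ is a cuspidal Kostant tiling of $\tau^{(i)}$; by the uniqueness in Theorem~\ref{mainthmcusp}(i) we have $\Gamma^{(i)} = \Gamma_{\tau^{(i)}}$.

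For the forward direction, suppose $\tau$ is semicuspidal. By Lemma~\ref{sciffcusptile} applied to $\tau$, every $\gamma \in \Gamma_\tau$ satisfies $\cont(\gamma) = \psi(m\beta) = \psi(\beta)$. Restricting to a component, every $\gamma \in \Gamma_{\tau^{(i)}}$ still has $\cont(\gamma) = \psi(\beta) = \psi(\cont(\tau^{(i)}))$, so Lemma~\ref{sciffcusptile} applied to $\tau^{(i)}$ shows that $\tau^{(i)}$ is semicuspidal. Summing tile contents within each component gives $\cont(\tau^{(i)}) = m'_i \beta$ with $\sum_i m'_i = m$, hence $m'_i \leq m$. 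For the converse, suppose each $\tau^{(i)}$ is a semicuspidal skew shape of content $m'_i \beta$ with $m'_i \leq m$. Then $\psi(\cont(\tau^{(i)})) = \psi(\beta)$, so by Lemma~\ref{sciffcusptile} applied to $\tau^{(i)}$, every tile in $\Gamma_{\tau^{(i)}}$ has content $\psi(\beta)$. Reassembling, every tile in $\Gamma_\tau = \bigsqcup_i \Gamma_{\tau^{(i)}}$ has content $\psi(\beta) = \psi(m\beta)$, so one final application of Lemma~\ref{sciffcusptile} concludes that $\tau$ is semicuspidal.

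There is no real obstacle here: the entire argument is essentially bookkeeping, and the only substantive input is that cuspidal tiles being connected forces cuspidal Kostant tilings to respect the connected-component decomposition. The arithmetic that converts $\cont(\tau^{(i)}) = k_i \psi(\beta)$ into $m'_i \beta$ is straightforward via the factorization $\beta = m(\beta)\psi(\beta)$.
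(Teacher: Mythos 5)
Your proof is correct and takes essentially the same route as the paper: the paper's one-line argument (preceding the corollary) is precisely that cuspidal tiles are connected ribbons by Theorem~\ref{allcusp}, so the cuspidal Kostant tiling decomposes along connected components, and then Lemma~\ref{sciffcusptile} does the rest. You have merely spelled out the bookkeeping (the uniqueness step identifying $\Gamma^{(i)}$ with $\Gamma_{\tau^{(i)}}$ and the two applications of Lemma~\ref{sciffcusptile}) that the paper leaves implicit.
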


\subsection{Real semicuspidal skew shapes} First we focus on semicuspidal skew shapes associated to real positive roots.

\begin{lemma}\label{conscreal}
Let \(\tau\) be a connected skew shape of content \(m \beta \in \Phi_+'\), where \(\beta \in \Phi_+^\re\). Then \(\tau\) is semicuspidal if and only if \(m=1\) and \(\tau \sim_e \zeta^\beta\).
\end{lemma}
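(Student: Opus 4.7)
My plan is as follows.

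The $(\impliedby)$ direction is immediate: if $m=1$ and $\tau \sim_e \zeta^\beta$, then $\tau$ is cuspidal by Theorem~\ref{allcusp}, hence semicuspidal.

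For $(\implies)$, I will show $\tau$ must be a single ribbon in $[\zeta^\beta]$, forcing $m=1$. Let $u_* = \tau_{\ssww}$ and $v_* = \tau_{\nnee}$ be the unique extremes provided by Lemma~\ref{uniquemax} (using connectedness of $\tau$). Since $u_*, v_*$ are extreme, $\so u_*, \we u_*, \no v_*, \ea v_* \notin \tau$, so $(u_*,v_*)$ belongs both to $\textup{Rem}_\tau$ and (dually) to its $\nnww$-analog; denote the resulting $\ssee$-removable ribbon $\xi = \xi^\tau_{u_*,v_*}$ from Lemma~\ref{xiuv} and denote the corresponding $\nnww$-removable ribbon by $\eta$ (built via the reversal of Proposition~\ref{revlem} applied to $\tau^{\rev}$). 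Both ribbons run from $u_*$ to $v_*$, so by Lemma~\ref{hookcont} share the common content $\alpha(\res(u_*), \dist(u_*,v_*)+1)$. Applying semicuspidality to the tableaux $(\tau\setminus\xi, \xi)$ and $(\eta, \tau\setminus\eta)$, and then Lemma~\ref{GenConvexLem}(iii) to collapse the sums, yields $\cont(\xi) \succeq \beta$ and $\cont(\eta) \preceq \beta$, so $\cont(\xi)=\cont(\eta)\approx\beta$. Since $\beta\in\Phi_+^\re$, Lemma~\ref{notapprox} upgrades this to $\cont(\xi)=\cont(\eta)=\beta$, forcing $\dist(u_*,v_*)+1 = \height(\beta)$.

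Next I show $\xi$ and $\eta$ are cuspidal. For $\xi$: if not, Lemma~\ref{CuspForSkewHook} produces a proper $\ssee$-removable sub-ribbon $\nu \subsetneq \xi$ with $\cont(\nu) \not\succ \beta$, hence (as $\height(\nu)<\height(\beta)$ and $\beta$ real) with $\cont(\nu)\prec\beta$; a direct check using that $\xi$ is $\ssee$-removable in $\tau$ shows $\nu$ is itself $\ssee$-removable in $\tau$, contradicting semicuspidality. The cuspidality of $\eta$ follows by applying the same argument to $\tau^{\rev}$ under $\succeq^{\rev}$ via Proposition~\ref{revlem}. Lemma~\ref{onlycuspribbon} then forces $\xi = \eta = \zeta^{(\beta, u_*)}$. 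Finally, suppose $\tau\neq\xi$ and pick $w' \in \tau\setminus\xi$: by Lemma~\ref{uniquemax} we have $u_*\nearrow w'\nearrow v_*$, and since $w'\notin\xi=\eta$, both $\ssee w'$ and $\nnww w'$ lie in $\tau$; but then $\nnww w'\notin\xi$ (because $\ssee(\nnww w')=w'\in\tau$), so iterating gives $\nnww^n w' \in \tau\setminus\xi$ for every $n\geq 0$, contradicting finiteness of $\tau$. Thus $\tau=\xi\sim_e\zeta^\beta$ and $m=1$. The main obstacle is the NW cuspidality step, which requires a careful bookkeeping via the reversal symmetry rather than a direct re-proof.
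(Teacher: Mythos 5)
Your proof is correct, and it takes a genuinely different route than the paper's. The paper's argument invokes the full Kostant-tiling machinery (Lemma~\ref{sciffcusptile}, Theorem~\ref{mainthmcusp}), then supposes $m>1$ for contradiction, isolates two overlapping tiles $\gamma,\gamma'$ whose union is connected, and runs a four-case analysis via Lemma~\ref{remhookswap} on the arrangement of their corner nodes. Your approach sidesteps all of that: you exploit the unique extreme nodes $u_*,v_*$ from Lemma~\ref{uniquemax} to produce the two extremal ribbons $\xi$ (the $\ssee$-removable one) and $\eta$ (its $\nnww$-removable counterpart via Proposition~\ref{revlem}), both running from $u_*$ to $v_*$, so sharing the content $\alpha(\res(u_*),\dist(u_*,v_*)+1)$. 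Semicuspidality applied to the tableaux $(\tau\setminus\xi,\xi)$ and $(\eta,\tau\setminus\eta)$ together with Lemma~\ref{GenConvexLem}(iii) sandwiches that content at $\approx\beta$, and $\beta\in\Phi_+^\re$ upgrades this to equality. The cuspidality of $\xi$ follows because any proper $\ssee$-removable sub-ribbon of $\xi$ is $\ssee$-removable in $\tau$, which combined with the height inequality and Lemma~\ref{notapprox} contradicts semicuspidality; the same holds for $\eta$ under reversal. Lemma~\ref{onlycuspribbon} then forces $\xi=\eta=\zeta^{(\beta,u_*)}$, and your infinite-descent step (if $w'\in\tau\setminus\xi$, then $\nnww^n w'\in\tau\setminus\xi$ for all $n$) cleanly replaces the paper's case analysis, giving $\tau=\xi$, hence $m=1$ and $\tau\sim_e\zeta^\beta$. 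What your route buys is a shorter, more self-contained proof relying only on the elementary lemmas from \S\ref{skewshapesec}--\S\ref{cuspcons} rather than the heavy results of \S\ref{Kostsec}; what the paper's route buys is reuse of Lemma~\ref{remhookswap}, which it needs again verbatim in Proposition~\ref{conissemicusp}, so the two arguments share bookkeeping. One small caution: when you write that $w'\notin\xi=\eta$ forces $\ssee w'$ and $\nnww w'\in\tau$, you should make explicit that $u_*\nearrow w'\nearrow v_*$ and $u_*\nearrow\nnww w'\nearrow v_*$ hold by Lemma~\ref{uniquemax} so that the defining conditions of $\xi^\tau_{u_*,v_*}$ and its $\nnww$-analog actually apply; you do gesture at this, and the argument is sound, but it is worth spelling out.
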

\begin{proof}
The `if' direction is immediate by Lemma~\ref{allcusp}, as \(\zeta^\beta\) is cuspidal. Now assume \(\tau\) is semicuspidal. Then by Lemma~\ref{sciffcusptile} and Theorem~\ref{allcusp} there exists a cuspidal Kostant tableau \((\gamma_i)_{i=1}^m\) for \(\tau\) such that \(\gamma_i \sim_e \zeta^\beta\) for all \(i \in [1,m]\). If \(m=1\), we are done.

Assume by way of contradiction that \(m>1\).
By Theorem~\ref{mainthmcusp}, every minimal \(\ssee\)-removable ribbon in \(\tau\) is a tile in \(\Gamma_\tau\). Let \(\mu\) be the union of all minimal \(\ssee\)-removable ribbons in \(\tau\). Then \(\mu\) is a skew shape such that each of its connected components is \(e\)-similar to \(\zeta^\beta\). Let \(\tau' = \tau \backslash \mu\). If \(\tau'\) is empty, then \(\mu = \tau\) would consist of \(m\) disconnected skew shapes, a contradiction, since \(\tau\) is connected. Thus \(\tau'\) is nonempty. Let \(\gamma'\) be a minimal \(\ssee\)-removable ribbon in \(\tau'\). By Theorem~\ref{mainthmcusp}, \(\gamma' \sim_e \zeta^\beta\) is a tile in \(\Gamma_\tau\). As \(\gamma'\) was {\em not} removable in \(\tau\), it follows that there is some minimal \(\ssee\)-removable ribbon \(\gamma \sim_e \zeta^\beta\) in \(\tau\) such that \(\nu:=\gamma \sqcup \gamma'\) is a connected skew shape.

We now will derive a contradiction by focusing on this shape \(\nu\). 
Note that \((\gamma', \gamma)\) is a cuspidal Kostant tableau for \(\nu\).
For clarity, we write \(\nu' = \gamma' = \nu \backslash \gamma\). 
We have \(\gamma = \xi^\nu_{u,v}\) and \(\gamma' = \xi^{\nu'}_{z,w}\), for some \((u,v) \in \textup{Rem}_\tau{\nu}\), \((z,w) \in \textup{Rem}_\tau{\nu'}\). Since \(\gamma \sim_e \gamma'\), the nodes \(u,v,z,w\) must be arranged as in one of the following cases:

{\em Case 1:} \(u \nearrow v \nearrow z \nearrow w\). As \(\nu\) is connected, this implies that \(z \in \{\ea v, \no v\}\), so \(\nu\) is a ribbon. But then \(m \beta = \cont(\nu) \in \Phi_+\) by Corollary~\ref{skewpos}, a contradiction since \(m >1\).

{\em Case 2:} \(z \nearrow w \nearrow u \nearrow v\). As \(\nu\) is connected, this implies that \(u \in \{\ea w, \no w\}\), so \(\nu\) is a ribbon, and we get a contradiction as in Case 1.

{\em Case 3:} \(u \nearrow z \nearrow v \nearrow w\). Then by Lemma~\ref{remhookswap}(iii), we have that \((\ssee z,w) \in \textup{Rem}_\tau\nu\). Thus \(\xi^\nu_{(\ssee z, w)}\) is an \(\ssee\)-removable hook in \(\nu\). Note that \(\cont(\xi^\nu_{\ssee z, w}) = \beta = \cont(\gamma)\). Then \(\xi^\nu_{(\ssee z, w)}\) is minimal in \(\nu\) since \(\gamma\) is minimal by Theorem~\ref{mainthmcusp}. Thus \(\xi^\nu_{\ssee z, w}\) is cuspidal by Lemma~\ref{minremiscusp}. But \(\xi^\nu_{\ssee z, w} \not \sim_e \xi^{\nu'}_{z,w}  = \gamma' \sim_e \zeta^\beta\), a contradiction of Theorem~\ref{allcusp}.

{\em Case 4:} \(z \nearrow u \nearrow w \nearrow v\).  Then by Lemma~\ref{remhookswap}(iv), we have that \((z, \ssee w) \in \textup{Rem}_\tau\nu\), so \(\xi^\nu_{z, \ssee w}\) is an \(\ssee\)-removable hook in \(\nu\), which derives a contradiction along the same lines as Case 4. 

This exhausts the possibilities for arrangements of the nodes \(u,v,z,w\), so we get a contradiction in any case. Therefore \(m=1\), as desired.
\end{proof}

Recall the \(e\)-similarity classes defined in \S\ref{similaritysec}, and choose, for every \(m \in \NN\) and \(\beta \in \Phi_+^\re\), a distinguished skew shape \(\zeta^{m \beta} \in [(\zeta^\beta)^m]_e\).

\begin{corollary}\label{realsemicor}
Let \(m \in \NN\), \(\beta \in \Phi_+^\re\). Assume \(\tau \in \SSSS(m\beta)\). 
Then \(\tau\) is semicuspidal if and only if \(\tau \sim_e \zeta^{m \beta}\). 
\end{corollary}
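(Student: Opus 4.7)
The proof plan is a direct application of Corollary~\ref{concompsc} and Lemma~\ref{conscreal}, which have done the main work.

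For the $(\impliedby)$ direction, assume $\tau \sim_e \zeta^{m\beta}$. By construction, $\zeta^{m\beta} \in [(\zeta^\beta)^m]_e$, so $\tau$ decomposes into $m$ connected components, each residue-preserving-translation-similar to $\zeta^\beta$. Each component is therefore cuspidal of content $\beta$ by Theorem~\ref{allcusp}, and in particular semicuspidal of content $1 \cdot \beta$ with $1 \leq m$. By the converse direction of Corollary~\ref{concompsc}, $\tau$ is semicuspidal.

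For the $(\implies)$ direction, assume $\tau$ is semicuspidal of content $m\beta$. By Corollary~\ref{concompsc}, every connected component $\tau_i$ of $\tau$ is semicuspidal of content $m_i \beta$ for some $m_i \leq m$, with $\sum_i m_i = m$. Now I apply Lemma~\ref{conscreal} to each $\tau_i$: since $\tau_i$ is a \emph{connected} semicuspidal skew shape of content $m_i \beta$ with $\beta \in \Phi_+^\re$, the lemma forces $m_i = 1$ and $\tau_i \sim_e \zeta^\beta$. Thus $\tau$ has exactly $m$ connected components, each $e$-similar to $\zeta^\beta$.

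To conclude, by Proposition~\ref{skewshapecondecomp}, the connected components of $\tau$ admit a unique ordering $\tau_m \NEarrow \cdots \NEarrow \tau_1$, and the same is true of the components of $\zeta^{m\beta}$. Since residue-preserving-translation-similar connected components are matched pairwise in the same order, it follows from the definition of the $e$-similarity class $[(\zeta^\beta)^m]_e$ in \S\ref{similaritysec} that $\tau \in [(\zeta^\beta)^m]_e = [\zeta^{m\beta}]_e$, i.e., $\tau \sim_e \zeta^{m\beta}$. There are no serious obstacles here—all the hard combinatorics has been absorbed into Lemma~\ref{conscreal}, whose case analysis on the four possible relative positions of overlapping minimal removable ribbons already did the heavy lifting. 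The only mild subtlety is checking that the ordering of connected components matches, which is immediate from Proposition~\ref{skewshapecondecomp}.
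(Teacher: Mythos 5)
Your proof is correct and takes essentially the same approach as the paper: the forward direction is identical (Corollary~\ref{concompsc} followed by Lemma~\ref{conscreal}), and the reverse direction routes through Corollary~\ref{concompsc} rather than the paper's direct appeal to Lemma~\ref{sciffcusptile}, but since the former is an immediate consequence of the latter, the reasoning is the same. Your explicit check that the ordering of connected components matches via Proposition~\ref{skewshapecondecomp} is a harmless extra step the paper leaves implicit.
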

\begin{proof}
For the `only if' direction, assume \(\tau\) is semicuspidal. Then by Corollary~\ref{concompsc}, all connected components of \(\tau\) are semicuspidal, and each of these is \(e\)-similar to \(\zeta^\beta\) by Lemma~\ref{conscreal}, implying the result. 
The `if' direction is granted by Lemma~\ref{sciffcusptile}, since any \(\tau \in [(\zeta^\beta)^m]_e\) is trivially tiled by tiles \(e\)-similar to \(\zeta^\beta\).
\end{proof}

\subsection{The dilation map}\label{infldef}
In this subsection, fix some \(t \in \Z_e\), and recall that \(
[\zeta^t]_e =  \{\zeta^{(\delta, b)} \mid b \in \N_t\}
\) by Proposition~\ref{allcusprib}. Recall that \(\zeta^t = \zeta^{(\delta, b^t)}\), where \(b^t \in \N_t\). Set:
\begin{align*}
x^t= \ea(\zeta^{t}_{\nnee} - b^t),
\qquad
\textup{and}
\qquad
 y^t= \no(\zeta^{t}_{\nnee} - b^t).
\end{align*}
By construction, \(\res(\zeta^{t}_{\nnee}) = \overline{t-1}\), so it follows that \(\res(x^t) = \res(y^t) = \overline 0\). 
We define a map \(\phi_t: \N \to \N_t\) by setting
\begin{align*}
\phi_t(u) = b^t+ u_1y^t + u_2x^t,
\end{align*}
for all \(u \in \N\). 
Then we define the {\em \(t\)-dilation} map \(\infl_t: \N \to [\zeta^t]_e\) by setting:
\begin{align*}
\infl_t(u) = \zeta^{(\delta, \phi_t(u))}.
\end{align*}

The following lemma establishes that that \([\zeta^t]_e\) is a tiling of \(\N\).

\begin{lemma}\label{tilelem}
For all \(u \in \N\), there exists a unique \(b \in \N_t\) such that \(u \in \zeta^{(\delta,b)}\).
\end{lemma}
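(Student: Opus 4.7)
The plan is to prove this by showing that the translates of the fixed ribbon $\zeta^t$ by the lattice generated by $x^t$ and $y^t$ tile the plane $\N$. First I would record the basic observation that for all $b \in \N_t$, the ribbon $\zeta^{(\delta,b)}$ is simply the translate $\zeta^t + (b - b^t)$. This is immediate from Definition~\ref{defxib}, since the $\no/\ea$ choices in building the path depend only on the comparison of $\alpha(\res(b), i-1)$ with $\delta$, and $\res(b) = \res(b^t) = t$; in particular $\zeta^{(\delta,b)}$ has exactly one node of each residue in $\Z_e$ since its content is $\delta$.

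Next I would compute the displacement vector $(a,c) := \zeta^t_{\nnee} - b^t$. Since $\zeta^t$ is the image of a $\no/\ea$ path of length $e$ starting at $b^t$, we have $a \le 0$, $c \ge 0$, and $c - a = e - 1$ by Corollary~\ref{sizehook}. Then $x^t = (a, c+1)$ and $y^t = (a-1, c)$, and a direct determinant calculation gives
\begin{equation*}
\det\begin{pmatrix} a & c+1 \\ a-1 & c \end{pmatrix} = ac - (c+1)(a-1) = c - a + 1 = e.
\end{equation*}
Hence $\Z x^t + \Z y^t$ is a sublattice of $\N = \Z^2$ of index $e$. Since $\res(x^t) = \res(y^t) = \overline 0$, this sublattice is contained in $\N_{\overline 0}$, which itself has index $e$ in $\N$ as the kernel of the surjection $\N \twoheadrightarrow \Z_e$, $u \mapsto \res(u)$. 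Index comparison forces $\Z x^t + \Z y^t = \N_{\overline 0}$.

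For existence, given $u \in \N$, I would let $v$ be the unique node of $\zeta^t$ with $\res(v) = \res(u)$ (which exists by the first paragraph). Then $u - v \in \N_{\overline 0}$, so by the previous paragraph there exist $m,n \in \Z$ with $u - v = m x^t + n y^t$. Setting $b = b^t + m x^t + n y^t \in \N_t$, we have $\zeta^{(\delta,b)} = \zeta^t + (b - b^t) = \zeta^t + (u-v) \ni v + (u-v) = u$. For uniqueness, if $u \in \zeta^{(\delta,b_1)} \cap \zeta^{(\delta,b_2)}$ with $b_1,b_2 \in \N_t$, the unique node of residue $\res(u)$ in each $\zeta^{(\delta,b_i)} = \zeta^t + (b_i - b^t)$ is $v + (b_i - b^t)$, and equating these to $u$ yields $b_1 = b^t + (u-v) = b_2$.

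No real obstacle is anticipated here: the only subtle point is recognizing that the determinant $e$ identifies $\Z x^t + \Z y^t$ with all of $\N_{\overline 0}$, after which existence and uniqueness follow from the fact that $\zeta^t$ contains exactly one node of each residue.
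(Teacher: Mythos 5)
Your proof is correct, and the two essential ideas coincide with the paper's: that $\zeta^{(\delta,b)}$ is the residue-preserving translate $\zeta^t + (b - b^t)$, and that $\zeta^{(\delta,b)}$ contains exactly one node of each residue (since its content is $\delta$). The difference is that you route through the lattice identity $\ZZ x^t + \ZZ y^t = \N_{\overline 0}$, proved by a determinant/index computation, and this step is actually superfluous for the present lemma. Having located the unique $v \in \zeta^t$ with $\res(v) = \res(u)$, you only need that $u - v \in \N_{\overline 0}$; then $b := b^t + (u - v)$ already lies in $\N_t$ and $\zeta^{(\delta,b)} = \zeta^t + (u-v) \ni u$. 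Expressing $u - v$ in the $\{x^t,y^t\}$ basis adds nothing, since your $b = b^t + m x^t + n y^t$ is just $b^t + (u-v)$ under a different name. The paper does exactly this direct translation (calling the offset $c$), without reference to the lattice. Your determinant argument is itself correct and is in fact an alternative, arguably slicker proof of the paper's Lemma~\ref{N0basis} (that $\{x^t, y^t\}$ is a $\ZZ$-basis for $\N_{\overline 0}$), which the paper proves by a more hands-on span-and-independence computation and only needs later, in Lemma~\ref{phibij}. So: correct, same underlying approach, but you've folded in machinery that belongs to the next lemma rather than this one.
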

\begin{proof}
Let \(b' \in \N_t\). Then, since \(\cont(\zeta^{(\delta, b')}) = \delta\), we have that \(\zeta^{(\delta, b')}\) contains a node \(v\) with \(\res(v) = \res(u)\). Then  \(u = v+ c\) some node \(c \in \N_0\). 
Take \(b = b' + c\). Then \(\res(b) = t\). We have \(u \in \T_c(\zeta^{(\delta,b')}) = \zeta^{(\delta, b' + c)} = \zeta^{(\delta,b)}\) by Proposition~\ref{allcusprib}., establishing existence.

Now assume \(b'' \in \N_t\), and \(u \in \zeta^{(\delta, b'')}\). Then \(b'' = b + d\) for some \(d \in \N_{\overline 0}\). Then \(\zeta^{(\delta,b'')} = \T_d(\zeta^{(\delta, b)})\), so \(u= u'+ d\) for some \(u' \in \zeta^{(\delta, b)}\) with \(\res(u') = \res(u)\). But, as \(\cont(\zeta^{(\delta,b)}) = \delta\), the only node \(u' \in \zeta^{(\delta,b)}\) with residue \(\res(u)\) is \(u\), so \(u' = u\), and thus \(d=(0,0)\). Thus \(b'' = b\), establishing uniqueness.
\end{proof}

\begin{lemma}\label{N0basis}
The set \(\N_{\overline 0}\) is a free \(\Z\)-module with basis \(\{x^t,y^t\}\).
\end{lemma}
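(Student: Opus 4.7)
The plan is to make the coordinates of $x^t$ and $y^t$ explicit using the shape of the cuspidal ribbon $\zeta^t$, and then reduce the claim to a two-by-two determinant computation together with an index comparison.

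First I would unpack the definition of $\zeta^t = \zeta^{(\delta,b^t)}$. By Definition~\ref{defxib} and Lemma~\ref{hookispath}, $\zeta^t$ is traced out by a $\no/\ea$ path of length $\height(\delta) = e$ starting at $b^t$, so there exist $a,b \in \ZZ_{\geq 0}$ with $a+b = e-1$ such that $\zeta^t_{\nnee} - b^t = (-a,b)$, where $a$ counts the north moves and $b$ counts the east moves. Applying the definitions $x^t = \ea(\zeta^t_{\nnee} - b^t)$ and $y^t = \no(\zeta^t_{\nnee}-b^t)$, I get the explicit coordinates $x^t = (-a,b+1)$ and $y^t = (-a-1,b)$.

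Next I would observe that $\N = \ZZ^2$ and that $\N_{\overline 0}$ is the kernel of the surjective $\ZZ$-module homomorphism $\res \colon \N \to \ZZ_e$, hence $\N_{\overline 0}$ is a subgroup of $\N$ of index $e$. I already know from the discussion just before the lemma that $\res(x^t) = \res(y^t) = \overline 0$, so $\langle x^t, y^t \rangle \subseteq \N_{\overline 0}$. The main computation is then the determinant
\[
\det\begin{pmatrix} -a & -a-1 \\ b+1 & b \end{pmatrix} = -ab + (a+1)(b+1) = a+b+1 = e,
\]
which is nonzero; this immediately gives $\ZZ$-linear independence of $\{x^t, y^t\}$ inside $\N = \ZZ^2$ and shows that $\langle x^t, y^t \rangle$ has index exactly $e$ in $\N$.

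Finally, comparing indices: since $\langle x^t,y^t\rangle \subseteq \N_{\overline 0}$ and both subgroups have index $e$ in $\N$, the inclusion must be an equality, so $\{x^t, y^t\}$ generates $\N_{\overline 0}$; combined with linear independence, this exhibits $\{x^t, y^t\}$ as a free $\ZZ$-basis of $\N_{\overline 0}$. There is no genuine obstacle here once the coordinates of $\zeta^t_{\nnee} - b^t$ are extracted from the $\no/\ea$ path description of $\zeta^t$; the rest is an index-of-lattice argument in $\ZZ^2$.
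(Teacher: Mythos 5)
Your proof is correct, and it takes a genuinely different route from the paper's. The paper proves spanning explicitly: from $x^t - y^t = (1,1)$ and the relation $x^t_2 - x^t_1 = e$ (extracted from Corollary~\ref{sizehook}) it manufactures $(0,e) \in \ZZ\{x^t,y^t\}$, and then writes an arbitrary $u \in \N_{\overline 0}$ as $u_1(1,1) + k(0,e)$; linear independence is then a short separate calculation. You instead write down the coordinates $x^t = (-a,b+1)$, $y^t = (-a-1,b)$ with $a+b = e-1$ (coming from the $\no/\ea$ path description of $\zeta^t$), compute the determinant to be exactly $e$, and then invoke the standard fact that the index of a sublattice of $\ZZ^2$ is the absolute value of the determinant. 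Since $\langle x^t,y^t\rangle \subseteq \ker(\res) = \N_{\overline 0}$ and both have index $e$ in $\N$, the inclusion is an equality. Your argument is shorter and slightly more conceptual (it offloads the work to the Smith-normal-form / index-of-sublattice correspondence), while the paper's is fully elementary and self-contained; both hinge on the same underlying identity, as $a+b+1 = e$ is exactly the relation $x^t_2 - x^t_1 = e$ used in the paper. One small point worth stating when you formalize this: the determinant-index fact you cite is a nontrivial (if standard) ingredient, and you are implicitly also using surjectivity of $\res$, which is immediate but should be mentioned if the paper hasn't.
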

\begin{proof}
We first show that \(\ZZ\{x^t,y^t\}\) spans \(\N_{\overline 0}\). Note that by definition we have \(x^t = \ssee y^t\), so \((1,1) = x^t-y^t \in \ZZ\{x^t,y^t\}\). Then \(x^t - x^t_1(1,1) = (0,x^t_2-x^t_1) \in \ZZ\{x^t,y^t\}\).
By Corollary~\ref{sizehook}, we have
\begin{align*}
e&= \dist(\zeta^{t}_{\ssww}, \zeta^{t}_{\nnee}) + 1 
=\dist(b^t, \zeta^{t}_{\nnee}) + 1
 =|(\zeta^{t}_{\nnee})_1 - b^t_1| + |(\zeta^{t}_{\nnee})_2 - b^t_2| + 1\\
 &=|x^t_1| + |x^t_2-1| + 1 = -x^t_1 + (x^t_2-1) + 1 = x^t_2 - x^t_1.
\end{align*}
Thus \((0,e) \in \ZZ\{x^t,y^t\}\). Let \(u \in \N_{\overline 0}\). Then we have \(u = (u_1,u_1) + (0,ke)\) for some \(k \in \ZZ\), so \(u = u_1(1,1) + k(0,e) \in \ZZ\{x^t,y^t\}\). Thus \(\ZZ\{x^t,y^t\} = \N_{\overline 0}\).

Now assume \(cx^t + dy^t = 0\) for some \(c,d \in \ZZ\). Then we have
\begin{align*}
0 &=cx^t + dy^t = c x^t+ d\nnww x^t = c(x^t_1,x^t_2) + d(x^t_1-1,x^t_2-1)\\
&= ((c+d)x^t_1 - d, (c+d)x^t_2 - d).
\end{align*}
It follows that \(0 = (c+d)(x^t_2-x^t_1) = (c+d)e\), so \(c+ d = 0\), which implies that \(c=d=0\). Thus \(\{x^t,y^t\}\) are linearly independent, and so constitute a basis for \(\N_{\overline 0}\).
\end{proof}

\begin{lemma}\label{phibij}
The map \(\phi_t: \N \to \N_t\) is a bijection.
\end{lemma}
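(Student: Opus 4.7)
The plan is to reduce the bijectivity of $\phi_t$ to the basis statement already established in Lemma~\ref{N0basis}. First I would observe that $\N_t$ is precisely the coset $b^t + \N_{\overline 0}$: indeed, $\res$ is a $\Z$-module homomorphism, so $\res(v) = \res(b^t) = t$ holds if and only if $v - b^t \in \N_{\overline 0}$, and since $b^t \in \N_t$ this coset description is immediate. Thus proving $\phi_t$ surjects onto and injects into $\N_t$ amounts to understanding the linear map $\psi_t : \N \to \N_{\overline 0}$ defined by $\psi_t(u) = u_1 y^t + u_2 x^t$, since $\phi_t(u) = b^t + \psi_t(u)$.

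Next I would verify that $\psi_t$ actually lands in $\N_{\overline 0}$ (which is clear because $\res(x^t) = \res(y^t) = \overline 0$ and $\res$ is $\Z$-linear), and that $\psi_t$ is itself $\Z$-linear in $u$, which is immediate from the definition. The main content of the lemma is then: $\psi_t$ is a $\Z$-module isomorphism $\Z^2 \iso \N_{\overline 0}$. By Lemma~\ref{N0basis}, the pair $\{x^t, y^t\}$ forms a $\Z$-basis of $\N_{\overline 0}$, which means exactly that every element of $\N_{\overline 0}$ has a unique expression as $u_1 y^t + u_2 x^t$ with $u_1, u_2 \in \Z$. This is the bijectivity of $\psi_t$ (with the coefficients read off in the order $y^t, x^t$ matching the definition of $\phi_t$).

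Combining these pieces, $\phi_t$ is the composition of the bijection $\psi_t : \N \to \N_{\overline 0}$ with the translation by $b^t$, which is a bijection $\N_{\overline 0} \to b^t + \N_{\overline 0} = \N_t$. Hence $\phi_t$ is a bijection, which is exactly the claim.

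There is no real obstacle here; the lemma is a direct corollary of Lemma~\ref{N0basis} together with the elementary observation that $\N_t$ is an $\N_{\overline 0}$-coset containing $b^t$. The proof will be quite short.
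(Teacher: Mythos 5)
Your proof is correct and relies on the same key ingredient as the paper, namely Lemma~\ref{N0basis}. The paper checks surjectivity and injectivity of $\phi_t$ directly, whereas you factor $\phi_t$ as a $\Z$-linear isomorphism $\N\to\N_{\overline 0}$ followed by translation by $b^t$; this is a slightly cleaner packaging of the identical argument.
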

\begin{proof}
First we show surjectivity. If \(b' \in \N_t\), then \(b' = b^t+c\) for some \(c \in \N_0\). Then by Lemma~\ref{N0basis}, \(c = ry^t + sx^t\) for some \(r,s \in \ZZ\). Then we have \(\phi_t((r,s)) = b^t + ry^t + sx^t = b^t+c = b'\), as desired.

Next we show injectivity. Assume \(\phi_t(u) = \phi_t(v)\). Then we have \(b^t+u_1y^t + u_2x^t = b^t+ v_1y^t + v_2x^t\), so \(u_1y^t + u_2x^t = v_1y^t + v_2x^t\). But then by Lemma~\ref{N0basis}, this implies that \(u = v\), as desired.
\end{proof}

\begin{corollary}\label{inflbij}
The \(t\)-dilation map \(\infl_t : \N \to [\zeta^t]\) is a bijection.
\end{corollary}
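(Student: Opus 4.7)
The plan is to observe that $\infl_t$ factors as a composition of two bijections. Specifically, define $\iota: \N_t \to [\zeta^t]_e$ by $\iota(b) = \zeta^{(\delta, b)}$; then by construction $\infl_t = \iota \circ \phi_t$. Since $\phi_t: \N \to \N_t$ is already known to be a bijection by Lemma~\ref{phibij}, it suffices to verify that $\iota$ is a bijection, after which the corollary follows because a composition of bijections is a bijection.

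For $\iota$, surjectivity is immediate from Proposition~\ref{allcusprib}, which explicitly identifies $[\zeta^t]_e = \{\zeta^{(\delta,b)} \mid b \in \N_t\}$. For injectivity, the key observation is that $b$ can be recovered from $\zeta^{(\delta,b)}$ as its unique southwest node: by Definition~\ref{defxib}, the path $(z^i)_{i=1}^e$ used to construct $\zeta^{(\delta,b)}$ has $z^1 = b$, and Lemma~\ref{zetaiscusp} confirms $\zeta^{(\delta,b)}_{\ssww} = b$. Hence $\zeta^{(\delta,b)} = \zeta^{(\delta,b')}$ forces $b = \zeta^{(\delta,b)}_{\ssww} = \zeta^{(\delta,b')}_{\ssww} = b'$.

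There is essentially no obstacle here; the corollary is a routine bookkeeping consequence of Lemma~\ref{phibij} together with the identification of $[\zeta^t]_e$ in Proposition~\ref{allcusprib} and the recovery of the base node from Lemma~\ref{zetaiscusp}. The proof should be just a few lines.
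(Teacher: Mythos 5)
Your proof is correct and takes the natural route the paper clearly intends (the paper states this as a corollary with no written proof immediately after Lemma~\ref{phibij}). Factoring $\infl_t = \iota \circ \phi_t$ with $\iota(b) = \zeta^{(\delta,b)}$, citing Lemma~\ref{phibij} for $\phi_t$, Proposition~\ref{allcusprib} for surjectivity of $\iota$, and recovering $b = \zeta^{(\delta,b)}_{\ssww}$ from Lemma~\ref{zetaiscusp} for injectivity, is exactly the standard bookkeeping that makes this a corollary.
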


\begin{lemma}\label{inflmoves1}
Let \(z \in \N\). Then 
\begin{align*}
\infl_t(\ea z) =\T_{x^t}( \infl_t(z));
\;\;\;
\infl_t(\no z) =\T_{y^t}( \infl_t(z));
\;\;\;
\infl_t(\ssee z) = \ssee(\infl_t(z)).
\end{align*}
\end{lemma}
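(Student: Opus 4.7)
My plan is to prove each of the three identities as a direct computation, using the definitions of $\infl_t$ and $\phi_t$ together with Proposition~\ref{allcusprib}. Recall that $\infl_t(u) = \zeta^{(\delta, \phi_t(u))}$, so an identity of the form $\infl_t(w) = \T_c(\infl_t(z))$ will reduce, via Proposition~\ref{allcusprib} (applicable since the claimed translation vectors $x^t, y^t, (1,1)$ all lie in $\N_{\overline 0}$), to the corresponding displacement equation $\phi_t(w) = \phi_t(z) + c$ at the level of southwest corners in $\N_{\overline 0}$.

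For the east and north identities, I would unpack $\phi_t(\ea z)$ and $\phi_t(\no z)$ using the explicit formula $\phi_t(u) = b^t + u_1 y^t + u_2 x^t$ together with the definitions $\ea z = (z_1, z_2+1)$ and $\no z = (z_1 - 1, z_2)$. This yields displacements $\phi_t(\ea z) - \phi_t(z) = x^t$ and an analogous expression for the north case. Since $\res(x^t) = \res(y^t) = \overline 0$, as recorded immediately after the definition of these vectors, Proposition~\ref{allcusprib} converts each displacement into the claimed translation of cuspidal ribbons.

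For the $\ssee$ identity, I would use that $\ssee z - z = (1,1) \in \N_{\overline 0}$, so by Proposition~\ref{allcusprib} the right-hand side equals $\zeta^{(\delta, \phi_t(z) + (1,1))}$. The proof then reduces to showing $\phi_t(\ssee z) = \phi_t(z) + (1,1)$. The key algebraic input here is the identity $x^t - y^t = (1,1)$, which is exactly the relation $x^t = \ssee y^t$ recorded at the start of the proof of Lemma~\ref{N0basis}. With this in hand, expanding $\phi_t(\ssee z)$ from the formula for $\phi_t$ concludes the proof.

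The main challenge, if any, is purely notational bookkeeping between the translation maps $\ea$, $\no$, $\ssee$, the basis vectors $x^t$, $y^t$, and the formula for $\phi_t$; there is no conceptual obstacle once the setup is in hand, and every ingredient needed (residues of $x^t$ and $y^t$; the relation $x^t - y^t = (1,1)$; the residue-preserving translation property of cuspidal ribbons) has already been established earlier.
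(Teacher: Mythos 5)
Your plan follows the same route as the paper's for the $\ea$ and $\no$ identities: express the displacement of $\phi_t$-values and invoke the residue-preserving translation property of the ribbons $\zeta^{(\delta,b)}$ (Proposition~\ref{allcusprib}). For the $\ssee$ identity you propose a direct displacement computation, whereas the paper instead derives it by composing the first two identities with the relation $x^t - y^t = (1,1)$; that difference is cosmetic.

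However, if you actually carry out the displacements from the formula $\phi_t(u) = b^t + u_1 y^t + u_2 x^t$ as printed, there is a sign obstruction you have not addressed. With $\no z = (z_1 - 1, z_2)$ one gets $\phi_t(\no z) - \phi_t(z) = -y^t$, not $+y^t$; and with $\ssee z = (z_1+1, z_2+1)$ one gets $\phi_t(\ssee z) - \phi_t(z) = x^t + y^t$, which cannot equal $(1,1) = x^t - y^t$ since $y^t \neq 0$. So the identity $x^t - y^t = (1,1)$ does not close your $\ssee$ calculation as claimed. The lemma as stated (and the proof of Lemma~\ref{inflmoves2}, which needs $\phi_t(\no z) = \phi_t(z) + y^t$) in fact forces the coefficient of $u_1$ in the definition of $\phi_t$ to be $-y^t$; this looks like a sign typo in the paper. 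Once $\phi_t(u) = b^t - u_1 y^t + u_2 x^t$ is used, both your direct $\ssee$ computation and the paper's composition argument go through. Your write-up should either flag the discrepancy or work explicitly with the corrected formula, because as drafted the two computations you sketch do not produce the displacements the lemma requires.
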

\begin{proof}
We have
\begin{align*}
\infl_t( \ea z) &= \infl_t( (z_1, z_2 + 1)) = \zeta^{(\delta, \phi_t(z_1,z_2+1))}\\
&= \zeta^{(\delta, b^t+ z_1y^t + z_2x^t + x^t)} = \T_{x^t}(\zeta^{(\delta, b^t+ z_1y^t + z_2x^t)} ) = \T_{x^t}(\infl_t(z)).
\end{align*}
The second equality is similar. The proof of the last equality follows from the first two and the fact that \(x^t-y^t = (1,1)\).
\end{proof}

\begin{lemma}\label{inflmoves2}
Let \(u,z \in \N\), with \(u \in \infl_t(z)\). Then we have:
\begin{enumerate}
\item \(u = (\infl_t(z))_\nnee \iff \ea u = (\infl_t(\ea z))_\ssww \iff \no u = (\infl_t(\no z))_\ssww\).
\item \(u \neq (\infl_t(z))_\nnee \implies \ea u \in \infl_t(z) \sqcup \infl_t(\ssee z)\)
\item \(u = (\infl_t(z))_\ssww \iff \so u = (\infl_t(\so z))_\nnee \iff \we u = (\infl_t(\we z))_\nnee\)
\item \(u \neq (\infl_t(z))_\ssww \implies \so u \in \infl_t(z) \sqcup \infl_t(\ssee z)\)
\end{enumerate}
\end{lemma}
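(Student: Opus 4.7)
The plan is to deduce parts (i) and (iii) from direct coordinate computations using Lemma~\ref{inflmoves1}, and to deduce parts (ii) and (iv) by combining the $\no/\ea$ path structure of cuspidal ribbons (Lemma~\ref{hookispath}) with the translation identity $\infl_t(\ssee z) = \ssee(\infl_t(z))$. Throughout, set $v := \zeta^t_\nnee - b^t$, so that $x^t = \ea v$ and $y^t = \no v$. Note that for any $z \in \N$, the ribbon $\infl_t(z)$ is a residue-preserving translate of $\zeta^t$ by $\phi_t(z) - b^t$, whence $(\infl_t(z))_\ssww = \phi_t(z)$ and $(\infl_t(z))_\nnee = \phi_t(z) + v$.

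For part (i), I would compute $(\infl_t(\ea z))_\ssww = (\infl_t(z))_\ssww + x^t = \phi_t(z) + v + (0,1) = \ea(\phi_t(z)+v) = \ea (\infl_t(z))_\nnee$ using Lemma~\ref{inflmoves1}, then cancel $\ea$ to obtain the equivalence $\ea u = (\infl_t(\ea z))_\ssww \iff u = (\infl_t(z))_\nnee$. The same computation with $y^t = \no v$ in place of $x^t = \ea v$ yields $(\infl_t(\no z))_\ssww = \no(\infl_t(z))_\nnee$, and cancellation delivers the third equivalent condition.

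For part (ii), Lemma~\ref{hookispath} presents $\infl_t(z) = \{w^i\}_{i=1}^e$ as a $\no/\ea$ path. Since $u \neq (\infl_t(z))_\nnee = w^e$, we have $u = w^i$ for some $i < e$, so $w^{i+1} \in \{\ea u, \no u\}$. In the first subcase $\ea u \in \infl_t(z)$ directly. In the second subcase $\no u \in \infl_t(z)$, and the elementary identity $\ea u = \ssee(\no u)$ combined with Lemma~\ref{inflmoves1} gives $\ea u \in \ssee(\infl_t(z)) = \infl_t(\ssee z)$. Parts (iii) and (iv) are fully dual, swapping $\ssww \leftrightarrow \nnee$ and running the path argument from the southwest end: for (iii) one inverts the $\no$ relation of Lemma~\ref{inflmoves1} to get $\infl_t(\so z) = \T_{-y^t}(\infl_t(z))$, and then the identities $v - y^t = (1,0)$ and $v - x^t = (0,-1)$ yield $(\infl_t(\so z))_\nnee = \so(\infl_t(z))_\ssww$ and $(\infl_t(\we z))_\nnee = \we(\infl_t(z))_\ssww$; for (iv), if $u = w^i$ with $i > 1$, then $w^{i-1} \in \{\so u, \we u\}$, and either $\so u \in \infl_t(z)$ directly or $\we u \in \infl_t(z)$ and $\so u = \ssee(\we u) \in \infl_t(\ssee z)$.

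There is no substantial obstacle here; the argument is essentially bookkeeping, unfolding the definitions of $\phi_t$, $x^t$, $y^t$ and leveraging the ribbon path description. The one geometric observation that powers parts (ii) and (iv) is the identity $\ssee \circ \no = \ea$ (equivalently $\ssee \circ \we = \so$) as maps on $\N$: this is precisely what guarantees that when a cuspidal ribbon ``turns'' at $u$, the skipped neighbor lands in the diagonally translated tile $\infl_t(\ssee z)$ rather than in some unrelated tile of the dilated tiling of $\N$.
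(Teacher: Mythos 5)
Your proof is correct and takes essentially the same approach as the paper's: part (i) is a direct coordinate computation via $\phi_t$ and Lemma~\ref{inflmoves1}, and parts (ii), (iv) combine the ribbon-as-$\no/\ea$-path structure with the translation identity $\infl_t(\ssee z) = \ssee(\infl_t(z))$. The paper in fact only writes out (i) and (ii), dismissing (iii) and (iv) as ``similar''; you fill in those dual cases explicitly, which is a fair expansion of the same argument.
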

\begin{proof}

We have \(u = (\infl_t(z))_{\nnee}\) if and only if \(u = \phi_t(z) + x^t - (0,1)\), which occurs if and only if 
\begin{align*}
\ea u = \phi_t(z) + x^t = \phi_t(\ea z) = \zeta^{(\delta, \phi_t(\ea z))}_\ssww = (\infl_t(\ea z))_\ssww,
\end{align*}
proving claim (i). Claim (iii) is similar. For claim (ii), assume \(u \neq (\infl_t(z))_\nnee\). 
If \(\ea u \in \infl_t(z)\), we are done. Assume not. Then we must have that \(\no u \in \infl_t(z)\) since \(\infl_t(z)\) is a ribbon. But then \(\ea u = \ssee \no u \in \ssee(\infl_t(z)) = \infl_t(\ssee z)\) by Lemma~\ref{inflmoves1}, as desired. Claim (iv) is similar.
\end{proof}

We extend the dilation map to subsets \(\tau\) of \(\N\) by setting
\(
\infl_t(\tau) = \bigsqcup_{u \in \tau} \infl_t(u).
\)

\begin{lemma}\label{skewtoskew}
Let \(\tau\) be a nonempty finite subset of \(\N\). Then:
\begin{enumerate}
\item \(\infl_t(\tau)\) is cornered if and only if \(\tau\) is cornered;
\item \(\infl_t(\tau)\) is diagonal convex if and only if \(\tau\) is diagonal convex;
\item \(\infl_t(\tau)\) is a connected skew shape if and only if \(\tau\) is a connected skew shape;
\item \(\infl_t(\tau)\) is a ribbon if and only if \(\tau\) is a ribbon.
\end{enumerate}
\end{lemma}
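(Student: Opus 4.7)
The plan is to deduce all four equivalences from Lemmas~\ref{inflmoves1} and~\ref{inflmoves2}, using Proposition~\ref{CSS=PDC} to glue the ingredients in (iii) and (iv). The central observation, extracted from Lemma~\ref{inflmoves1}, is that $\ssee z$ shifts $\infl_t(z)$ by $(1,1)$ (preserving all diagonals), while $\ea z$ and $\no z$ shift it by $x^t$ and $y^t$ respectively; each of these shift vectors has diagonal $e$ (the computation for $x^t$ appears in the proof of Lemma~\ref{N0basis}, and $y^t$ is analogous). Since $\ea z - z = (0,1)$ and $\no z - z = (-1,0)$ each increase $\diag(z)$ by $1$ while $\ssee z - z = (1,1)$ leaves $\diag(z)$ unchanged, one concludes that the SW corner of $\infl_t(z)$ sits on diagonal $\diag(b^t) + \diag(z)\cdot e$, so the tile occupies the $e$ consecutive diagonals $\diag(b^t) + \diag(z)\cdot e,\dots,\diag(b^t) + \diag(z)\cdot e + (e-1)$. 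Consequently, for each $N \in \ZZ$ there is a unique $n \in \ZZ$ such that $\infl_t(z)$ meets $\D_N$ iff $\diag(z) = n$; moreover, restricting to $\tau$, the map sending $z \in \D_n\cap\tau$ to the unique node of $\infl_t(z)$ on $\D_N$ is an $\ssee$-order-preserving bijection onto $\D_N\cap\infl_t(\tau)$ (since, as $z$ moves by $\ssee$ along $\D_n$, the corresponding node on $\D_N$ moves by $\ssee$).

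For (i), I would establish the bijection $z\mapsto (\infl_t(z))_\ssww$ between maximally-southwest nodes of $\tau$ and of $\infl_t(\tau)$, and the symmetric bijection $z \mapsto (\infl_t(z))_\nnee$ for maximally-northeast nodes. In the forward direction, Lemma~\ref{inflmoves2}(iii) combined with Lemma~\ref{tilelem} translates $\so z, \we z \notin \tau$ into $\so(\infl_t(z))_\ssww, \we(\infl_t(z))_\ssww \notin \infl_t(\tau)$. For the reverse direction, if $u\in\infl_t(\tau)$ is maximally-southwest and $z$ is the unique element with $u\in\infl_t(z)$, then the ribbon structure of $\infl_t(z)$ forces $u=(\infl_t(z))_\ssww$: any non-SW-corner node of a $\no/\ea$ path has either its south or its west neighbor inside the path, so inside $\infl_t(z) \subseteq \infl_t(\tau)$, contradicting maximality. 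Applying Lemma~\ref{inflmoves2}(iii) and Lemma~\ref{tilelem} again converts $\so u, \we u \notin \infl_t(\tau)$ into $\so z, \we z\notin\tau$. The NE analogue uses Lemma~\ref{inflmoves2}(i) in place of (iii), with the ribbon-structure step going through by symmetry. Injectivity of $\phi_t$ (Lemma~\ref{phibij}) then matches cardinalities, proving (i).

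Parts (ii), (iii), (iv) follow quickly. For (ii), the diagonal correspondence from the first paragraph gives that $\D_N\cap\infl_t(\tau)$ is $\ssee$-convex within $\D_N$ iff its preimage $\D_n\cap\tau$ is $\ssee$-convex within $\D_n$; quantifying over $N$ (equivalently, over $n$) yields the equivalence. For (iii), combine (i), (ii), and Proposition~\ref{CSS=PDC}, noting $\infl_t(\tau)$ is nonempty iff $\tau$ is, which is immediate from Corollary~\ref{inflbij}. For (iv), since ribbons are precisely thin connected skew shapes, and since the diagonal correspondence supplies $|\D_N\cap\infl_t(\tau)| = |\D_n\cap\tau|$, thinness transfers between $\tau$ and $\infl_t(\tau)$; combined with (iii) this gives (iv). The main technical step in this plan is the clean setup of the diagonal correspondence in the first paragraph—specifically, verifying the identity $\diag(\phi_t(z)) = \diag(b^t) + \diag(z)\cdot e$ solely from Lemma~\ref{inflmoves1}, without recourse to the explicit formula for $\phi_t$; once this is in hand, (ii) and the thinness half of (iv) reduce to bookkeeping, and (iii) is immediate.
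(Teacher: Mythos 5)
Your proof is correct and follows essentially the same route as the paper: part~(i) via the corner-to-corner bijection supplied by Lemmas~\ref{inflmoves2} and~\ref{tilelem}, part~(ii) via the $\ssee$-commutation in Lemma~\ref{inflmoves1}, and parts~(iii)--(iv) assembled through Proposition~\ref{CSS=PDC}. Your explicit ``diagonal correspondence'' is a cleaner packaging of the same $\ssee$-commutation fact that the paper invokes more locally in its proofs of (ii) and of thinness in (iv), and it tidily unifies those two transfers.
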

\begin{proof}
We begin with (i).
If \(v \in \infl_t(\tau)\), then \(v \in \infl_t(u)\) for some \(u \in \tau\). Since \(\infl_t(u)\) is a ribbon, we have \(\{\no v, \ea v\} \cap \infl_t(\tau) = \varnothing\) only if \(v = \infl_t(u)_\nnee\), and \(\{\so v, \we v\} \cap \infl_t(\tau) = \varnothing\) only if \(v = \infl_t(u)_\ssww\). 

Now let \(u \in \tau\). 
Then it follows that \((\infl_t(u))_\nnee \in \infl_t(\tau)\). By Lemma~\ref{inflmoves2}(i), we have \(\ea(\infl_t(u))_\nnee \in (\infl_t(\ea u))\) and \(\no(\infl_t(u))_\nnee \in (\infl_t(\no u))\). Then \(u \in \tau\) is a node such that \(\{\no u,  \ea u\} \cap \tau = \varnothing\) if and only if \(\infl_t(u)_\nnee\) is a node in \(\infl_t(\tau)\) such that \(\{\no(\infl_t(u)_\nnee), \ea(\infl_t(u)_\nnee)\} \cap \infl_t(\tau) = \varnothing\). By a similar argument, \(u \in \tau\) is a node such that \(\{\so u,  \we u\} \cap \tau = \varnothing\) if and only if \(\infl_t(u)_\ssww\) is a node in \(\infl_t(\tau)\) such that \(\{\so(\infl_t(u)_\ssww), \we(\infl_t(u)_\ssww)\} \cap \infl_t(\tau) = \varnothing\). From this, and the argument in the previous paragraph, it follows that \(\infl_t(\tau)\) is cornered if and only if \(\tau\) is cornered.

Now we prove (ii).
Assume \(\tau\) is diagonal convex. Assume that \(n \in \NN\) is such that \(u,w \in \D_n \cap \infl_t(\tau)\), \(v \in \D_n\), and \(u \searrow v \searrow w\). We have \(v = (\ssee)^ku\), \(w = (\ssee)^\ell u\), for some \(1 \leq k \leq \ell\).
We have \(u \in \infl_t(z)\) for some \(z \in \tau\). Then, applying Lemma~\ref{inflmoves1}, we have \(v \in (\ssee)^k(\infl_t(z)) = \infl_t((\ssee)^kz)\), \(w \in (\ssee)^\ell(\infl_t(z)) = \infl_t((\ssee)^\ell z)\). Since \(w \in \infl_t(\tau)\), we have \((\ssee)^\ell z \in \tau\). By diagonal convexity of \(\tau\), it follows that \((\ssee)^k z \in \tau\). Thus \(\infl_t((\ssee)^kz) \subseteq \infl_t(\tau)\), so \(v \in \infl_t(\tau)\). Thus \(\infl_t(\tau)\) is diagonal convex.

Now assume \(\infl_t(\tau)\) is diagonal convex, and \(n \in \NN\) is such that \(u,w \in \D_n \cap \tau\), \(v \in \D_n\), and \(u \searrow v \searrow w\). We have \(v = (\ssee)^ku\), \(w = (\ssee)^\ell u\), for some \(1 \leq k \leq \ell\). Let \(z \in \infl_t(u) \subset \infl_t(\tau)\). 
 Then by Lemma~\ref{inflmoves1}, we have that \((\ssee)^k z \in (\ssee)^k \infl_t(u) = \infl_t((\ssee)^k z) = \infl_t(v)\), and similarly, \((\ssee)^\ell z \in \infl_t(w) \subseteq \infl_t(\tau)\). But, as \(\infl_t(\tau)\) is diagonal convex, it follows that \((\ssee)^kz \in \infl_t(\tau)\), which implies that \(v \in \tau\). Thus \(\tau\) is diagonal convex. This completes the proof of (ii).
 
 With (i),(ii) in hand, (iii) follows by Proposition~\ref{CSS=PDC}. With (iii) in hand, we need only check that \(\tau\) is thin if and only if \(\infl_t(\tau)\) is thin. This follows easily from Lemma~\ref{inflmoves1}.
\end{proof}

\begin{lemma}\label{onecolorskew}
Assume \(\tau\) is a nonempty skew shape, and \(\tau = \tau_1 \sqcup \cdots \sqcup \tau_k\), where \(\tau_k \NEarrow \cdots \NEarrow \tau_1\) are the connected components of \(\tau\). Then \(\infl_t(\tau) = \infl_t(\tau_1) \sqcup \cdots \sqcup \infl_t(\tau_k) \in \SSSS\), and \(\infl_t(\tau_k) \NEarrow \cdots \NEarrow \infl_t(\tau_1)\) are the connected components of \(\infl_t(\tau)\).
\end{lemma}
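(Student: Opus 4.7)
The plan is to verify that the dilation map $\infl_t$ preserves the connected-component decomposition described by Proposition~\ref{skewshapecondecomp}. First I would apply Lemma~\ref{skewtoskew}(iii) to each connected component $\tau_i$ to conclude that each $\infl_t(\tau_i)$ is a connected skew shape. Disjointness of the $\infl_t(\tau_i)$ then follows immediately from Lemma~\ref{tilelem}: the ribbons $\{\zeta^{(\delta,b)} \mid b \in \N_t\}$ tile $\N$, so $\infl_t$ sends distinct nodes of $\N$ to disjoint tiles, whence the disjointness of the $\tau_i$ propagates to their dilations.

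The main technical step is to show that $\infl_t(\tau_j) \NEarrow \infl_t(\tau_i)$ whenever $i < j$. I would begin by computing $x^t$ and $y^t$ explicitly: writing $\zeta^t_\nnee - b^t = (p,q)$ with $p \leq 0$, $q \geq 0$, and $q - p = e-1$, one gets $x^t = (p, q+1)$ and $y^t = (p-1, q)$. The key coordinate observation is that for any $k,\ell \geq 1$ the translation vector $ky^t + \ell x^t = ((k+\ell)p - k,\, (k+\ell)q + \ell)$ has first coordinate at most $-k \leq -1$ and second coordinate at least $\ell \geq 1$, hence is strictly northeast. Now if $z \in \tau_j$ and $w \in \tau_i$ satisfy $w = \no^k \ea^\ell z$ for some $k,\ell \geq 1$, iterating Lemma~\ref{inflmoves1} gives $\infl_t(w) = \T_{ky^t + \ell x^t}(\infl_t(z))$. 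For arbitrary $a \in \infl_t(z)$ and $b \in \infl_t(w)$, one writes $b - a = (a' - a) + ky^t + \ell x^t$ for some $a' \in \infl_t(z)$; since $\infl_t(z)$ is a ribbon of content $\delta$ its diameter is bounded by $(|p|, q)$, so $a' - a$ has first coordinate in $[p, -p]$ and second in $[-q, q]$. A direct bound then shows the first coordinate of $b - a$ stays at most $(k+\ell-1)p - k \leq -1$ and the second at least $(k+\ell-1)q + \ell \geq 1$, giving $a \NEarrow b$ as required.

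Finally I would invoke Proposition~\ref{skewshapecondecomp}: the disjoint union of connected skew shapes $\infl_t(\tau_i)$ arranged $\infl_t(\tau_k) \NEarrow \cdots \NEarrow \infl_t(\tau_1)$ is a skew shape, which by construction equals $\infl_t(\tau)$. To see that the $\infl_t(\tau_i)$ are in fact the connected components, I would note that strict $\NEarrow$-separation prevents adjacency: if $a \in \infl_t(\tau_j)$ and $b \in \infl_t(\tau_i)$ with $i<j$, then $b-a$ has both coordinates strictly nonzero, so $a$ and $b$ differ by no unit step and no path in $\infl_t(\tau)$ can join them without passing through some other $\infl_t(\tau_h)$; since each $\infl_t(\tau_i)$ is itself connected and these are pairwise isolated in this sense, they are exactly the connected components.

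The main obstacle I anticipate is the coordinate bookkeeping in the second paragraph: one must confirm that strict $\NEarrow$-separation of $z,w \in \N$ survives dilation even after absorbing the nonzero ``internal diameter'' of each tile $\infl_t(z)$. Once this inequality is established, the remainder is a direct assembly using Lemmas~\ref{skewtoskew}, \ref{tilelem}, \ref{inflmoves1} and Proposition~\ref{skewshapecondecomp}.
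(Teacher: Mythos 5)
Your argument is correct and follows the same overall skeleton as the paper's: disjointness via Lemma~\ref{tilelem}, connectedness of each \(\infl_t(\tau_i)\) via Lemma~\ref{skewtoskew}(iii), strict \(\NEarrow\)-separation, and assembly via Proposition~\ref{skewshapecondecomp}. Where you genuinely add content is the key \(\NEarrow\)-separation step: the paper dispatches this with ``by definition of the dilation map'' and leaves the verification implicit, whereas you prove it by explicit coordinate bookkeeping with \(x^t = (p, q{+}1)\), \(y^t = (p{-}1, q)\), absorbing the internal diameter of each tile into the translation bounds. Your arithmetic checks out: \((k{+}\ell{-}1)p - k \leq -1\) and \((k{+}\ell{-}1)q + \ell \geq 1\) since \(p\leq 0 \leq q\). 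A shorter route to the same separation, reusing the paper's toolkit and avoiding coordinates, is this: if \(v = \no^k\ea^\ell u\) with \(k,\ell\geq 1\), then iterating Lemma~\ref{inflmoves2}(i) gives \(\infl_t(v)_{\ssww} = \no^k\ea^\ell\bigl(\infl_t(u)_{\nnee}\bigr)\), so \(\infl_t(u)_{\nnee} \NEarrow \infl_t(v)_{\ssww}\); then Lemma~\ref{uniquemax} chains \(a \nearrow \infl_t(u)_{\nnee} \NEarrow \infl_t(v)_{\ssww} \nearrow b\) to give \(a \NEarrow b\) directly. Either route works; yours is more self-contained, the other is shorter given the surrounding machinery. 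One small tightening for your final step: rather than ``no path can join them without passing through some other \(\infl_t(\tau_h)\),'' the cleaner statement is that strict \(\NEarrow\)-separation means no node of \(\infl_t(\tau_j)\) is adjacent to any node of \(\infl_t(\tau_i)\) for \(i\neq j\), so every path inside \(\infl_t(\tau)\) stays within a single \(\infl_t(\tau_h)\); together with connectedness of each piece, this pins the \(\infl_t(\tau_h)\) down as exactly the connected components.
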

\begin{proof}
By definition of the dilation map, we have \(\infl_t(u) \NEarrow \infl_t(v)\) whenever \(u \NEarrow v\), so we have that \(\infl_t(\tau_1) \NEarrow \cdots \NEarrow \infl_t(\tau_k)\), and these are connected skew shapes by Lemma~\ref{skewtoskew}, so \(\infl_t(\tau) \in \SSSS\), as desired.
\end{proof}

\begin{lemma}\label{deltile}
Let \(\tau\) be a nonempty skew shape.
Then 
\(\Gamma_{\infl_t(\tau)} = \{\infl_t(u) \mid u \in \tau\}\), and \(\infl_t(\tau)\) is semicuspidal.
\end{lemma}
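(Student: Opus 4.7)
The plan is to exhibit an explicit cuspidal Kostant tableau for $\infl_t(\tau)$ whose underlying tiling is $\Lambda := \{\infl_t(u) \mid u \in \tau\}$, then invoke Theorem~\ref{mainthmcusp}(i) for the claim $\Gamma_{\infl_t(\tau)} = \Lambda$ and Lemma~\ref{sciffcusptile} for semicuspidality.

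First I would verify that $\Lambda$ is a tiling of $\infl_t(\tau)$ by cuspidal ribbons of content $\delta$. The tiles are pairwise disjoint by Lemma~\ref{tilelem}, their union is $\infl_t(\tau)$ by the very definition of the dilation map, and $\infl_t(\tau)$ is a skew shape by Lemma~\ref{onecolorskew}. Each $\infl_t(u) = \zeta^{(\delta,\phi_t(u))}$ is a cuspidal ribbon of content $\delta$ by Proposition~\ref{allcusprib} together with Lemma~\ref{zetaiscusp}. Since $\searrow$ is a partial order on the finite set $\tau$, I then pick any linear extension $u_1, \ldots, u_n$ and set $\ttt(i) := \infl_t(u_i)$. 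The content sequence $(\cont(\ttt(i)))_i = (\delta, \ldots, \delta)$ is automatically a Kostant sequence, so once $(\Lambda, \ttt)$ is shown to be a tableau, it becomes a cuspidal Kostant tableau for $\infl_t(\tau)$. Theorem~\ref{mainthmcusp}(i) then forces $\Gamma_{\infl_t(\tau)} = \Lambda$, and since every tile has content $\delta = \psi(|\tau|\delta) = \psi(\cont(\infl_t(\tau)))$, Lemma~\ref{sciffcusptile} yields that $\infl_t(\tau)$ is semicuspidal.

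The hard part is verifying the tableau condition for $\ttt$: if $a \in \infl_t(u_i)$, $b \in \infl_t(u_j)$, and $a \searrow b$, then $i \leq j$. For this I would first establish the following auxiliary local claim: for any $u \in \N$ and any $v \in \infl_t(u)$, both $\ea v$ and $\so v$ lie in $\infl_t(w)$ for some $w$ with $u \searrow w$. This should fall out by direct case analysis from Lemma~\ref{inflmoves2}: part~(i) gives $\ea v = (\infl_t(\ea u))_\ssww \in \infl_t(\ea u)$ when $v = (\infl_t(u))_\nnee$, while part~(ii) places $\ea v \in \infl_t(u) \sqcup \infl_t(\ssee u)$ otherwise; parts~(iii) and~(iv) handle $\so v$ analogously, giving $w \in \{\so u, u, \ssee u\}$. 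In every case $u \searrow w$.

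Once the local claim is in hand, I would chain it along any $\so/\ea$ path from $a$ to $b$ (such a path exists since $a \searrow b$). At each step the label of the containing tile advances to some $w'$ with $w \searrow w'$, and the containing tile is uniquely determined by Lemma~\ref{tilelem}. Hence the sequence of tile labels traced by the path forms a chain $u_i = w^{(0)} \searrow w^{(1)} \searrow \cdots \searrow w^{(r)} = u_j$, so by transitivity $u_i \searrow u_j$, and the choice of linear extension forces $i \leq j$. This closes the verification of the tableau condition and completes the argument.
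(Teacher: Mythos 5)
Your argument is correct, and its overall skeleton matches the paper's: establish that $\{\infl_t(u)\mid u\in\tau\}$ is a tiling of $\infl_t(\tau)$ by cuspidal ribbons of content $\delta$, realize it as a cuspidal Kostant tableau, conclude $\Gamma_{\infl_t(\tau)}$ equals this tiling by uniqueness (Theorem~\ref{mainthmcusp}(i)), then invoke Lemma~\ref{sciffcusptile} for semicuspidality. Where you diverge is in verifying the tableau condition. The paper proceeds by induction, repeatedly removing an $\ssee$-removable node $u$ from $\tau$, observing via Lemmas~\ref{inflmoves1} and~\ref{inflmoves2} that $\infl_t(u)$ is then $\ssee$-removable in $\infl_t(\tau)$, and using $\infl_t(\tau\setminus\{u\}) = \infl_t(\tau)\setminus\infl_t(u)$ to recurse; this constructs one specific tableau. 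You instead pick an arbitrary linear extension of $\searrow$ on $\tau$ and check the tableau condition directly, via the local observation (extracted from Lemma~\ref{inflmoves2}) that stepping $\so$ or $\ea$ from a node of $\infl_t(w)$ always lands in $\infl_t(w')$ for some $w\searrow w'$, then chaining along a $\so/\ea$ path from $a$ to $b$. Both use the same two supporting lemmas; your version avoids the induction on $|\tau|$ and makes explicit the appeal to Theorem~\ref{mainthmcusp}(i) and to the fact that every tile has content $\delta=\psi(\cont(\infl_t(\tau)))$, points the paper's terse proof leaves implicit.
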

\begin{proof}
By Lemma~\ref{onecolorskew}, we have that \(\infl_t(\tau)\) is a skew shape, and if \(u\) is an \(\ssee\)-removable node in \(u\), then \(\infl_t(u)\) is an \(\ssee\)-removable hook in \(\infl_t(\tau)\) by Lemmas~\ref{inflmoves1} and~\ref{inflmoves2}. Then \(\infl_t(\tau \backslash \{u\}) = \infl_t(\tau)\backslash\{\infl_t(u)\}\), and it follows by induction that \(\{\infl_t(u) \mid u \in \tau\}\) is a tiling of \(\infl_t(\tau)\). The fact that \(\infl_t(\tau)\) is semicuspidal follows by Lemma~\ref{sciffcusptile}.
\end{proof}

\subsection{Connected imaginary semicuspidal skew shapes}

\begin{proposition}\label{conissemicusp}
Let \(\tau\) be a connected skew shape with \(\cont(\tau) = m\delta\). Then \(\tau\) is semicuspidal if and only if there exists \(t \in \Z_e\) and \(\mu \in \SSSS_{\tt c}(m)\) such that \(\tau = \infl_t(\mu)\).
\end{proposition}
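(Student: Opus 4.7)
My plan is to prove each direction separately. The forward direction is essentially immediate: if $\tau=\infl_t(\mu)$ with $\mu\in\SSSS_{\tt c}(m)$, then Lemma~\ref{skewtoskew}(iii) gives that $\tau$ is a connected skew shape, and Lemma~\ref{deltile} shows $\tau$ is semicuspidal. Its content is $m\delta$ since each dilated node $\infl_t(u)$ contributes $\cont(\zeta^{(\delta,\phi_t(u))})=\delta$ and $|\mu|=m$.

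For the converse, assume $\tau$ is connected semicuspidal of content $m\delta$. By Lemma~\ref{sciffcusptile} each tile of the cuspidal Kostant tiling $\Gamma_\tau$ has content $\delta$, and by Theorem~\ref{allcusp} each such tile lies in $[\zeta^{t_\gamma}]_e$ for some $t_\gamma\in\ZZ_e$. Once I show $t_\gamma$ is constant on $\Gamma_\tau$, with common value $t$, the proof concludes as follows: set $\mu:=\{\infl_t^{-1}(\gamma)\mid\gamma\in\Gamma_\tau\}$, well-defined by Corollary~\ref{inflbij}. Then $\infl_t(\mu)=\bigsqcup_{\gamma\in\Gamma_\tau}\gamma=\tau$, and Lemma~\ref{skewtoskew}(iii) forces $\mu$ to be a connected skew shape, so $\mu\in\SSSS_{\tt c}(m)$.

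The crux is therefore the constant color claim. I would prove it by strong induction on $m$. The base case $m=1$ is trivial. For $m\geq 2$, take the minimal $\ssee$-removable ribbon $\gamma_0$ of $\tau$ from Lemma~\ref{minremexist}; it has content $\delta$ by Lemma~\ref{sciffcusptile}, hence lies in some $[\zeta^t]_e$. The set $\tau\setminus\gamma_0$ inherits a tiling by cuspidal $\delta$-ribbons from $\Gamma_\tau\setminus\{\gamma_0\}$, so each connected component $\tau'_j$ is a semicuspidal connected skew shape of content $m_j\delta$ with $m_j<m$. By induction, every tile of $\tau'_j$ lies in a common class $[\zeta^{t_j}]_e$. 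Since $\tau$ is connected, each $\tau'_j$ contains a tile $\gamma'_j$ with $\gamma'_j\sqcup\gamma_0$ a connected skew shape. The induction step thus reduces to the local claim: if $\gamma_1,\gamma_2$ are cuspidal $\delta$-ribbons with $\gamma_1\sqcup\gamma_2$ a connected skew shape, then $t_{\gamma_1}=t_{\gamma_2}$.

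I would prove the local claim by setting $\nu:=\gamma_1\sqcup\gamma_2$ and invoking Theorem~\ref{mainthmcusp}: $\{\gamma_1,\gamma_2\}$ is simultaneously the unique cuspidal Kostant tiling and the unique minimal $\ssee$-removable ribbon tiling of $\nu$. With $\gamma_2$ the first tile removed, the nodes $(\gamma_1)_\ssww,(\gamma_1)_\nnee,(\gamma_2)_\ssww,(\gamma_2)_\nnee$ fall into one of the six configurations of Lemma~\ref{remhookswap}. In each case I would combine the path rule of Definition~\ref{defxib}, which rigidly determines a cuspidal $\delta$-ribbon from its $\ssww$-residue, with the skew-shape constraint on $\nu$; in the interlocking cases (iii)--(v), I would exhibit an alternative minimal $\ssee$-removable ribbon via an $\ssee$-shift of an endpoint, then invoke Lemma~\ref{minremiscusp} to conclude it is cuspidal, and finally use the uniqueness from Theorem~\ref{mainthmcusp} to force $\res((\gamma_1)_\ssww)=\res((\gamma_2)_\ssww)$. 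This local case analysis is the main obstacle: unlike the real situation of Lemma~\ref{conscreal}, one cannot simply invoke uniqueness of $\zeta^\beta$ for a contradiction, since the alternative ribbon might be a different member of the family $\zeta^{\overline 0},\ldots,\zeta^{\overline{e-1}}$; one must carefully track the residue along the cuspidal path in each configuration to pin the alternative ribbon to the same color class as $\gamma_1$.
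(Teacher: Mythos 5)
Your forward direction is correct and matches the paper: Lemma~\ref{skewtoskew}(iii) gives the skew-shape/connectedness and Lemma~\ref{deltile} gives semicuspidality, and the content computation is immediate.

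For the converse, you have correctly identified the crux (all tiles of $\Gamma_\tau$ must have the same residue, i.e., lie in the same class $[\zeta^t]_e$) and correctly identified the ``local two-tile claim'' as the engine, which is precisely the paper's $m=2$ case, proved by exactly the case analysis you sketch via Lemma~\ref{remhookswap} and residue tracking. However, the step that reduces the general induction to that local claim has a genuine gap: you assert, without proof, that each connected component $\tau'_j$ of $\tau\setminus\gamma_0$ \emph{contains a tile $\gamma'_j$ with $\gamma'_j\sqcup\gamma_0$ a connected skew shape}. Connectedness of $\tau$ only guarantees that $\tau'_j$ has some \emph{node} adjacent to $\gamma_0$, not that the tile of $\Gamma_{\tau'_j}$ containing that node combines with $\gamma_0$ into a skew shape. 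Indeed, for $\gamma'_j\sqcup\gamma_0$ to be a skew shape one needs (at minimum) that no node of $\tau'_j\setminus\gamma'_j$ sits southeast of $\gamma'_j$ and northwest of $\gamma_0$; this is guaranteed if $\gamma'_j$ is $\ssee$-removable in $\tau'_j$, but an $\ssee$-removable tile of $\tau'_j$ need not be adjacent to $\gamma_0$, and conversely a tile adjacent to $\gamma_0$ need not be $\ssee$-removable. So the two properties you need simultaneously are not obviously compatible.

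The paper's proof is structured precisely to avoid having to make this choice. For $m\geq 3$ it splits on whether $\tau'=\tau\setminus\gamma_m$ is disconnected: if so, it never isolates a single adjacent tile but instead takes a whole component $\mu$ and applies the inductive hypothesis to $\mu\sqcup\gamma_m$, which is automatically a connected semicuspidal skew shape with fewer tiles (skewness holds because $\mu$ is a full component, so there is no ``intervening tile''; connectedness holds because $\tau$ is connected). If $\tau'$ is connected, induction on $\tau'$ gives monochromaticity there, and the remaining question of $\gamma_m$'s colour is handled via the second-to-last tableau tile $\gamma_{m-1}$ (which \emph{is} $\ssee$-removable in $\tau'$, so $\gamma_{m-1}\sqcup\gamma_m$ is a skew shape by Lemma~\ref{tabisskew}); when $\gamma_{m-1}\sqcup\gamma_m$ is \emph{disconnected}, a swap of the last two tiles and a re-run of the dichotomy on $\tau\setminus\gamma_{m-1}$ recovers the conclusion. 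Only when $\gamma_{m-1}\sqcup\gamma_m$ is connected does the paper invoke the $m=2$ case — your local claim. So the local claim is genuinely the engine, but invoking it requires that the two tiles be consecutive in a tableau and adjacent, and the paper's extra machinery (whole-component induction plus the swap) exists precisely to arrange this. You should either prove the existence of a suitably removable-and-adjacent tile in each component, or restructure the induction along the lines of the paper.

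One minor point for your local-claim sketch: for ribbons of equal cardinality $e$, the nesting cases (v) and (vi) of Lemma~\ref{remhookswap} are impossible (they would force $u=z$ and $w=v$), which is why the paper's $m=2$ argument only needs four cases, not six.
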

\begin{proof}
The `if' direction is supplied by Lemma~\ref{deltile} and Lemma~\ref{skewtoskew}(iii). We focus now on the `only if' direction. 
By Lemma~\ref{sciffcusptile}, we may assume \((\gamma_1, \ldots, \gamma_m)\) is a cuspidal Kostant tableau for \(\tau\), where \(\gamma_i = \zeta^{(\delta, c^i)}\) for some \(c^i \in \N\). Write \(\tau' := \tau \backslash \gamma_m\). If \(\tau' \neq \varnothing\), then \(\tau'\) is semicuspidal by Lemma~\ref{sciffcusptile}.

We go by induction on \(m\). The base case \(m=1\) is immediate, since then we have that \(\tau=\gamma_1\) is cuspidal. Assume \(m=2\).
Assume first, by way of contradiction, that \(\res(c^1) \neq \res(c^2)\). By Lemma~\ref{xiuv}, there exist \((u,v) \in \textup{Rem}_\tau\tau\) such that \(\xi^\tau_{u,v} = \zeta^{(\delta, c^2)}\). Writing \(\tau' := \tau \backslash \xi^\tau_{u,v} = \zeta^{(\delta, c^1)} \), we have some \((z,w) \in \textup{Rem}_\tau{\tau'}\) such that \(\xi^{\tau'}_{z,w} = \zeta^{(\delta, c^1)}\). As \(\tau\) is connected, we have that \(u,v,z,w\) are trivially in the same connected component of \(\tau\). As the two ribbons have the same cardinality, the nodes \(u,v,z,w\) must be arranged as in one of the following cases:

{\em Case 1:} \(u \nearrow v \nearrow z \nearrow w\). As \(\tau\) is connected, this implies that \(z \in \{\ea v, \no v\}\), and thus
\begin{align*}
\res(c^2) = \res(z) = \res(v)+\overline 1 = \res(\zeta^{(\delta,c^1)}_\nnee) + \overline 1 = (\res(c^1) - \overline 1)+ \overline 1 = \res(c^1),
\end{align*} 
a contradiction. 

{\em Case 2:} \(z \nearrow w \nearrow u \nearrow v\). As \(\nu\) is connected, this implies that \(u \in \{\ea w, \no w\}\), which again forces \(\res(c^2) = \res(c^1)\), a contradiction as in Case 1.

{\em Case 3:} \(u \nearrow z \nearrow v \nearrow w\). Then by Lemma~\ref{remhookswap}(iii), we have that \((\ssee z,w) \in \textup{Rem}_\tau\nu\). Thus \(\xi^\tau_{(\ssee z, w)}\) is an \(\ssee\)-removable hook in \(\tau\). Note that \(\cont(\xi^\tau_{\ssee z, w}) = \delta\), so 
\(\xi^\nu_{(\ssee z, w)}\) is minimal in \(\tau\) since \(\gamma_2\) is minimal by Theorem~\ref{mainthmcusp}. Thus \(\xi^\nu_{\ssee z, w}\) is cuspidal by Lemma~\ref{minremiscusp}. We have \(\res(\ssee z) = \res(z) = \res(c^1)\). But then we must have by Proposition~\ref{allcusprib} that \(\xi^\tau_{\ssee z, w} = \zeta^{(\delta, \ssee z)} = \ssee(\zeta^{(\delta, z)})\). But then
\begin{align*}
w = (\xi^\tau_{\ssee, w})_\nnee = (\ssee(\zeta^{(\delta, z)}))_\nnee = \ssee w,
\end{align*}
a contradiction. 

{\em Case 4:} \(z \nearrow u \nearrow w \nearrow v\).  Then by Lemma~\ref{remhookswap}(iv), we have that \((z, \ssee w) \in \textup{Rem}_\tau\nu\), so \(\xi^\nu_{z, \ssee w}\) is an \(\ssee\)-removable hook in \(\nu\), which derives a contradiction along the same lines as Case 4. 

Thus, in any case we derive a contradiction, so \(\res(c^1) = \res(c^2) = t\) for some \(t \in \Z_e\). Then we have by Lemma~\ref{skewtoskew} that \(\tau = \infl_t(\mu)\) for some connected skew shape \(\mu \in \SSSS_{\tt c}(2)\), and we are done.

Now let \(\tau\) be a connected, cuspidal shape with content \(m\delta\), where \(m \geq 3\), and make the induction assumption on all \(m' < m\). First assume that \(\tau'\) is disconnected. 
Let \(\mu\) be any connected component of \(\tau'\). Then \(\mu = \sqcup_{i \in I} \gamma_i\) for some \(I \subset [1,m-1]\), and \(\mu \sqcup \gamma_m = \sqcup_{i \in I\cup\{m\}} \gamma_i\) is a connected semicuspidal skew shape by Lemma~\ref{sciffcusptile}. 
Then by the induction assumption we have \(\res(c^i) = \res(c^m)\) for all \(i \in I\). Applying this argument to all connected components of \(\tau'\) gives the result. 

Now assume that \(\tau'\) is connected. By the induction assumption we have \(\res(c^i) = \res(c^{m-1})\) for all \(i \in [1,m-1]\). If \(\gamma_{m-1} \sqcup \gamma_m\) is disconnected, then \((\gamma_1, \ldots, \gamma_{m-2}, \gamma_m, \gamma_{m-1})\) is a cuspidal Kostant tableau for \(\tau\). If \(\tau \backslash \gamma_{m-1}\) is disconnected, then it follows that \(\res(c^i) = \res(c^{m-1})\) for all \(i \in [1,m]\), giving the result. If \(\tau \backslash \gamma_{m-1}\) is connected, then by the induction assumption we have \(\res(c^m) = \res(c^1) = \res(c^{m-1})\), so \(\res(c^i) = \res(c^m)\) for all \(i \in I\).

Finally, assume \(\gamma_{m-1} \sqcup \gamma_m\) is connected. By the case for \(m=2\) above, we have that \(\res(c^{m-1}) = \res(c^m)\), so \(\res(c^i) = \res(c^m)\) for all \(i \in I\), completing the proof.
\end{proof}

\begin{lemma}\label{delconsim}
Let \(t_1, t_2 \in \ZZ_e\),  \(\mu, \nu \in \SSSS_{\tt c}\). Then \(\infl_{t_1}(\mu) \sim_e \infl_{t_2}(\nu)\) if and only if \(\mu \sim \nu\) and \(t_1 =t_2\).
\end{lemma}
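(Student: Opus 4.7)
The plan is to handle each direction separately. The forward direction follows from a direct computation using the definition of the dilation map, while the reverse direction relies on the uniqueness of the cuspidal Kostant tiling (Theorem~\ref{mainthmcusp}) together with the \(\ZZ\)-linear structure underlying \(\phi_t\) (Lemma~\ref{N0basis}).

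For the forward direction, assume \(t_1 = t_2 = t\) and \(\nu = \T_d \mu\) for some \(d \in \N\). Iterating Lemma~\ref{inflmoves1} (or directly from the formula \(\phi_t(u) = b^t + u_1 y^t + u_2 x^t\)), I would show that \(\infl_t(\T_d u) = \T_c(\infl_t(u))\) for every \(u \in \mu\), where \(c := d_1 y^t + d_2 x^t\). By Lemma~\ref{N0basis} we have \(c \in \N_{\overline 0}\), so \(\infl_t(\nu) = \T_c(\infl_t(\mu))\) is a residue-preserving translation of \(\infl_t(\mu)\); both are connected by Lemma~\ref{skewtoskew}(iii), which gives \(\infl_{t_1}(\mu) \sim_e \infl_{t_2}(\nu)\).

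For the reverse direction, since \(\mu, \nu \in \SSSS_{\tt c}\), Lemma~\ref{skewtoskew}(iii) forces the \(e\)-similarity \(\infl_{t_1}(\mu) \sim_e \infl_{t_2}(\nu)\) to take the form \(\infl_{t_2}(\nu) = \T_c(\infl_{t_1}(\mu))\) for a single \(c \in \N_{\overline 0}\). By Lemma~\ref{deltile} and the uniqueness part of Theorem~\ref{mainthmcusp}, the cuspidal Kostant tilings are \(\Gamma_{\infl_{t_i}(\mu)} = \{\infl_{t_1}(u) \mid u \in \mu\}\) and \(\Gamma_{\infl_{t_2}(\nu)} = \{\infl_{t_2}(v) \mid v \in \nu\}\), and the bijection of skew shapes \(\T_c\) must send tiles to tiles. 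Picking any \(u_0 \in \mu\) with corresponding \(v_0 \in \nu\), Proposition~\ref{allcusprib} identifies \(\T_c\bigl(\zeta^{(\delta,\phi_{t_1}(u_0))}\bigr) = \zeta^{(\delta,\phi_{t_1}(u_0)+c)}\), so comparing with \(\zeta^{(\delta,\phi_{t_2}(v_0))}\) and invoking the uniqueness in Lemma~\ref{tilelem} yields \(\phi_{t_1}(u_0) + c = \phi_{t_2}(v_0)\). Reading off residues on both sides gives \(t_1 = t_2\); call this common value \(t\).

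It remains to deduce \(\mu \sim \nu\). Defining \(\psi_t(w) := w_1 y^t + w_2 x^t\), which is a \(\ZZ\)-linear bijection \(\N \to \N_{\overline 0}\) by Lemma~\ref{N0basis}, the identity \(\phi_t(v) - \phi_t(u) = \psi_t(v-u)\) holds for all \(u,v \in \N\). Applying the tile-matching of the previous paragraph to every \(u \in \mu\) produces a pair \((u,v) \in \mu \times \nu\) with \(\psi_t(v-u) = c\); since \(\psi_t\) is injective, the offset \(v - u = \psi_t^{-1}(c) =: d\) is the same element of \(\N\) for every such pair. Because \(\T_c\) is a bijection between the two tilings (and both have the same cardinality, namely \(|\mu|=|\nu|\) by counting the nodes of the dilations), this forces \(\nu = \T_d \mu\), i.e.\ \(\mu \sim \nu\). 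The only substantive point to be careful about is that the bijective correspondence \(u \mapsto v\) produced by \(\T_c\) is indeed globally defined by the same translation vector \(d\); this is where the \(\ZZ\)-linearity of \(\psi_t\) does the real work, upgrading a one-tile identification into a translation of the entire shape.
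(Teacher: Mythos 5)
Your proof is correct, and the forward direction matches the paper's argument. The reverse direction, however, takes a genuinely heavier route than the paper does. To derive $t_1 = t_2$, you pass through the uniqueness of cuspidal Kostant tilings (Theorem~\ref{mainthmcusp}), Lemma~\ref{deltile}, and a tile-matching argument; the paper instead settles $t_1 = t_2$ with a single observation: the southwest-most node of $\infl_{t_i}(\cdot)$ is $\phi_{t_i}(\cdot_\ssww)$, which has residue $t_i$, so if $t_1 \neq t_2$ the two dilations have southwest corners of different residues and cannot be $e$-similar. For recovering $\mu \sim \nu$ once $t_1 = t_2 = t$, the paper simply notes that $\infl_t \circ \T_c = \T_{c_1 y^t + c_2 x^t} \circ \infl_t$ and that $\{x^t, y^t\}$ is a $\ZZ$-basis of $\N_{\overline 0}$ (Lemma~\ref{N0basis}), so translations of $\mu$ by $\N$ correspond bijectively to residue-preserving translations of $\infl_t(\mu)$; injectivity of $\infl_t$ (Corollary~\ref{inflbij}) closes the argument. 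Your tile-matching approach reaches the same place via $\psi_t(v-u)=c$, which is a nice conceptual viewpoint but is substantially more machinery than the lemma warrants, and requires an extra (unjustified but true) step: that residue-preserving translations carry the cuspidal Kostant tiling of a shape to the cuspidal Kostant tiling of its image, which you should state explicitly if you keep this route. Overall: correct, but the paper's southwest-corner observation and linearity/injectivity argument are considerably more economical.
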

\begin{proof}
First, note that if \(t_1 = t_2 = t\), then by the definition of the dilation map, there exists \(c \in \N\) such that \(\T_c(\mu) = \nu\) if and only if \(\T_{c_1y^t + c_2x^t}(\infl_t(\mu)) = \infl_t(\nu)\). The result follows. If \(t_1 \neq t_2\), then we note that \(\res(\infl_{t_1}(\mu)_{\ssww}) = t_1\), and \(\res(\infl_{t_2}(\nu)_{\ssww}) = t_2\), so \(\infl_{t_1}(\mu) \not \sim_e \infl_{t_2}(\nu)\), completing the proof.
\end{proof}

\subsection{Arbitrary imaginary semicuspidal skew shapes}
Similarity is an equivalence relation on \(\SSSS_{\tt c}\). Let \(\widetilde{\SSSS}_c \subset \SSSS_{\tt c}\) be a set of distinguished representatives from each similarity class. For \(k \leq m \in \NN\), we write
\begin{align*}
\SSSSS(k,m)&:= \{ (\btau, \bbep) \mid \bbep \in \Z_e^k, \btau \in \widetilde{\SSSS}_c^k, |\tau_1| + \cdots + |\tau_k| = m\};\\
\SSSSS(m)&:= \bigsqcup_{k \in \NN} \SSSSS(k,m).
\end{align*}

For each \((\btau, \bbep) \in \SSSSS(k,m)\), we choose a distinguished skew shape \(\zeta^{(\btau, \bbep)} \in \SSSS(m\delta)\) from the \(e\)-similarity class:
\begin{align*}
\zeta^{(\btau, \bbep)} \in [ \infl_{\varepsilon_1}(\tau_1), \ldots, \infl_{\varepsilon_k}(\tau_k)]_e.
\end{align*}
 
\begin{lemma}\label{arbdelsemi}
Let \(m \in \NN\). Then the set \(\{ \zeta^{(\btau, \bbep)} \mid (\btau, \bbep) \in \SSSSS(m)\}\) is a complete and irredundant set of semicuspidal skew shapes of content \(m \delta\), up to \(e\)-similarity.
\end{lemma}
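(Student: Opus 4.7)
The plan is to prove the completeness and irredundancy statements separately, leveraging the structural results already established. The key ingredients are Corollary~\ref{concompsc} (which reduces semicuspidality to semicuspidality of connected components), Proposition~\ref{conissemicusp} (which classifies connected imaginary semicuspidal skew shapes via the dilation map), Proposition~\ref{skewshapecondecomp} (which gives the canonical ordered connected-component decomposition of any skew shape), and Lemma~\ref{delconsim} (which controls when two dilated connected shapes are $e$-similar).

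For the inclusion showing every element of the proposed set is semicuspidal of content $m\delta$: given $(\btau, \bbep) \in \SSSSS(k,m)$, by construction $\zeta^{(\btau,\bbep)}$ is $e$-similar to a skew shape with connected components $\infl_{\varepsilon_1}(\tau_1), \ldots, \infl_{\varepsilon_k}(\tau_k)$ arranged $\NEarrow$-increasingly from right to left. Each $\infl_{\varepsilon_i}(\tau_i)$ is connected by Lemma~\ref{skewtoskew}(iii) and semicuspidal of content $|\tau_i|\delta$ by Lemma~\ref{deltile}, so Corollary~\ref{concompsc} gives that $\zeta^{(\btau,\bbep)}$ is semicuspidal of total content $(\sum_i |\tau_i|)\delta = m\delta$.

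For completeness, I would take any semicuspidal $\tau \in \SSSS(m\delta)$ and apply Proposition~\ref{skewshapecondecomp} to write $\tau = \tau_1' \sqcup \cdots \sqcup \tau_k'$ with $\tau_k' \NEarrow \cdots \NEarrow \tau_1'$ connected. By Corollary~\ref{concompsc}, each $\tau_i'$ is semicuspidal of content $m_i' \delta$ with $\sum_i m_i' = m$. By Proposition~\ref{conissemicusp}, each $\tau_i' = \infl_{\varepsilon_i}(\mu_i)$ for some $\varepsilon_i \in \ZZ_e$ and $\mu_i \in \SSSS_{\tt c}(m_i')$. Let $\tau_i \in \widetilde{\SSSS}_c$ be the distinguished representative with $\tau_i \sim \mu_i$, and set $\btau = (\tau_1, \ldots, \tau_k)$, $\bbep = (\varepsilon_1, \ldots, \varepsilon_k)$. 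Then $\sum_i |\tau_i| = \sum_i m_i' = m$, so $(\btau,\bbep) \in \SSSSS(k,m)$, and by Lemma~\ref{delconsim} we have $\tau_i' = \infl_{\varepsilon_i}(\mu_i) \sim_e \infl_{\varepsilon_i}(\tau_i)$. Since $\tau$ and $\zeta^{(\btau,\bbep)}$ both lie in $[\infl_{\varepsilon_1}(\tau_1), \ldots, \infl_{\varepsilon_k}(\tau_k)]_e$, we conclude $\tau \sim_e \zeta^{(\btau,\bbep)}$.

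For irredundancy, suppose $\zeta^{(\btau,\bbep)} \sim_e \zeta^{(\btau',\bbep')}$ with $(\btau,\bbep) \in \SSSSS(k,m)$ and $(\btau',\bbep') \in \SSSSS(k',m)$. The definition of $e$-similarity forces the number of connected components to agree, so $k = k'$, and the $i$-th connected component on each side must be residue-preserving translates of one another. Since connected components of $\zeta^{(\btau,\bbep)}$ are $e$-similar to $\infl_{\varepsilon_i}(\tau_i)$ (and similarly on the primed side), this yields $\infl_{\varepsilon_i}(\tau_i) \sim_e \infl_{\varepsilon'_i}(\tau'_i)$ for each $i$. Applying Lemma~\ref{delconsim}, we obtain $\varepsilon_i = \varepsilon'_i$ and $\tau_i \sim \tau'_i$; since both $\tau_i, \tau'_i \in \widetilde{\SSSS}_c$ are the chosen similarity-class representatives, $\tau_i = \tau'_i$. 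Hence $(\btau,\bbep) = (\btau',\bbep')$. The main obstacle, if any, is a careful bookkeeping check that the $e$-similarity of a union of ordered connected components descends to componentwise $e$-similarity — this follows directly from the definition of $[\bmu]_e$ in \S\ref{similaritysec}, but warrants explicit verification.
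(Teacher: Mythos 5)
Your proof is correct and takes the same approach as the paper, which simply cites Proposition~\ref{conissemicusp}, Corollary~\ref{concompsc}, and Lemma~\ref{delconsim}; you have just spelled out the bookkeeping (ordered connected-component decomposition via Proposition~\ref{skewshapecondecomp}, passage to distinguished representatives, and the observation that $e$-similarity is by definition componentwise) that the paper leaves implicit.
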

\begin{proof}
Completeness of this set follows from Proposition~\ref{conissemicusp} and Corollary~\ref{concompsc}. Irredundancy follows from
Lemma~\ref{delconsim}.
\end{proof}

\begin{example}\label{e2dil}
Let \(e=2\) and recall the imaginary cuspidal ribbons \(\zeta^{\overline{0}}, \zeta^{\overline{1}}\) from Example~\ref{e2shapes}. In Figure~\ref{fig:e2dil}, we give an example of an element \((\btau, \bbep) \in \SSSSS(3,15)\), and display the corresponding dilated semicuspidal skew shape \(\zeta^{(\btau,\bbep)} \in \SSSS(15\delta)\).
\begin{figure}[h]
\begin{align*}
{}
\substack{
\left(
\left(
\hackcenter{
\begin{tikzpicture}[scale=0.4]
%
\draw[thick, fill=lightgray!50] (0,0)--(2,0)--(2,1)--(0,1)--(0,0);
%
\draw[thick, dotted] (1,0)--(1,1);
\end{tikzpicture}
}
\;
,
\hackcenter{
\begin{tikzpicture}[scale=0.4]
%
\draw[thick, fill=lightgray!50] (0,0)--(3,0)--(3,4)--(2,4)--(2,3)--(1,3)--(1,1)--(0,1)--(0,0);
%
\draw[thick, dotted] (0,1)--(3,1);
\draw[thick, dotted] (1,2)--(3,2);
\draw[thick, dotted] (1,3)--(3,3);
\draw[thick, dotted] (1,0)--(1,3);
\draw[thick, dotted] (2,0)--(2,3);
\end{tikzpicture}
}
\;
,
\;
\hackcenter{
\begin{tikzpicture}[scale=0.4]
%
\draw[thick, fill=lightgray!50] (0,0)--(2,0)--(2,1)--(3,1)--(3,2)--(0,2)--(0,0);
%
\draw[thick, dotted] (0,1)--(2,1);
\draw[thick, dotted] (1,0)--(1,2);
\draw[thick, dotted] (2,0)--(2,2);
\end{tikzpicture}
}
\right)
,
\displaystyle (\overline 1, \overline 0,  \overline 1)
\right)
\\
{}
\\
{}
\\
{}
\\
{}
\\
{}
\\
{}
\\
{}
\\
{}
\\
{}
\\
{}
\\
{}
\\
{}
\\
{}
\\
{}
\\
{}
\\
{}
\\
{}
\\
{}
\\
{}
\\
{}
\\
{}
\\
{}
\\
{}
\\
{}
\\
{}
}
\;\;
\substack{
\displaystyle\rotatebox[origin=c]{-45}{$\leftrightarrow$}
\\
{}
\\
{}
\\
{}
\\
{}
\\
{}
\\
{}
\\
{}
\\
{}
\\
{}
\\
{}
\\
{}
}
\hspace{-10mm}
\hackcenter{
\begin{tikzpicture}[scale=0.4]
%
\draw[thick, fill=lightgray!50] (0,0)--(1,0)--(1,1)--(2,1)--(2,4)--(3,4)--(3,6)--(2,6)--(2,5)--(1,5)--(1,4)--(0,4)--(0,0);
\draw[thick, fill=lightgray!50] (4,7)--(10,7)--(10,8)--(11,8)--(11,9)--(12,9)--(12,10)--(13,10)--(13,11)--(11,11)--(11,10)--(8,10)--(8,9)--(7,9)--(7,8)--(4,8)--(4,7);
\draw[thick, fill=lightgray!50] (14,12)--(15,12)--(15,13)--(16,13)--(16,15)--(15,15)--(15,14)--(14,14)--(14,12);
%
\draw[thick] (1,0)--(1,4);
\draw[thick] (2,1)--(2,5);
\draw[thick, dotted] (0,1)--(1,1);
\draw[thick, dotted] (0,2)--(2,2);
\draw[thick, dotted] (0,3)--(2,3);
\draw[thick, dotted] (0,4)--(2,4);
\draw[thick, dotted] (1,5)--(3,5);
\draw[thick] (0,2)--(1,2);
\draw[thick] (1,3)--(2,3);
\draw[thick, dotted] (5,7)--(5,8);
\draw[thick, ] (6,7)--(6,8);
\draw[thick, dotted] (7,7)--(7,8);
\draw[thick, dotted] (8,7)--(8,9);
\draw[thick, dotted] (9,7)--(9,10);
\draw[thick, dotted] (10,7)--(10,10);
\draw[thick, dotted] (10,7)--(10,10);
\draw[thick, dotted] (11,8)--(11,11);
\draw[thick, dotted] (12,9)--(12,11);
\draw[thick] (5,8)--(10,8);
\draw[thick] (7,9)--(11,9);
\draw[thick] (9,10)--(12,10);
\draw[thick, ] (8,7)--(8,8);
\draw[thick, ] (9,8)--(9,9);
\draw[thick, ] (10,9)--(10,10);
\draw[thick, dotted] (14,13)--(15,13);
\draw[thick, dotted] (15,14)--(16,14);
\draw[thick, ] (15,13)--(15,14);
\node at (0.5,0.5){$\scriptstyle1 $};
\node at (0.5,1.5){$\scriptstyle0 $};
\node at (0.5,2.5){$\scriptstyle1 $};
\node at (0.5,3.5){$\scriptstyle0 $};
\node at (1.5,1.5){$\scriptstyle1 $};
\node at (1.5,2.5){$\scriptstyle0 $};
\node at (1.5,3.5){$\scriptstyle1 $};
\node at (1.5,4.5){$\scriptstyle0 $};
\node at (2.5,4.5){$\scriptstyle1 $};
\node at (2.5,5.5){$\scriptstyle0 $};
\node at (4.5,7.5){$\scriptstyle0 $};
\node at (5.5,7.5){$\scriptstyle1 $};
\node at (6.5,7.5){$\scriptstyle0 $};
\node at (7.5,7.5){$\scriptstyle1 $};
\node at (8.5,7.5){$\scriptstyle0 $};
\node at (9.5,7.5){$\scriptstyle1 $};
\node at (6.5+1,7.5+1){$\scriptstyle0 $};
\node at (7.5+1,7.5+1){$\scriptstyle1 $};
\node at (8.5+1,7.5+1){$\scriptstyle0 $};
\node at (9.5+1,7.5+1){$\scriptstyle1 $};
\node at (6.5+2,7.5+2){$\scriptstyle0 $};
\node at (7.5+2,7.5+2){$\scriptstyle1 $};
\node at (8.5+2,7.5+2){$\scriptstyle0 $};
\node at (9.5+2,7.5+2){$\scriptstyle1 $};
\node at (8.5+3,7.5+3){$\scriptstyle0 $};
\node at (9.5+3,7.5+3){$\scriptstyle1 $};
\node at (14.5,12.5){$\scriptstyle1 $};
\node at (14.5,13.5){$\scriptstyle0 $};
\node at (14.5+1,12.5+1){$\scriptstyle1 $};
\node at (14.5+1,13.5+1){$\scriptstyle0 $};
\end{tikzpicture}
}
\end{align*}
\caption{An element \((\btau, \bbep) \in \SSSSS(3,15)\); corresponding semicuspidal shape \(\zeta^{(\btau, \bbep)} \in \SSSS(15\delta)\)
}
\label{fig:e2dil}       
\end{figure}

\end{example}

\subsection{Main theorems, semicuspidal version}

\begin{theorem}\label{allsemicusp}
The set \(\{\zeta^{m \beta} \mid m \in \NN, \beta \in \Phi_+^\re\} \cup \{ \zeta^{(\btau, \bbep)} \mid m \in \NN, (\btau, \bbep) \in \SSSSS(m)\}\) represents a complete and irredundant set of semicuspidal skew shapes, up to \(e\)-similarity.
\end{theorem}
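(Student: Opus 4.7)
The plan is to reduce this classification to the two previously established cases, namely real and imaginary semicuspidal skew shapes, which have already been handled in Corollary~\ref{realsemicor} and Lemma~\ref{arbdelsemi} respectively. The global statement is essentially a matter of combining these two results and checking that the two families do not overlap under $e$-similarity.

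First I would observe that if $\tau$ is a semicuspidal skew shape, then by definition $\cont(\tau) = m\beta$ for some $m \in \NN$ and $\beta \in \Phi_+$. Replacing $\beta$ by $\psi(m\beta)$ and adjusting $m$ accordingly via Lemma~\ref{psidef}, I may assume $\beta \in \Psi = \Phi_+^\re \sqcup \{\delta\}$. This splits the problem into two disjoint cases according to whether $\beta$ is real or imaginary.

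In the case $\beta \in \Phi_+^\re$, Corollary~\ref{realsemicor} immediately gives that $\tau \sim_e \zeta^{m\beta}$, so $\tau$ is $e$-similar to exactly one member of the first family. In the case $\beta = \delta$, so that $\cont(\tau) = m\delta$, Lemma~\ref{arbdelsemi} gives both completeness and irredundancy within the second family, so $\tau \sim_e \zeta^{(\btau, \bbep)}$ for a unique $(\btau, \bbep) \in \SSSSS(m)$. Within each of the two families, irredundancy across different choices of $(m,\beta)$ in the first family is automatic, since $e$-similar skew shapes have equal content and the roots $m\beta \in Q_+$ are distinct for distinct pairs $(m,\beta)$ with $\beta \in \Phi_+^\re$.

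Finally, for the disjointness of the two families under $e$-similarity, I would note that $\cont(\zeta^{m\beta}) = m\beta \notin \Phi_+^\im$ when $\beta \in \Phi_+^\re$, since by Lemma~\ref{notapprox}(ii) no positive integer multiple of a real root can be imaginary. On the other hand, $\cont(\zeta^{(\btau, \bbep)}) = m\delta \in \Phi_+^\im$. Since $e$-similarity preserves content, no skew shape from the first family can be $e$-similar to one from the second, completing the irredundancy check. I do not anticipate any substantial technical obstacle here, as the whole theorem is a packaging of Corollary~\ref{realsemicor} and Lemma~\ref{arbdelsemi}; the only subtle point is remembering to normalize $(m,\beta)$ so that $\beta \in \Psi$ before invoking the two cases.
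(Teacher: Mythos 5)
Your proposal is correct and takes exactly the route the paper does: its proof is the single sentence ``Follows from Corollary~\ref{realsemicor} and Lemma~\ref{arbdelsemi},'' and you have simply spelled out the three checks (normalizing $(m,\beta)$ via Lemma~\ref{psidef} so that $\beta \in \Psi$, invoking the real and imaginary classifications, and verifying the two families are disjoint because $e$-similarity preserves content). One small slip: for the disjointness claim that $m\beta \neq m'\delta$ when $\beta \in \Phi_+^\re$, the citation of Lemma~\ref{notapprox}(ii) is not quite right---that lemma concerns $\approx$-comparisons of heights, not integer multiples. The fact you want is exactly the injectivity in Lemma~\ref{psidef}, which you already invoke earlier; citing it again here would tidy things up.
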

\begin{proof}
Follows from Corollary~\ref{realsemicor} and Lemma~\ref{arbdelsemi}.
\end{proof}

\begin{theorem}\label{mainthmscKost}
Let \(\tau\) be a nonempty skew shape. Then
there exists a unique semicuspidal strict Kostant tiling \(\Gamma_\tau^{sc}\) for \(\tau\) given by \(\Gamma_\tau^{sc} = \{\sqcup_{\gamma \in \Gamma_\tau \cap \SSSS(\beta)}\gamma \mid \beta \in \Psi, \kappa^{\Gamma_\tau}_\beta >0\}\), and \(\bkap^{\Gamma_\tau^{sc}} = \bkap^{\Gamma_\tau}\).
\end{theorem}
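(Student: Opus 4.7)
The plan is to establish existence by showing that the prescribed set $\Gamma_\tau^{sc}$ really is a semicuspidal strict Kostant tiling, and then deduce uniqueness by reducing any such tiling to $\Gamma_\tau$ via Lemma~\ref{sciffcusptile} and Theorem~\ref{mainthmcusp}(i).

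First I would pick any cuspidal Kostant tableau $(\gamma_1,\ldots,\gamma_k)$ for $\tau$, which exists by Theorem~\ref{mainthmcusp}(i), and observe that tiles of the same content are automatically consecutive. Indeed, all contents lie in $\Psi$ by Lemma~\ref{cuspisindiv} and the sequence $\cont(\gamma_1)\succeq\cdots\succeq\cont(\gamma_k)$ is non-increasing, so any tile sandwiched between two tiles of content $\beta \in \Psi$ must itself have content $\approx \beta$, and Lemma~\ref{notapprox}(iii) forces equality. Letting $\beta_1\succ\cdots\succ\beta_r$ be the distinct contents appearing, I set $\lambda_{\beta_i} := \bigsqcup_{\gamma \in \Gamma_\tau \cap \SSSS(\beta_i)}\gamma$. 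Each $\lambda_{\beta_i}$ is a union of a consecutive block in the cuspidal tableau, hence a skew shape by Lemma~\ref{tabisskew}. The sequence $(\lambda_{\beta_1},\ldots,\lambda_{\beta_r})$ is a $\Gamma_\tau^{sc}$-tableau: if $u\in\lambda_{\beta_i}$, $v\in\lambda_{\beta_j}$ and $u\searrow v$, then $u\in\gamma_p$, $v\in\gamma_q$ with $p\leq q$ forces $\beta_i\succeq\beta_j$ and hence $i\leq j$. The corresponding content sequence is $(\kappa^{\Gamma_\tau}_{\beta_1}\beta_1,\ldots,\kappa^{\Gamma_\tau}_{\beta_r}\beta_r)$ with $\psi$-values $\beta_1\succ\cdots\succ\beta_r$, so it is a strict Kostant sequence.

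Semicuspidality of each tile $\lambda_{\beta_i}$ follows directly from Lemma~\ref{sciffcusptile}: the restriction of $\Gamma_\tau$ to $\lambda_{\beta_i}$ is a cuspidal Kostant tiling of $\lambda_{\beta_i}$ whose every tile has content $\beta_i = \psi(\cont(\lambda_{\beta_i}))$. The equality $\bkap^{\Gamma_\tau^{sc}}=\bkap^{\Gamma_\tau}$ is then immediate, since both partitions record the multiplicity $\kappa^{\Gamma_\tau}_\beta$ for every $\beta\in\Psi$.

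For uniqueness, suppose $\Lambda$ is any semicuspidal strict Kostant tiling of $\tau$. Applying Lemma~\ref{sciffcusptile} to each $\lambda\in\Lambda$ gives a cuspidal Kostant tiling $\Gamma_\lambda$ of $\lambda$ with every tile of content $\psi(\cont(\lambda))$. Concatenating a Kostant tableau for each $\lambda$ in the order prescribed by the $\Lambda$-tableau produces a cuspidal Kostant tableau of $\tau$; by Theorem~\ref{mainthmcusp}(i) the underlying tiling is $\Gamma_\tau$. Hence every cuspidal tile of $\Gamma_\tau$ is contained in exactly one $\lambda\in\Lambda$, and strictness of $\Lambda$ (distinct tiles have distinct $\psi$-values in $\Psi$) forces each $\lambda$ to be the union of all $\gamma\in\Gamma_\tau$ of content $\psi(\cont(\lambda))$. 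This gives $\Lambda=\Gamma_\tau^{sc}$.

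The only delicate step is the consecutive-block observation used to apply Lemma~\ref{tabisskew} and to verify the $\searrow$-compatibility of the proposed tableau; everything else is a direct assembly of Lemma~\ref{sciffcusptile}, Theorem~\ref{mainthmcusp}, and the convexity facts in Lemma~\ref{notapprox}.
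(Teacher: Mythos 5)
Your proof is correct and follows essentially the same route as the paper's: you show $\Gamma_\tau^{sc}$ is a semicuspidal strict Kostant tiling using Lemmas~\ref{tabisskew} and~\ref{sciffcusptile}, and prove uniqueness by refining any strict semicuspidal Kostant tiling $\Lambda$ to a cuspidal Kostant tiling and invoking Theorem~\ref{mainthmcusp}. The main difference is one of explicitness: the paper simply cites Lemma~\ref{tabisskew} for the existence direction, implicitly assuming that the tiles of $\Gamma_\tau$ of a fixed content form a consecutive block in a cuspidal Kostant tableau, whereas you prove this convexity-and-$\Psi$-indivisibility observation (using Lemma~\ref{notapprox}(iii)) before applying Lemma~\ref{tabisskew}; you also spell out the $\searrow$-compatibility check that makes $(\lambda_{\beta_1},\ldots,\lambda_{\beta_r})$ a tableau. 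Both are needed for the paper's argument to compile, so your version is, if anything, more careful than the original. One remaining implicit step in both your proof and the paper's is that the restriction of $\Gamma_\tau$ to $\lambda_{\beta_i}$ actually coincides with $\Gamma_{\lambda_{\beta_i}}$ (this follows from the uniqueness in Theorem~\ref{mainthmcusp}(i), since the restriction is itself a cuspidal Kostant tiling of $\lambda_{\beta_i}$), which is what justifies invoking Lemma~\ref{sciffcusptile}.
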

\begin{proof}
Let \(\beta \in \Psi, \kappa^{\Gamma_\tau}_\beta >0\). Then \(\sqcup_{\gamma \in \Gamma_\tau \cap \SSSS(\beta)}\gamma\) is a nonempty skew shape by Lemma~\ref{tabisskew}, and is semicuspidal by Lemma~\ref{sciffcusptile}. The fact that \(\Gamma_\tau^{sc}\) is strict Kostant follows from the definition of \(\Gamma_\tau^{sc}\) and the fact that \(\Gamma_\tau\) is Kostant. For uniqueness, assume that \(\Lambda\) is any strict semicuspidal Kostant tiling of \(\tau\). Let \(\lambda \in \Lambda\). By Lemma~\ref{sciffcusptile}, every \(\lambda \in \Lambda\) is tiled by cuspidal ribbons of content \(\psi(\cont(\lambda))\), so we may refine \(\Lambda\) to a cuspidal Kostant tiling of \(\tau\), which by Theorem~\ref{mainthmcusp} is equal to \(\Gamma_\tau\). By strictness of \(\Lambda\), it follows then that every \(\lambda \in \Lambda\) is a union of {\em all} \(\gamma \in \Gamma_\tau\) of the same content, so \(\Lambda = \Gamma_\tau^{sc}\), proving uniqueness. The final statement follows directly from the construction of \(\Gamma_\tau^{sc}\).
\end{proof}

\section{An application to representation theory of KLR algebras}\label{appSpecht}
The combinatorial study of cuspidality and Kostant tilings for skew shapes in this paper is motivated by a connection to the theory of cuspidal systems and Specht modules over KLR algebras. We explain the connection in this section.

\subsection{KLR algebras}
We continue with our choice of \(e \in \Z_{>1}\), associated positive root system \(\Phi_+\) of type \({\tt A}^{(1)}_{e-1}\), and convex preorder \(\succeq\). We additionally fix an arbitrary ground field \(\k\).
For every element in the positive root lattice \(\theta \in Q_+\), there is an associated \(\ZZ\)-graded \(\k\)-algebra \(R_\theta\), called a {\em KLR} algebra. This family of algebras categorifies the positive part of the quantum group \(U_q(\widehat{\mathfrak{sl}}_e)\), see \cite{KL1, KL2, Rouq}. As we will focus on the combinatorics surrounding these algebras and not the specifics of, say, the presentation of \(R_\theta\), we refer the interested reader to the aforementioned papers for such details.

\subsection{Representation theory of \(R_\theta\)}
We consider the category \(R_\theta\)-mod of finitely generated \(\ZZ\)-graded \(R_\theta\)-modules. We will use the \(\cong\) symbol to indicate a (degree-preserving) isomorphism of \(R_\theta\)-modules, and \(\approx\) to indicate an isomorphism of \(R_\theta\)-modules up to some grading shift.
For \(\theta_1,\ldots, \theta_k \in Q_+\), there is an inclusion 
\begin{align*}
R_{\theta_1, \ldots, \theta_k}:=R_{\theta_1} \otimes \cdots \otimes  R_{\theta_k} \to R_{\theta_1 + \cdots + \theta_k}, 
\end{align*} 
with accompanying induction and restriction functors
\begin{align*}
\Ind_{\theta_1,\ldots, \theta_k}^{\theta_1 + \cdots + \theta_k}: & \;R_{\theta_1, \ldots, \theta_k}\textup{-mod} \to R_{\theta_1 + \cdots + \theta_k}\textup{-mod},\\
\Res_{\theta_1,\ldots, \theta_k}^{\theta_1 + \cdots + \theta_k}: & \;R_{\theta_1 + \cdots + \theta_k}\textup{-mod} \to R_{\theta_1, \ldots, \theta_k}\textup{-mod},
\end{align*}
as defined, for instance, in \cite{KL1}.

\subsection{Cuspidal systems and classification of simple \(R_\theta\)-modules}\label{stratasec}
Following \cite{KCusp, McN, KM}, for \(m \in \NN\), \(\beta \in \Phi_+\), we say an \(R_{m\beta}\)-module \(M\) is {\em semicuspidal} provided that for all \(0 \neq \theta_1, \theta_2 \in Q_+\) with \(\theta_1 + \theta_2 = m\beta\), we have \(\Res_{\theta_1, \theta_2}^{m\beta} M \neq 0\) only if \(\theta_1\) is a sum of positive roots \(\preceq \beta\) and \(\theta_2\) is a sum of positive roots \(\succeq \beta\). We say moreover that \(M\) is {\em cuspidal} if \(m=1\) and the comparisons above are strict.
Cuspidal and semicuspidal modules are key building blocks in the representation theory of \(R_\theta\). 

For \(\theta \in Q_+\), we define a {\em root partition} \(\pi = (\bkap, \blam)\) as the data of a Kostant partition \(\bkap = (\kappa_\beta)_{\beta \in \Psi}\in \Xi(\theta)\), together with 
an \((e-1)\)-multipartition \(\blam\) of \(\kappa_\delta\). We set \(\Pi(\theta)\) to be the set of all root partitions of \(\theta\), and define the `forgetful' map \(\rho:\Pi(\theta) \to \Xi(\theta)\) via \(\rho((\bkap, \blam)) = \bkap\). The partial order \(\trianglerighteq\) on \(\Xi(\theta)\) induces a partial preorder on \(\Pi(\theta)\) via \(\rho\).

To each \(\pi \in \Pi(\theta)\), one may associate a certain proper standard module \(\Delta(\pi)\) which is an ordered induction product of simple semicuspidal modules, see for instance \cite[(6.5)]{KM}. The module \(\Delta(\pi)\) has a simple head \(L(\pi)\), and 
\(
\{ L(\pi) \mid \pi \in \Pi(\theta)\}
\)
is a complete and irredundant set of simple \(R_\theta\)-modules up to isomorphism and grading shift, as explained in \cite{KCusp, TW, KM}.

\subsection{Specht modules}\label{Spechtmodsec}
It is shown in \cite{BK} that level \(\ell\) cyclotomic quotients of KLR algebras are isomorphic to blocks of level \(\ell\) cyclotomic Hecke algebras associated to complex reflection groups. Of particular interest is the case where \(e\) is prime and \(\textup{char} \,\k = e\); in this situation a level one quotient of \(\bigoplus_{\height(\theta) = n} R_\theta\) is isomorphic to the symmetric group algebra \(\k \mathfrak{S}_{n}\). 

Along the lines of this connection, Kleshchev-Mathas-Ram describe in \cite{KMR}, for any \(\ell\)-multipartition \(\blam\) of multicharge \(\bc\) and content \(\theta\), the presentation of an associated {\em Specht module} \(S^{\blam} \in R_\theta\textup{-mod}\). Specht modules are cell modules in the cellular structure for cyclotomic quotients of \(R_\theta\) defined in \cite{HM}, and hence are key objects in the representation theory of these algebras.
In \cite{Muth}, this construction was extended to define {\em skew Specht} \(R_\theta\)-modules which are of primary interest in this section.

\subsubsection{Skew Specht modules}
Let \(\tau \in \SSSS(\theta)\). We define the {\em (row) skew Specht \(R_\theta\)-module} \(S^\tau\) using the presentation in \cite[Definition~4.5]{Muth}. We remark that, although the definition in that paper is given by considering \(\tau\) as the set difference of Young diagrams \(\lambda /\mu = \tau\), the presentation of the module depends only on the nodes in \(\tau\), and the choices of Young diagrams \(\lambda, \mu\) that realize \(\tau\) only serve to determine the overall grading shift of the module. As we are not invested in grading shifts in this paper, we will simply assume that the generating vector of \cite[Definition~4.5]{Muth} is placed in \(\ZZ\)-degree zero. We note then that \(S^\tau \cong S^\mu\) whenever \(\tau \sim_e \mu\).

The specifics of the presentation of \(S^\tau\) are not needed here, so we refer interested readers to \cite{KMR, Muth} for more information. Our investigation of these modules will rely primarily on the basic combinatorial facts provided in Propositions~\ref{kbasis} and~\ref{Spechtres} below.

\begin{remark}\label{highno}
Every (higher level) skew Specht module \(S^{\blam/\bmu}\) is isomorphic (up to grading shift) to some \(S^\tau\) described herein. Indeed, one may associate \(\blam/\bmu\) with a skew shape \(\tau\) as in \S\ref{otherformsec}. The resultant modules \(S^\tau\) and \(S^{\blam/\bmu}\) are then isomorphic up to grading shift.
\end{remark}

\begin{proposition}\label{kbasis} \cite[Theorem~5.12]{Muth}
Let \(\theta \in Q_+\). Then \(S^\tau\) has \(\k\)-basis in bijection with the set of Young tableaux for \(\tau\).
 \end{proposition}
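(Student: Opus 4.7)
The plan is to produce, for each Young tableau $\ttt$ for $\tau$, an explicit vector $v^\ttt \in S^\tau$, and then prove both spanning and linear independence of these vectors. Fix a reference Young tableau $\ttt^\tau$ (say, the row-reading ordering of the nodes of $\tau$) and let $z^\tau$ denote the cyclic generator coming from the presentation of \cite[Definition 4.5]{Muth}. For each Young tableau $\ttt$ for $\tau$, let $d(\ttt) \in \mathfrak{S}_{|\tau|}$ be the unique permutation sending $\ttt^\tau$ to $\ttt$, fix a reduced expression $d(\ttt) = s_{i_1} \cdots s_{i_k}$, and define $v^\ttt := \psi_{i_1} \cdots \psi_{i_k} z^\tau$, where the $\psi_i$ are the KLR crossing generators.

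For spanning, I would run a straightening argument. By the Khovanov--Lauda--Rouquier basis theorem for $R_\theta$, any element of $S^\tau$ is a linear combination of elements of the form $\psi_w y^{\mathbf{a}} e(\bi) z^\tau$. The dot-annihilation and idempotent relations built into the Specht presentation reduce the problem to the case $y^{\mathbf{a}} = 1$ and $e(\bi)$ matching the residue sequence of $\ttt^\tau$. Then Garnir-type relations rewrite $\psi_w z^\tau$ whenever $w$ swaps entries of $\ttt^\tau$ that violate the southeast-ordering condition defining a Young tableau, expressing it in terms of elements with strictly shorter underlying permutations. An induction on the length of $w$ concludes.

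Linear independence, which I expect to be the main obstacle, I would attack by induction on $|\tau|$, with the base case $|\tau| = 1$ immediate. For the inductive step, pick any $\ssee$-removable node $u$ of $\tau$ of residue $i$, and study $\Res^{\theta}_{\theta - \alpha_i,\, \alpha_i} S^\tau$. The key structural statement is that this restriction admits a filtration whose subquotients are indexed by $\ssee$-removable nodes $u$ of residue $i$, with the subquotient at $u$ isomorphic (up to grading shift) to $S^{\tau \setminus \{u\}} \boxtimes L(\alpha_i)$. By the inductive hypothesis, the latter carries a basis labeled by Young tableaux for $\tau \setminus \{u\}$, and each such tableau extends uniquely to a Young tableau $\ttt$ of $\tau$ by placing $|\tau|$ at $u$. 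Since different choices of $u$ produce summands supported on different idempotents of $R_{\theta - \alpha_i}$, any nontrivial linear relation among the $v^\ttt$ would restrict to a nontrivial relation in one of the smaller $S^{\tau \setminus \{u\}}$, contradicting induction.

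The technical heart is establishing the claimed filtration of $\Res S^\tau$. Building on the presentation-level analysis in \cite{KMR, Muth}, I would construct explicit surjections $\Res S^\tau \twoheadrightarrow S^{\tau \setminus \{u\}} \boxtimes L(\alpha_i)$ for each valid $u$, verify by hand that the defining relations of the target are satisfied, and then use the spanning result (giving an upper bound on dimension) together with a count of pairs $(u, \ttt')$ (giving a matching lower bound) to conclude that these maps assemble into a filtration of the required form. This is also the step most sensitive to choices of reduced expressions and of the ordering on $\ssee$-removable corners, and careful bookkeeping via Shuffle Lemma-type manipulations will be required.
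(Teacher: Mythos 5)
The paper does not prove this proposition itself; it quotes it from \cite[Theorem~5.12]{Muth}, which in turn rests on the graded Specht basis theorem of \cite{KMR}. Your overall architecture---explicit $\psi$-vectors, spanning by straightening with Garnir relations, and linear independence by restriction and induction on $|\tau|$---is in the right spirit, but the linear-independence mechanism in those references is different: in \cite{KMR} the lower bound on $\dim S^{\blam}$ comes from passing to a cyclotomic quotient and comparing with the classical ungraded dimension $|\mathrm{Std}(\blam)|$ via the Brundan--Kleshchev isomorphism, not from a restriction filtration. So even if your route can be made to work, it is a genuinely different argument from the one being cited.

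There is, moreover, a real gap in the linear-independence step as you have written it. A collection of surjections $\Res^{\theta}_{\theta-\alpha_i,\alpha_i} S^\tau \twoheadrightarrow S^{\tau\setminus\{u\}} \boxtimes L(\alpha_i)$, one for each $\ssee$-removable node $u$ of residue $i$, does not on its own assemble into a filtration or yield a lower bound on dimension: the kernels may intersect, so the induced map into $\bigoplus_u S^{\tau\setminus\{u\}} \boxtimes L(\alpha_i)$ need not be injective. The claim that ``different choices of $u$ produce summands supported on different idempotents of $R_{\theta-\alpha_i}$'' is also incorrect: two $\ssee$-removable nodes of the same residue can support Young tableaux of $\tau\setminus\{u\}$ with identical residue sequences (already visible with two disconnected nodes of the same residue), so the idempotent decomposition does not separate the proposed subquotients. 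To make your strategy work you would need to build an honest filtration directly---ordering the removable nodes of each residue, constructing a surjection from each successive kernel onto the next subquotient, and verifying exactness by explicit $\psi$-computations. That is essentially the content of Proposition~\ref{Spechtres}, but in \cite{Muth} that restriction theorem is proved \emph{using} the basis theorem, so invoking it here is circular unless you supply a basis-free proof of the filtration, which is exactly the technical work your sketch defers.
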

 
In particular, Proposition~\ref{kbasis} implies that \(S^\tau =0\) if and only if \(\tau = \varnothing\).

\begin{proposition}\label{Spechtres}
Let \(0 \neq \theta, \theta_1, \ldots, \theta_k \in Q_+\), with \(\theta = \theta_1 + \cdots + \theta_k\). Let \(\tau \in \SSSS(\theta)\). Then \(\Res_{\theta_1, \ldots, \theta_k}^\theta S^\tau\) has a \(R_{\theta_1, \ldots, \theta_k}\)-module filtration with subquotients isomorphic (up to grading shift) to \(S^{\tau_1} \boxtimes \cdots \boxtimes S^{\tau_k}\), ranging over all tableaux \((\tau_1, \ldots, \tau_k)\) for \(\tau\) such that \(\tau_i \in \SSSS(\theta_i)\) for all \(i \in [1,k]\).
\end{proposition}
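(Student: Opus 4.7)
My plan is to proceed in three main steps, working from the basis description of $S^\tau$ in Proposition~\ref{kbasis}. First, I would recall that $S^\tau$ has a $\k$-basis $\{v_\sfr\}$ indexed by Young tableaux $\sfr$ for $\tau$, which in our language are orderings $\sfr = (u_1, \ldots, u_{|\tau|})$ of the nodes of $\tau$ such that $u_i \searrow u_j$ implies $i \leq j$. Given such $\sfr$ and the heights $n_i := \height(\theta_i)$, set $\tau_i^\sfr := \{u_{n_1 + \cdots + n_{i-1} +1}, \ldots, u_{n_1 + \cdots + n_i}\}$. By Lemma~\ref{tabisskew} applied iteratively, $(\tau_1^\sfr, \ldots, \tau_k^\sfr)$ is a tableau for $\tau$ in the sense of \S\ref{tilingsec}. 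Conversely, for any tableau $(\tau_1, \ldots, \tau_k)$ for $\tau$, every Young tableau for $\tau$ that inserts the nodes of $\tau_1$ first, then $\tau_2$, and so on, is obtained by assembling Young tableaux for each $\tau_i$. Thus the basis $\{v_\sfr\}$ is partitioned into blocks indexed by tableaux $(\tau_1, \ldots, \tau_k)$ for $\tau$.

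Second, I would observe that restriction to $R_{\theta_1, \ldots, \theta_k}$ is compatible with the weight decomposition by residue sequences: the idempotent $e(\bi_1, \ldots, \bi_k) \in R_{\theta_1, \ldots, \theta_k}$ acts non-trivially on $v_\sfr$ only when the residue sequence of $\sfr$ splits into residue sequences of contents $\theta_1, \ldots, \theta_k$ across the chosen positions. This forces $\cont(\tau_i^\sfr) = \theta_i$ for all $i$, so only tableaux $(\tau_1, \ldots, \tau_k)$ for $\tau$ with $\tau_i \in \SSSS(\theta_i)$ contribute. I would then order such tableaux by a compatible total refinement of the natural `southeast-first' preorder (for instance, lexicographically on the sequence of contents $(\cont(\tau_1), \cont(\tau_1 \sqcup \tau_2), \ldots)$ or on the partial unions themselves), and define the filtration by letting $F_{\leq (\tau_1, \ldots, \tau_k)}$ be the $\k$-span of all $v_\sfr$ with $(\tau_1^\sfr, \ldots, \tau_k^\sfr)$ less than or equal to $(\tau_1, \ldots, \tau_k)$ in this order.

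Third, I would identify each subquotient. The basis of $S^{\tau_1} \boxtimes \cdots \boxtimes S^{\tau_k}$ is indexed by tuples of Young tableaux, one for each $\tau_i$, and these assemble precisely into the Young tableaux for $\tau$ that realize the splitting $(\tau_1, \ldots, \tau_k)$. I would define a $\k$-linear isomorphism on basis vectors and then verify that the generators of $R_{\theta_1} \otimes \cdots \otimes R_{\theta_k}$ act in matching fashion modulo $F_{<(\tau_1, \ldots, \tau_k)}$. This amounts to comparing the Specht presentation relations of \cite{KMR, Muth} applied within each $\tau_i$ to those applied to the combined Young tableau, which agree on generators living entirely within a single block $[n_1 + \cdots + n_{i-1}+1, \, n_1 + \cdots + n_i]$.

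The main obstacle is the third step: controlling how KLR generators $\psi_r$ and $y_r$ acting \emph{within} a block interact with basis vectors whose associated tableau exceeds $(\tau_1, \ldots, \tau_k)$ in the chosen order. One must show that any `garbage' term arising from the Specht relations and from straightening non-standard Young tableaux lies in the lower filtration piece $F_{<(\tau_1, \ldots, \tau_k)}$, which requires a careful inductive analysis using the quadratic and braid relations as presented in \cite[\S4]{Muth}, together with the observation that the natural order on tableaux of $\tau$ refines the dominance relation on residue sequences induced by the splitting. Once this is established, the resulting identification of the subquotient as $S^{\tau_1} \boxtimes \cdots \boxtimes S^{\tau_k}$ (up to an overall grading shift coming from the differing placement of the generating vectors) follows immediately.
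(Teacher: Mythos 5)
The paper dispatches this proposition in one line by citing Muth's Theorem~5.13 (the two-factor case of this filtration statement) and invoking induction on $k$, so your proof proposal and the paper take genuinely different routes: you attempt to re-derive the restriction-filtration result from first principles, starting from the basis description of Proposition~\ref{kbasis}, whereas the paper just black-boxes the existing theorem.

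Your outline is sound as far as it goes. Steps one and two are clean: using Lemma~\ref{tabisskew} to show each block $\tau_i^\sfr$ is a skew shape is exactly right, and the observation that the idempotents $e(\bi_1,\ldots,\bi_k)$ pick out only the tableaux with $\cont(\tau_i^\sfr)=\theta_i$ is the correct mechanism. However, what you call the ``main obstacle'' is not a peripheral detail --- it is precisely the content of Muth's Theorem~5.13, and it is where essentially all of the work lies. In particular, your filtration order is underspecified: you need not merely \emph{some} total refinement of the southeast-first preorder, but one with the specific property that the KLR relations (quadratic, braid, and the Garnir/straightening relations in the Specht presentation) move basis vectors strictly downward in the order whenever a generator $\psi_r$ with $r$ at a block boundary is applied, and leave each $F_{\leq(\tau_1,\ldots,\tau_k)}$ invariant under the subalgebra $R_{\theta_1,\ldots,\theta_k}$. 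Verifying this is a nontrivial combinatorial-algebraic argument. So your route is viable and more self-contained than the paper's, but completing it would amount to re-proving the cited theorem rather than applying it; as written, the proposal correctly locates the gap but does not close it.
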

\begin{proof}
Follows from inductive application of \cite[Theorem~5.13]{Muth}.
\end{proof}

\subsection{Cuspidal Specht modules}
Our Definition~\ref{cuspdef} of cuspidality for skew shapes is motivated by a connection with cuspidal Specht modules, as detailed in the following proposition.

\begin{proposition}\label{cuspiscusp}
Let \(\tau\) be a skew shape. Then the Specht module \(S^\tau\) is cuspidal (resp. semicuspidal) if and only if the skew shape \(\tau\) is cuspidal (resp. semicuspidal).
\end{proposition}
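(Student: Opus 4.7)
The proof is a direct translation between the representation-theoretic definition of (semi)cuspidality in terms of restriction functors and the combinatorial definition in terms of tableau decompositions, mediated by Propositions~\ref{kbasis} and~\ref{Spechtres}. The plan is to show that for $\theta_1 + \theta_2 = \cont(\tau)$ with $\theta_1, \theta_2 \neq 0$, the restriction $\Res_{\theta_1, \theta_2}^{\cont(\tau)} S^\tau$ is nonzero if and only if there exists a two-tile tableau $(\tau_1, \tau_2)$ for $\tau$ with $\cont(\tau_i) = \theta_i$; this will make the two definitions literally identical statements.

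First I will handle this key translation. By Proposition~\ref{Spechtres}, the restriction carries a filtration whose subquotients are (up to grading shift) the outer tensor products $S^{\tau_1} \boxtimes S^{\tau_2}$ as $(\tau_1, \tau_2)$ ranges over tableaux of $\tau$ with $\cont(\tau_i) = \theta_i$. By Proposition~\ref{kbasis}, $S^{\tau_i}$ has a $\k$-basis indexed by Young tableaux of $\tau_i$, so $S^{\tau_i} \neq 0$ if and only if $\tau_i$ is nonempty; in particular $S^{\tau_1} \boxtimes S^{\tau_2} \neq 0$ exactly when both $\tau_i$ are nonempty. Since $\theta_1, \theta_2 \neq 0$ forces both tiles to be nonempty whenever $\cont(\tau_i) = \theta_i$, we conclude that $\Res_{\theta_1, \theta_2}^{\cont(\tau)} S^\tau \neq 0$ if and only if some such tableau exists (the module is nonzero iff at least one filtration subquotient is nonzero).

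With this equivalence in hand, both directions become essentially tautological. For the forward direction in the semicuspidal case, suppose $\tau$ is semicuspidal of content $m\beta$ and $\Res_{\theta_1, \theta_2}^{m\beta} S^\tau \neq 0$ for some $0 \neq \theta_1, \theta_2$; then a two-tile tableau $(\tau_1, \tau_2)$ with $\cont(\tau_i) = \theta_i$ exists, and the combinatorial semicuspidality of $\tau$ gives the required decompositions of $\theta_1, \theta_2$ into positive roots $\preceq \beta$ and $\succeq \beta$ respectively. Conversely, if $S^\tau$ is semicuspidal and $(\lambda_1, \lambda_2)$ is any two-tile tableau of $\tau$, then both $\lambda_i$ are nonempty, so $\theta_i := \cont(\lambda_i) \neq 0$, and $S^{\lambda_1} \boxtimes S^{\lambda_2}$ is a nonzero subquotient of $\Res_{\theta_1, \theta_2}^{m\beta} S^\tau$; the module semicuspidality then yields the combinatorial condition on $\cont(\lambda_i)$. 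The cuspidal case ($m = 1$, strict inequalities) is handled by the identical argument, replacing $\succeq$ with $\succ$ throughout. No step poses a genuine obstacle---the main content is simply recognizing that Propositions~\ref{kbasis} and~\ref{Spechtres} make the dictionary between the two definitions exact.
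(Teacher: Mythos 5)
Your proof is correct and takes essentially the same approach as the paper's: both directions are mediated by Propositions~\ref{kbasis} and~\ref{Spechtres}, with your ``key translation'' ($\Res_{\theta_1,\theta_2}^{\cont(\tau)} S^\tau \neq 0$ iff a two-tile tableau of the prescribed contents exists) being exactly what the paper uses in each direction, just stated up front as a standalone equivalence rather than invoked piecemeal.
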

\begin{proof}
We prove the statement for cuspidality; the semicuspidality statement is similar.
Let \(\beta \in \Phi_+\), and \(\tau \in \SSSS(\beta)\). 

\((\implies)\) Assume \(S^\tau\) is cuspidal. Let \((\tau_1, \tau_2)\) be a tableau for \(\tau\). Write \(\theta_1 = \cont(\tau_1)\), \(\theta_2 = \cont(\tau_2)\). Then \(\Res_{\theta_1, \theta_2}^\beta S^\tau \neq 0\), as it has a nonzero subquotient \(S^{\tau_1} \boxtimes S^{\tau_2}\) by Proposition~\ref{Spechtres}. But then by cuspidality of \(S^\tau\), we have that \(\theta_1\) is a sum of positive roots \(\prec \beta\), and \(\theta_2\) is a sum of positive roots \(\succ \beta\). Thus \(\tau\) is cuspidal by Definition~\ref{cuspdef}.

\((\impliedby)\) Assume \(\tau\) is cuspidal. Let \(0 \neq \theta_1, \theta_2 \in Q_+\) with \(\theta_1 + \theta_2 =  \beta\). If \(\Res_{\theta_1, \theta_2}^\beta S^\tau \neq 0\), then there must be a nonzero subquotient in the filtration described in Proposition~\ref{Spechtres}. Thus there exists a tableau \((\tau_1, \tau_2)\) for \(\tau\) with \(\tau_1 \in \SSSS(\theta_1)\) and \(\tau_2 \in \SSSS(\theta_2)\). By the cuspidality of \(\tau\), we have that \(\theta_1\) is a sum of positive roots \(\prec \beta\), and \(\theta_2\) is a sum of positive roots \(\succ \beta\). Thus \(S^\tau\) is cuspidal.
\end{proof}

In view of Proposition~\ref{cuspiscusp} and Remark~\ref{highno}, Theorems~\ref{allcusp} and~\ref{allsemicusp} give a full classification and construction (up to grading shift) of all cuspidal and semicuspidal skew Specht modules for the KLR algebra. 

\subsubsection{Simple semicuspidal modules}
Using representation-theoretic results, it is established in \cite[Proposition 8.5]{Muth} that when \(\succeq\) is a `balanced' convex preorder, every real simple cuspidal module is isomorphic to (a grading shift of) a certain ribbon Specht module. We extend this result to arbitrary convex preorders in the following proposition.

\begin{proposition}\label{propcuspSpechtsimp}
Let \(m \in \NN\), \(\beta \in \Phi_+^\re\). Then \(S^{\zeta^\beta}\) is the unique simple cuspidal \(R_\beta\)-module, and \(S^{\zeta^{m\beta}}\) is the unique simple semicuspidal  \(R_{m \beta}\)-module, up to grading shift.
\end{proposition}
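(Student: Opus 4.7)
The plan is to first reduce the semicuspidal case to the cuspidal case, and then establish the simplicity of $S^{\zeta^\beta}$. By Corollary~\ref{realsemicor}, the shape $\zeta^{m\beta}$ consists (up to $e$-similarity) of $m$ disjoint copies of $\zeta^\beta$ arranged from NE to SW. Since the skew Specht module construction in \cite{KMR, Muth} is compatible with this decomposition, with each connected component contributing an induction factor (up to grading shift), one obtains $S^{\zeta^{m\beta}} \approx \Ind_{\beta,\ldots,\beta}^{m\beta}(S^{\zeta^\beta})^{\boxtimes m}$. Granted the $m=1$ case, this is the $m$-fold induction of the simple cuspidal module $L(\beta)$. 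By the cuspidal systems theory of \cite{KCusp, McN, KM}, for real $\beta$ such an induction product is simple and realizes the unique (up to grading shift) simple semicuspidal $R_{m\beta}$-module, giving the result for all $m$.

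For the case $m=1$, Proposition~\ref{cuspiscusp} and Theorem~\ref{allcusp} together show that $S^{\zeta^\beta}$ is a cuspidal $R_\beta$-module. Cuspidality is preserved under subquotients (immediate from the restriction-based definition), and the stratification theory recalled in \S\ref{stratasec} guarantees a unique (up to grading shift) simple cuspidal $R_\beta$-module $L(\beta)$. Hence every composition factor of $S^{\zeta^\beta}$ is $\approx L(\beta)$, and it suffices to prove that the composition length is $1$.

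To do this, I would compare the $\bi^\beta$-weight spaces of $S^{\zeta^\beta}$ and $L(\beta)$, where $\bi^\beta = (\res(z^1), \ldots, \res(z^{\height(\beta)}))$ is the residue sequence read along the canonical $\no/\ea$-path for $\zeta^\beta$ from Definition~\ref{defxib}. By Proposition~\ref{kbasis}, the dimension of the $\bi^\beta$-weight space of $S^{\zeta^\beta}$ equals the number of Young tableaux of $\zeta^\beta$ with residue sequence $\bi^\beta$. On the other hand, the dominant-word machinery from \cite{Beck}, as adapted to the KLR setting in \cite{McN}, identifies $\bi^\beta$ as the dominant word attached to $\beta$ under $\succeq$, so the $\bi^\beta$-weight space of $L(\beta)$ is one-dimensional. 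Showing that there is exactly one Young tableau of $\zeta^\beta$ with residue sequence $\bi^\beta$ will then force the composition length of $S^{\zeta^\beta}$ to be $1$.

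The hard part is proving the uniqueness of this Young tableau. In the balanced setting treated in \cite{Muth} cuspidal ribbons are straight strips and the argument is transparent, but in general the ribbons $\zeta^\beta$ can exhibit the intricate zig-zag structure seen in Figures~\ref{fig:BigEx1}--\ref{fig:BigEx2}. The argument will have to track the convexity-driven NW/SE decisions in Definition~\ref{defxib} at each step of the defining path and use the minimality of $\bi^\beta$ among residue sequences of Young tableaux of $\zeta^\beta$ (reflecting its status as the dominant word for $\beta$) to eliminate any alternative tableau.
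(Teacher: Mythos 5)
Your overall strategy mirrors the paper's proof: for $m=1$, observe that $S^{\zeta^\beta}$ is cuspidal via Proposition~\ref{cuspiscusp} and Theorem~\ref{allcusp}, invoke uniqueness (up to shift) of the simple cuspidal $L(\beta)$ to see every composition factor is $\approx L(\beta)$, and then conclude composition length $1$ by an extremal-weight-space computation; for $m>1$, use the decomposition of $\zeta^{m\beta}$ into $m$ copies of $\zeta^\beta$ and the compatibility of skew Specht modules with induction (\cite[Theorem~5.15]{Muth}) to realize $S^{\zeta^{m\beta}} \approx \Ind_{\beta,\ldots,\beta}^{m\beta}(S^{\zeta^\beta})^{\boxtimes m} \approx L(\beta^m)$.

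The genuine gap is exactly where you flag it: the claim that the extremal weight space of $S^{\zeta^\beta}$ is one-dimensional is left as ``the argument will have to track the convexity-driven NW/SE decisions\dots'' without being carried out. This is the substantive new content needed to go beyond the balanced case; the paper dispatches it by citing the explicit extremal-word result \cite[Lemma~2.28]{KCusp} and pointing to how it is used in \cite[Lemma~8.3]{Muth}, rather than re-deriving it. Your sketch of the plan is plausible but not a proof.

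Separately, you should double-check your choice of word. You propose $\bi^\beta = (\res(z^1),\ldots,\res(z^h))$ read along the SW-to-NE $\no/\ea$-path of Definition~\ref{defxib}. But the sequence $(z^1,\ldots,z^h)$ is generally \emph{not} a Young tableau of $\zeta^\beta$ in the sense of \S\ref{tilingsec}: whenever the path takes a $\no$ step, the node $z^{i+1} = \no z^i$ satisfies $z^{i+1} \searrow z^i$, so the tableau condition forces $z^{i+1}$ to precede $z^i$, which the SW-to-NE reading violates. Consequently the $\bi^\beta$-weight space of $S^{\zeta^\beta}$ could be zero for that word as stated; the correct extremal word requires reading column-runs of the ribbon from north to south. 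This is a fixable bookkeeping issue but needs to be addressed before the dimension count can be made precise.
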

\begin{proof}
By \cite[Theorem 5.2]{KM}, there is a unique simple cuspidal module \(L(\beta)\). By Theorem~\ref{allcusp}, \(\zeta^\beta \in \SSSS(\beta)\) is cuspidal. By Proposition~\ref{cuspiscusp} then, \(S^{\zeta^\beta}\) is cuspidal, so every simple factor of \ \(S^{\zeta^\beta}\) must be cuspidal, and thus all are isomorphic to \(L(\beta)\) up to some grading shifts. An extremal word argument, using \cite[Lemma 2.28]{KCusp} as in the proof of \cite[Lemma~8.3]{Muth} shows that this factor may occur only once. Thus  \(S^{\zeta^\beta} \approx L(\beta)\).

Let \(m \in \NN\). By \cite[Theorem 5.2]{KM} there is a unique simple cuspidal \(R_{m \beta}\)-module \(L(\beta^m)\) up to grading shift. Moreover, we have \(L(\beta^m) \cong \Ind_{\beta, \ldots, \beta}^{m \beta}(L(\beta)^{\boxtimes m})\). By Theorem~\ref{allsemicusp}, \(\zeta^{m\beta}\) is semicuspidal, and consists of \(m\) connected components, each of which is \(e\)-similar to \(\zeta^\beta\). By \cite[Theorem~5.15]{Muth}, we have then that \(S^{\zeta^{m \beta}} \approx \Ind_{\beta, \ldots, \beta}^{m\beta}((S^{\zeta^\beta})^{\boxtimes m})\). Then it follows from the first paragraph that \(S^{\zeta^\beta} \approx L(\beta^m)\), as desired.
\end{proof}

\subsection{Simple factors of skew Specht modules}

One of the central open problems in the representation theory of cyclotomic Hecke algebras, and by extension in the representation theory of KLR algebras, is the determination of the simple factors and decomposition numbers of Specht modules. 
The following proposition constitutes a tight upper bound (in the bilexicographic order on root partitions) for the simple factors of skew Specht modules.

\begin{proposition}\label{simplefactors}
Let \(\tau \in \SSSS(\theta)\). Then the Specht module \(S^\tau\) has a simple factor \(L(\pi)\) with \(\rho(\pi) = \bkap^{\Gamma_\tau}\), and \(\bkap^{\Gamma_\tau} \trianglerighteq \rho(\mu)\) for all simple factors \(L(\mu)\) of \(S^\tau\).
\end{proposition}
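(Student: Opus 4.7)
The proof naturally splits into two parts: the bilexicographic upper bound $\rho(\mu) \trianglelefteq \bkap^{\Gamma_\tau}$ for every composition factor $L(\mu)$ of $S^\tau$, and the existence of a composition factor at which equality is achieved. Both rest on a standard structural consequence of the stratification theory of cuspidal systems (see \cite{KCusp, McN, KM}): for any $\bkap \in \Xi(\theta)$, writing $\psi_1 \succ \cdots \succ \psi_s$ for the indivisible positive roots at which $\bkap$ is nonzero and $m_i := \kappa_{\psi_i}$, the restriction $\Res^\theta_{m_1\psi_1, \ldots, m_s\psi_s}$ sends a simple $L(\mu)$ with $\rho(\mu) = \bkap$ to a nonzero module all of whose composition factors have the form $L(\mu_1) \boxtimes \cdots \boxtimes L(\mu_s)$ with each $L(\mu_i)$ simple semicuspidal of content $m_i \psi_i$; conversely, only simples with $\rho(\mu) = \bkap$ can contribute composition factors of this ``semicuspidal in every slot'' form to this restriction.

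For the upper bound, suppose $L(\mu)$ is a composition factor of $S^\tau$ with $\rho(\mu) = \bkap$. The stratification principle gives $\Res^\theta_{m_1\psi_1, \ldots, m_s\psi_s} L(\mu) \neq 0$, and exactness of restriction forces $\Res^\theta_{m_1\psi_1, \ldots, m_s\psi_s} S^\tau \neq 0$. Proposition~\ref{Spechtres} provides a filtration of this restriction with subquotients $S^{\tau_1} \boxtimes \cdots \boxtimes S^{\tau_s}$ indexed by tableaux $(\tau_1, \ldots, \tau_s)$ of $\tau$ with $\cont(\tau_i) = m_i \psi_i$. At least one subquotient must be nonzero, so by Proposition~\ref{kbasis} each $\tau_i$ is a nonempty skew shape (Lemma~\ref{tabisskew}), producing a Kostant tiling $\Lambda := \{\tau_1, \ldots, \tau_s\}$ of $\tau$ with $\bkap^\Lambda = \bkap$. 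Theorem~\ref{mainthmcusp}(iv) then gives $\bkap^{\Gamma_\tau} \trianglerighteq \bkap = \rho(\mu)$.

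For the existence, take the semicuspidal strict Kostant tableau $(\eta_1, \ldots, \eta_s)$ produced by Theorem~\ref{mainthmscKost}, so $\cont(\eta_i) = m_i \psi_i$ and each $\eta_i$ is a semicuspidal skew shape. Proposition~\ref{cuspiscusp} ensures each $S^{\eta_i}$ is a nonzero semicuspidal $R_{m_i\psi_i}$-module, whence all of its simple factors are simple semicuspidal. By Proposition~\ref{Spechtres}, the external tensor product $S^{\eta_1} \boxtimes \cdots \boxtimes S^{\eta_s}$ appears as a subquotient of $\Res^\theta_{m_1\psi_1, \ldots, m_s\psi_s} S^\tau$, so this restriction admits a composition factor $L(\mu_1) \boxtimes \cdots \boxtimes L(\mu_s)$ with every $L(\mu_i)$ simple semicuspidal. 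By the converse half of the stratification principle, such a factor can only arise from a composition factor $L(\mu)$ of $S^\tau$ with $\rho(\mu) = \bkap^{\Gamma_\tau}$, yielding the required simple factor.

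The main obstacle is invoking the stratification machinery cleanly enough to recover the \emph{exact} Kostant partition of a simple factor from the shape of its restriction, rather than merely a domination inequality; once this is granted, the argument is a direct combination of Propositions~\ref{Spechtres} and~\ref{cuspiscusp} with the two main combinatorial theorems on cuspidal and semicuspidal Kostant tilings, Theorems~\ref{mainthmcusp}(iv) and~\ref{mainthmscKost}.
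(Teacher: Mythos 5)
Your proof follows the same strategy as the paper's, which invokes \cite[Theorem~6.8(v)]{KM} for both halves. The upper bound is handled in exactly the same way: combine the nonvanishing of $\Res_{\rho(\mu)} S^\tau$ (guaranteed by the stratification theory for any composition factor $L(\mu)$), the Specht restriction filtration (Proposition~\ref{Spechtres}) to produce a Kostant tiling $\Lambda$ with $\bkap^\Lambda = \rho(\mu)$, and Theorem~\ref{mainthmcusp}(iv). For the existence half, however, your ``converse half'' of the stratification principle, as stated, is stronger than what the theory cleanly supplies and stronger than you need: the claim that semicuspidal-in-every-slot composition factors of $\Res_{\bkap}$ can arise \emph{only} from simples with $\rho(\mu) = \bkap$ on the nose is not the right invariant. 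The cleaner and sufficient statement, which is what \cite[Theorem~6.8(v)]{KM} provides and what the paper uses, is the vanishing $\Res_{\bkap} L(\mu) = 0$ whenever $\rho(\mu) \triangleleft \bkap$. Once you know $\Res_{\bkap^{\Gamma_\tau}} S^\tau \neq 0$ (which your use of Theorem~\ref{mainthmscKost}, Proposition~\ref{Spechtres}, and Proposition~\ref{kbasis} correctly establishes), combine this vanishing with the upper bound you already proved: every composition factor has $\rho(\mu) \trianglelefteq \bkap^{\Gamma_\tau}$, so a nonzero restriction forces some $\rho(\mu) = \bkap^{\Gamma_\tau}$. You flag this concern yourself in your closing paragraph, and that is exactly the right diagnosis; replace the ``exact recovery from the shape of the restriction'' with the domination inequality plus your own upper bound, and the argument coincides with the paper's.
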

\begin{proof}
Assume \(L(\mu)\) is a simple factor of \(S^\tau\). By \cite[Theorem~6.8(v)]{KM}, \(\Res_{\rho(\mu)} S^\tau \neq 0\). Then by Proposition~\ref{Spechtres}, there exists a Kostant tiling \(\Lambda\) for \(\tau\) with \(\bkap^\Lambda = \rho(\mu)\). But then, by Theorem~\ref{mainthmcusp}, \(\bkap^{\Gamma_\tau} \trianglerighteq \rho(\mu)\).

By \cite[Theorem~6.8(v)]{KM}, if \(\bkap^{\Gamma_\tau}\triangleright \rho(\mu)\) for all simple factors \(L(\mu)\) of \(S^\tau\), then we must have \(\Res_{\bkap^{\Gamma_\tau}}S^\tau =0\). But \(\Gamma^{sc}_\tau\) is a tiling with \(\bkap^{ \Gamma^{sc}_\tau} = \bkap^{\Gamma_\tau}\) by Theorem~\ref{mainthmscKost}, so \(\Res_{\bkap^{\Gamma_\tau}}S^\tau \neq0\). Therefore \(S^\tau\) has some simple factor \(L(\pi)\) with \(\rho(\pi) = \bkap^{\Gamma_\tau}\).
\end{proof}

\subsection{Related questions and future work}

\subsubsection{Simple labels}
Simple \(R_\theta\)-modules have two differing sets of labels, coming from the stratified structure of the full affine KLR algebra, or alternatively from the cellular structure of cyclotomic quotients of the KLR algebra. Indeed, simple modules may be labeled by \(L(\pi)\) for \(\pi \in \Pi(\theta)\) via the stratified structure of \(R_\theta\)-mod described in \S\ref{stratasec}. 
Alternatively, \(R_\theta\)-modules have labels of the form \(D^{\blam}\), where \(\blam\) is a Kleshchev multipartition, and \(D^{\blam}\) is the simple head of the associated Specht module \(S^{\blam}\), see \cite{AM, HM}. Proposition~\ref{simplefactors} gives a bound on simple factors of Specht  modules in terms of root partitions, and is thus a rough step in the direction of understanding the connection between these approaches. We expect a much more delicate combinatorial process is required to match the Kleshchev multipartition \(\blam\) with the root partition \(\pi\) which labels the same simple module.

\subsubsection{Simple imaginary semicuspidal modules}
In general, simple imaginary semicuspidal \(R_{n \delta}\)-modules are not isomorphic to skew Specht modules, but do appear to arise as heads (or socles) of imaginary semicuspidal skew Specht modules. For a balanced convex preorder, evidence for this assertion appears in \cite{EK}, which relates some semicuspidal \(R_{n \delta}\)-modules to RoCK blocks of Hecke algebras via Morita equivalence. For arbitrary convex preorders, we expect a similar connection to hold with blocks which are Scopes equivalent to RoCK blocks.

\subsubsection{Other types}
Cuspidal modules for KLR algebras of all untwisted affine types were defined and studied in \cite{KCusp}. Specht modules, defined in the combinatorial setting of Young diagrams, albeit with a different treatment for residues, have been defined for the KLR algebra of affine type C in \cite{APS}. We expect that, with reasonable modifications, versions of Theorems~\ref{allcusp} and~\ref{allsemicusp} should hold in this combinatorial setting, allowing for a presentation of cuspidal and semicuspidal modules via (skew) Specht modules in affine type C.

\end{document}